\numberwithin{equation}{section}
\newtheorem{thrm}{Theorem}[section]
\newtheorem{prop}[thrm]{Proposition}
\newtheorem{crl}[thrm]{Corollary}
\newtheorem{lemma}[thrm]{Lemma}
\newtheorem{defn}[thrm]{Definition}
\theoremstyle{remark}
\newtheorem{rmk}[thrm]{Remark}
\newcommand{\nc}{\newcommand}
\nc{\End}{\mathrm{End}}
\nc{\Ext}{\mathrm{Ext}}
\nc{\Hom}{\mathrm{Hom}}
\nc{\Ima}{\mathrm{Image}}
\nc{\Ind}{\mathrm{Ind}}
\nc{\Ker}{\mathrm{Ker}}
\nc{\RHom}{\mathrm{RHom}}
\nc{\Sym}{\mathrm{Sym}}
\nc\bb{\mathbb}
\nc\mf{\mathfrak}
\nc\ms{\mathsf}
\nc\mc{\mathcal}
\nc{\mfg}{\mf{g}}
\nc{\mfh}{\mf{h}}
\nc{\mfsl}{\mf{sl}}
\nc{\mfgl}{\mf{gl}}
\nc{\mfso}{\mf{so}}
\nc{\mfsp}{\mf{sp}}
\nc{\nn}{\nonumber}
\nc{\spl}[1]{\begin{equation}\begin{aligned}#1\end{aligned}\end{equation}}
\nc{\eqa}[1]{\begin{align}#1\end{align}}
\nc{\eqn}[1]{\begin{align*}#1\end{align*}}
\nc{\eg}[1]{\begin{gather}#1\end{gather}}
\nc{\egn}[1]{\begin{gather*}#1\end{gather*}}
\nc{\mbfk}{\mathbf{k}}
\nc{\mcA}{\mc{A}}
\nc{\mcB}{\mc{B}}
\nc{\mcU}{\mc{U}}
\nc{\mfU}{\mf{U}}
\nc{\mcP}{\mc{P}}
\nc{\mcQ}{\mc{Q}}
\nc{\mcX}{\mc{X}}
\nc{\mcZ}{\mc{Z}}
\nc{\mcT}{\mc{T}}
\nc{\mcG}{\mc{G}}
\nc{\msS}{\ms{S}}
\nc{\mss}{\ms{s}}
\nc{\msc}{\ms{c}}
\nc{\msd}{\ms{d}}
\nc{\msv}{\ms{v}}
\nc{\msq}{\ms{q}}
\nc{\msw}{\ms{w}}
\nc{\mcS}{\mc{S}}
\nc{\mcI}{\mc{I}}
\nc{\C}{\mathbb{C}}
\nc{\N}{\mathbb{N}}
\nc{\Z}{\mathbb{Z}}
\nc{\ot}{\otimes}
\nc{\op}{\oplus}
\nc{\lan}{\langle}
\nc{\ran}{\rangle}
\nc{\qu}{\quad}
\nc{\qq}{\qquad}
\nc\Tr{{\rm tr}}
\nc{\al}{\alpha}
\nc{\del}{\delta}
\nc{\eps}{\epsilon}
\nc{\veps}{\varepsilon}
\nc{\ga}{\gamma}
\nc{\Ga}{\Gamma}
\nc{\ka}{\kappa}
\nc{\la}{\lambda}
\nc{\om}{\omega}
\nc{\si}{\sigma}
\nc{\Si}{\Sigma}
\nc{\bsi}{\boldsymbol\sigma}
\nc{\bSi}{\boldsymbol\Sigma}
\nc{\Ups}{\upsilon}
\nc{\vphi}{\varphi}
\nc{\btau}{\boldsymbol\tau}
\nc{\bdel}{\boldsymbol\delta}
\nc{\id}{\mathrm{id}}
\nc{\gr}{\mathrm{gr}}
\nc{\lrh}{\leftrightharpoons}
\nc{\iso}{\stackrel{\sim}{\longrightarrow}}
\nc{\liso}{\stackrel{\sim}{\longleftarrow}}
\nc{\wh}{\widehat}
\nc{\wt}{\widetilde}
\nc{\tl}{\tilde}
\nc{\lra}{\longrightarrow}
\nc{\ra}{\rightarrow}
\nc{\into}{\hookrightarrow}
\nc{\onto}{\twoheadrightarrow}
\nc{\qdet}{{\rm qdet\,}}
\nc{\sdet}{{\rm sdet\,}}
\nc{\sign}{{\rm sign}}
\nc{\inv}{{\rm inv}}
\nc{\F}[2]{F'^{\rho}_{#1#2}}
\nc{\mysum}{\textstyle\sum}
\nc{\mysuml}{\textstyle\sum\limits}
\nc{\red}{\color{red}}
\nc{\blu}{\color{blue}}
\nc{\br}{\color{Brown}}
\nc{\gre}{\color{green!50!black}}
\renewcommand{\,}{\kern 0.1em} 
\begin{document}

\hfill DMUS-MP-16/10

\vspace{1.2cm}

\begin{center}
{\Large{\textbf{Representations of twisted Yangians of types B, C, D: I}}} 

\bigskip

Nicolas Guay$^{1a}$, Vidas Regelskis$^{23b}$, Curtis Wendlandt$^{1c}$

\end{center}

\bigskip

\begin{center} \small 
$^1$ University of Alberta, Department of Mathematics, CAB 632, Edmonton, AB T6G 2G1, Canada.\\ 
$^2$ University of Surrey, Department of Mathematics, Guildford, GU2 7XH,  UK. \\
$^3$ University of York, Department of Mathematics, York, YO10 5DD,  UK. \\
\smallskip
E-mail: $^a$\,nguay@ualberta.ca $^b$\,vidas.regelskis@york.ac.uk $^c$\,cwendlan@ualberta.ca
\end{center}

\patchcmd{\abstract}{\normalsize}{}{}{}

\begin{abstract} \small 
We initiate a theory of highest weight representations for twisted Yangians of types B,~C,~D and we classify the finite-dimensional irreducible representations of twisted Yangians associated to symmetric pairs of types CI, DIII and BCD0.
\end{abstract}

\makeatletter
\@setabstract
\makeatother

\medskip

\tableofcontents

\thispagestyle{empty}


\section{Introduction}

Yangians constitute one of the two important families of quantum groups of affine type along with the quantum affine algebras and are of interest to both mathematicians and physicists. Their representation theory has been studied quite a lot over the past thirty years and occasionally applications have been found to the study of other mathematical entities, for instance to isomonodromic deformations \cite{ChMa}, slices in affine Grassmannian \cite{KWWY}, classical centralizers \cite{Mo4,MO,Na} and the geometry of Schubert varieties \cite{RTV}. The category of finite-dimensional representations of Yangians is not semisimple, so understanding the irreducible ones does not provide a complete picture of the category, but it is nevertheless the most important first step that needs to be taken. Such an understanding comes first from classifying those modules in terms of certain polynomials (see \cite{Dr3}) and then from building realizations of those modules \cite{KN1,KN2,KN5,KNP}, studying their behaviour under tensor products \cite{Mo3,NaTa1,NaTa2}, etc. 

In theoretical physics, Yangians first appeared via their relation to rational solutions of the quantum Yang-Baxter equation \cite{Dr1}. It was later discovered that they are also relevant in the study of symmetries of certain integrable systems \cite{Be1,Be2}. In the case of integrable systems with boundaries, it turns out that certain subalgebras of Yangians called twisted Yangians are sometimes relevant to understanding their symmetries, see e.g.~\cite{DMS,Ma,MaRe1,MaRe2,MaSh}. In the mathematical literature, those that have been mostly studied are the twisted Yangians of type AI and AII (corresponding to the pairs $(\mfgl_N,\mfso_N)$ and $(\mfgl_N,\mfsp_N)$) introduced by G. Olshanskii in \cite{Ol}, further studied in \cite{MNO} and whose representation theory was the subject of a good number of papers by A. Molev, M. Nazarov \textit{et al.}, see for instance \cite{KN3,KN4,KN5,KNP,Mo1,Mo2,Mo4,Mo5,Na}. They are coideal subalgebras of the Yangian of $\mfgl_N$. In \cite{GR}, for each symmetric pair of type B, C or D (see Subsection \ref{sec:symm-pairs}), similar (extended) twisted Yangians denoted $Y(\mfg_N,\mcG)^{tw}$ (resp.~$X(\mfg_N,\mcG)^{tw}$ - see \cite{AMR}) were introduced as coideal subalgebras of the (extended) Yangians $Y(\mfg_N)$ (resp.~$X(\mfg_N)$) where $\mfg_N=\mfso_N$ or $\mfg_N=\mfsp_N$. (See Subsection \ref{sec:symm-pairs} for the definition of the  matrix $\mcG$.) Some of their structural properties were determined: in particular, it was shown that $Y(\mfg_N,\mcG)^{tw}$ can also be viewed as a quotient of $X(\mfg_N,\mcG)^{tw}$ which in turn can be defined using the reflection equation that first appeared in \cite{Ch} and \cite{Sk}. 

The goal of this paper is to begin the study of the irreducible finite-dimensional representations of the new twisted Yangians defined in \cite{GR}. Our main results provide the classification of these modules in the cases when the underlying symmetric pair is of type CI (i.e.~$(\mfsp_{2n},\mfgl_{n})$), DIII (i.e.~$(\mfso_{2n},\mfgl_{n})$) or BCD0 (i.e.~$(\mfg_N,\mfg_N)$): see Theorems \ref{CT:Thm.DIII-CI}, \ref{CT:Thm.C0-D0} and \ref{CT:Thm.B0}, which are applicable to the extended twisted Yangians $X(\mfg_N,\mcG)^{tw}$ and their respective Corollaries \ref{CT:Cor.DIII-CI.Yangians} and \ref{CT:Cor.C0-D0.Yangians} which are applicable to $Y(\mfg_N,\mcG)^{tw}$. The classification is stated in terms of certain polynomials as in the case of Yangians of simple Lie algebras \cite{Dr3}, twisted Yangians for symmetric pairs of type A \cite{Mo5} and twisted $q$-Yangians \cite{GM}. To obtain our classification theorems, we follow a well established approach (see \cite{Mo1,Mo2,Mo5,MR,AMR}). As a first step, we prove that any finite-dimensional irreducible module is of highest weight type, hence the quotient of a Verma module: see Theorem \ref{HWT:Thm.HWT}. The classification problem thus reduces to determining conditions on the highest weight which hold exactly when the corresponding simple module is finite-dimensional, but there is another question which must first be considered. In the context of the RTT-presentation which we are using, a Verma module can sometimes be trivial (see Proposition 5.4 in \cite{AMR}), so it is important to find conditions which are equivalent to non-triviality: such conditions are stated in Proposition \ref{HWT:refl.Prop.2} whose proof depends on Theorem 4.2 in \cite{MR} and relies on Proposition \ref{HWT:refl.Prop.1} which states that a certain subspace of any representation of $X(\mfg_N,\mcG)^{tw}$ can be made into a module over an extended reflection algebra (which is an extension of a twisted Yangian of type AIII). The same proposition, along with Theorem 4.6 in \cite{MR}, allows us to deduce certain conditions (see Proposition \ref{HWT:refl.Prop.3}) on the highest weight which are necessary for the irreducible quotient of a Verma module to be finite-dimensional. Similar results were already known for the twisted Yangians $Y^\pm(N)$ associated to the symmetric pairs $(\mfgl_N,\mfg_N)$, where $\mfg_N=\mfso_{N}$ or $\mfsp_{N}$, with the role of the reflection algebra played instead by the Yangian $Y(n)$ of the general linear algebra $\mfgl_n$ ($N=2n$ or $N=2n+1$): Propositions \ref{HWT:refl.Prop.1} and \ref{HWT:refl.Prop.3} should be compared to Proposition 4.2.8 of \cite{Mo5}. 

One of the key ingredients in the proof of the Classification Theorems \ref{CT:Thm.DIII-CI}, \ref{CT:Thm.C0-D0} and \ref{CT:Thm.B0} is provided by Lemma \ref{CT:Lemma.induction} and Proposition \ref{CT:Prop.induction}, which state that a certain subspace $V_+$ of a $X(\mfg_N,\mcG)^{tw}$-module $V$ inherits the structure of a module over a twisted Yangian associated to a symmetric pair $(\mfg',\mathfrak{k}')$, where $\mathrm{rank} \,\mfg' = \mathrm{rank} \, \mfg_N -1$. This allows us to argue by induction on the rank of the Lie algebra $\mfg_N$ in order to establish necessary conditions for the finite-dimensionality of $V$. As for the base case for the induction, it can be deduced from the known Classification Theorem for finite-dimensional irreducible modules over twisted Yangians of type AI or AII (see \cite{Mo1,Mo2,Mo5}) using the isomorphisms established in \cite{GRW}. To prove sufficiency of the conditions in the aforementioned theorems, we build a representation of $X(\mfg_N,\mcG)^{tw}$ with the proper highest weight from a finite-dimensional representation of $X(\mfg_N)$.

In the present manuscript, our results on highest weight modules of twisted Yangians in Section \ref{sec:HWT} are valid in all Cartan types listed in Section \ref{sec:defs} except type DI(b), however in Section \ref{sec:CT} we fully classify finite-dimensional irreducible modules of twisted Yangians of types BCD0, CI and DIII only. We hope to be able to extend our classification results to twisted Yangians of other types in a future publication. Type DI(b) is excluded from our considerations because the generators that should be used to define the notion of highest weight vectors are not exactly the same as in the other types. This is a consequence of the fact that the matrix $\mcG$ below (see Subsection \ref{sec:symm-pairs}) is not diagonal in type DI(b). Furthermore, the proof of similar classification theorems for twisted Yangians of type CII and DI(a) seems to be noticeably more complicated, hence has also been postponed.

Once the full classification of irreducible finite dimensional modules has been achieved, we hope to investigate connections of the twisted Yangians introduced in \cite{GR} to centralizer constructions as in \cite{Mo4,MO,Na} and possible analogs of the functor studied in \cite{KN3,KN4}. Furthermore, the methods of proof of this paper should be applicable to obtain classification theorems as in \cite{GM} for finite dimensional irreducible modules over twisted quantum loop algebras associated to symmetric pairs of types B, C, D similar to those studied in \cite{MRS}.

{\it Outline.} The paper is organized as follows. In Section~\ref{sec:defs} we introduce the necessary definitions and recall the basic facts about symmetric pairs of classical types, the reflection equation and its solutions. In Section~\ref{sec:Y-TY} we recall the definition of Yangians, twisted Yangians and reflection algebras. The main results of this paper are presented in the remaining three sections.  We initiate the highest weight theory for representations of twisted Yangians of types B, C, D, for the cases when the matrix $\mcG$ is diagonal in Section \ref{sec:HWT}. In the following one, we deduce the classification theorems for finite-dimensional irreducible representations of twisted Yangians of types CI, DIII and BCD0 when the rank is small. The last section contains the main classifications theorems of finite-dimensional irreducible modules for twisted Yangians of types CI, DIII and BCD0.

\smallskip

{\it Acknowledgements.} The first author acknowledges the support of the Natural Sciences and Engineering Research Council of Canada through its Discovery Grant program.  Part of this work was done during the second author's visits to the University of Alberta. V.R. thanks the University of Alberta for the hospitality, and also gratefully acknowledges the Engineering and Physical Sciences Research Council (EPSRC) of the United Kingdom for the Postdoctoral Fellowship under the grant EP/K031805/1. The third author was partially supported by a CGS-D scholarship from the Natural Sciences and Engineering Research Council of Canada. The authors would like to warmly thank the referee for a very thorough review of our paper.



\section{Definitions and preliminaries} \label{sec:defs}


\subsection{Notation} \label{sec:notation}

Let $n \in\N$ and set $N=2n$ or $N=2n+1$. We will always assume that $\mfg=\mfgl_N$, $\mfg=\mfsl_N$ or $\mfg=\mfg_N$, where $\mfg_N$ is the orthogonal Lie algebra $\mfso_N$ or the symplectic Lie algebra $\mfsp_N$ (only when $N=2n$). The algebra $\mfg_N$ can be realized as a Lie subalgebra of $\mfgl_N$ as follows. We label the rows and columns of matrices in $\mfgl_N$ by the indices $\{ -n, \ldots, -1,1,\ldots, n\}$ if $N=2n$ and by $\{-n, \ldots, -1,0,1,\ldots, n \}$ if $N=2n+1$. Set $\theta_{ij}=1$ in the orthogonal case and $\theta_{ij}=\mathrm{sign}(i)\cdot \mathrm{sign}(j)$ in the symplectic case for $-n \le i,j\le n$. (It is understood that, when $N$ is even, $i=0$ and $j=0$ are excluded.) 

Let $F_{ij}=E_{ij} - \theta_{ij} E_{-j,-i}$, where the $E_{ij}$ are the usual elementary matrices of $\mfgl_N$. These matrices satisfy the relations 
\eq{ \label{[F,F]}
[F_{ij},F_{kl}] = \delta_{jk}F_{il} - \delta_{il}F_{kj} + \theta_{ij}\delta_{j,-l}F_{k,-i} - \theta_{ij}\delta_{i,-k}F_{-j,l} , \qq
F_{ij} + \theta_{ij}F_{-j,-i}=0.
}
We may identify $\mfg_N$ with $\mathrm{span}_{\C} \{ F_{ij} \, : \, -n\le i,j\le n \}$ and we will use $\mfh_N=\mathrm{span}_{\C} \{ F_{ii} \, : \, 1 \le i \le n \}$ as Cartan subalgebra. 
Given a Lie algebra $\mathfrak{a}$ its universal enveloping algebra will be denoted by $\mfU\mathfrak{a}$.

Next, we need to introduce some operators: $P \in \End(\C^N \ot\C^N)$ will denote the permutation operator on $\C^N \ot \C^N$ and we set $Q=P^{t_1}=P^{t_2}$ where the transpose $t$ is given by $(E_{ij})^t = \theta_{ij} E_{-j,-i}$; explicitly,
\eq{ \label{PQ}
P=\sum_{-n \le i,j \le n} E_{ij} \ot E_{ji}, \qq Q = \sum_{-n \le i,j \le n} \theta_{ij} E_{ij} \ot E_{-i,-j} .
}
In the cases when both orthogonal ($\theta_{ij}=1$) and symplectic ($\theta_{ij} = \mathrm{sign}(i)\cdot \mathrm{sign}(j)$) transpositions are used simultaneously, we will denote the former by $t_+$ and the latter by $t_-$. Let $I$ denote the identity matrix. Then $P^2=I$, $PQ=QP=\pm Q$ and $Q^2=N Q$, which will be useful below. Here (and further in this paper) the upper sign corresponds to the orthogonal case and the lower sign to the symplectic case. In addition, we will also use the notation $(\pm)$ and $[\pm]$, which will be explained in appropriate places.

Let tensor products be defined over the field of complex numbers. For a matrix $X$ with entries $x_{ij}$ in an associative algebra $A$ we write
\eq{
X_s = \sum_{-n\le i,j\le n} \underbrace{ I \ot \cdots \ot I}_{s-1} \ot E_{ij} \ot I \ot \cdots \ot I \ot x_{ij} \in \End(\C^N)^{\ot k} \ot A .
}
Here $k \in \N_{\ge 2}$ and $1\le s\le k$; it will always be clear from the context what $k$ is.

Henceforth, we will employ the convention used above in labelling the rows and columns of $N \times N$ matrices except in the following circumstances. 
When discussing symmetric pairs of type AIII (see Subsection \ref{sec:symm-pairs}) and the corresponding twisted Yangians $\mcB(N,q)$ (see Subsection \ref{sec:TY-MR}), 
we shall follow the notation from \cite{MR} and instead label the rows and columns of $N\times N$ matrices using the ordered set $\{ 1,\ldots, N\}$.


\subsection{Symmetric pairs of classical Lie algebras}  \label{sec:symm-pairs}

The symmetric pairs we are interested in are of the form $(\mfg,\mfg^{\rho})$, where $\rho$ is an involution of $\mfg$ and $\mfg^\rho$ denotes the $\rho$-fixed subalgebra of $\mfg$. The involution $\rho$ is given by $\rho(X) = \mcG X \mcG^{-1}$ for a particular matrix $\mcG$, except that $\rho(X) = - X^t$ for types AI and AII. We will make well-suited choices of $\mcG$ for the purposes of the present paper, which agree with the ones in \cite{MNO}, \cite{MR}, and to the ones in \cite{GR} up to conjugation.  We also introduce a further refinement of Cartan's classification of symmetric spaces that reflects the explicit form of $\mcG$ listed below and differences in the study of the representation theory of the extended twisted Yangians. 

Let $p$ and $q$ be such that $p\ge q>0$ and $p+q=N$. In the list below, for each Cartan type, we indicate the corresponding symmetric pair and give our choice of matrix $\mcG$:

\begin{itemize} [itemsep=0.75ex]

\item AI\hspace{.495cm}: $(\mfg,\mfg^\rho)=(\mfsl_N,\mfso_N)$ and $t=t_+$.

\item AII\hspace{.37cm}: $(\mfg,\mfg^\rho)=(\mfsl_N,\mfsp_N)$ and $t=t_-$.

\item AIII\hspace{.23cm}: $(\mfg,\mfg^\rho)=(\mfsl_N,(\mfgl_p \op \mfgl_{q})\cap \mfsl_N)$ and $\mcG=\sum_{i=1}^{p} E_{ii} - \sum_{i=p+1}^N E_{ii}$. 

\item CI\hspace{.507cm}: $N=2n$, $(\mfg,\mfg^\rho)=(\mfsp_N,\mfgl_{\frac{N}{2}})$ and $\mcG=\sum_{i=1}^{\frac{N}{2}} (E_{ii} - E_{-i,-i})$. 

\item DIII\hspace{.22cm}: $N=2n$, $(\mfg,\mfg^\rho)=(\mfso_N,\mfgl_{\frac{N}{2}})$ and $\mcG=\sum_{i=1}^{\frac{N}{2}} (E_{ii} - E_{-i,-i})$.

\item BI(a)\,: $N=2n+1$, $(\mfg,\mfg^\rho)=(\mfso_N,\mfso_p\op\mfso_q)$ such that $p$ is odd and $q$ is even. The matrix $\mcG$ is
\eq{
\mcG=\sum_{i=-\frac{p-1}{2}}^{\frac{p-1}{2}} E_{ii}
 - \sum_{i=\frac{p+1}{2}}^{\frac{N-1}{2}} (E_{ii} + E_{-i,-i}). \label{G:BIb}
}
In particular, the subalgebra of $\mfg^{\rho}$ spanned by $F_{ij}$ with $-\frac{p-1}{2} \le i,j \le \frac{p-1}{2}$ is isomorphic to $\mfso_p$ and the subalgebra spanned by $F_{ij}$ with $|i|,|j| \ge \frac{p+1}{2}$ is isomorphic to $\mfso_q$. 

\item BI(b)\,: $N=2n+1$, $(\mfg,\mfg^\rho)=(\mfso_N,\mfso_p\op\mfso_q)$ such that $p$ is even and $q$ is odd. The matrix $\mcG$ is
\eq{
\mcG=-\sum_{i=-\frac{q-1}{2}}^{\frac{q-1}{2}} E_{ii}
 + \sum_{i=\frac{q+1}{2}}^{\frac{N-1}{2}} (E_{ii} + E_{-i,-i}). \label{G:BIa}
}
Now it is $\mfso_q$ spanned by $F_{ij}$ with $-\frac{q-1}{2} \le i,j \le \frac{q-1}{2}$ and $\mfso_p$ is spanned by $F_{ij}$ with $|i|,|j| \ge \frac{q+1}{2}$. 

\item CII\hspace{.38cm}: $N=2n$, $(\mfg,\mfg^\rho)=(\mfsp_N,\mfsp_p\op\mfsp_q)$ such that both $p$ and $q$ are even. The matrix $\mcG$ is
\eq{
\mcG= \sum_{i=1}^{\frac{p}{2}} (E_{ii} + E_{-i,-i}) - \sum_{i=\frac{p}{2}+1}^{\frac{N}{2}} (E_{ii} + E_{-i,-i}) . \label{G:CII}
}
In this case the subalgebra of $\mfg^{\rho}$ spanned by $F_{ij}$ with $-\frac{p}{2} \le i,j \le \frac{p}{2}$ is isomorphic to $\mfsp_p$ and the subalgebra of $\mfg^{\rho}$ spanned by $F_{ij}$ with $|i|,|j| > \frac{p}{2}$ is isomorphic to $\mfsp_q$. 

\item DI(a)\,: $N=2n$, $(\mfg,\mfg^\rho)=(\mfso_N,\mfso_p\op\mfso_q)$ such that both $p$ and $q$ are even. We choose $\mcG$ to be the same as for CII case, i.e.\@ given by \eqref{G:CII}. Hence the subalgebras $\mfso_p$ and $\mfso_q$ of $\mfg^\rho$ are defined analogously.

\item DI(b)\,: $N=2n$, $(\mfg,\mfg^\rho)=(\mfso_N,\mfso_p\op\mfso_q)$ such that both $p$ and $q$ are odd. We choose $\mcG$ to be
\eq{
\mcG=E_{1,-1} + E_{-1,1} + \sum_{i=2}^{\frac{p+1}{2}} (E_{ii} + E_{-i,-i}) - \sum_{i=\frac{p+3}{2}}^{\frac{N}{2} } (E_{ii} + E_{-i,-i}). \label{G:DIb}
} 

\end{itemize}

Type DI(b) is exceptional as it is the only case when $\mcG$ can not be chosen to be diagonal. 

\smallskip

In addition, we will consider trivial symmetric pairs:
\begin{itemize}
\item ABCD0: $(\mfg,\mfg^{\rho}) = (\mfg,\mfg)$ and $\mcG=I$. 
\end{itemize}
These can be thought of as limiting cases of types AIII, CII and BDI when $p=N$ and $q=0$. For ease of notation, we will further refer to types CI, DIII, CII, BDI and BCD0 as types B-C-D.


\subsection{R-matrices, K-matrices and the reflection equation} \label{sec:R-K-mats}

The matrices $R(u)\in\End(\C^N\ot \C^N)[[u^{-1}]]$ that we will need are defined by \cite{MNO,AACFR}:
\eq{
a) \qu R(u) = I - \frac{P}{u} \qu\text{for}\qu \mfgl_N, \qq b)\qu R(u) = I - \frac{P}{u} + \frac{Q}{u-\ka} \qu\text{for}\qu \mfg_N,  \label{R(u)}
}
where $\ka=N/2\mp 1$ for $\mfg=\mfg_N$. These $R$-matrices are rational solutions of the quantum Yang-Baxter equation with spectral parameter,
\eq{ \label{YBE}
R_{12}(u)\, R_{13}(u+v)\, R_{23}(v) = R_{23}(v)\, R_{13}(u+v)\, R_{12}(u) ,
}
and satisfy the identity $R(u)\,R(-u) = (1-u^{-2})\,I$. Set $R^t(u) = R^{t_1}(u)= R^{t_2}(u)$; here, $t_1$ is the transpose with respect to the first copy of $\End(\C^N)$ inside $\End(\C^N \ot \C^N) \cong \End(\C^N) \ot \End(\C^N)$; $t_2$ is defined similarly.

\begin{defn} \label{D:K-matrices}
We recall the following matrices $K(u)\in \End(\C^N)(u)$ from \cite{GR,MR,MNO}:
\begin{itemize} [itemsep=0.5ex]
\item $K(u)=I$ for symmetric pairs ABCD0, AI and AII;
\item $K(u)=\mcG$ for symmetric pairs AIII, CI, DIII and DI, CII when $p=q$;
\item $K(u) = (I-c\,u\, \mcG)(1-c\,u)^{-1}$ with $c=\tfrac{4}{p-q}$ for symmetric pairs BDI, CII when $p>q$. 
\end{itemize}
When $K(u)$ is a constant matrix, we will say that it is `of the first kind'; if it depends on $u$, it will be said to be `of the second kind'. 
\end{defn}

The $K(u)$ of types AI and AII is a (scalar) solution of the twisted reflection equation 
\eq{ \label{REt}
R(u-v)\,K_1(u)\,R^t(-u-v)\,K_2(v) = K_2(v)\,R^t(-u-v)\,K_1(u)\,R(u-v) ,
}
where $t=t_{\pm}$  and $R(u)$ is given by a) in \eqref{R(u)}. In all other cases, $K(u)$ are solutions of the reflection equation 
\eq{ \label{RE}
R(u-v)\,K_1(u)\,R(u+v)\,K_2(v) = K_2(v)\,R(u+v)\,K_1(u)\,R(u-v) ,
}
with $R(u)$ given by a) in \eqref{R(u)} for type AIII, or by b) in \eqref{R(u)} for all the B-C-D types. 

\smallskip

Matrix solutions of these reflection equations are often called $K$-matrices in the literature, which is why we are using the notation $K(u)$ in this subsection; however, when working with twisted Yangians in the rest of this paper, we will write $\mcG(u)$ instead of $K(u)$ to conform with the notation in \cite{GR} and \cite{Mo5}. The matrix $K(u)$ defined above satisfies the unitary relation $K(u) K(-u) = I$.  The next lemma is automatic when $K(u)$ is constant; for the other cases, it follows using the same argument as in the proof of Lemma 4.3 in \cite{GR} - see below (4.3.1) in \textit{loc.~cit.} Here we will use the notation $(\pm)$ where the lower sign distinguishes types CI and DIII from the other cases.

\begin{lemma} \label{L:K-RE-Sym}
Except in type A, the matrix $K(u)$ satisfies the following identities:
\eqa{
K^t(u) &= (\pm)\,K(\ka-u) \pm \frac{K(u)-K(\ka-u)}{2u-\ka} + \frac{\Tr(K(u))\,K(\ka-u) - \Tr(K(u))\cdot I }{2u-2\ka}  \label{K-Sym} \\
       &= p(u) K(\kappa-u) \pm \frac{K(u)}{2u-\kappa} - \frac{\Tr(K(u)) \cdot I}{2u-2\kappa} \nonumber, 
}
where
\eq{
p(u)=(\pm)\,1\mp \dfrac{1}{2u-\ka}+ \dfrac{\Tr(K(u))}{2u-2\ka}.  \label{p(u)}
}
Moreover, $p(u)$ satisfies
\eq{
p(u)\,p(\ka-u) = 1 - \frac{1}{(2u-\ka)^{2}}. \label{p(u)p(k-u)}
}
\end{lemma}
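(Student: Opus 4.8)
The plan is to verify the two displayed equalities in \eqref{K-Sym} and the two properties \eqref{p(u)p(k-u)} directly from the explicit form of $K(u)$ given in Definition \ref{D:K-matrices}, treating the constant ("first kind") and $u$-dependent ("second kind") cases separately, and then reconciling the second equality with the definition of $p(u)$ in \eqref{p(u)}. First I would record that the second equality in \eqref{K-Sym} is purely formal given the first: subtract the two right-hand sides and collect the coefficient of $K(\ka-u)$, which is $(\pm)1 \pm \tfrac{1}{2u-\ka} - \tfrac{\Tr(K(u))}{2u-2\ka}$; by \eqref{p(u)} this coefficient is exactly $p(u)$ once one also moves the $\mp K(\ka-u)/(2u-\ka)$ and $+\Tr(K(u))K(\ka-u)/(2u-2\ka)$ terms of the first line to the $K(\ka-u)$ bucket. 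So the content is the \emph{first} equality together with \eqref{p(u)p(k-u)}.

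For the first equality I would split into cases. In type BCD0 we have $K(u)=I$, so $\Tr(K(u))=N=2\ka\pm 2$; the right-hand side of the first line becomes $(\pm)I \pm \tfrac{I-I}{2u-\ka} + \tfrac{N\cdot I - N\cdot I}{2u-2\ka} = (\pm)I$, and $K^t(u)=I^{t}=I$ in the orthogonal case and $=-I$ (recall the upper sign is orthogonal, lower symplectic) in the symplectic case, matching $(\pm)I$ — but here one must be careful because in types CI, DIII the relevant sign is the \emph{lower} one by the convention stated before the lemma, whereas $\mfg_N$ is symplectic in CI and orthogonal in DIII; I would check this sign bookkeeping explicitly against \eqref{PQ} and the definition $(E_{ij})^t=\theta_{ij}E_{-j,-i}$. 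For types AIII, CI, DIII (and DI, CII with $p=q$) we have $K(u)=\mcG$, a diagonal $\pm 1$ matrix with $\mcG^{t_\pm}=\mcG$ (orthogonal transpose fixes the diagonal) or a sign twist thereof, and $\Tr(\mcG)=p-q=0$ in CI, DIII, so again the two correction terms in the first line telescope to zero or combine cleanly; I would just compute $\mcG^t$ from $(E_{ii})^t = \theta_{ii}E_{-i,-i}$ and verify the identity entrywise. For the second-kind cases BDI, CII with $p>q$, $K(u)=(I-cu\mcG)(1-cu)^{-1}$ with $c=4/(p-q)$ and $\Tr(K(u)) = (N-cu(p-q))/(1-cu) = (N-4u)/(1-cu)$; here I would substitute this rational expression into the right-hand side of the first line, use $\mcG^t = \mcG$ (it is diagonal with entries $\pm 1$), compute $K^t(u)$ on the left, put everything over the common denominator $(1-cu)(2u-\ka)(2u-2\ka)$, and check the polynomial identity in $u$. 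As the excerpt notes, this last case is exactly the computation carried out below (4.3.1) in the proof of Lemma 4.3 of \cite{GR}, so I would invoke that argument rather than reproduce it.

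Finally, \eqref{p(u)p(k-u)} I would obtain by a short direct computation from \eqref{p(u)}: write $p(u) = (\pm)1 \mp \tfrac{1}{2u-\ka} + \tfrac{\Tr K(u)}{2u-2\ka}$ and $p(\ka-u) = (\pm)1 \mp \tfrac{1}{\ka-2u} + \tfrac{\Tr K(\ka-u)}{\ka-2u-\ka} = (\pm)1 \pm \tfrac{1}{2u-\ka} - \tfrac{\Tr K(\ka-u)}{2u}$, using $2(\ka-u)-\ka = \ka-2u = -(2u-\ka)$ and $2(\ka-u)-2\ka = -2u$. The cross terms in the product then pair up; the $(\pm)1$ squared gives $1$, the two $\mp\tfrac{1}{2u-\ka}(\pm 1)$ terms cancel against each other up to the $-\tfrac{1}{(2u-\ka)^2}$, and the trace-dependent terms must cancel — which they do precisely because applying the first equality of \eqref{K-Sym} and taking traces yields a relation between $\Tr K(u)$ and $\Tr K(\ka-u)$ (equivalently, one can use that $p(u)$ is the scalar by which $K(\ka-u)$ gets multiplied, combined with the unitarity $K(u)K(-u)=I$ and the reflection-equation symmetry). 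The main obstacle I anticipate is not any single computation but the consistent tracking of the three overlapping sign conventions — the orthogonal/symplectic upper/lower sign inherited from \eqref{R(u)}–\eqref{PQ}, the separate $(\pm)$ of the lemma that singles out CI and DIII, and the transpose $t_\pm$ — across all the cases; getting \eqref{p(u)p(k-u)} to come out with the correct sign on the $1/(2u-\ka)^2$ term is the natural place where a sign error would surface, so I would use that identity as a consistency check on the case analysis.
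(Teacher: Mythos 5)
Your overall strategy coincides with what the paper actually does, which is terse: for $K(u)$ constant the identity is a case-by-case direct verification (the paper calls it ``automatic''), and for the second-kind matrices the paper simply cites the computation below (4.3.1) in the proof of Lemma 4.3 of \cite{GR}, exactly as you propose. Your preliminary observation that the second displayed equality in \eqref{K-Sym} follows purely formally from the first by collecting all terms proportional to $K(\ka-u)$ and comparing with the definition \eqref{p(u)} is also correct, though the intermediate coefficient you wrote has two sign slips; the bottom line is right.

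Two things to tighten. First, under the paper's convention $(E_{ij})^t = \theta_{ij}E_{-j,-i}$ one has $\theta_{ii}=1$ in both the orthogonal and the symplectic case, hence $I^t = I$ regardless of the sign of the transpose; your claim that $I^t = -I$ in the symplectic case is incorrect. For BCD0 the identity then reads $I = (\pm)I$ with $(\pm)=+1$ (BCD0 is neither CI nor DIII), so the check succeeds, just not for the reason you gave; the sign $(\pm)$ is keyed to the CI/DIII distinction and not to orthogonal versus symplectic, and you already flag this distinction as a hazard. For CI and DIII one computes instead $\mcG^t = -\mcG$ and $\Tr\mcG = 0$, so the right side collapses to $(\pm)\mcG$ with the lower sign, as required; for CII and DI(a) with $p=q$ one gets $\mcG^t = +\mcG$ and the upper sign. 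Second, for \eqref{p(u)p(k-u)} the route you gesture at (a trace relation extracted from \eqref{K-Sym}) can be made to work, but a cleaner argument avoids trace bookkeeping altogether: apply the second line of \eqref{K-Sym} at $u$ and at $\ka-u$, take the transpose of the first and substitute the second; the $K(\ka-u)$ terms cancel identically, and since $K(u)$ and $I$ are linearly independent as matrix-valued series in every non-type-A case except BCD0, comparing the coefficient of $K(u)$ gives $p(u)\,p(\ka-u) + (2u-\ka)^{-2} = 1$ at once, while BCD0 is a separate one-line scalar verification which you already carry out.
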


Next we give an additional solution $K(u;a)$ to \eqref{RE} parametrized by a free parameter $a\in\C$ which will be used in Section \ref{sec:CT} to construct one-dimensional representations of twisted Yangians of types CI and DIII.

\begin{lemma} \label{L:K-1dim}
Let $a\in\C$ and let $\mcG$ be of type CI or DIII. Then the matrix
\eq{
K(u;a) = \mcG + a\,u^{-1}I  \label{K-1dim:C1D3} 
}
is a one-parameter solution of \eqref{RE}. Moreover, it satisfies the symmetry relation 
\eq{
K^t(u;a) = - \,K(\ka-u;a) \pm \frac{K(u;a)-K(\ka-u;a)}{2u-\ka} - \frac{ \Tr(K(u;a))\cdot I }{2u-2\ka}. \label{K-1dim:Sym}
}
\end{lemma}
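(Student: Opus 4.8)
The plan is to verify both assertions by direct substitution, exploiting the fact that $K(u;a)$ differs from the constant matrix $\mcG$ only by the scalar term $a\,u^{-1}I$. First I would recall that $\mcG$ itself is a constant solution of \eqref{RE} in types CI and DIII (Definition \ref{D:K-matrices}), so the pair $(K_1,K_2)=(\mcG,\mcG)$ already satisfies the reflection equation. Writing $K(u;a)=\mcG+a\,u^{-1}I$ and expanding both sides of \eqref{RE}, the terms with no factor of $a$ cancel by this known identity, and the term quadratic in $a$ cancels trivially since it reduces to $R(u-v)R(u+v)=R(u+v)R(u-v)$ multiplied by the scalar $a^2(uv)^{-1}$, which holds because any two $R$-matrices of the form \eqref{R(u)} commute (they are built from $I,P,Q$, and $[P,Q]=0$, $PQ=QP=\pm Q$). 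So the whole content is the vanishing of the terms linear in $a$. After dividing out the scalar $a$, what must be checked is
\eq{
\tfrac1u\,R(u-v)\,R(u+v)\,\mcG_2 + \tfrac1v\,R(u-v)\,\mcG_1\,R(u+v) = \tfrac1v\,\mcG_2\,R(u+v)\,R(u-v) + \tfrac1u\,R(u+v)\,\mcG_1\,R(u-v),
}
which rearranges, using $[R(u-v),R(u+v)]=0$ and $[\mcG_1,\mcG_2]=0$, into the statement that $R(u-v)\,\mcG_1\,R(u+v)$ and $R(u+v)\,\mcG_1\,R(u-v)$ differ from each other, and similarly for $\mcG_2$, in a way that is absorbed by the two distinct scalar prefactors; concretely it amounts to the identity $R(u-v)\mcG_1 R(u+v)\mcG_2=\mcG_2 R(u+v)\mcG_1 R(u-v)$ for the constant matrix $\mcG$, i.e.\@ again the reflection equation for $\mcG$. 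Thus the linear-in-$a$ terms also vanish, and \eqref{RE} holds for $K(u;a)$.

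For the symmetry relation \eqref{K-1dim:Sym}, I would again use linearity in $a$ together with Lemma \ref{L:K-RE-Sym}. Write $K(u;a)=\mcG+a\,u^{-1}I$. The matrix $\mcG$ is traceless in types CI and DIII (it has $n$ entries $+1$ and $n$ entries $-1$), so $\Tr(K(u;a))=a\,u^{-1}\cdot N = 2a\,n\,u^{-1}$, and in fact we only need $\Tr(K(u;a))=N a u^{-1}$. Applying \eqref{K-Sym} with $K(u)=\mcG$ (constant, traceless): since $\mcG$ is traceless the trace terms drop and the relation for the constant part reads $\mcG^t = -\mcG \pm \dfrac{\mcG-\mcG}{2u-\ka}=-\mcG$, which is indeed the defining transpose-antisymmetry of $\mcG$ under $t=t_\mp$ in types CI/DIII (one checks $(E_{ii}-E_{-i,-i})^t = \theta_{i,-i}E_{-i,-i}\cdot(\text{sign stuff}) = -(E_{ii}-E_{-i,-i})$ for the relevant $\theta$). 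For the $a\,u^{-1}I$ part, $(a u^{-1}I)^t = a u^{-1}I$ since $I^t=I$, and I would check that the right-hand side of \eqref{K-1dim:Sym} applied to $a u^{-1}I$ reproduces $a u^{-1}I$:
\eq{
-\frac{a}{\ka-u} \pm \frac{a u^{-1}-a(\ka-u)^{-1}}{2u-\ka} - \frac{N a u^{-1}}{2u-2\ka} \overset{?}{=} \frac{a}{u}.
}
This is a rational-function identity in $u$ (with $\ka=N/2\mp1$, and the sign $\pm$ matching orthogonal/symplectic) that I would clear denominators and verify; the combination $\dfrac{x-y}{2u-\ka}$ with $x=u^{-1},y=(\ka-u)^{-1}$ simplifies to $-\dfrac{1}{u(\ka-u)}$, and the remaining terms collapse after using $N=2\ka\pm2$.

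The main obstacle is bookkeeping rather than conceptual: getting the signs right in the transpose conventions $t_\pm$ for types CI (symplectic, $t_-$) versus DIII (orthogonal, $t_+$) simultaneously, and making sure the $(\pm)$ and $\pm$ symbols in \eqref{K-1dim:Sym} are consistently the "lower sign" convention announced before Lemma \ref{L:K-RE-Sym}. I expect the scalar rational-function check above to be the one place where an arithmetic slip is easy, so I would do it carefully with $\ka$ left symbolic and only substitute $\ka = N/2\mp1$ at the end. Everything else follows from linearity in $a$ and the already-established facts that $\mcG$ solves \eqref{RE}, that $\mcG$ is traceless and $t$-antisymmetric in these two types, and that the relevant $R$-matrices commute.
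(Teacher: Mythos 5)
Your treatment of the symmetry relation \eqref{K-1dim:Sym} is correct and matches the paper's: linearity in $a$, tracelessness and $t$-antisymmetry of $\mcG$, and the scalar rational-function identity for the $au^{-1}I$ part, which you simplify correctly using $N/2 = \ka \pm 1$. The $a^0$ and $a^2$ terms in the reflection equation also cancel for the reasons you give.

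However, there is a genuine gap in the linear-in-$a$ part of the reflection equation. After dividing by $a$, the identity to prove is
\begin{equation*}
v^{-1}R(u-v)\,\mcG_1\,R(u+v) + u^{-1}R(u-v)\,R(u+v)\,\mcG_2 = u^{-1}\mcG_2\,R(u+v)\,R(u-v) + v^{-1}R(u+v)\,\mcG_1\,R(u-v)
\end{equation*}
(note that in your displayed equation you swapped the $u^{-1}$ and $v^{-1}$ prefactors on the right-hand side, but that is a minor slip). You assert that this "amounts to the identity $R(u-v)\mcG_1 R(u+v)\mcG_2 = \mcG_2 R(u+v)\mcG_1 R(u-v)$, i.e.\@ again the reflection equation for $\mcG$." That reduction is not valid. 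Each term in the linear-in-$a$ identity contains exactly one factor of $\mcG$, whereas the constant reflection equation has two; moreover the two sides carry distinct scalar prefactors $u^{-1}$ and $v^{-1}$ attached to differently-positioned $\mcG$'s, so there is no algebraic rearrangement that converts one into the other. Writing the identity as $v^{-1}\left(R(u-v)\mcG_1 R(u+v) - R(u+v)\mcG_1 R(u-v)\right) = u^{-1}\left[\mcG_2,\,R(u+v)R(u-v)\right]$ makes this plain: it is a new identity whose verification is the nontrivial part of the lemma. Indeed, if one replaces $R$ by the type-A $R$-matrix $I - P/u$ the identity does hold by the short computation using $[\mcG_1,P]=-[\mcG_2,P]$, but for the B-C-D $R$-matrix the extra $Q/(u-\ka)$ term contributes a large number of cross terms. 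The paper isolates the rational-$R$ part as a separate cancellation and then verifies the remaining $Q$-dependent terms by comparing coefficients of $u^iv^j$ and using the type-specific identities $Q^2=NQ$, $PQ=QP=\pm Q$, $\mcG^t=-\mcG$, $P\mcG_1=\mcG_2 P$, and $Q\mcG_1 = -Q\mcG_2$. None of this can be bypassed by invoking the constant reflection equation for $\mcG$; that only disposes of the $a^0$ terms. Your proof would need to actually carry out (or otherwise justify) this $Q$-dependent cancellation for the linear term.
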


\begin{proof}
We begin by showing that $\mcG+a\,u^{-1} I$ satisfies the reflection equation \eqref{RE}, that is
$$
 R_{12}(u-v)(\mcG_1+a\,u^{-1}I_1)R_{12}(u+v)(\mcG_2+a\,v^{-1}I_2)=(\mcG_2+a\,v^{-1}I_2)R_{12}(u+v)(\mcG_1+a\,u^{-1}I_1)R_{12}(u-v). 
$$
The matrices $\mcG$ and $I$ are themselves solutions of \eqref{RE}, thus it suffices to show that 
\spl{
& v^{-1}R_{12}(u-v)\mcG_1R_{12}(u+v)+u^{-1}R_{12}(u-v)R_{12}(u+v)\mcG_2 \\
& \qq =u^{-1}\mcG_2R_{12}(u+v)R_{12}(u-v)+v^{-1}R_{12}(u+v)\mcG_1R_{12}(u-v).\label{CT:Lemma.tracereps.RE.2}
}
Notice first that 
\spl{
& \left(1-\frac{P}{u-v}\right)\mcG_1\left(1-\frac{P}{u+v}\right)v^{-1}+\left(1-\frac{P}{u-v}\right)\left(1-\frac{P}{u+v}\right)\mcG_2u^{-1} 
\\
& \qq = \mcG_2\left(1-\frac{P}{u+v}\right)\left(1-\frac{P}{u-v}\right)u^{-1}+\left(1-\frac{P}{u+v}\right)\mcG_1\left(1-\frac{P}{u-v}\right)v^{-1}.\label{CT:Lemma.tracereps.RE.3}
}
This is verified by expanding both sides. After subtracting \eqref{CT:Lemma.tracereps.RE.3},
the left hand side of \eqref{CT:Lemma.tracereps.RE.2} becomes
\eqn{
&\frac{1}{u+v-\ka}\left(\frac{\mcG_1Q}{v}-\frac{P\mcG_1Q}{(u-v)v}+\frac{Q\mcG_2}{u}-\frac{PQ\mcG_2}{(u-v)u} \right) 
\\
&\qq + \frac{1}{u-v-\ka}\left(\frac{Q\mcG_1}{v}-\frac{Q\mcG_1P}{v(u+v)}+\frac{Q\mcG_1Q}{(u+v-\ka)v}+\frac{Q\mcG_2}{u}-\frac{QP\mcG_2}{(u+v)u}+\frac{Q^2\mcG_2}{(u+v-\ka)u}\right) ,
}
while the right hand side becomes
\eqn{
& \frac{1}{u-v-\ka}\left(\frac{\mcG_2Q}{u}-\frac{\mcG_2PQ}{u(u+v)}+\frac{\mcG_1Q}{v}-\frac{P\mcG_1Q}{(u+v)v} \right) 
\\
&\qq + \frac{1}{u+v-\ka}\left(\frac{\mcG_2Q}{u}-\frac{\mcG_2QP}{u(u-v)}+\frac{\mcG_2Q^2}{u(u-v-\ka)}+\frac{Q\mcG_1}{v}-\frac{Q\mcG_1P}{(u-v)v}+\frac{Q\mcG_1Q}{v(u-v-\ka)}\right).
}
Multiplying both sides by $(u+v-\ka)(u-v-\ka)(u^2-v^2)uv$ and equating the coefficients of $u^iv^j$, we see that it suffices to establish the following relations:
\eqa{
\ka Q\mcG_2+QP\mcG_2-Q^2\mcG_2+\ka Q\mcG_2+PQ\mcG_2&=\ka\mcG_2Q+\mcG_2QP-\mcG_2Q^2+\ka\mcG_2Q+\mcG_2PQ , \label{CT:Lemma.tracereps.RE.4}
\\
Q\mcG_1+Q\mcG_2-\mcG_1Q+Q\mcG_2&=\mcG_2Q-Q\mcG_1+\mcG_2Q+\mcG_1Q , \label{CT:Lemma.tracereps.RE.5}
\\
-Q\mcG_1P+QP\mcG_2+P\mcG_1Q+PQ\mcG_2&=\mcG_2QP+Q\mcG_1P+\mcG_2PQ-P\mcG_1Q. \label{CT:Lemma.tracereps.RE.6}
}
Since $Q^2=NQ$ and $PQ=QP=\pm Q$, \eqref{CT:Lemma.tracereps.RE.4} is equivalent to $(2\ka-N)(Q\mcG_2-\mcG_2Q)=\mp2(Q\mcG_2-\mcG_2Q)$, and since $\ka=N/2\mp1$, this equality is indeed satisfied. 
As $P\mcG_1=\mcG_2P$ and $\mcG^t=-\mcG$, we have $-\mcG_1Q=(P\mcG_1)^{t_1}=(\mcG_2P)^{t_1}=\mcG_2Q$. Similarly $Q\mcG_1=-Q\mcG_2$, from which \eqref{CT:Lemma.tracereps.RE.5} follows.
Relation \eqref{CT:Lemma.tracereps.RE.6} holds since $P\mcG_2=\mcG_1P$, $P\mcG_1=\mcG_2P$ and $PQ=QP$. 
This completes the proof that $K(u;a)=\mcG+a\,u^{-1}I$ is a solution of \eqref{RE}.

\smallskip

By Lemma \ref{L:K-RE-Sym} we already know that \eqref{K-1dim:Sym} holds if $K(u;a)$ is replaced with $\mcG$. Thus it suffices to show that it also does if $K(u;a)$ is replaced with $u^{-1}I$, which can be checked directly.
\end{proof}


\section{Yangians, twisted Yangians and reflection algebras} \label{sec:Y-TY}


\subsection{Yangians and extended Yangians of types B-C-D} \label{sec:Y-BCD}

We briefly recall the algebraic structure of the Yangian $Y(\mfg_N)$ and the extended Yangian $X(\mfg_N)$ of the Lie algebra $\mfg_N$. For complete details and proofs of statements  presented below, please consult \cite{AACFR,AMR}. 

Fix $N\in\Z_{\ge 2}$ if $\mfg=\mfsp_N$ or $N\in\Z_{\ge 3}$ if $\mfg=\mfso_N$. We introduce elements $t_{ij}^{(r)}$ with $-n \le i,j \le n$ and $r\in\Z_{\ge 0}$ such that $t^{(0)}_{ij}= \del_{ij}$. Combining these into formal power series $t_{ij}(u) = \sum_{r\ge 0} t_{ij}^{(r)} u^{-r}$, we can then form the generating matrix $T(u)= \sum_{-n\le i,j\le n} E_{ij} \ot t_{ij}(u)$. 
Let $R(u)$ be given by b) of \eqref{R(u)}. The same $R$-matrix will be used in Subsections \ref{sec:TY-BCD} and \ref{sec:RA-BCD}. 

\begin{defn}
The extended Yangian $X(\mfg_N)$ is the unital associative $\C$-algebra generated by elements $t_{ij}^{(r)}$ with $-n \le i,j \le n$ and $r\in\Z_{\ge 0}$ satisfying the relation
\eq{ \label{RTT}
R(u-v)\,T_1(u)\,T_2(v) = T_2(v)\,T_1(u)\,R(u-v) ,
}
or equivalently
\begin{align*}
[\, t_{ij}(u),t_{kl}(v)]&=\frac{1}{u-v}\Big(t_{kj}(u)\, t_{il}(v)-t_{kj}(v)\, t_{il}(u)\Big)\\
                      {}&-\frac{1}{u-v-\ka}\sum_{-n \le a \le n} \Big(\delta_{k,-i}\,\theta_{ia}\, t_{aj}(u)\, t_{-a,l}(v)-\delta_{l,-j}\,\theta_{ja}\, t_{k,-a}(v)\, t_{ia}(u)\Big). 
\end{align*}
The Hopf algebra structure of $X(\mfg_N)$ is given by 
\eq{ \label{Hopf:X}
\Delta: t_{ij}(u)\mapsto \sum_{-n \le a \le n} t_{ia}(u)\ot t_{aj}(u), \qq S: T(u)\mapsto T(u)^{-1},\qquad \epsilon: T(u)\mapsto I. 
}
\end{defn}

We will call the generating matrix of the extended Yangian $X(\mfg_N)$ the $T$-matrix. We will use the same terminology for the Yangian $Y(N)$, {\it cf.} Subsection \ref{sec:Y-A}. 

\smallskip

Let $A\in GL(N)$ be such that $AA^t=I$ and let $a\in \C$ be a constant. Moreover, let $f(u)\in 1 + u^{-1}\C[[u^{-1}]]$ be an arbitrary formal power series with constant term equal to 1. 
\begin{prop} \label{P:Aut(X)}
The maps  
\begin{equation} 
\varkappa_a : T(u) \mapsto T(u+a), \qquad \mu_f : T(u) \mapsto f(u)\, T(u), \qquad \al_A  : T(u) \mapsto A\,T(u)\, A^t, \label{al_A}
\end{equation}
are automorphisms of $X(\mfg_N)$. 
\end{prop}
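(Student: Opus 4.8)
The plan is to verify each of the three maps in \eqref{al_A} separately by checking that it preserves the defining relation \eqref{RTT}, then observe invertibility is immediate in each case, so each is an algebra automorphism. Throughout, I would work in $\End(\C^N)^{\ot 2}\ot X(\mfg_N)$ and exploit the fact that $R(u-v)$ acts only on the first two tensor factors, hence commutes with everything in the third factor.

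\textbf{Shift automorphism $\varkappa_a$.} This is the easiest: applying $\varkappa_a$ to both sides of \eqref{RTT} turns $T_1(u)T_2(v)$ into $T_1(u+a)T_2(v+a)$ and similarly on the right, while $R(u-v)$ is unchanged since $(u+a)-(v+a)=u-v$. Thus the relation with $u,v$ replaced by $u+a,v+a$ is exactly \eqref{RTT} at shifted parameters, so the images satisfy the same relations. One should note that $t^{(0)}_{ij}=\delta_{ij}$ is preserved since the constant term of $t_{ij}(u+a)$ is still $\delta_{ij}$. The inverse is $\varkappa_{-a}$.

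\textbf{Rescaling automorphism $\mu_f$.} Here $T_1(u)\mapsto f(u)T_1(u)$ and $T_2(v)\mapsto f(v)T_2(v)$; since $f(u),f(v)$ are central formal series (scalars), they pull out of everything, and both sides of \eqref{RTT} get multiplied by the same scalar $f(u)f(v)$, so the relation is preserved. The condition $f\in 1+u^{-1}\C[[u^{-1}]]$ guarantees $f(u)$ is invertible in $\C[[u^{-1}]]$ with inverse of the same form, and also that the constant term $t^{(0)}_{ij}=\delta_{ij}$ is unchanged; the inverse automorphism is $\mu_{1/f}$.

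\textbf{Conjugation automorphism $\al_A$.} This is the one requiring actual computation, and is the main obstacle. Write $\wt T(u)=A\,T(u)\,A^t$, so $\wt T_s(u)=A_s T_s(u) A_s^t$ where $A_s$ denotes $A$ acting in the $s$-th tensor slot. Substituting into \eqref{RTT}, I must show
\[
R(u-v)\,A_1 T_1(u)A_1^t\,A_2 T_2(v)A_2^t=A_2 T_2(v)A_2^t\,A_1 T_1(u)A_1^t\,R(u-v).
\]
Since $A_1^t A_2=A_2 A_1^t$ (they act in different slots), this rearranges; the key step is to establish the $R$-matrix intertwining identity $(A_1 A_2)^{-1}R(u-v)(A_1A_2)=R(u-v)$, i.e.\ that $R(u-v)$ commutes with $A\ot A$. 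For the $\mfgl_N$-type term $I-P/u$ this is the standard fact that $P$ commutes with $A\ot A$ for any $A$. For the term $Q/(u-\ka)$ one uses $Q=P^{t_1}=P^{t_2}$ together with the hypothesis $AA^t=I$ (equivalently $A^t=A^{-1}$, so $A$ is ``orthogonal'' with respect to the bilinear form defining $t$): a short computation gives $(A\ot A)Q(A\ot A)^{-1}=Q$ precisely because $A^t A=I$. Granting these, conjugating \eqref{RTT} by $(A_1A_2)^{-1}$ on the left and $(A_1A_2)$ on the right, and moving the $A$'s through using that they act in disjoint slots, yields exactly the relation for $\wt T(u)$. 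Finally $\al_A^{-1}=\al_{A^{-1}}=\al_{A^t}$, which is again of the same form since $(A^t)(A^t)^t=A^tA=I$, and $\al_A$ preserves $t^{(0)}_{ij}=\delta_{ij}$ because $AIA^t=I$. I expect the only slightly delicate point to be the verification that $A\ot A$ commutes with $Q$, where one must be careful about which transpose $t=t_\pm$ is in play and correctly use $AA^t=I$ in the form appropriate to the orthogonal or symplectic setting; everything else is formal manipulation in $\End(\C^N)^{\ot 2}\ot X(\mfg_N)$.
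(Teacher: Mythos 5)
Your proof is correct. The paper does not prove this proposition itself but cites \cite{AACFR,AMR}; your argument — checking preservation of \eqref{RTT}, with the only substantive step being that $R(u)$ commutes with $A\ot A$ (immediately for $P$, and for $Q$ via the identity $(M\ot I)Q=(I\ot M^t)Q$ combined with $AA^t=I$) — is precisely the standard argument in those references.
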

\begin{defn} \label{def:YgN}
Consider the subalgebra of $X(\mfg_N)$ defined by
\eq{
Y(\mfg_N) = \{ y \in X(\mfg_N) : \mu_f(y) = y \text{ for any } f(u) \in1+u^{-1}\C[[u^{-1}]] \}. \label{Y(g)}
}
This subalgebra is called the Yangian of the Lie algebra $\mfg_N$.
\end{defn}

There exists a distinguished central series $z(u)=1+\sum_{r>0}z_ru^{-r}$ such that
\eq{ \label{z(u)}
\Delta( z(u) ) = z(u) \ot z(u) , \qq S(z(u)) = z(u)^{-1} , \qq T(u)^{-1} = z(u)^{-1} T^t(u+\ka),
}
Moreover, the coefficients $z_1,z_2,\ldots$ of $z(u)$ are algebraically independent and generate the whole center $ZX(\mfg_N)$ of $X(\mfg_N)$.

By \cite[Thm.~3.1]{AMR}, the algebra $X(\mfg_N)$ is isomorphic to the tensor product of its subalgebras $ZX(\mfg_N)$ and $Y(\mfg_N)$. Since $ZX(\mfg_N)$ is generated by the coefficients of the series $z(u)$, it follows that the Yangian $Y(\mfg_N)$ is isomorphic to the quotient of $X(\mfg_N)$ by the ideal generated by $z_1,z_2,\ldots$ ({\it cf.}~Cor.~3.2 in {\it ibid.}), or in other words, we have that
\eq{
X(\mfg_N) = ZX(\mfg_N) \ot Y(\mfg_N) \qq\Longrightarrow\qq Y(\mfg_N) \cong X(\mfg_N) / (z(u) - 1).
}
Moreover, the centre of $Y(\mfg_N)$ is trivial. We will denote the $T$-matrix of $Y(\mfg_N)$ by $\mcT(u)$. Its matrix elements will be denoted by $(\mcT(u))_{ij} = \tau_{ij}(u)$, and hence $\tau_{ij}(u) = \sum_{r\ge0} \tau^{(r)}_{ij} u^{-r}$ with $\tau_{ij}^{(0)} = \del_{ij}$. 

Let $y(u)$ be the unique series in $1+u^{-1}ZX(\mfg_N)[[u^{-1}]]$ such that $z(u) = y(u)\,y(u+\ka)$. By \eqref{z(u)} the automorphism $\mu_f$ takes $y(u)$ to $f(u)\,y(u)$ and hence the matrix $y(u)^{-1}T(u)$ is $\mu_f$-stable. In summary, we have that
\eq{ \Delta(y(u)) = y(u) \ot y(u), \qq T(u) = y(u)\, \mcT(u), \qq \mcT(u)^{-1} = \mcT^t(u+\ka), \qq \mu_f : \mcT(u) \mapsto \mcT(u). \label{mcT(u)}
}
The Yangian $Y(\mfg_N)$ is a Hopf subalgebra of $X(\mfg_N)$ with the coproduct,
antipode and counit obtained by restricting those of $X(\mfg_N)$; see \cite[Prop.~3.3]{AMR}.

%

\subsection{Twisted Yangians and extended twisted Yangians of types B-C-D} \label{sec:TY-BCD}

These algebras were introduced in \cite{GR}. Let us briefly recall their main algebraic properties. For complete details and proofs of the claims below consult {\it cit.~loc.} It will be convenient to call the generating matrix of a twisted Yangian (or a reflection algebra) the $S$-matrix. Moreover, to be consistent with the notation in \cite{GR} and in  \cite{Mo5}, we denote by $\mcG(u)$ the matrix $K(u)$ of type B--C--D given in Definition \ref{D:K-matrices}.


\begin{defn} \label{D:TX}
The extended twisted Yangian $X(\mfg_N,\mcG)^{tw}$ is the subalgebra of $X(\mfg_N)$ generated by the coefficients  $s_{ij}^{(r)}$ with ${-}n \le i,j \le n$ and $r\in\Z_{\ge0}$ of the entries $s_{ij}(u)$ of the $S$-matrix
\eq{
S(u) = T(u-\ka/2)\,\mcG(u)\,T^t(-u+\ka/2). \label{S=TGT}
}	
\end{defn}

The algebra $X(\mfg_N,\mcG)^{tw}$ is a left coideal subalgebra of $X(\mfg_N)$: $\Delta(X(\mfg_N,\mcG)^{tw}) \subset X(\mfg_N) \otimes X(\mfg_N,\mcG)^{tw}$. We have that $s_{ij}(u) = \sum_{a,b=-n}^n \theta_{jb}\,t_{ia}(u-\ka/2)\,g_{ab}(u)\,t_{-j,-b}(-u+\ka/2)$, where $g_{ab}(u)$ are the matrix entries of $\mcG(u)$ and, by \eqref{Hopf:X},
\eq{ \label{TX-cop}
\Delta(s_{ij}(u)) = \sum_{-n\le a,b\le n} \theta_{jb}\,t_{ia}(u-\ka/2)\,t_{-j,-b}(-u+\ka/2)\ot s_{ab}(u).
}
We now recall some properties of the matrix~$S(u)$. By Lemmas 4.2 and 4.3 in \cite{GR} we have the following.

\begin{lemma} \label{L:TX-RE}
The $S$-smatrix $S(u)$ satisfies the reflection equation 
\eq{ \label{TX-RE}
R(u-v)\,S_1(u)\,R(u+v)\,S_2(v) = S_2(v)\,R(u+v)\,S_1(u)\,R(u-v),
}
or equivalently
\spl{ \label{[s,s]}
[\,s_{ij}(u),s_{kl}(v)]&=\frac{1}{u-v}\Big(s_{kj}(u)\,s_{il}(v)-s_{kj}(v)\,s_{il}(u)\Big)\\
{}& +\frac{1}{u+v} \sum_{a=-n}^n \Big(\delta_{kj}\,s_{ia}(u)\,s_{al}(v)-
\delta_{il}\,s_{ka}(v)\,s_{aj}(u)\Big)\\
{}& -\frac{1}{u^2-v^2} \sum_{a=-n}^n \delta_{ij}\Big(s_{ka}(u)\,s_{al}(v) - s_{ka}(v)\,s_{al}(u)\Big) \\
{}& -\frac{1}{u-v-\ka} \sum_{a=-n}^n \Big( \delta_{k,-i}\,\theta_{i a}\, s_{a j}(u)\, s_{-a,l}(v) - \delta_{l,-j} \,\theta_{a j}\, s_{k,-a}(v)\, s_{i a}(u) \Big) \\
{}& -\frac{1}{u+v-\ka}\, \Big( \theta_{j,-k}\,s_{i,-k}(u)\, s_{-j,l}(v) - \theta_{i,-l}\,s_{k,-i}(v)\, s_{-l,j}(u) \Big) \\
{}& +\frac{1}{(u+v) (u-v-\ka)}\, \theta_{i,-j} \sum_{a=-n}^n  \Big( \delta_{k,-i}\,s_{-j,a}(u)\, s_{a l}(v) - \delta_{l,-j}\,s_{k a}(v)\, s_{a,-i}(u) \Big) \\
{}& +\frac{1}{(u-v) (u+v-\ka)}\, \theta_{i,-j} \Big( s_{k,-i}(u)\, s_{-j,l}(v)-s_{k,-i}(v)\, s_{-j,l}(u) \Big) \\
{}& -\frac{1}{(u-v-\ka) (u+v-\ka)}\, \theta_{i j}  \sum_{a=-n}^n \Big( \delta_{k,-i}\, s_{a a}(u)\, s_{-j,l}(v) - \delta_{l,-j}\,s_{k,-i}(v)\, s_{a a}(u) \Big) ,
}
and the symmetry relation
\eq{
S^t(u) = (\pm)\,S(\ka-u) \pm \frac{S(u)-S(\ka-u)}{2u-\ka} + \frac{\Tr(\mcG(u))\,S(k-u) - \Tr(S(u))\cdot I }{2u-2\ka} \,, \label{TX-Sym}
}
or equivalently
\eqa{ \label{s=s}
\theta_{ij}s_{-j,-i}(u) = (\pm)\,s_{ij}(\ka-u) \pm \frac{s_{ij}(u)-s_{ij}(\ka-u)}{2u-\ka} + \frac{\Tr(\mcG(u))\,s_{ij}(\kappa-u) - \delta_{ij} \sum_{k=-n}^n s_{kk}(u)  }{2u-2\ka} ,
}
where the lower sign in $(\pm)$ distinguishes types CI and DIII from the other cases. 
\end{lemma}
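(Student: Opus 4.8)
The plan is to derive both assertions from the corresponding facts about the $R$-matrix, the $T$-matrix, and the $K$-matrix $\mcG(u)$, together with the defining formula \eqref{S=TGT}. For the reflection equation \eqref{TX-RE}, I would follow the standard ``fusion'' argument (as in \cite{GR,MNO}): substitute $S(u)=T(u-\ka/2)\,\mcG(u)\,T^t(-u+\ka/2)$ into both sides of \eqref{TX-RE} and repeatedly use the $RTT$-relation \eqref{RTT}, its transposed variants, and the Yang--Baxter equation \eqref{YBE} to move the $R$-matrices past the $T$-factors, until one is left with an identity that follows from the reflection equation $R(u-v)\,\mcG_1(u)\,R(u+v)\,\mcG_2(v)=\mcG_2(v)\,R(u+v)\,\mcG_1(u)\,R(u-v)$ satisfied by $\mcG(u)$ (which holds by Definition \ref{D:K-matrices} and the discussion in Subsection \ref{sec:R-K-mats}). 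Since the paper cites Lemmas 4.2 and 4.3 of \cite{GR} as the source of the statement, the cleanest route is simply to invoke those results; I would sketch the reduction above and point out that the only input beyond \cite{GR} is that our normalization of $\mcG(u)$ coincides with the $K(u)$ used there up to the spectral shift $u\mapsto u\mp\ka/2$ built into \eqref{S=TGT}. The componentwise form \eqref{[s,s]} is then obtained by expanding \eqref{TX-RE} in the basis $E_{ij}\ot E_{kl}$, using the explicit formulas \eqref{R(u)}b) and \eqref{PQ} for $P$ and $Q$; this is a lengthy but purely mechanical bookkeeping exercise that I would not carry out in detail.

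For the symmetry relation \eqref{TX-Sym}, the key ingredients are the crossing/symmetry identity \eqref{K-Sym} for $\mcG(u)=K(u)$ from Lemma \ref{L:K-RE-Sym}, and the crossing relation $T(u)^{-1} = z(u)^{-1}\,T^t(u+\ka)$ from \eqref{z(u)}, equivalently $T^t(u+\ka) = z(u)\,T(u)^{-1}$. I would compute $S^t(u)$ directly: taking the full transpose of \eqref{S=TGT} gives
\eq{
S^t(u) = \bigl(T^t(-u+\ka/2)\bigr)^{t}\,\mcG^t(u)\,\bigl(T(u-\ka/2)\bigr)^{t} = T(-u+\ka/2+\ka)\,\ldots
}
wait—more carefully, $\bigl(T^t(v)\bigr)^{t}=T(v)$, so one gets $S^t(u)$ expressed through $T(u-\ka/2)$, $\mcG^t(u)$ and $T^t(-u+\ka/2)$ with the order of the two $T$-type factors reversed; one then rewrites $\mcG^t(u)$ using \eqref{K-Sym} and uses the crossing relation \eqref{z(u)} together with the fact that the central series $z(u)$ must cancel out (because the left side $S^t(u)$ lies in $X(\mfg_N,\mcG)^{tw}$ and, by the $\mu_f$-twisting argument of Subsection \ref{sec:TY-BCD}, the combination is $\mu_f$-covariant in the correct way). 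Comparing with $S(\ka-u)=T(\ka/2-u)\,\mcG(\ka-u)\,T^t(-\ka/2+u)$ and with $S(u)$ itself then yields \eqref{TX-Sym}. The equivalent componentwise statement \eqref{s=s} is read off by taking the $(i,j)$-entry, using $(E_{ij})^t=\theta_{ij}E_{-j,-i}$ and $\Tr(I)=N$.

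The main obstacle I anticipate is the second relation, specifically the careful tracking of the central factor $z(u)$ (or its square root $y(u)$) and of the spectral shifts when transposing the product \eqref{S=TGT}: one must verify that the $z$-factors arising from $T^t(v)=z(v-\ka)\,T(v-\ka)^{-1}$ combine with the rational coefficients in \eqref{K-Sym} so as to reproduce exactly the rational prefactors $(\pm)$, $\pm(2u-\ka)^{-1}$, $(2u-2\ka)^{-1}$ appearing in \eqref{TX-Sym}, with $\Tr(\mcG(u))$ entering through the trace term of \eqref{K-Sym}. The unitarity relations $R(u)R(-u)=(1-u^{-2})I$ and $\mcG(u)\mcG(-u)=I$, and the identity $p(u)p(\ka-u)=1-(2u-\ka)^{-2}$ from \eqref{p(u)p(k-u)}, are what make this consistent, and I would use them as the bookkeeping checks. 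As with the reflection equation, since the statement is quoted from \cite[Lemmas 4.2, 4.3]{GR}, in the write-up I would give the reduction to the $R$- and $K$-matrix identities and defer the remaining computation to \textit{loc.\ cit.}
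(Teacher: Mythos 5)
For the reflection equation your plan is correct and matches the standard route: substitute \eqref{S=TGT}, move $R$-matrices past the $T$-factors with the $RTT$-relation and the Yang--Baxter equation, and reduce to the reflection equation for the scalar $\mcG(u)$. Since the paper simply cites \cite[Lemmas~4.2, 4.3]{GR}, deferring the bookkeeping to \textit{loc.~cit.} is also consonant with the paper.

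For the symmetry relation \eqref{TX-Sym}, however, there is a genuine gap. Your sketch hinges on taking the transpose of the product \eqref{S=TGT} entrywise and writing
\[
S^t(u)\;=\;\bigl(T^t(-u+\ka/2)\bigr)^{t}\,\mcG^t(u)\,\bigl(T(u-\ka/2)\bigr)^{t},
\]
but $(XY)^t = Y^t X^t$ fails when the entries of $X$ and $Y$ do not commute, and here the entries $t_{ij}(u-\ka/2)$ and $t_{kl}(-u+\ka/2)$ of the two $T$-type factors do \emph{not} commute. Concretely, $(S^t(u))_{ij}=\theta_{ij}S(u)_{-j,-i}$ expands to sums of products of the form $t_{-j,\ast}(u-\ka/2)\,g_{\ast\ast}(u)\,t_{i,\ast}(\ka/2-u)$, whereas $S(\ka-u)_{ij}$ expands to products $t_{i,\ast}(\ka/2-u)\,g_{\ast\ast}(\ka-u)\,t_{-j,\ast}(u-\ka/2)$ with the two $t$-factors in the opposite order. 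To compare these one must commute $T(u-\ka/2)$ and $T^t(\ka/2-u)$, which requires the $RTT$-relation \eqref{RTT}. The $Q$-part of $R(u)$ in that commutation is precisely what generates the $\Tr(S(u))$ and $\Tr(\mcG(u))$ terms and the prefactors $1/(2u-\ka)$, $1/(2u-2\ka)$ in \eqref{TX-Sym}; they do not arise from \eqref{K-Sym} alone. In fact the crossing relation $T^t(v)=z(v-\ka)T(v-\ka)^{-1}$ is not used at all in this computation, and no $z$-factors should appear -- your ``main obstacle'' is looking in the wrong place. To repair the argument, replace the (false) reversed-product step by the $RTT$-relation commutation and then apply Lemma~\ref{L:K-RE-Sym} for the scalar matrix $\mcG(u)$, which is how the cited \cite[Lemma~4.3]{GR} proceeds.
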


Introduce the formal power series $w(u) = z(-u-\ka/2)\,z(u-\ka/2)$. By \eqref{z(u)} and \eqref{S=TGT} we have that 
\eq{
S(u)\,S(-u) = w(u)\cdot I, \qq w(u)=w(-u), \qq\Delta(w(u))=w(u)\ot w(u) . \label{w(u)}
}
Since the series $z(u)$ are central in $X(\mfg_N)$, the same is true for the series $w(u)$, or in other words, the even coefficients $w_2,w_4,\ldots$ of $w(u)$ are algebraically independent and central in the algebra $X(\mfg_N,\mcG)^{tw}$. The relation $S(u)\,S(-u) = I$ (equivalently, $w(u)=1$) will be called the {\em unitary relation}.

\smallskip


\begin{defn} \label{D:TY}
The twisted Yangian $Y(\mfg_N,\mcG)^{tw}$ is the quotient of $X(\mfg_N,\mcG)^{tw}$ by the ideal generated by the coefficients of the unitary relation, that is
\eq{
Y(\mfg_N,\mcG)^{tw} = X(\mfg_N,\mcG)^{tw} / (S(u)\,S(-u) - I) . \label{Y=X/(SS-I)}
}
\end{defn}

By \cite[Thm.~3.1]{GR} the algebra $Y(\mfg_N,\mcG)^{tw}$ is isomorphic to the subalgebra of $Y(\mfg_N)$ generated by the coefficients $\si_{ij}^{(r)}$ with $r\ge0$ of the matrix entries $\si_{ij}(u)$ of the $S$-matrix $\Si(u)$ defined by 
\eq{
\Si(u) = \mcT(u-\ka/2)\, \mcG(u)\, \mcT^t(-u+\ka/2) . \label{Si=TGT}
}

Define the series $q(u) = y(u-\ka/2)\,y(-u+\ka/2)$. Then, by \eqref{mcT(u)} and \eqref{w(u)}, we have that
\eq{
S(u) = q(u)\, \Si(u), \qq w(u) = q(u)\,q(-u) = q(u)\,q(u+\ka). \label{S=qSi}
}
Given $f(u) \in 1 + u^{-1}\C[[u^{-1}]]$, set $g(u) = f(u)\,f(-u)$ and let $\nu_g$ denote the restriction of the automorphism $\mu_f$ of $X(\mfg_N)$ to the subalgebra $X(\mfg_N,\mcG)^{tw}$. Then, by \eqref{al_A} and \eqref{mcT(u)},
\eq{
\nu_g : S(u) \mapsto g(u-\ka/2)\,S(u), \qq \nu_g : w(u) \mapsto g(u-\ka/2)g(-u-\ka/2) w(u), \qq \nu_g : \Si(u) \mapsto \Si(u). \label{nu_g}
}
This implies that $Y(\mfg_N,\mcG)^{tw}$, viewed as a subalgebra of $X(\mfg_N)$, is the $\nu_g$-stable subalgebra of $X(\mfg_N,\mcG)^{tw}$; see \cite[Cor.~3.1]{GR}. 

\smallskip

Let $ZX(\mfg_N,\mcG)^{tw}$ denote the subalgebra of $X(\mfg_N,\mcG)^{tw}$ generated by the coefficients of $w(u)$. Then, by \cite[Cor.~3.5]{GR}, the subalgebra $ZX(\mfg_N,\mcG)^{tw}$ is the whole centre of $X(\mfg_N,\mcG)^{tw}$ and the centre of $Y(\mfg_N,\mcG)^{tw}$ is trivial. 

\begin{crl} [{\cite[Cor.~3.6]{GR}}]\label{C:TX=ZTX*TY}
The algebra $X(\mfg_N,G)^{tw}$ is isomorphic to the tensor product of its centre and the subalgebra
$Y(\mfg_N,\mcG)^{tw}$, namely
\eq{
X(\mfg_N,\mcG)^{tw} \cong ZX(\mfg_N,\mcG)^{tw} \ot Y(\mfg_N,\mcG)^{tw}. \label{TX=ZTX*TY}
}
\end{crl}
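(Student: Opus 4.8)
**Proof plan for Corollary \ref{C:TX=ZTX*TY}.**

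The plan is to deduce this from the analogous decomposition $X(\mfg_N) \cong ZX(\mfg_N) \ot Y(\mfg_N)$ (quoted above from \cite[Thm.~3.1]{AMR}) together with the relations \eqref{S=qSi}, \eqref{w(u)} and the definitions of $ZX(\mfg_N,\mcG)^{tw}$ and $Y(\mfg_N,\mcG)^{tw}$. First I would establish that the multiplication map $\mu\colon ZX(\mfg_N,\mcG)^{tw} \ot Y(\mfg_N,\mcG)^{tw} \to X(\mfg_N,\mcG)^{tw}$ is well-defined, i.e.\ that the two factors commute inside $X(\mfg_N,\mcG)^{tw}$. This is immediate since $ZX(\mfg_N,\mcG)^{tw}$ is central in $X(\mfg_N,\mcG)^{tw}$ (as recalled just before the statement from \cite[Cor.~3.5]{GR}), so $\mu$ is an algebra homomorphism. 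It remains to prove that $\mu$ is bijective.

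For surjectivity I would use \eqref{S=qSi}, which gives $S(u) = q(u)\,\Si(u)$ where $q(u) \in 1 + u^{-1}ZX(\mfg_N)[[u^{-1}]]$ and $\Si(u)$ is the generating matrix of (the copy of) $Y(\mfg_N,\mcG)^{tw}$ inside $Y(\mfg_N)$. The coefficients $q_r$ of $q(u)$ lie in $ZX(\mfg_N)$; but since $w(u) = q(u)q(u+\ka)$ by \eqref{S=qSi} and the $w_{2k}$ generate $ZX(\mfg_N,\mcG)^{tw}$, one shows that the $q_r$ are expressible in terms of the $w_{2k}$ — this is the twisted analogue of recovering $y(u)$ from $z(u) = y(u)y(u+\ka)$, solved recursively order by order in $u^{-1}$; note $q(u)$ is the unique such series with constant term $1$. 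Hence each $s_{ij}^{(r)}$, being a coefficient of $q(u)\Si(u)$, lies in the subalgebra generated by $ZX(\mfg_N,\mcG)^{tw}$ and the $\si_{ij}^{(r)}$, which is precisely the image of $\mu$. Thus $\mu$ is surjective.

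For injectivity I would argue that $ZX(\mfg_N,\mcG)^{tw}$ and $Y(\mfg_N,\mcG)^{tw}$ sit, respectively, inside $ZX(\mfg_N)$ and $Y(\mfg_N)$ as subalgebras of $X(\mfg_N) \cong ZX(\mfg_N) \ot Y(\mfg_N)$. Indeed the coefficients of $w(u) = z(-u-\ka/2)z(u-\ka/2)$ lie in $ZX(\mfg_N)$, while by \cite[Thm.~3.1]{GR} the algebra $Y(\mfg_N,\mcG)^{tw}$ is isomorphic to the subalgebra of $Y(\mfg_N)$ generated by the $\si_{ij}^{(r)}$. A nonzero element of the kernel of $\mu$ would be a nontrivial relation $\sum_k a_k b_k = 0$ with $a_k \in ZX(\mfg_N,\mcG)^{tw} \subset ZX(\mfg_N)$ and $b_k \in Y(\mfg_N,\mcG)^{tw} \subset Y(\mfg_N)$; but such a relation cannot hold in $ZX(\mfg_N)\ot Y(\mfg_N)$ unless it is trivial, because the tensor decomposition of $X(\mfg_N)$ forces linear independence of products of linearly independent elements from the two factors. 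Hence $\mu$ is injective, completing the proof.

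The main obstacle I anticipate is the surjectivity step, specifically making precise that the coefficients of $q(u)$ lie in $ZX(\mfg_N,\mcG)^{tw}$ (equivalently, in the algebra generated by the $w_{2k}$) rather than merely in the ambient $ZX(\mfg_N)$; this requires the uniqueness of $q(u)$ as a series with constant term $1$ satisfying $q(u)q(u+\ka) = w(u)$, together with a recursive solvability argument paralleling the one for $y(u)$ versus $z(u)$ in Subsection \ref{sec:Y-BCD}. Everything else is formal manipulation of the tensor product decomposition of $X(\mfg_N)$ and the centrality of $w(u)$.
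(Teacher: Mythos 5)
Your proof is correct. The paper gives no independent argument for this corollary --- it is imported verbatim from \cite[Cor.~3.6]{GR} --- so I assess the proposal on its own terms.

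The three steps are sound. For well-definedness, centrality of the $w_{2k}$ in $X(\mfg_N,\mcG)^{tw}$ (cited from \cite[Cor.~3.5]{GR}) is exactly what is needed. The surjectivity step is the one that requires care, and you have identified the key point correctly: from $S(u)=q(u)\Si(u)$ one only gets a priori that the $q_r$ lie in $ZX(\mfg_N)$, and one must show they lie in the smaller algebra generated by the $w_{2k}$. The identity $w(u)=q(u)\,q(u+\ka)$ with $q_0=1$ does exactly this: expanding, $w_r = 2q_r + (\text{polynomial in }q_1,\dots,q_{r-1},\ka)$, so each $q_r$ is recursively a polynomial in $w_1,\dots,w_r$, with the odd $w_r$ vanishing. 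This is literally the same recursion that extracts $y(u)$ from $z(u)=y(u)y(u+\ka)$ in $ZX(\mfg_N)$, as you say. The injectivity step embeds $ZX(\mfg_N,\mcG)^{tw}\subset ZX(\mfg_N)$ and $Y(\mfg_N,\mcG)^{tw}\subset Y(\mfg_N)$ (the latter via \cite[Thm.~3.1]{GR}, recalled after \eqref{Y=X/(SS-I)}) and invokes the known tensor decomposition $X(\mfg_N)\cong ZX(\mfg_N)\ot Y(\mfg_N)$; this is a clean way to avoid re-deriving any PBW-type independence.

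One remark worth keeping in mind: the paper also quotes, as Corollary \ref{X:PBW} (again from \cite{GR}), a PBW basis for $X(\mfg_N,\mcG)^{tw}$ consisting of ordered monomials in the $w_{2k}$ and the generators constrained as in Corollary \ref{Y:PBW}. That statement yields the tensor decomposition immediately as a vector-space identity (with the algebra isomorphism then following from centrality of the $w_{2k}$). Your argument has the advantage of not depending on that PBW result --- it relies only on the untwisted decomposition of $X(\mfg_N)$ from \cite{AMR} and the factorization $S(u)=q(u)\Si(u)$ --- which is safer, since in \cite{GR} the PBW basis and the tensor decomposition may well be proved in either order, and leaning on Corollary \ref{X:PBW} here could risk circularity depending on the internal logic of \cite{GR}.
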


\begin{rmk} \label{R:TX-al_A}
It was noted in \cite[Remark 3.2]{GR}, that the automorphism $\al_A$ of $X(\mfg_N)$ (see \eqref{al_A}) restricts to an isomorphism between the algebras $X(\mfg_N,\mcG)^{tw}$ and $X(\mfg_N,A^t\mcG A)^{tw}$.  Given a matrix $\mcG$ let $A\in GL(N)$ be such that $AA^t=I$ and $A^t\mcG A=\mcG$. Then $\al_A$ restricts to an automorphism of $X(\mfg_N,\mcG)^{tw}$. The same can be said for $Y(\mfg_N,\mcG)^{tw}$.
\end{rmk}

\begin{prop}  \label{P:F->TY}
Set $F^{\prime \rho}_{ij}=\sum_{a=-n}^n(F_{ia}g_{aj}+g_{ia}F_{aj})$ and $\bar{g}_{ij}=0$ if $\mcG(u)$ is of the first kind and $\bar{g}_{ij}=(g_{ij} - \del_{ij})\,c^{-1}$ if $\mcG(u)$ is of the second kind. Then the assignment $F^{\prime \rho}_{ij}\mapsto \si^{(1)}_{ij} - \bar{g}_{ij}$ (resp. $\F{i}{j}\mapsto s_{ij}^{(1)}-\bar{g}_{ij}$) defines an embedding $\mf{U}\mfg^\rho_N\into Y(\mfg_N,\mcG)^{tw}$ (resp. $\mf{U}\mfg^\rho_N\into X(\mfg_N,\mcG)^{tw}$).
\end{prop}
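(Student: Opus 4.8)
The plan is to verify that the classical-type embedding behaves coherently with the definition of the $S$-matrix \eqref{S=TGT}, reducing the statement to a computation of the degree-one part of $S(u)$ (resp. $\Si(u)$) in terms of the $F_{ij}$. Recall first the standard embedding $\mf{U}\mfg_N \into X(\mfg_N)$ sending $F_{ij}$ to $t_{ij}^{(1)}$ (this follows from the first-order part of \eqref{RTT} and the relations \eqref{[F,F]}); composing with $\al_A$-type normalizations it restricts appropriately. The key is then to expand $S(u) = T(u-\ka/2)\,\mcG(u)\,T^t(-u+\ka/2)$ to first order in $u^{-1}$. Writing $T(u) = I + \sum_{r\ge 1} T^{(r)}u^{-r}$ with $(T^{(1)})_{ij}=t_{ij}^{(1)}$, and using $(T^t(v))_{ij} = \sum_{a}\theta_{ja}t_{-j,-a}(v) = \del_{ij} - \theta_{ij}\,v^{-1}t_{-j,-i}^{(1)}+\cdots$, together with $\mcG(u) = \mcG + \bar{g}\,(\text{something})\,u^{-1}+\cdots$ depending on whether $\mcG(u)$ is of the first or second kind, one collects the $u^{-1}$ coefficient of $S(u)$. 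The shifts $\pm\ka/2$ contribute only to higher order in $u^{-1}$ for the leading term, so modulo lower-degree corrections the $u^{-1}$-coefficient $s_{ij}^{(1)}$ equals $\sum_{a}(t_{ia}^{(1)}g_{aj} + g_{ia}\,\theta_{ja}t_{-j,-a}^{(1)})$ plus the constant term coming from $\mcG(u)$ itself, which is precisely $\bar{g}_{ij}$. Under $F_{ij}\mapsto t_{ij}^{(1)}$ and using $\theta_{ja}t_{-j,-a}^{(1)}\mapsto \theta_{ja}F_{-j,-a} = -F_{aj}$ by \eqref{[F,F]}, this becomes $\sum_a(F_{ia}g_{aj}+g_{ia}F_{aj}) = \F{i}{j}$; hence $\F{i}{j}\mapsto s_{ij}^{(1)}-\bar{g}_{ij}$ as claimed, and identically for $\Si(u)$ with $\mcT$ in place of $T$.

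Next I would check this assignment is a homomorphism of Lie algebras, i.e. that $\F{i}{j}\mapsto s_{ij}^{(1)}-\bar{g}_{ij}$ respects \eqref{[F,F]} after transport through $\mcG$. Here one extracts the lowest-order ($u^{-2}v^{-1}$ or $u^{-1}v^{-2}$, as appropriate) coefficients of the reflection equation \eqref{TX-RE} / relation \eqref{[s,s]}, which yields commutation relations among the $s_{ij}^{(1)}$; since $\mcG(u) = \mcG + (\text{first-kind: }0\ /\ \text{second-kind: }c^{-1}(\mcG - I))u^{-1}+\cdots$ and $\mcG$ itself solves the relevant reflection equation, the shift by $\bar{g}_{ij}$ is exactly what is needed to absorb the inhomogeneous terms and land on the $\mf{U}\mfg_N^\rho$ relations. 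Equivalently, and more conceptually, this is the statement that $\mfg_N^\rho$ is the fixed subalgebra under the involution $\rho(X)=\mcG X\mcG^{-1}$ (with the $-X^t$ twist in type AI/AII), and the map $X\mapsto \tfrac12(X + \rho(X))$-type symmetrization at the classical level is what the expansion of $T(u-\ka/2)\mcG(u)T^t(-u+\ka/2)$ computes at first order. One then observes that the image elements generate a subalgebra of $X(\mfg_N,\mcG)^{tw}$ isomorphic to a quotient of $\mf{U}\mfg_N^\rho$.

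Finally I would establish injectivity, which is the main obstacle. The cleanest route is a filtration/associated-graded argument: $X(\mfg_N,\mcG)^{tw}$ (resp. $Y(\mfg_N,\mcG)^{tw}$) carries a natural filtration, and by the PBW-type results in \cite{GR} its associated graded algebra is a polynomial algebra on suitable symbols; one identifies the symbols of the degree-one generators $s_{ij}^{(1)}$ (equivalently $\F{i}{j}$) and checks that they satisfy no relations beyond those forced by $\mfg_N^\rho$, so that the induced map $\mf{U}\mfg_N^\rho \to \gr X(\mfg_N,\mcG)^{tw}$ is injective on the subalgebra generated by the $\F{i}{j}$. Alternatively one can exhibit a faithful module — e.g. restrict a suitable evaluation or vector representation of $X(\mfg_N)$ — on which the images of the $\F{i}{j}$ act as the classical generators, forcing injectivity. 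I expect injectivity to be the delicate point because it requires knowing enough about the size of $X(\mfg_N,\mcG)^{tw}$; fortunately the PBW theorem for these algebras is available from \cite{GR}, so the argument should go through by comparing leading symbols. The case distinction (first vs. second kind, and the sign conventions separating CI/DIII from the rest) only affects the bookkeeping of $\bar{g}_{ij}$ and the constant terms, not the structure of the argument.
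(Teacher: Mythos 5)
Your proposal is a genuinely different route from the paper's. The paper does \emph{not} re-derive the embedding: it cites Corollary~3.3 of \cite{GR} for the embedding $\mf{U}\mfg_N^\rho\into Y(\mfg_N,\mcG)^{tw}$ via $\F{i}{j}\mapsto\si_{ij}^{(1)}-\bar g_{ij}$, then observes that $Y(\mfg_N,\mcG)^{tw}\subset X(\mfg_N,\mcG)^{tw}$, and finally reduces the claim to the one-line check that $\si_{ij}^{(1)}=s_{ij}^{(1)}$ --- which follows because $S(u)=q(u)\Si(u)$ with $q(u)=y(u-\ka/2)\,y(-u+\ka/2)$, and the $u^{-1}$-coefficient of $q(u)$ vanishes (the contributions $+y^{(1)}u^{-1}$ and $-y^{(1)}u^{-1}$ from the two shifted factors cancel). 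What the paper's route buys is brevity and leverage on the hard work already done in \cite{GR}; your route would give a self-contained verification, but at the cost of carrying out the Lie-bracket check in your second step and the PBW injectivity argument in your third step explicitly, neither of which you do.

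There is also a technical imprecision in your first step worth flagging. You invoke ``the standard embedding $\mf{U}\mfg_N\into X(\mfg_N)$ sending $F_{ij}$ to $t_{ij}^{(1)}$,'' but no such embedding exists for the \emph{extended} Yangian: from $T(u)^{-1}=z(u)^{-1}T^t(u+\ka)$ one gets $t_{ij}^{(1)}+\theta_{ij}\,t_{-j,-i}^{(1)}=z_1\,\delta_{ij}$, so the $t_{ij}^{(1)}$ satisfy the $\mfg_N$ relations only modulo the central element $z_1$. The clean identification is $F_{ij}\mapsto\tau_{ij}^{(1)}$ into $Y(\mfg_N)$. Tracing this through, the $u^{-1}$-coefficient of $S(u)$ gives $s_{ij}^{(1)}-\bar g_{ij}=\sum_a(t_{ia}^{(1)}g_{aj}+g_{ia}t_{aj}^{(1)})-z_1\,g_{ij}$ rather than the clean expression you state, although the stray $-z_1 g_{ij}$ is central and would not obstruct the Lie-bracket check. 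Working instead with $\Si(u)$ and $\tau_{ij}^{(1)}$ (as you gesture at in passing) removes the discrepancy, and doing so is essentially what reduces your computation to the one the paper imports from \cite{GR}.
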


\begin{proof}
The embedding $\mf{U}\mfg^\rho_N\into Y(\mfg_N,\mcG)^{tw}$ was obtained in Corollary 3.3 in \cite{GR}. Since $Y(\mfg_N,\mcG)^{tw}$ can be viewed as a subalgebra of $X(\mfg_N,\mcG)^{tw}$, the assignment $\F{i}{j}\mapsto \si_{ij}^{(1)}-\bar{g}_{ij}$ defines an 
inclusion $\iota:\mathfrak{U}\mfg_N^{\rho}\into X(\mfg_N,\mcG)^{tw}$. By \eqref{S=qSi} we have $\Si(u)=q(u)^{-1}S(u)$, where $q(u)=y(u-\ka/2)\,y(-u+\ka/2)$. The coefficient of $u^{-1}$ in the expansion of $q(u)$ is zero, thus $\si_{ij}^{(1)}=s_{ij}^{(1)}$ 
for all $-n\le i,j\le n$. 
\end{proof}

We end this subsection by rephrasing Theorem 3.2 in \cite{GR} for the choice of matrix $\mcG$ in Subsection \ref{sec:symm-pairs}. The only difference is for type BDI, in which case the matrix $\Sigma(u)$ from \eqref{Si=TGT} is related to the matrix of generators considered in \cite{GR} via conjugation by a suitable matrix $A$ satisfying $A^{-1} = A^t$.

\begin{crl} \label{Y:PBW} 
Given any total ordering on the set of generators $\si_{ij}^{(r)}$ of $Y(\mfg_N,\mcG)^{tw}$ 
a vector space basis of $Y(\mfg_N,\mcG)^{tw}$ is provided by ordered monomials in the following generators $(r\ge 1)$:
\begin{itemize} [itemsep=1ex]
\item {\rm BD0\hspace{2.7mm}:} $\si_{ij}^{(2r-1)}$ with $i+j>0$.

\item {\rm C0\hspace{5.3mm}:} $\si_{ij}^{(2r-1)}$ with $i+j\ge0$.

\item {\rm CI\hspace{5.8mm}:} $\si_{ij}^{(2r-1)}$ with $i,j>0$; and $\si_{ij}^{(2r)}$ with $i+j\ge0$, $ij < 0$.

\item {\rm DIII\hspace{3mm}:} $\si_{ij}^{(2r-1)}$ with $i,j>0$; and $\si_{ij}^{(2r)}$ with $i+j>0$, $ij < 0$.

\item {\rm BI(a)\hspace{1.4mm}:} $\si_{ij}^{(2r-1)}$ with $i+j>0$ and $|i|,|j|\le\frac{p-1}{2}$ or $|i|,|j|\ge \frac{p+1}{2}$; 
\\ 
and $\si_{ij}^{(2r)}$ with $i\ge\frac{p+1}{2}$, $|j|\le \frac{p-1}{2}$ or $j\ge\frac{p+1}{2}$, $|i|\le \frac{p-1}{2}$.

\item {\rm BI(b)\hspace{1.2mm}:} the same as for {\,\rm BI(a)} except $p$ should be replaced with $q$.

\item {\rm CII\hspace{4.4mm}:} $\si_{ij}^{(2r-1)}$ with $i+j\ge0$ and $|i|,|j|\le\frac{q}{2}$ or $|i|,|j|\ge \frac{q}{2}+1$; 
\\ 
\mbox{} \hspace{9.5mm} and $\si_{ij}^{(2r)}$ with $i\ge\frac{q}{2}+1$, $|j|\le \frac{q}{2}$ or $j\ge\frac{q}{2}+1$, $|i|\le \frac{q}{2}$.

\item {\rm DI(a)\hspace{1.2mm}:} the same as for {\,\rm CII} except $i+j\ge0$ should be replaced with $i+j>0$.

\item {\rm DI(b)\;:} $\si^{(2r-1)}_{ij}$ with $i+j>0$ and $2\le |i|,|j|\le \frac{p+1}{2}$ or $|i|,|j|\ge \frac{p+3}{2}$, we should also include $\si^{(2r-1)}_{1j}$ and $\si^{(2r-1)}_{j1}$ with $j\ge 2$;
\\
and $\si_{ij}^{(2r)}$ with $i\ge \frac{p+3}{2}$ and $2\le |j|\le \frac{p+1}{2}$ or $j\ge \frac{p+3}{2}$ and $2\le |i|\le \frac{p+1}{2}$, we should also include $\si^{(2r)}_{1,-1}$ and $\si^{(2r)}_{1j}$, $\si^{(2r)}_{j1}$ with $j\ge 2$.
\end{itemize}
\end{crl}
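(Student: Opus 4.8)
The plan is to establish the PBW-type basis result for $Y(\mfg_N,\mcG)^{tw}$ by transporting the corresponding statement from \cite[Thm.~3.2]{GR} along the identifications described in the excerpt, carefully tracking how the choice of $\mcG$ made in Subsection \ref{sec:symm-pairs} differs from the choice in \cite{GR}. Since \cite[Thm.~3.2]{GR} already provides a PBW basis for the twisted Yangian associated to \emph{their} matrix of generators, the only work is to explain why the conjugation $\al_A$ (for a suitable $A$ with $A^{-1}=A^t$, as in Remark \ref{R:TX-al_A}) carries their generating set, and hence their basis, to a basis of the form claimed here in terms of the $\si_{ij}^{(r)}$.

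First I would record that in types BD0, C0, CI and DIII the matrix $\mcG$ chosen here is literally equal (not merely conjugate) to the one used in \cite{GR}, so for those cases there is nothing to conjugate: the listed generators are exactly the ones appearing in \cite[Thm.~3.2]{GR}, and the assertion is a direct quotation. (Here one also uses $\Si(u) = \mcT(u-\ka/2)\,\mcG\,\mcT^t(-u+\ka/2)$ from \eqref{Si=TGT} and the symmetry relation \eqref{s=s}, which is what forces the specific parity constraints $\si^{(2r-1)}$ versus $\si^{(2r)}$ and the sign conditions $i+j>0$, $ij<0$, etc., on which generators are needed — but these are already built into \cite[Thm.~3.2]{GR}.) Then I would turn to type BDI, where the present $\mcG$ differs from the one in \cite{GR} by conjugation: there is a permutation-type orthogonal matrix $A$ with $A^{-1}=A^t$ and $\Si(u)$ here equal to $A\,\Si^{GR}(u)\,A^t$ for the matrix $\Si^{GR}(u)$ of \cite{GR}. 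By Remark \ref{R:TX-al_A} (applied to $Y(\mfg_N,\mcG)^{tw}$), $\al_A$ is an algebra isomorphism carrying the generators $\si^{GR,(r)}_{ij}$ to linear combinations $\sum_{k,l} (A)_{ik}(A)_{jl}\,\si^{(r)}_{kl}$; since $A$ is a signed permutation, this is up to sign a relabeling of indices $i\mapsto \pi(i)$, $j\mapsto \pi(j)$ for a fixed permutation $\pi$ of $\{-n,\dots,n\}$. Applying $\pi$ to the index conditions in \cite[Thm.~3.2]{GR} produces exactly the conditions listed here for BI(a), BI(b), CII, DI(a), DI(b), and since $\al_A$ is an isomorphism it sends a basis to a basis.

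Concretely the steps are: (1) state that for BCD0, CI, DIII the claim is \cite[Thm.~3.2]{GR} verbatim, given that $\mcG$ coincides; (2) for BDI exhibit the orthogonal matrix $A$ with $A^{-1}=A^t$ and $A^t\mcG_{GR}A=\mcG$, verify $\Si(u)=\al_A(\Si^{GR}(u))$ from \eqref{Si=TGT}, invoke Remark \ref{R:TX-al_A} to get an isomorphism $\al_A\colon Y(\mfg_N,\mcG)^{tw}\iso Y(\mfg_N,\mcG_{GR})^{tw}$; (3) observe that the image of each basis monomial under $\al_A^{-1}$ is, up to a nonzero scalar, a monomial in the $\si^{(r)}_{ij}$ with indices permuted by $\pi$, hence ordered monomials in the permuted generators form a basis; (4) check that applying $\pi$ to the defining inequalities in the BDI part of \cite[Thm.~3.2]{GR} yields precisely the inequalities stated here, which is a short bookkeeping check case by case. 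The main obstacle — really the only nontrivial point — is step (4): one must pin down $A$ (equivalently the permutation $\pi$ relating the two orderings of the index set used for $\mfso_N$/$\mfsp_N$ in \cite{GR} versus in Subsection \ref{sec:symm-pairs}) and confirm that the parity conditions and the sign conditions on $i+j$ and $ij$ transform correctly; this is routine but requires care because the two references split the range $\{-n,\dots,n\}$ into the $\mfg^\rho$-blocks in slightly different ways.
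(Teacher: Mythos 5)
Your approach matches the paper's exactly: the paper states Corollary~\ref{Y:PBW} as a direct rephrasing of \cite[Thm.~3.2]{GR}, noting that the only discrepancy is in type BDI where the present $\Si(u)$ is obtained from the one in \cite{GR} by conjugation via a suitable orthogonal matrix $A$ with $A^{-1}=A^t$, which is precisely your step (2) using Remark~\ref{R:TX-al_A}. One small correction: CII does not require the conjugation (the paper says only BDI differs), so listing it in your step (4) is a harmless redundancy rather than a needed case.
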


\begin{crl}[see {\cite[Cor.~3.4]{GR}}] \label{X:PBW} 
Given any total ordering on the set of generators $s_{ij}^{(r)}$ of $X(\mfg_N,\mcG)^{tw}$, a vector space basis of $X(\mfg_N,\mcG)^{tw}$ is provided by the ordered monomials in the generators $w_2,w_4,\ldots\,$ and $s^{(r)}_{ij}$ with $r$, $i$, $j$ satisfying the same constraints as in Corollary \ref{Y:PBW}.
\end{crl}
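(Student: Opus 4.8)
The plan is to derive the basis for $X(\mfg_N,\mcG)^{tw}$ from the one for $Y(\mfg_N,\mcG)^{tw}$ in Corollary~\ref{Y:PBW} by passing through the tensor decomposition $X(\mfg_N,\mcG)^{tw}\cong ZX(\mfg_N,\mcG)^{tw}\ot Y(\mfg_N,\mcG)^{tw}$ of Corollary~\ref{C:TX=ZTX*TY}. First I will use that $ZX(\mfg_N,\mcG)^{tw}$ is the polynomial algebra on the algebraically independent coefficients $w_2,w_4,\ldots$ of $w(u)$, so that its ordered monomials in the $w_{2r}$ form a basis; tensoring this basis with the one supplied by Corollary~\ref{Y:PBW} gives at once a basis of $X(\mfg_N,\mcG)^{tw}$ consisting, for any total ordering, of ordered monomials in the $w_{2r}$ and in the $\si_{ij}^{(r)}$ subject to the stated constraints. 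It then remains only to trade the generators $\si_{ij}^{(r)}$ for the $s_{ij}^{(r)}$.

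For this I will use the relation $S(u)=q(u)\,\Si(u)$ from \eqref{S=qSi}, equivalently $s_{ij}(u)=q(u)\,\si_{ij}(u)$, with $q(u)=y(u-\ka/2)\,y(-u+\ka/2)$. The two facts I need about $q(u)$ are: (i) its coefficients lie in $ZX(\mfg_N,\mcG)^{tw}$ — indeed, $w(u)=q(u)\,q(u+\ka)$ together with the normalization $q(u)\in 1+u^{-1}ZX(\mfg_N)[[u^{-1}]]$ determines them recursively as polynomials in $w_2,w_4,\ldots$; and (ii) every coefficient $q^{(k)}$ with $k\ge 1$ lies in the augmentation ideal $\mathfrak{m}=(w_2,w_4,\ldots)$ of $ZX(\mfg_N,\mcG)^{tw}$ — for $k=1$ this is the vanishing $q^{(1)}=0$ noted in the proof of Proposition~\ref{P:F->TY}, and the general case follows from the same recursion. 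Comparing coefficients of $u^{-r}$ in $s_{ij}(u)=q(u)\,\si_{ij}(u)$ and in $\si_{ij}(u)=q(u)^{-1}\,s_{ij}(u)$ then yields, for each $r\ge 1$, triangular relations expressing $s_{ij}^{(r)}$ as $\si_{ij}^{(r)}$ plus a $ZX(\mfg_N,\mcG)^{tw}$-linear combination, with coefficients in $\mathfrak{m}$, of the $\si_{ij}^{(r')}$ with $r'<r$, and conversely.

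Finally I will argue that these relations convert the basis of the first paragraph into the asserted set of ordered monomials in the $w_{2r}$ and the constrained $s_{ij}^{(r)}$. Modulo the two-sided ideal $\mathfrak{m}\,X(\mfg_N,\mcG)^{tw}$, whose quotient is $Y(\mfg_N,\mcG)^{tw}$, the above gives $s_{ij}^{(r)}\equiv\si_{ij}^{(r)}$; hence each constrained $s$-monomial equals the corresponding constrained $\si$-monomial plus a $ZX(\mfg_N,\mcG)^{tw}$-combination of products of $\si$'s of strictly smaller total upper index. Re-expanding the latter in the basis of Corollary~\ref{Y:PBW} — using only the straightening relations of $Y(\mfg_N,\mcG)^{tw}$, which never increase the total upper index — and collecting $w$-factors shows that the transition between the two families of monomials is unitriangular with respect to the filtration refining the total upper index by the $\mathfrak{m}$-adic order; since $\bigcap_{m\ge 0}\mathfrak{m}^m X(\mfg_N,\mcG)^{tw}=0$, a standard successive-approximation argument shows it is invertible, so the second family is a basis as well. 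This parallels, and proceeds along the same lines as, \cite[Cor.~3.4]{GR}. I expect this last step — making "unitriangular with respect to a suitable filtration" literally correct once the correction terms $q^{(k)}\si_{ij}^{(r-k)}$ are multiplied by $w$-monomials and straightened in the $Y(\mfg_N,\mcG)^{tw}$-basis — to be the only delicate point.
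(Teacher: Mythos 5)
Your overall route — pass through the tensor decomposition $X(\mfg_N,\mcG)^{tw}\cong ZX(\mfg_N,\mcG)^{tw}\ot Y(\mfg_N,\mcG)^{tw}$ from Corollary~\ref{C:TX=ZTX*TY}, use the PBW basis of $Y(\mfg_N,\mcG)^{tw}$ from Corollary~\ref{Y:PBW}, and convert $\si$-monomials to $s$-monomials using $S(u)=q(u)\Si(u)$ from \eqref{S=qSi} — is the right one, and your two claims about $q(u)$ (its coefficients lie in $ZX(\mfg_N,\mcG)^{tw}$ and the positive-degree ones lie in $\mathfrak{m}=(w_2,w_4,\ldots)$) are correct and established exactly as you say, by solving $w(u)=q(u)q(u+\ka)$ recursively. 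The paper itself simply cites \cite[Cor.~3.4]{GR}, so there is no in-text proof to compare against word for word; but the argument there is of this type.

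The ``delicate point'' you flag is real but can be handled more cleanly than in your last paragraph, and in a way that dispenses with the $\mathfrak{m}$-adic refinement and the appeal to $\bigcap_{m}\mathfrak{m}^m X=0$ entirely. Set $F_dX$ to be the $ZX(\mfg_N,\mcG)^{tw}$-span of ordered, constrained $\si$-monomials of total upper index $\le d$; by Corollary~\ref{Y:PBW} and Corollary~\ref{C:TX=ZTX*TY}, each $F_dX$ is a free $ZX(\mfg_N,\mcG)^{tw}$-module of \emph{finite} rank (there are only finitely many such monomials). The crucial observation — which you should state and verify rather than merely assert — is that the defining relations of $Y(\mfg_N,\mcG)^{tw}$ all respect the total-upper-index filtration: in the coefficient expansion of the reflection equation \eqref{[s,s]}, the rational kernels $\tfrac{1}{u\pm v}$, $\tfrac{1}{u\pm v-\ka}$, etc.~strictly drop the total degree, so $[\si^{(a)},\si^{(b)}]$ has total index $<a+b$; the symmetry relation \eqref{s=s} only involves $\si_{ij}(u)$, $\si_{ij}(\ka-u)$ and $\tfrac{1}{2u-\ka}$, $\tfrac{1}{2u-2\ka}$, whose $u^{-r}$-coefficients involve only $\si^{(s)}$ with $s\le r$; and the $u^{-r}$-coefficient of the unitarity relation $\Si(u)\Si(-u)=I$ expresses linear combinations of $\si^{(r)}_{**}$ in terms of bilinear expressions $\si^{(s)}\si^{(t)}$ with $s+t=r$. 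Hence $F_aX\cdot F_bX\subseteq F_{a+b}X$, and each product of unconstrained $\si$'s, once straightened in the basis of Corollary~\ref{Y:PBW}, stays within its filtration level. With this, $s^{(r)}_{ij}=\si^{(r)}_{ij}+\sum_{k\ge1}q_k\si^{(r-k)}_{ij}$ shows directly that $s^{(r)}_{ij}\in F_rX$ and $s^{(r)}_{ij}\equiv\si^{(r)}_{ij}$ modulo $F_{r-1}X$, and then for any ordered constrained $s$-monomial $m_s$ of total index $d$ one gets $m_s\equiv m_\si$ modulo $F_{d-1}X$. This is a genuine unitriangular change of $ZX(\mfg_N,\mcG)^{tw}$-basis on each finite-rank free module $F_dX$, invertible level by level — no completion or successive-approximation argument is needed. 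Taking the union over $d$ and tensoring with the $\C$-basis $\{w_2^{a_1}w_4^{a_2}\cdots\}$ of $ZX(\mfg_N,\mcG)^{tw}$ finishes the proof. So: correct plan, correct preliminary observations, but the last paragraph should be replaced by this cleaner filtration argument, which also makes the appeal to the algebra-filtration property of $Y(\mfg_N,\mcG)^{tw}$ explicit rather than implicit.
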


\if0 
\begin{thrm}\label{GR:thrm.Yangian_as_subalgebra}
Let $\wt{Y}(\mfg_N,\mcG)^{tw}$ be the subalgebra of $\wt Y(\mfg_N)$ generated by the coefficients $\si_{ij}(u)$ of $\Si(u)$ defined by $\Si(u) = \mcT(u-\ka/2) \mcG(u) \mcT^t(-u+\ka/2)$.
Then $\wt{Y}(\mfg_N,\mcG)^{tw}$ is a subalgebra of $X(\mfg_N,\mcG)^{tw}$ and the quotient homomorphism $X(\mfg_N,\mcG)^{tw} \onto Y(\mfg_N,\mcG)^{tw}$ induces
an isomorphism between $\wt{Y}(\mfg_N,\mcG)^{tw}$ and $Y(\mfg_N,\mcG)^{tw}$.  Moreover, $X(\mfg_N,\mcG)^{tw}$ is isomorphic to $ZX(\mfg_N,\mcG)^{tw} \otimes \wt{Y}(\mfg_N,\mcG)^{tw}$. 
\end{thrm}
\fi


\subsection{Reflection algebras of types B-C-D} \label{sec:RA-BCD}

It was shown in \cite[Sec.~4]{GR} that certain quotients of the extended reflection algebra $\mcX\mcB(\mcG)$ defined by the reflection equation are isomorphic to twisted Yangians $X(\mfg_N,\mcG)^{tw}$ and $Y(\mfg_N,\mcG)^{tw}$. In this subsection we summarize the results obtained in {\it loc.~cit.}

Let $N$ be as in Subsection \ref{sec:Y-BCD}. Introduce elements $\tl\mss_{ij}^{(r)}$ with $-n \le i,j \le n$ and $r\in\Z_{\ge 0}$ such that $\tl\mss^{(0)}_{ij}= g_{ij}$ and combine them into an $S$-matrix $\wt\msS(u)$ in the same way as we did for the $T$-matrix $T(u)$.


\begin{defn} \label{D:XBG}
The extended reflection algebra $\mcX\mcB(\mcG)$ is the unital associative $\C$-algebra generated by elements $\tl\mss_{ij}^{(r)}$ with $-n\le i,j\le n$, $r\in\Z_{\ge 0}$ and satisfying the reflection equation 
\eq{ \label{RE:XB}
R(u-v)\,\wt\msS_1(u)\,R(u+v)\,\wt\msS_2(v) = \wt\msS_2(v)\,R(u+v)\,\wt\msS_1(u)\,R(u-v) ,
}
or equivalently \eqref{[s,s]} with $s_{ij}(u)$ replaced with $\tl\mss_{ij}(u)$.
\end{defn}

Let $h(u)\in1+u^{-1}\C[[u^{-1}]]$ and let $A\in GL(N)$ be such that $AA^t=I$ and $A\mcG A^t=\mcG$.

\begin{prop} \label{Aut(XBG)}
The maps
\begin{equation} \wt\nu_h : \wt\msS(u) \mapsto h(u)\,\wt\msS(u), \quad \wt\ga : \wt\msS(u) \mapsto \wt\msS(-u)^{-1}, \quad \wt\al_A  : \wt\msS(u) \mapsto A\,\wt\msS(u)\, A^t \label{tnu_h}
\end{equation}
are automorphisms of $\mcX\mcB(\mcG)$.
\end{prop}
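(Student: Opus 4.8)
The statement asserts that the three maps $\wt\nu_h$, $\wt\ga$, $\wt\al_A$ are algebra automorphisms of $\mcX\mcB(\mcG)$. Since $\mcX\mcB(\mcG)$ is defined by generators $\tl\mss_{ij}^{(r)}$ subject only to the reflection equation \eqref{RE:XB} (together with the constant-term condition $\tl\mss_{ij}^{(0)}=g_{ij}$), in each case the plan is the same two-step verification: first check that the proposed image matrix $\wt\msS'(u)$ still satisfies \eqref{RE:XB} and has the correct constant term, so that the assignment extends to a well-defined algebra homomorphism by the universal property; then exhibit a two-sided inverse, so that it is an automorphism. I would handle the three maps in increasing order of difficulty.

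\emph{The map $\wt\al_A$.} Conjugating \eqref{RE:XB} on both sides by $A\ot A$ and using $A\mcG A^t=\mcG$ (so the constant term $g_{ij}$ is preserved) together with the standard fact that the $R$-matrix of \eqref{R(u)}b) commutes with $A\ot A$ for $A\in GL(N)$ with $AA^t=I$ (because $P$ and $Q$ do — $P$ commutes with any $B\ot B$, and $Q$ commutes with $B\ot B$ precisely when $BB^t$ is scalar, here $=I$), one sees $A\,\wt\msS(u)\,A^t$ solves \eqref{RE:XB}. The inverse is $\wt\al_{A^t}=\wt\al_{A^{-1}}$. This is essentially the argument already recorded for $\al_A$ in Proposition \ref{P:Aut(X)} and Remark \ref{R:TX-al_A}, adapted to the reflection equation.

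\emph{The map $\wt\nu_h$.} Replacing $\wt\msS_1(u)\mapsto h(u)\wt\msS_1(u)$ and $\wt\msS_2(v)\mapsto h(v)\wt\msS_2(v)$ in \eqref{RE:XB} multiplies both sides by the scalar $h(u)h(v)$, so \eqref{RE:XB} is preserved; the constant term becomes $h(u)\cdot g_{ij}$, and since $h(u)\in 1+u^{-1}\C[[u^{-1}]]$ the coefficient of $u^0$ is still $g_{ij}$, matching the requirement $\tl\mss^{(0)}_{ij}=g_{ij}$ — so one must be slightly careful and note that $\wt\nu_h$ rescales the higher coefficients consistently. The inverse is $\wt\nu_{h^{-1}}$, where $h(u)^{-1}\in 1+u^{-1}\C[[u^{-1}]]$.

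\emph{The map $\wt\ga$ — the main obstacle.} Here one must show $\wt\msS(-u)^{-1}$ satisfies \eqref{RE:XB}. I would argue as follows: set $\wt\msS'(u)=\wt\msS(-u)^{-1}$. In \eqref{RE:XB}, substitute $u\mapsto -u$, $v\mapsto -v$, then use the Yang–Baxter equation \eqref{YBE} and the unitarity $R(u)R(-u)=(1-u^{-2})I$ of the $R$-matrix to move the $R$-factors past the inverses; conjugating \eqref{RE:XB} (with $u,v$ replaced by $-u,-v$) by $\wt\msS_1(-u)^{-1}\wt\msS_2(-v)^{-1}$ on appropriate sides and invoking $R_{12}(u-v)$, $R_{12}(u+v)$ invertibility yields the reflection equation for $\wt\msS'$ with a rearrangement of the arguments; the identity $R_{12}(u+v)$ is symmetric in the needed sense because it depends on $u+v$, whereas $R_{12}(u-v)\leftrightarrow R_{12}(v-u)$ must be handled via unitarity. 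The constant term of $\wt\msS(-u)^{-1}$ is $\mcG^{-1}=\mcG$ (using $\mcG^2=I$, which holds for all the diagonal $\mcG$ in Subsection \ref{sec:symm-pairs}), so the constant-term condition holds. Finally $\wt\ga$ is its own inverse: $\wt\ga^2:\wt\msS(u)\mapsto (\wt\msS(-(-u))^{-1})^{-1}=\wt\msS(u)$. The genuinely delicate point, and where I expect to spend the most effort, is the bookkeeping with the $R$-matrices: one needs the precise intertwining relations — a "crossing-unitarity" type identity relating $R(u)$, $R^t(-u-\kappa)$ and so on — to reorganize \eqref{RE:XB} into the form \eqref{RE:XB} for the inverted matrix. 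This is the analogue of the standard verification that $\msS(u)\mapsto \msS(-u)^{-1}$ is a symmetry of a reflection algebra, and I would model the computation on the proof of the corresponding statement in \cite{GR} (and the classical case in \cite{MNO}).
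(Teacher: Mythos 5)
The paper does not actually prove this proposition: it is stated as a recollection from \cite[Sec.~4]{GR}, where the reader is referred for details. Your two-step strategy --- check that the proposed image matrix still satisfies the defining reflection equation \eqref{RE:XB} with the correct constant term $\mcG$, then exhibit a two-sided inverse --- is the right one and is certainly what underlies the reference, and your treatments of $\wt\nu_h$ and $\wt\al_A$ are both sound. So the proposal is correct in substance.

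The one place you go astray is in sizing up $\wt\ga$, which you flag as the main obstacle and propose to attack using the Yang--Baxter equation and a ``crossing-unitarity'' identity involving $R^t(-u-\kappa)$. None of that is needed; the argument is essentially a one-liner from the unitarity relation $R(w)R(-w)=(1-w^{-2})I$ alone. Replace $(u,v)$ by $(-u,-v)$ in \eqref{RE:XB} and take the matrix inverse of both sides (which reverses the order of the four factors). Each $R(-w)^{-1}$ equals $(1-w^{-2})^{-1}R(w)$, and the two scalar factors $(1-(u+v)^{-2})^{-1}$ and $(1-(u-v)^{-2})^{-1}$ cancel between the two sides; what remains is exactly \eqref{RE:XB} for $\wt\msS'(u):=\wt\msS(-u)^{-1}$. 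The only remaining point --- which you correctly raise --- is the constant-term requirement, which needs $\mcG^{-1}=\mcG$, i.e.\ $\mcG^2=I$; this does hold for every $\mcG$ listed in Subsection \ref{sec:symm-pairs} (including the non-diagonal DI(b) choice), so $\wt\msS(-u)^{-1}$ is a well-defined power series over the algebra with constant term $\mcG$. That $\wt\ga$ is its own inverse is then immediate. So your plan is correct, but the bookkeeping you were bracing for does not materialize.
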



\begin{defn} \label{D:BG}
The reflection algebra $\mcB(\mcG)$ is the quotient of $\mcX\mcB(\mcG)$ by the ideal generated by the coefficients of the symmetry relation \eqref{TX-Sym} with $S(u)$ replaced with  $\wt\msS(u)$, or equivalently \eqref{s=s} with $s_{ij}(u)$ replaced with $\tl\mss_{ij}(u)$.
\end{defn}

By \cite[Prop.~5.1 and Cor.~5.1]{GR}, there exists a unique formal power series $\msc(u)=1+\sum_{r\ge1} \msc_r u^{-1}$ with coefficients central in $\mcX\mcB(\mcG)$ such that
\eq{ \label{sc(u)}
Q\,\wt\msS_1(u)\,R(2u-\ka)\,\wt\msS_2(\ka-u)^{-1} = \wt\msS_2(\ka-u)^{-1} R(2u-\ka)\,\wt\msS_1(u)\,Q = p(u)\,\msc(u)\,Q 
}
with $p(u)$ given by \eqref{p(u)}. The uniqueness of $\msc(u)$ is a consequence of the invertibility of $p(u)$ - see \eqref{p(u)p(k-u)}. Moreover, $\msc(u)$ satisfies the relation $\msc(u)^{-1}=\msc(\ka-u)$ and the odd coefficients are algebraically independent.
By \cite[Thm.~4.2 and Cor.~5.2]{GR}, we have the following isomorphism of algebras:
\eq{
X(\mfg_N,\mcG)^{tw} \cong \mcB(\mcG) \cong \mcX\mcB(\mcG) / (\msc(u) - 1) . \label{iso:TX-BG}
}

Combining \eqref{tnu_h} with \eqref{sc(u)} we have that $\wt\nu_h(\msc(u)) = h(u)\,h(\ka-u)^{-1}\msc(u)$. Let $\msv(u)$ be the unique central invertible power series with constant term $1$ such that $\msc(u) = \msv(u)^2$. Hence $\msc(u) = \msv(u)\,\msv(\ka-u)^{-1}$. 
Set $\mss_{ij}(u) = \msv(u)^{-1}\,\wt\mss_{ij}(u)$ and let series $h(u)$ be such that $h(u)^{-1} = h(\ka-u)$. Then, by \cite[Prop.~5.2 and Thm.~5.3]{GR}, the subalgebra of $\mcX\mcB(\mcG)$ generated by the coefficients $\mss_{ij}^{(r)}$ with $r\in\Z_{\ge0}$ of the series $\mss_{ij}(u)$ is the $\wt\nu_h$-stable subalgebra of $\mcX\mcB(\mcG)$ and is isomorphic to $\mcB(\mcG)$. We will denote by $\mcZ\mcX(\mcG)$ the subalgebra of $\mcX\mcB(\mcG)$ generated by the odd coefficients of $\msc(u)$.

\smallskip

Let $\msS(u)$ denote the $S$-matrix of $\mcB(\mcG)$. By \cite[Prop.~4.1]{GR}, the product $\msS(u)\,\msS(-u) = \msw(u)\cdot I$ is a scalar matrix, where $\msw(u)$ is an even formal power series in $u^{-1}$ with coefficients central in $\mcB(\mcG)$. We will denote by $\mcZ\mcB(\mcG)$ the subalgebra of $\mcB(\mcG)$ generated by the even coefficients of $\msw(u)$.


\begin{defn} \label{D:UBG}
The unitary reflection algebra $\mcU\mcB(\mcG)$ is the quotient of $\mcB(\mcG)$ by the ideal generated by the coefficients of the unitary relation $\msS(u)\,\msS(-u)=I$.
\end{defn}

By \cite[Thm.~4.1]{GR}, we have the following isomorphism of algebras:
\eq{
Y(\mfg_N,\mcG)^{tw} \cong \mcU\mcB(\mcG) \cong \mcB(\mcG) / (\msw(u) - 1) . \label{iso:TY-UBG}
}

It will be convenient to denote $\mcX\mcB(\mcG)$ by $\wt X(\mfg_N,\mcG)^{tw}$ and $\mcZ\mcX(\mcG)$ by $\wt{ZX}(\mfg_N,\mcG)^{tw}$.


\subsection{Yangians of type A} \label{sec:Y-A}

We briefly recall the algebraic properties of the Yangian $Y(N)$ of the Lie algebra $\mfgl_N$ and the Yangian $SY(N)$ of the Lie algebra $\mfsl_N$. For more details consult~\cite{MNO,Mo5}. 

\smallskip

Let $N\ge2$ and let the matrix $T(u)$ be defined in the same way as in Subsection \ref{sec:Y-BCD}. Moreover, let $R(u)$ be given by a) of \eqref{R(u)}.  This $R$-matrix will also be used in Subsections \ref{sec:TY-Ols} and \ref{sec:TY-MR}.

\begin{defn}
The Yangian $Y(N)$ is the unital associative $\C$-algebra generated by elements $t_{ij}^{(r)}$ with $-n \le i,j \le n$ and $r\in\Z_{\ge 0}$ satisfying the relation \eqref{RTT}, or equivalently
\eq{ \label{RTT:A}
[\, t_{ij}(u),t_{kl}(v)] =\frac{1}{u-v} \Big(t_{kj}(u)\, t_{il}(v)-t_{kj}(v)\, t_{il}(u)\Big) .
}
The Hopf algebra structure of $Y(N)$ is the same as in \eqref{Hopf:X}.
\end{defn}

All the maps listed in Proposition \ref{P:Aut(X)} are also automorphisms of $Y(N)$, and in addition so is the map $\beta_A : T(u) \mapsto A\,T(u)\, A^{-1}$, where $A\in GL(N)$ is any invertible matrix.

\begin{defn} 
The special Yangian $SY(N)$ is the subalgebra of $Y(N)$ given by
\eq{ 
SY(N) = \{ y \in Y(N) : \mu_f(y) = y \text{ for any } f=f(u)\in 1+ u^{-1}\C[[u^{-1}]] \}. \label{SY(N)}
}
It is also called the Yangian of $\mfsl_N$. 
\end{defn}

There exists a distinguished central series $\qdet T(u)$, called the quantum determinant, whose  coefficients $d_1,d_2,\ldots$ are, by \cite[Thm.~2.13, Cor.~2.18]{MNO}, algebraically independent and generate the whole centre $Z(N)$ of $Y(N)$. Moreover, the quotient algebra $Y(N) / (\qdet T(u) - 1)$ is isomorphic to the Yangian $SY(N)$ and $Y(N) \cong Z(N) \ot SY(N)$. Furthermore, the centre of $SY(N)$ is trivial.

Let $d(u)$ be the unique series such that $\qdet T(u) = d(u)\,d(u-1)\cdots d(u-N+1)$. Then $\mu_f(d(u)) = f(u)\,d(u)$ and hence the matrix $\mcT(u) = d(u)^{-1} T(u)$ is stable under the action of $\mu_f$, i.e.~it is the $T$-matrix of $SY(N)$.

\begin{rmk}
In \cite{Mo5}, the algebras $Y(N)$ and $SY(N)$ are denoted by $Y(\mfgl_N)$ and $Y(\mfsl_N)$, respectively. We will use the former notation to avoid possible confusion with the Yangians $X(\mfg_N)$ and $Y(\mfg_N)$ of the Lie algebra~$\mfg_N$.
\end{rmk}


\subsection{Olshanskii Twisted Yangians} \label{sec:TY-Ols}

In this subsection we briefly recall the presentation of the twisted Yangians $Y^\pm(N)$ introduced by G. Olshanskii in \cite{Ol} and surveyed in \cite{MNO}.

\smallskip

\begin{defn}
The twisted Yangian $Y^\pm(N)$ is the subalgebra of $Y(N)$ generated by the coefficients $s_{ij}^{(r)}$ with $-n\le i,j \le n$ and $r\in \Z_{\ge0}$ of the matrix elements $s_{ij}(u)$ of the $S$-matrix
\eq{
S(u) = T(u) \, T^t(-u) . \label{S=TT}
}
\end{defn}

We have that $s_{ij}(u) = \sum_{a=-n}^n \theta_{aj}\,t_{ia}(u)\,t_{-j,-a}(-u)$ and
\eq{
\Delta(s_{ij}(u)) = \sum_{-n\le a,b \le n} \theta_{ja} t_{ia}(u)\,t_{-j,-b}(-u) \ot s_{ab}(u),
}
which implies that $Y^\pm(N)$ is a left coideal subalgebra of $Y(N)$. 

\begin{lemma}
The $S$-matrix $S(u)$ defined by \eqref{S=TT} satisfies the twisted reflection equation
\eq{
R(u-v)\,S_1(u)\,R^t(-u-v)\,S_2(v) = S_2(v)\,R^t(-u-v)\,S_1(u)\,R(u-v), \label{RE-Ols}
}
where $R(u)$ is given by \eqref{R(u)} a). Equivalently
\spl{ \label{[s,s]-Ols}
[\,s_{ij}(u),s_{kl}(v)]&=\frac{1}{u-v}\Big(s_{kj}(u)\,s_{il}(v)-s_{kj}(v)\,s_{il}(u)\Big)\\
& -\frac{1}{u+v}\Big(\theta_{k,-j}\,s_{i,-k}(u)\,s_{-j,l}(v)-\theta_{i,-l}\,s_{k,-i}(v)\,s_{-l,j}(u)\Big)\\
& +\frac{1}{u^2-v^2}\, \theta_{i,-j} \Big( s_{k,-i}(u)\,s_{-j,l}(v) - s_{k,-i}(v)\,s_{-j,l}(u)\Big) ,
}
and the symmetry relation
\eq{
S^t(u) = S(-u) \pm \frac{S(u)-S(-u)}{2u}.  \label{Sym-Ols}
}
\end{lemma}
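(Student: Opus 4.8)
The plan is to reduce all three assertions to the single defining relation \eqref{RTT:A} of $Y(N)$, along the same lines as the proof of the type B--C--D analogue \eqref{TX-RE} in Lemma~\ref{L:TX-RE} and as in \cite{Ol,MNO}. The preparatory step is to record the companions of the $RTT$ relation \eqref{RTT} obtained by transposing one or both of the two tensor factors. Since $P^{t_1}=P^{t_2}=Q$ and $P^{t_1t_2}=P$ for the $\mfgl_N$ $R$-matrix, one has $R^{t_1}(u)=R^{t_2}(u)=R^t(u)$ and $R^{t_1t_2}(u)=R(u)$, and transposing \eqref{RTT} in the first, the second, and both tensor factors yields, respectively, the crossing relation
\[
T_1^{t_1}(u)\,R^t(u-v)\,T_2(v)=T_2(v)\,R^t(u-v)\,T_1^{t_1}(u),
\]
its analogue with the roles of the two factors exchanged, and an $RTT$-type relation for the pair $T_1^{t_1}(u),\,T_2^{t_2}(v)$. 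I will also use the unitarity $R(u)\,R(-u)=(1-u^{-2})\,I$ together with the associated crossing-unitarity identities for $R^t$.

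\emph{The twisted reflection equation \eqref{RE-Ols}.} Substitute $S_1(u)=T_1(u)\,T_1^{t_1}(-u)$ and $S_2(v)=T_2(v)\,T_2^{t_2}(-v)$ into the left-hand side of \eqref{RE-Ols} and transform it into the right-hand side by repeatedly moving an adjacent factor past its neighbour: the crossing relation transports $R^t(-u-v)$ across $T_1^{t_1}(-u)$, the $RTT$ relation then acts on the triple $R(u-v)\,T_1(u)\,T_2(v)$ that this produces, and after finitely many such rewrites one reaches $S_2(v)\,R^t(-u-v)\,S_1(u)\,R(u-v)$, the surplus $R$-matrices being absorbed via (crossing-)unitarity. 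This is exactly the computation of \cite[Lemmas~4.2 and 4.3]{GR} in the present case (trivial $\mcG$, type-A $R$-matrix), or of \cite[\S3]{MNO}.

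\emph{The commutation relations \eqref{[s,s]-Ols} and the symmetry relation \eqref{Sym-Ols}.} The relations \eqref{[s,s]-Ols} are simply the matrix-entry form of \eqref{RE-Ols}: writing $S(u)=\sum_{ij}E_{ij}\otimes s_{ij}(u)$, $P=\sum_{ij}E_{ij}\otimes E_{ji}$ and $Q=\sum_{ij}\theta_{ij}E_{ij}\otimes E_{-i,-j}$, one expands both sides of \eqref{RE-Ols}, equates the coefficients of $E_{ij}\otimes E_{kl}$, and rearranges; nothing beyond \eqref{RE-Ols} is needed. For \eqref{Sym-Ols} I would argue directly from \eqref{S=TT}. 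The $(i,j)$ entry of $S^t(u)$ equals $\theta_{ij}\,s_{-j,-i}(u)=\theta_{ij}\sum_a\theta_{a,-i}\,t_{-j,a}(u)\,t_{i,-a}(-u)=(\pm)\sum_a\theta_{aj}\,t_{-j,a}(u)\,t_{i,-a}(-u)$, where I have used $\theta_{ij}\theta_{a,-i}=(\pm)\theta_{aj}$. Reorder each product $t_{-j,a}(u)\,t_{i,-a}(-u)$ by means of the $v=-u$ specialization of \eqref{RTT:A}, namely $[t_{pq}(u),t_{rs}(-u)]=\tfrac{1}{2u}\bigl(t_{rq}(u)t_{ps}(-u)-t_{rq}(-u)t_{ps}(u)\bigr)$, and sum over $a$. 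The leading term $\sum_a\theta_{aj}\,t_{i,-a}(-u)\,t_{-j,a}(u)$ equals $(\pm)\,s_{ij}(-u)$ after the substitution $a\mapsto-a$ (using $\theta_{-a,j}=(\pm)\theta_{aj}$), while the correction terms are $\tfrac{1}{2u}\sum_a\theta_{aj}\,t_{ia}(u)t_{-j,-a}(-u)=\tfrac{1}{2u}\,s_{ij}(u)$ and $-\tfrac{1}{2u}\sum_a\theta_{aj}\,t_{ia}(-u)t_{-j,-a}(u)=-\tfrac{1}{2u}\,s_{ij}(-u)$. Multiplying through by the overall factor $(\pm)$ gives $(S^t(u))_{ij}=s_{ij}(-u)\pm\tfrac{1}{2u}\bigl(s_{ij}(u)-s_{ij}(-u)\bigr)$, which is \eqref{Sym-Ols}.

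The main obstacle is the bookkeeping in the reflection-equation step: one must choose the order in which the crossing relations, the $RTT$ relation and the (crossing-)unitarity identities are invoked so that the chain of rewrites actually closes on the right-hand side of \eqref{RE-Ols}, while keeping straight the conventions $R^t=R^{t_1}=R^{t_2}$ versus $R^{t_1t_2}=R$ and the orthogonal/symplectic signs carried by $\theta$. The two remaining parts are routine.
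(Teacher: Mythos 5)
The paper states this lemma without proof, recalling it from \cite{Ol} and \cite{MNO}; your proposal correctly reconstructs that standard argument. Your derivation of the symmetry relation \eqref{Sym-Ols} via the $v=-u$ specialization of \eqref{RTT:A} is carried out in full and is correct, including the sign bookkeeping $\theta_{ij}\theta_{a,-i}=\pm\theta_{aj}$ and $\theta_{-a,j}=\pm\theta_{aj}$ (upper sign orthogonal, lower symplectic), and your high-level description of the chain of crossing/$RTT$/unitarity rewrites for \eqref{RE-Ols} is the standard proof — you are right that the only delicate point is choosing the correct arguments and tensor slots for the $t_1$-, $t_2$- and $t_1t_2$-transposed $RTT$ relations, together with the fact that $R(u)$ and $R^t(v)$ commute since $PQ=QP$.
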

\begin{prop} [{\cite[Prop.~3.11]{MNO}}] \label{P:Y-Ols->U}
The map $\mathrm{ev}_{\pm} : Y^\pm(N) \to \mfU\mfg_N$, $s_{ij}(u) \mapsto \del_{ij}\,I + \big(u \pm \tfrac12\big)^{-1} F_{ij}$ is a homomorphism of algebras and the map $\iota : \mfU\mfg_N \to Y^\pm(N)$, $F_{ij} \mapsto s_{ij}^{(1)}$ is an embedding of algebras. Moreover, $\mathrm{ev}_{\pm}\circ\iota = \mathrm{id}$.
\end{prop}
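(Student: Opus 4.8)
The plan is to use the presentation of $Y^\pm(N)$ by generators $s_{ij}^{(r)}$ subject to the twisted reflection equation \eqref{RE-Ols} (equivalently the relations \eqref{[s,s]-Ols}) together with the symmetry relation \eqref{Sym-Ols}, which is established in \cite{MNO}. Granting this presentation, showing that $\mathrm{ev}_\pm$ extends to an algebra homomorphism amounts to verifying that the matrix $\bar{S}(u) = I + \bigl(u\pm\tfrac12\bigr)^{-1}F \in \End(\C^N)(u)\ot\mfU\mfg_N$, with $F = \sum_{i,j}E_{ij}\ot F_{ij}$, satisfies \eqref{RE-Ols} and \eqref{Sym-Ols}.

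First I would handle the symmetry relation, which is a short direct computation: since $F_{ij} + \theta_{ij}F_{-j,-i} = 0$ by \eqref{[F,F]} one has $\bar{S}^t(u) = I - \bigl(u\pm\tfrac12\bigr)^{-1}F$, and substituting $\bar{S}$ for $S$ on the right-hand side of \eqref{Sym-Ols} and combining partial fractions over $\bigl(u-\tfrac12\bigr)\bigl(u+\tfrac12\bigr)$ reproduces exactly this matrix, for both choices of sign. The twisted reflection equation for $\bar{S}(u)$ is the substantial step. Substituting $R(u) = I - P/u$ and clearing the denominators $u-v$ and $u+v$ turns \eqref{RE-Ols} into an identity of polynomials in $u,v$ with coefficients in $\End(\C^N)^{\ot 2}\ot\mfU\mfg_N$; equating coefficients reduces it to a finite list of relations, each of which holds either for numerical reasons — the Yang--Baxter identities \eqref{YBE} together with $P^2 = I$, $PQ = QP = \pm Q$ and $Q^2 = NQ$ — or as a consequence of the bracket relations \eqref{[F,F]} and the symmetry $F_{ij} = -\theta_{ij}F_{-j,-i}$. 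This is the calculation of \cite{MNO}; I expect the bookkeeping in it to be the only genuine obstacle, as nothing here is conceptually subtle.

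For the map $\iota$, I would check that the elements $s_{ij}^{(1)}\in Y^\pm(N)$ satisfy the defining relations \eqref{[F,F]} of $\mfg_N$. Writing $s_{ij}(u) = \delta_{ij} + s_{ij}^{(1)}u^{-1} + s_{ij}^{(2)}u^{-2} + \cdots$ and extracting the lowest-order contributions from \eqref{[s,s]-Ols} and \eqref{Sym-Ols} yields $s_{ij}^{(1)} + \theta_{ij}s_{-j,-i}^{(1)} = 0$ (from the coefficient of $u^{-1}$ in \eqref{Sym-Ols}) and $[\,s_{ij}^{(1)},s_{kl}^{(1)}] = \delta_{jk}s_{il}^{(1)} - \delta_{il}s_{kj}^{(1)} + \theta_{ij}\delta_{j,-l}s_{k,-i}^{(1)} - \theta_{ij}\delta_{i,-k}s_{-j,l}^{(1)}$ (from \eqref{[s,s]-Ols}, after clearing the denominators $u-v$, $u+v$ and $u^2-v^2$ and comparing the leading terms). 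Hence $F_{ij}\mapsto s_{ij}^{(1)}$ respects \eqref{[F,F]} and defines an algebra homomorphism $\iota\colon\mfU\mfg_N\to Y^\pm(N)$.

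Finally, $\mathrm{ev}_\pm\circ\iota = \mathrm{id}$ is immediate: since $\bigl(u\pm\tfrac12\bigr)^{-1} = u^{-1} + O(u^{-2})$, the element $\mathrm{ev}_\pm(s_{ij}^{(1)})$, which is the coefficient of $u^{-1}$ in $\delta_{ij}I + \bigl(u\pm\tfrac12\bigr)^{-1}F_{ij}$, equals $F_{ij}$. In particular $\iota$ admits a one-sided inverse, hence is injective, that is, an embedding, which completes the argument.
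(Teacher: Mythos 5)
Your argument is sound. The paper states this proposition only as a citation to \cite[Prop.~3.11]{MNO} and does not reprove it, so there is nothing internal to compare against; what you give is a faithful reconstruction of the standard argument: verify \eqref{RE-Ols} and \eqref{Sym-Ols} for $\bar{S}(u)=I+\bigl(u\pm\tfrac12\bigr)^{-1}F$, read off the $\mfg_N$ bracket and symmetry from the leading coefficients of \eqref{[s,s]-Ols} and \eqref{Sym-Ols}, and observe that $\mathrm{ev}_\pm$ is a left inverse to $\iota$. Your explicit check of the symmetry relation is correct, and the $u^{-1}v^{-1}$ coefficient of \eqref{[s,s]-Ols} does reproduce \eqref{[F,F]} as you claim.

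Two points deserve sharper treatment. First, the presentation theorem for $Y^\pm(N)$ — that the coefficients $s_{ij}^{(r)}$, subject only to \eqref{RE-Ols} and \eqref{Sym-Ols}, define an abstract algebra isomorphic to the subalgebra of $Y(N)$ generated by the entries of \eqref{S=TT} — should be stated as the hypothesis on which the well-definedness of $\mathrm{ev}_\pm$ rests, rather than mentioned in passing; the subalgebra definition \eqref{S=TT} by itself does not license defining a homomorphism out of $Y^\pm(N)$ by checking relations, and the presentation result is itself a nontrivial theorem of \cite{MNO} (cf.\ the remark at the end of Subsection~\ref{sec:TY-Ols} and \cite[\S 2.13]{Mo5}). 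Second, a citation slip: the equation labelled \eqref{YBE} in this paper is written for the BCD $R$-matrix from part b) of \eqref{R(u)}; the identities you actually invoke for \eqref{RE-Ols} involve $R(u)=I-P/u$ and $R^t(u)=I-Q/u$, and beyond the scalar relations $P^2=I$, $PQ=QP=\pm Q$, $Q^2=NQ$ you also need $PF_1=F_2P$ and the $Q$--$F$ exchange identities that follow from $F^{t_\pm}=-F$; these are where the bracket \eqref{[F,F]} and the symmetry of $F$ enter the reflection-equation check.
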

There exists a distinguished central series $\sdet S(u)$, called the Sklyanin determinant, whose even coefficients are algebraically independent and generate the whole centre $Z^\pm(N)$ of $Y^\pm(N)$. Introduce the function $\ga_N(u)$ by 
\eq{
\ga_N(u) = 1 \qu \text{for } Y^+(N) \qu\text{and}\qu \ga_N(u)=\frac{2u+1}{2u-N+1} \qu \text{for } Y^-(N).
}
By \cite[Thms.~4.7, 4.11]{MNO}, we have that $\sdet S(u) = \ga_N(u) \,\qdet T(u)\,\qdet T(-u+N-1)$.

Let $g(u)\in 1+ u^{-2}\C[[u^{-2}]]$ and let $f(u) \in 1+ u^{-1}\C[[u^{-1}]]$ be the unique power series with constant term 1 such that $g(u) = f(u)^2$. Then $g(u)=g(-u) \Rightarrow f(u)^2 = f(-u)^2 \Rightarrow f(u)=f(-u)$ because both have constant term 1. Therefore, $g(u) = f(u)f(-u)$ and we let $\nu_g$ denote the restriction of $\mu_f$ to $Y^\pm(N)$. If $g(u) = f_1(u)f_1(-u) = f_2(u) f_2(-u)$, then $f_1(u)$ and $f_2(u)$ differ by multiplication by a series $h(u)$ such that $h(u)h(-u)=1$. 

\begin{defn}\label{def:SYN}
The special twisted Yangian $SY^\pm(N)$ is the subalgebra of $Y^\pm(N)$ given by
\eq{ 
SY^\pm(N) = \{ y \in Y^\pm(N) : \nu_g(y) = y \text{ for any } g=g(u)\in 1+ u^{-2}\C[[u^{-2}]] \}. \label{STY(N)}
}
\end{defn}

\begin{prop} [{\cite[Thm.~2.9.2, Cor.~2.9.3]{Mo5}}] 
The algebra $Y^\pm(N)$ is isomorphic to the tensor product of its centre $Z^\pm(N)$ and the subalgebra $SY^\pm(N)$:
\eq{
Y^\pm(N) \cong Z^\pm(N) \ot SY^\pm(N).
}
Consequently, $SY^\pm(N) \cong Y^\pm(N) / (\sdet\,S(u) - \ga_N(u))$ and the centre of $SY^\pm(N)$ is trivial.
\end{prop}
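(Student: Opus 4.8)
The plan is to adapt to the twisted setting the argument, recalled above, that yields $X(\mfg_N)\cong ZX(\mfg_N)\ot Y(\mfg_N)$. Since $Z^\pm(N)$ is central in $Y^\pm(N)$, multiplication defines an algebra homomorphism $m\colon Z^\pm(N)\ot SY^\pm(N)\to Y^\pm(N)$, $z\ot y\mapsto zy$, and the entire statement reduces to proving that $m$ is bijective. Two preliminary identifications are useful. First, $Z^\pm(N)=Y^\pm(N)\cap Z(N)$: the coefficients of $\sdet S(u)=\ga_N(u)\,\qdet T(u)\,\qdet T(-u+N-1)$ lie in $Z(N)$, so the generators of $Z^\pm(N)$ belong to $Y^\pm(N)\cap Z(N)$, while $Y^\pm(N)\cap Z(N)\subseteq Z(Y^\pm(N))=Z^\pm(N)$ trivially. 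Second, since $\mu_f$ acts on $S(u)=T(u)\,T^t(-u)$ by $S(u)\mapsto f(u)f(-u)\,S(u)$, its restriction to $Y^\pm(N)$ depends on $f$ only through $g(u)=f(u)f(-u)$, and hence $SY^\pm(N)=Y^\pm(N)\cap SY(N)$.

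The crucial step is a factorization of the $S$-matrix obtained from $Y(N)\cong Z(N)\ot SY(N)$. Writing $T(u)=d(u)\,\mcT(u)$, with $d(u)$ the (central) series and $\mcT(u)$ the $T$-matrix of $SY(N)$ as in Subsection~\ref{sec:Y-A}, one gets
\[
S(u)=T(u)\,T^t(-u)=d(u)\,d(-u)\,\mcT(u)\,\mcT^t(-u)=:\mathfrak{z}(u)\,\mcT(u)\,\mcT^t(-u),\qquad \mathfrak{z}(u):=d(u)\,d(-u).
\]
From $\qdet T(u)=d(u)\,d(u-1)\cdots d(u-N+1)$ this gives
\[
\sdet S(u)=\ga_N(u)\,\qdet T(u)\,\qdet T(-u+N-1)=\ga_N(u)\prod_{i=0}^{N-1}\mathfrak{z}(u-i).
\]
An $N$-fold factorization $P(u)=\prod_{i=0}^{N-1}\mathfrak{z}(u-i)$ of a power series $P(u)\in1+u^{-1}\C[[u^{-1}]]$ determines $\mathfrak{z}(u)$ uniquely, by a triangular recursion whose solution expresses the coefficients of $\mathfrak{z}(u)$ as polynomials in those of $P(u)$. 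Applying this with $P(u)=\sdet S(u)/\ga_N(u)$ shows that the coefficients of $\mathfrak{z}(u)$ are polynomials in those of $\sdet S(u)$; hence they lie in $Y^\pm(N)\cap Z(N)=Z^\pm(N)$, and (using the structure of the Sklyanin determinant, cf.~\cite{Mo5}) they generate $Z^\pm(N)$. Consequently the matrix $\mcT(u)\,\mcT^t(-u)=\mathfrak{z}(u)^{-1}S(u)$ has coefficients in $Y^\pm(N)$, and since these are $\mu_f$-invariant (because $\mcT(u)$ is) they lie in $SY^\pm(N)=Y^\pm(N)\cap SY(N)$. As $Y^\pm(N)$ is generated by the coefficients of $S(u)=\mathfrak{z}(u)\,\mcT(u)\,\mcT^t(-u)$, it is generated by the coefficients of $\mathfrak{z}(u)$ together with those of $\mcT(u)\,\mcT^t(-u)$, so $m$ is surjective.

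Injectivity of $m$ is now formal: the composite
\[
Z^\pm(N)\ot SY^\pm(N)\xrightarrow{\iota\ot\iota}Z(N)\ot SY(N)\iso Y(N)
\]
of the tensor product of the two inclusions with the multiplication isomorphism coincides with $m$ followed by the inclusion $Y^\pm(N)\into Y(N)$, and over a field the tensor product of injective linear maps is injective. Hence $Y^\pm(N)\cong Z^\pm(N)\ot SY^\pm(N)$. (Comparing this decomposition with the surjectivity argument above and using injectivity once more, one also sees that $SY^\pm(N)$ is generated by the coefficients of $\mcT(u)\,\mcT^t(-u)$.)

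For the remaining assertions: the ideal of $Y^\pm(N)$ generated by the coefficients of
\[
\sdet S(u)-\ga_N(u)=\ga_N(u)\Bigl(\prod_{i=0}^{N-1}\mathfrak{z}(u-i)-1\Bigr)
\]
is contained in $Z^\pm(N)$; since $\ga_N(u)$ is an invertible power series and the factorization is triangular, it equals the ideal generated by the coefficients of $\mathfrak{z}(u)-1$, that is, the augmentation ideal of the polynomial algebra $Z^\pm(N)$. Therefore $Y^\pm(N)/(\sdet S(u)-\ga_N(u))\cong\bigl(Z^\pm(N)/(\mathfrak{z}(u)-1)\bigr)\ot SY^\pm(N)\cong SY^\pm(N)$. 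Finally $Z(SY^\pm(N))=\C$, since $Z(A\ot B)=Z(A)\ot Z(B)$ for $\C$-algebras gives $Z^\pm(N)=Z(Y^\pm(N))=Z^\pm(N)\ot Z(SY^\pm(N))$. The one genuinely non-formal point is the claim that $\mathfrak{z}(u)=d(u)\,d(-u)$, a priori only a series over the centre of the ambient Yangian $Y(N)$, has all of its coefficients inside the subalgebra $Y^\pm(N)$: this is exactly what the Sklyanin determinant identity together with the uniqueness of $N$-fold factorizations provide, and it is the step I expect to require the most care.
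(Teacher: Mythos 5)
Your proof is correct and follows essentially the same route as the cited source (Thm.~2.9.2, Cor.~2.9.3 in \cite{Mo5}), which the paper invokes without reproducing: one transports the decomposition $Y(N)\cong Z(N)\ot SY(N)$ to $Y^\pm(N)$ by means of the central series $q(u)=d(u)d(-u)$ (your $\mathfrak z(u)$), the Sklyanin determinant identity, and the triangular uniqueness of the $N$-fold factorization $\sdet S(u)/\ga_N(u)=\prod_{i=0}^{N-1}q(u-i)$, which puts the coefficients of $q(u)$ inside $Z^\pm(N)$ and hence those of $\Si(u)=q(u)^{-1}S(u)$ inside $SY^\pm(N)$. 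Your identifications $Z^\pm(N)=Y^\pm(N)\cap Z(N)$ and $SY^\pm(N)=Y^\pm(N)\cap SY(N)$, the injectivity argument via the ambient multiplication isomorphism, and the reduction of the ideal $(\sdet S(u)-\ga_N(u))$ to the augmentation ideal of $Z^\pm(N)$ are all sound, and you have correctly singled out the passage of $q(u)$ into $Y^\pm(N)$ as the one substantive step.
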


Set $q(u) = d(u)\,d(-u)$ and $\Si(u) = q(u)^{-1} S(u)$. Since $\mu_f(d(u)) = f(u)\,d(u)$, we have that $\nu_g(\Si(u)) = \Si(u)$ and thus $\Si(u)$ is the $S$-matrix of $SY^\pm(N)$.
\begin{rmk}
There is an extended twisted Yangian $\wt Y^\pm(N)$ which plays a role analogous to $\mcX\mcB(\mcG)$ for the twisted Yangian $Y^{\pm}(N)$. For the general results concerning
this algebra we refer the reader to \cite[\S 2.13]{Mo5}.
\end{rmk}


\subsection{Molev-Ragoucy reflection algebras} \label{sec:TY-MR}

In this subsection we recall the structure of Molev-Ragoucy reflection algebras $\mcB(N,q)$ introduced in \cite{MR}. We recall that, in this exceptional case, we label the rows and columns of $N\times N$ matrices from $1$ to $N$. In particular, the generators of $Y(N)$ are $t_{ij}^{(r)}$ with $1\le i,j\le N, r\ge 0$.

\begin{defn}
The reflection algebra $\mcB(N,q)$ is the subalgebra of $Y(N)$ generated by the coefficients $b_{ij}^{(r)}$ with $1\leq i,j\leq N$ and $r\in \Z_{\ge0}$ of the matrix elements $b_{ij}(u)$ of the matrix $B(u)$ given by 
\eq{
B(u) = T(u) \, \mcG \, T(-u)^{-1} , \label{B=TGT}
}
where $\mcG$ is the diagonal matrix $\mcG={\rm diag}(\veps_{1},\ldots,\veps_N)$ with $\veps_i=1$ for $1\le i \le N-q$ and $\veps_i = -1$ for $N-q < i \le N$. 
\end{defn}

The algebra $\mcB(N,q)$ is a left coideal subalgebra of $Y(N): \Delta(\mcB(N,q)) \subset Y(N) \ot \mcB(N,q)$. 

\begin{lemma} \label{L:B(u)}
The matrix $B(u)$ defined by \eqref{B=TGT} satisfies the reflection equation \eqref{TX-RE} with $R(u) = I - u^{-1}P$, or equivalently
\spl{ \label{[b,b]}
[\,b_{ij}(u),b_{kl}(v)]&=\frac{1}{u-v}\Big(b_{kj}(u)\,b_{il}(v)-b_{kj}(v)\,b_{il}(u)\Big)\\
& +\frac{1}{u+v} \sum_{a=1}^N \Big(\delta_{kj}\,b_{ia}(u)\,b_{al}(v)-
\delta_{il}\,b_{ka}(v)\,b_{aj}(u)\Big)\\
& -\frac{1}{u^2-v^2} \sum_{a=1}^N \delta_{ij}\Big(b_{ka}(u)\,b_{al}(v) - b_{ka}(v)\,b_{al}(u)\Big) ,
}
and the unitary relation $B(u)\,B(-u) = I$.
\end{lemma}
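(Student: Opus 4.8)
The plan is to derive both the reflection equation and the unitary relation directly from the defining formula \eqref{B=TGT}, $B(u) = T(u)\,\mcG\,T(-u)^{-1}$, using only the $RTT$-relation \eqref{RTT} for $T(u)$ (in the type A form \eqref{RTT:A}, with $R(u) = I - u^{-1}P$) and elementary properties of the permutation operator $P$ and the constant diagonal matrix $\mcG$. First I would record the two ingredients I need: (i) the $RTT$-relation $R(u-v)\,T_1(u)\,T_2(v) = T_2(v)\,T_1(u)\,R(u-v)$, which also holds with $u$ and/or $v$ negated, and its consequence for inverses, namely $R(u-v)\,T_1(u)^{-1}\,T_2(v)^{-1} = T_2(v)^{-1}\,T_1(u)^{-1}\,R(u-v)$ obtained by multiplying on both sides by the appropriate inverses and using that $R(u-v)$ is invertible (since $R(u)R(-u) = (1-u^{-2})I$); and (ii) the fact that $\mcG$ is constant, so $\mcG_1$ commutes with $T_2(v)$ and with $T_2(v)^{-1}$, and similarly $\mcG_2$ commutes with $T_1(u)$ and $T_1(u)^{-1}$. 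Here $\mcG$ being diagonal is what makes $P\,\mcG_1\,\mcG_2 = \mcG_1\,\mcG_2\,P$ and, more importantly for some rearrangements, governs how $P$ interacts with $\mcG_1$ and $\mcG_2$ individually.

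For the reflection equation, I would substitute $B_1(u) = T_1(u)\,\mcG_1\,T_1(-u)^{-1}$ and $B_2(v) = T_2(v)\,\mcG_2\,T_2(-v)^{-1}$ into the left-hand side of \eqref{TX-RE}, namely $R(u-v)\,B_1(u)\,R(u+v)\,B_2(v)$, and then move factors past one another one step at a time. The strategy is the standard ``telescoping'' computation: starting from $R(u-v)\,T_1(u)\,\mcG_1\,T_1(-u)^{-1}\,R(u+v)\,T_2(v)\,\mcG_2\,T_2(-v)^{-1}$, first use the $RTT$-relation to pull $R(u-v)$ through $T_1(u)$; then pass $\mcG_1$ (which commutes with everything carrying index $2$) to the right past $T_1(-u)^{-1}$; then use the inverse form of $RTT$ to move $R(u+v)$ through $T_1(-u)^{-1}\cdots T_2(v)$ appropriately; and continue, each time invoking one of the four relevant identities ($RTT$ with signs $(u-v)$, $(u+v)$ on $T$'s and on $T^{-1}$'s) together with the commuting of constant $\mcG$'s. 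After finitely many such moves the expression is transformed into the right-hand side $B_2(v)\,R(u+v)\,B_1(u)\,R(u-v)$. This is precisely the computation that proves Sklyanin's reflection equation holds for $K$-matrices of the form $T(u)\,\mcG\,T(-u)^{-1}$ whenever $\mcG$ itself solves the constant reflection equation — and a constant matrix trivially does — so the argument is bookkeeping rather than anything conceptually new; alternatively one can cite the analogue of Lemma 4.2 in \cite{GR} or the corresponding statement in \cite{MR,Sk}. The componentwise form \eqref{[b,b]} then follows by extracting matrix entries from \eqref{TX-RE} with $R(u) = I - u^{-1}P$, expanding the products $R(u-v)B_1(u)R(u+v)B_2(v)$ in terms of $P$, and comparing coefficients — exactly as \eqref{[s,s]} is obtained from \eqref{TX-RE} in the B-C-D setting, but with the $Q$-terms absent since the type A $R$-matrix has no $Q$.

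For the unitary relation, I would compute $B(u)\,B(-u) = T(u)\,\mcG\,T(-u)^{-1}\,T(-u)\,\mcG\,T(u)^{-1} = T(u)\,\mcG^2\,T(u)^{-1}$, and since $\mcG^2 = I$ (as $\mcG = \mathrm{diag}(\veps_1,\ldots,\veps_N)$ with each $\veps_i = \pm 1$), this collapses to $T(u)\,T(u)^{-1} = I$. The main obstacle, such as it is, lies in the reflection-equation computation: one must be careful about the order of operations and about which copy of $\End(\C^N)$ each $\mcG$ and each $T$ acts on, since $B_1(u)$ contains both $T_1(u)$ and $T_1(-u)^{-1}$ and the $R$-matrices must be threaded past each factor in the correct sequence; but no genuinely new idea beyond the repeated use of \eqref{RTT} (and its negated and inverted forms) is required, and the whole statement is essentially Lemma 4.1–4.2 of \cite{MR} recalled in our notation.
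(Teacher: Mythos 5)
The paper itself does not prove Lemma \ref{L:B(u)}: Subsection \ref{sec:TY-MR} opens with the words ``we recall the structure of Molev--Ragoucy reflection algebras $\mcB(N,q)$ introduced in \cite{MR}'' and the lemma is stated without argument, with the unitary relation and the Sklyanin determinant formula \eqref{sdetB(u)} simply imported from that reference. So there is no in-paper proof to compare against; your task was effectively to supply one, and the sketch you give is the standard Sklyanin construction and is correct in outline. The unitarity computation $B(u)B(-u) = T(u)\mcG^2 T(u)^{-1} = I$ is exactly right, and the telescoping derivation of the reflection equation (use the $RTT$ relation, its inverted/negated forms, and the constant reflection equation for $\mcG$, threading each $R$-matrix past one $T$-factor at a time) is the correct mechanism; both the symmetry $R_{12}(u) = R_{21}(u)$ of the type A rational $R$-matrix and the unitarity identity $R(u)R(-u) = (1-u^{-2})I$ (so that $R(v-u)^{-1}$ is a scalar multiple of $R(u-v)$) are needed at the steps where you commute $R$ past a pair of $T^{-1}$'s, and you correctly flag both.

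One point in your justification is stated too strongly. You write that $\mcG$ solves the constant reflection equation ``and a constant matrix trivially does.'' This is false in general: for $R(u) = I - u^{-1}P$, expanding
\[
R(u-v)\,\mcG_1\,R(u+v)\,\mcG_2 - \mcG_2\,R(u+v)\,\mcG_1\,R(u-v)
\]
using $P\mcG_1 = \mcG_2 P$ leaves a residual term $\tfrac{1}{u+v}\bigl(\mcG_2^2 - \mcG_1^2\bigr)P$, which vanishes if and only if $\mcG^2$ is a scalar multiple of the identity. The argument therefore genuinely uses $\mcG^2 = I$, not merely that $\mcG$ is a constant matrix. Since the Molev--Ragoucy matrix $\mcG = \mathrm{diag}(\veps_1,\ldots,\veps_N)$ with $\veps_i = \pm 1$ does satisfy $\mcG^2 = I$, the conclusion stands, but the sentence as written would mislead a reader into thinking the hypothesis on $\mcG$ is vacuous. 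With that caveat corrected, the proposal is a correct and complete proof of a statement the paper cites without argument.
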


There exists a distinguished central series $\sdet B(u)$, called the Sklyanin determinant, whose odd coefficients are algebraically independent and generate the whole centre $\mcZ(N,q)$ of $\mcB(N,q)$. Introduce the function $\theta(u)$ by 
\eq{
\theta(u) = (-1)^q \prod_{1\le i\le N-q} (2u-2N+2i) \prod_{1\le i\le q} (2u-2N+2i) \prod_{1\le i\le N} \frac{1}{2u-2N+i+1}.
} 
By \cite[Thm.~3.4]{MR}, we have the identity
\eq{
\sdet B(u) = \theta(u)\, \qdet T(u) \,(\qdet T(-u+N-1))^{-1}. \label{sdetB(u)}
}

Let $g(u) \in 1 + u^{-1}\C[[u^{-1}]]$ be such that $g(u)g(-u)=1$ and let $f(u) \in 1 + u^{-1} \C[[u^{-1}]]$ be any power series such that $g(u) = f(u) f(-u)^{-1}$: the existence of $f(u)$ can be established as before Definition \ref{def:SYN}. Such a power series is unique up to multiplication by an invertible even series.

\begin{defn}
The special reflection algebra $\mcS\mcB(N,q)$ is the subalgebra of $\mcB(N,q)$ given by
\eq{
\mcS\mcB(N,q) = \{ b \in \mcB(N,q) : \nu_g(b) = b \text{ for any } g=g(u) \in 1 + u^{-1}\C[[u^{-1}]] \text{ such that } g(u)g(-u)=1 \}.
}
\end{defn}

\begin{prop}
The algebra $\mcB(N,q)$ is isomorphic to the tensor product of its centre $\mcZ(N,q)$ and the subalgebra $\mcS\mcB(N,q)$:
\eq{
\mcB(N,q)\cong \mcZ(N,q) \ot \mcS\mcB(N,q).
}
Consequently, we have the isomorphism of algebras $\mcS\mcB(N,q) \cong \mcB(N,q) / ( \sdet B(u) - \theta(u) )$ and the centre of $\mcS\mcB(N,q)$ is trivial.
\end{prop}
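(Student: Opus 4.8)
The plan is to adapt the argument for the twisted Yangians $Y^{\pm}(N)$ from \cite[\S2.9]{Mo5} and \cite{MNO}: produce a central power series that rescales $B(u)$ into a matrix with entries in $\mcS\mcB(N,q)$, deduce surjectivity of the multiplication map $m\colon\mcZ(N,q)\ot\mcS\mcB(N,q)\to\mcB(N,q)$, and then obtain its injectivity from the already-recalled decomposition $Y(N)\cong Z(N)\ot SY(N)$ of Subsection~\ref{sec:Y-A}. \emph{First}, by \eqref{sdetB(u)} the series $z(u):=\theta(u)^{-1}\sdet B(u)=\qdet T(u)\,(\qdet T(-u+N-1))^{-1}$ has constant term $1$ and all coefficients in $\mcZ(N,q)$. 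Using $\qdet T(u)=d(u)\,d(u-1)\cdots d(u-N+1)$ from Subsection~\ref{sec:Y-A}, I would verify that $z(u)=\prod_{i=0}^{N-1}d_B(u-i)$ with $d_B(u):=d(u)\,d(-u)^{-1}$; by uniqueness of such an $N$-fold factorisation, $d_B(u)$ is the unique series in $1+u^{-1}\C[[u^{-1}]]$ with this property, so its coefficients are rational polynomial expressions in those of $z(u)$ and hence lie in $\mcZ(N,q)$. Since $\mu_f(d(u))=f(u)\,d(u)$, the identity $B(u)=T(u)\mcG T(-u)^{-1}$ together with $T(u)=d(u)\mcT(u)$ and the centrality of the coefficients of $d(u)$ gives $B(u)=d_B(u)\,\mcT(u)\,\mcG\,\mcT(-u)^{-1}$, and also $\nu_g(d_B(u))=g(u)\,d_B(u)$ where $g(u)=f(u)f(-u)^{-1}$.

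\emph{Next}, set $\Si(u):=d_B(u)^{-1}B(u)=\mcT(u)\,\mcG\,\mcT(-u)^{-1}$; since $\mu_f$ fixes the coefficients of $\mcT(u)$ and of $\mcT(-u)^{-1}$, those of $\Si(u)$ lie in $SY(N)$, while $\nu_g(\Si(u))=g(u)^{-1}d_B(u)^{-1}\cdot g(u)B(u)=\Si(u)$ for every admissible $g$ shows they also lie in $\mcS\mcB(N,q)$. As $B(u)=d_B(u)\,\Si(u)$ and the coefficients of $d_B(u)$ lie in $\mcZ(N,q)$, every generator $b_{ij}^{(r)}$ of $\mcB(N,q)$ lies in $\mcZ(N,q)\cdot\mcS\mcB(N,q)$, so $m$ is surjective.

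For injectivity I would use the inclusions $\mcZ(N,q)\subseteq Z(N)$ (its generators are polynomial expressions in the central coefficients of $\qdet T(u)$, via \eqref{sdetB(u)}) and $\mcS\mcB(N,q)\subseteq SY(N)$; the latter holds because on $\mcB(N,q)$ the automorphism $\mu_f$ acts only through $g(u)=f(u)f(-u)^{-1}$, so $\nu_g$-invariance for every admissible $g$ is the same as $\mu_f$-invariance for every $f$. Composing $m$ with the inclusion $\mcB(N,q)\into Y(N)$ then factors as
\[
\mcZ(N,q)\ot\mcS\mcB(N,q)\;\into\;Z(N)\ot SY(N)\;\iso\;Y(N),
\]
whose first arrow is injective because the tensor product over $\C$ of injective linear maps is injective, and whose second arrow is the isomorphism of Subsection~\ref{sec:Y-A}; hence $m$ is injective, and therefore an isomorphism. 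I expect the only genuinely delicate points to be the two identifications just used — that $d_B(u)^{-1}B(u)$ equals $\mcT(u)\,\mcG\,\mcT(-u)^{-1}$ (so that it lands in $SY(N)$) and that $\mcS\mcB(N,q)$ coincides with $\mcB(N,q)\cap SY(N)$ — together with the factorisation bookkeeping of the first step; everything else is formal given the structural results recalled in Sections~\ref{sec:Y-A}--\ref{sec:TY-MR}.

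\emph{Finally}, for the stated consequences: the coefficients of $\sdet B(u)-\theta(u)$ generate the ideal of $\mcB(N,q)$ obtained from the augmentation ideal of the polynomial algebra $\mcZ(N,q)$ (on the algebraically independent odd coefficients of $\sdet B(u)$) after translating each such generator by the corresponding coefficient of $\theta(u)$ — here one uses that the even coefficients of $\sdet B(u)$ are polynomials in the odd ones, an identity that survives the specialisation $\qdet T(u)\mapsto 1$, i.e.\ $\sdet B(u)\mapsto\theta(u)$, coming from $Y(N)\onto SY(N)$. The isomorphism $\mcB(N,q)\cong\mcZ(N,q)\ot\mcS\mcB(N,q)$ then yields $\mcB(N,q)/(\sdet B(u)-\theta(u))\cong\mcS\mcB(N,q)$, and $Z(\mcB(N,q))\cong\mcZ(N,q)\ot Z(\mcS\mcB(N,q))$ combined with $Z(\mcB(N,q))=\mcZ(N,q)$ forces $Z(\mcS\mcB(N,q))=\C$.
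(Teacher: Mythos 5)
Your proof is correct, and it follows the standard route that the paper itself has in mind when it states this proposition without proof (the analogue of Theorem 2.9.2 and Corollary 2.9.3 of \cite{Mo5} for $Y^\pm(N)$, recalled in Subsection \ref{sec:TY-Ols}): extract from $\sdet B(u)$ a central series $d_B(u)=d(u)d(-u)^{-1}$ that rescales $B(u)$ into $\Sigma(u)=\mcT(u)\mcG\mcT(-u)^{-1}$ with coefficients in $\mcS\mcB(N,q)$, giving surjectivity of the multiplication map, and then reduce injectivity to the known decomposition $Y(N)\cong Z(N)\otimes SY(N)$ via the inclusions $\mcZ(N,q)\subseteq Z(N)$ and $\mcS\mcB(N,q)=\mcB(N,q)\cap SY(N)$. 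The two points you flag as delicate are handled correctly by the arguments you give, and the final passage from the tensor decomposition to $\mcS\mcB(N,q)\cong\mcB(N,q)/(\sdet B(u)-\theta(u))$ and to triviality of $Z(\mcS\mcB(N,q))$ is sound.
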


Introduce the elements $\tl b_{ij}^{(r)}$ with $1\le i,j \le N$ and $r\in\Z_{\ge0}$ and the matrix $\wt B(u)$ given by $\wt B(u) = \sum_{1 \le i,j \le N} E_{ij} \ot \tl b_{ij}(u)$ with $\tl b_{ij}(u) = \sum_{r=0}^{\infty} \tl b_{ij}^{(r)} u^{-r}$ and $\tl b_{ij}^{(0)} = \veps_i\del_{ij}$.

\begin{defn}
The extended reflection algebra $\wt{\mcB}(N,q)$ is the associative $\C$-algebra generated by the elements $\tl b_{ij}^{(r)}$ for $1\le i, j \le N$, $r \in\Z_{\ge 0}$ subject to the reflection equation \eqref{TX-RE} with $S(u)$ replaced by $\wt B(u)$.
\end{defn}

All the maps listed in Proposition \ref{Aut(XBG)} are also automorphisms of $\wt\mcB(N,q)$, and in addition so is the map $\wt\beta_A : \wt B(u) \mapsto A\,\wt B(u)\, A^{-1}$, where $A$ is any invertible matrix.

\begin{prop} [{\cite[Prop.~2.1]{MR}}] \label{P:B=Z*SB}
In the algebra $\wt\mcB(N,q)$ the product $\wt B(u)\,\wt B(-u) = \ms{f}(u)\cdot I$ where $\ms{f}(u)$ is an even series in $u^{-1}$ with coefficients central in $\wt\mcB(N,q)$. In particular, $\mcB(N,q) \cong \wt\mcB(N,q) / (\ms{f}(u)-1)$.
\end{prop}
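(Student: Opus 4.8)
**Proof proposal for Proposition \ref{P:B=Z*SB}.**

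The plan is to mimic the standard argument used for the extended Yangian $X(\mfg_N)$ (the series $z(u)$, see \eqref{z(u)}) and for the extended reflection algebra $\mcX\mcB(\mcG)$ (the series $\msc(u)$, see \eqref{sc(u)}), adapted to the type A reflection equation \eqref{TX-RE} with $R(u) = I - u^{-1}P$. First I would show that the matrix product $\wt B(u)\,\wt B(-u)$ is a scalar matrix. The key identity is the unitarity of the $R$-matrix in type A, namely $R(u)\,R(-u) = (1-u^{-2})\,I$, together with the fact that $R(u)$ specializes nicely: $R(0) = I - P$ is (up to sign) the projection onto the antisymmetric part of $\C^N \ot \C^N$. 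Concretely, I would substitute $v = -u$ into the reflection equation \eqref{TX-RE} for $\wt B(u)$, obtaining
\eq{
R(2u)\,\wt B_1(u)\,R(0)\,\wt B_2(-u) = \wt B_2(-u)\,R(0)\,\wt B_1(u)\,R(2u). \nonumber
}
Multiplying on the left by $R(2u)^{-1} = (1-4u^{-2})^{-1} R(-2u)$ and analysing the resulting relation on the one-dimensional subspace $\Ker(R(0)) = \Ker(I-P)$ (the symmetric tensors, which $P$ fixes) lets one extract that $\wt B_1(u)\,\wt B_2(-u)$ acts as a scalar there; a short manipulation then upgrades this to the statement that $\wt B(u)\,\wt B(-u) = \ms{f}(u)\cdot I$ for some formal power series $\ms{f}(u)$ whose coefficients lie in $\wt\mcB(N,q)$. (Alternatively, and perhaps more cleanly, one can argue as in \cite{GR} and \cite{MNO}: from $\wt B^{(0)} = \mcG$ with $\mcG^2 = I$ one has $\ms{f}(u) = 1 + O(u^{-1})$, and one checks directly from \eqref{[b,b]} that the off-diagonal entries of $\wt B(u)\,\wt B(-u)$ vanish and the diagonal entries coincide.)

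Next I would establish that $\ms{f}(u)$ is an even series, i.e. $\ms{f}(u) = \ms{f}(-u)$. This is immediate once the scalar property is known: replacing $u$ by $-u$ in $\wt B(u)\,\wt B(-u) = \ms{f}(u)\cdot I$ gives $\wt B(-u)\,\wt B(u) = \ms{f}(-u)\cdot I$, and since $\ms{f}(u)\cdot I$ commutes with everything, $\wt B(u)\,\wt B(-u) = \wt B(-u)\,\wt B(u)$, forcing $\ms{f}(u) = \ms{f}(-u)$.

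Then I would show the coefficients of $\ms{f}(u)$ are central in $\wt\mcB(N,q)$. The cleanest route is to multiply the reflection equation \eqref{TX-RE} (with $\wt B$ in place of $S$) on appropriate sides by $\wt B_1(-u)$ and $\wt B_2(-v)$ and use $\wt B_i(w)\,\wt B_i(-w) = \ms{f}(w)\cdot I$ to collapse products; comparing the two sides and using that $\ms{f}(u)\cdot I$ is scalar, one deduces $[\ms{f}(u), \wt B(v)] = 0$ for all $u,v$, hence $\ms{f}(u)$ is central. Equivalently one can check centrality directly at the level of the commutation relations \eqref{[b,b]} by summing suitable diagonal combinations — this is a routine but slightly tedious computation. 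Finally, for the last assertion $\mcB(N,q) \cong \wt\mcB(N,q)/(\ms{f}(u)-1)$: by definition $\mcB(N,q)$ is cut out inside $Y(N)$ by the symmetry/unitarity conditions, and \eqref{B=TGT} together with $\mcG^2 = I$ shows $B(u)\,B(-u) = I$, so the defining surjection $\wt\mcB(N,q) \onto \mcB(N,q)$ kills $\ms{f}(u) - 1$; to see it induces an isomorphism from the quotient, one invokes a PBW-type basis theorem for $\wt\mcB(N,q)$ (the analogue of Corollary \ref{X:PBW}) showing $\wt\mcB(N,q) \cong \mcZ \ot \mcB(N,q)$ with $\mcZ$ the polynomial algebra on the coefficients of $\ms{f}(u)$, exactly as in \cite[Thm.~3.1]{AMR} and \cite[Cor.~3.5, Cor.~3.6]{GR}.

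The main obstacle I anticipate is the rigorous verification of centrality of $\ms{f}(u)$ directly from \eqref{[b,b]}: the bookkeeping of the three sums in the reflection-equation commutator, when one substitutes the scalar relation, is error-prone, and getting the indices and the roles of $u \pm v$, $u^2 - v^2$ straight requires care. The scalar property itself and evenness are comparatively painless, and the quotient statement follows formally from the PBW basis once that is in hand (it is the exact parallel of the arguments already cited from \cite{GR} and \cite{AMR}, so I would simply refer to \cite[Prop.~2.1]{MR} for the details rather than reproving them).
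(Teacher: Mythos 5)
The paper itself does not prove this statement; it cites it as \cite[Prop.~2.1]{MR}, so your proposal must be assessed on its own terms. Your overall architecture (scalar property via a specialization of the reflection equation, then evenness, centrality, and a PBW decomposition for the quotient) is the right one, but the opening step contains a concrete error. In this paper's conventions (and in \cite{MR}'s), $R(u) = I - u^{-1}P$, so $R(0)$ is a pole, not the operator $I - P$. The correct specialization is to multiply the reflection equation through by $(u+v)$ and then let $v \to -u$: since $(u+v)R(u+v) = (u+v)I - P$, the limit is $-P$, giving
$R(2u)\,\wt B_1(u)\,P\,\wt B_2(-u) = \wt B_2(-u)\,P\,\wt B_1(u)\,R(2u)$.
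Commuting $P$ through via $P \wt B_2(-u) = \wt B_1(-u) P$ and setting $X = \wt B(u)\wt B(-u)$, $Y = \wt B(-u)\wt B(u)$, one obtains $(X_1 - Y_2)\,P = \tfrac{1}{2u}(X_2 - Y_2)$, and comparing entries on basis vectors $e_i \ot e_j$ forces $X = Y$ and $X$ scalar simultaneously, for $N\ge 2$. Your remark that $\Ker(I-P)$ is "one-dimensional" is also off: that kernel is the full symmetric subspace of $\C^N\ot\C^N$, of dimension $\binom{N+1}{2}$, and in any case the scalar statement is about $\wt B(u)\wt B(-u)$ on $\C^N$, not about $\wt B_1(u)\wt B_2(-u)$ restricted to a subspace of $\C^N\ot\C^N$.

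A second, subtler gap is in your evenness argument. You assert that ``since $\ms{f}(u)\cdot I$ commutes with everything'' one may conclude $\wt B(u)\wt B(-u) = \wt B(-u)\wt B(u)$; but $\ms{f}(u)$ has coefficients in the noncommutative algebra $\wt\mcB(N,q)$, and the claim that $\ms{f}(u)$ commutes with the entries $\wt b_{ij}(v)$ is exactly the centrality you have not yet established at that point — so the step as written is circular. The clean fix is that the corrected scalar argument above already yields $X = Y$ directly, and evenness follows at once. Your centrality sketch (feed $\wt B(w)\wt B(-w) = \ms{f}(w)I$ back into the reflection equation at a shifted argument to produce $[\ms{f}(u), \wt B(v)] = 0$) and the quotient via the PBW tensor decomposition are both sound in outline, as is the alternative you mention of checking off-diagonal vanishing and equality of diagonal entries directly from \eqref{[b,b]}.
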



\section{Highest Weight Representations}  \label{sec:HWT}

In this section, we develop a highest weight theory for representations of the extended twisted Yangians $X(\mfg_N,\mcG)^{tw}$ of type B-C-D where the matrix $\mcG$ is assumed to be diagonal. When $\mcG$ is non-diagonal, the corresponding symmetric pair $(\mfg_N,\mfg_N^\rho)$ is of type DI(b), and the associated twisted Yangians possess many exceptional features which shall be 
addressed in future work. 

\smallskip 

Let us begin by recalling some of the analogous results developed in \cite{AMR} for the extended Yangians $X(\mfg_N)$ of type B-C-D in their $RTT$-presentations. 

\smallskip 

A representation $V$ of $X(\mfg_N)$ is a \textit{highest weight representation} if there exists a nonzero vector $\xi\in V$ such that $V=X(\mfg_N)\,\xi$ and the following 
conditions are satisfied:
\begin{alignat*}{4}
  & t_{ij}(u)\,\xi=0 \quad &&\text{ for } && {-}n\leq i<j\leq n, \quad &&\text{ and } \nonumber\\
  & t_{ii}(u)\,\xi=\lambda_i(u)\,\xi \quad  &&\text{ for } && {-}n\leq i\leq n, && 
\end{alignat*}
and, for each $-n\leq i\leq n$, $\lambda_i(u)$ is a formal power series in $u^{-1}$ with constant term equal to $1$: 
$$\lambda_i(u)=1+\sum_{r=1}^{\infty}\lambda_{i}^{(r)}u^{-r}, \quad \lambda_i^{(r)}\in \mathbb{C}. $$
The vector $\xi$ is called the \textit{highest weight vector} of $V$, and the $N$-tuple $\lambda(u)=(\lambda_{-n}(u),\ldots,\lambda_n(u))$ is called the \textit{highest weight}
of V. 

Given an $N$-tuple $\lambda(u)$, the Verma module $M(\lambda(u))$ is defined as the quotient of $X(\mfg_N)$ by the left ideal generated by all the coefficients of the series 
$t_{ij}(u)$ with $-n\leq i<j\leq n$ and $t_{ii}(u)-\lambda_i(u)$ with $-n\leq i\leq n$. The Verma module $M(\lambda(u))$ is non-trivial if and only if the components of the highest
weight satisfy 
\begin{equation}
 \frac{\lambda_{-i}(u)}{\lambda_{-i-1}(u)}=\frac{\lambda_{i+1}(u-\ka+n-i)}{\lambda_{i}(u-\ka+n-i)} \label{HWT:Ext.non-trivial}
\end{equation}
for $i\in \{0,\ldots,n-1\}$ if $\mfg_N$ is of type B and $i\in\{1,\ldots,n-1\}$ otherwise. (To obtain the exact relation in \cite[Prop.~5.14]{AMR}, replace $i$ by $n-j$, $u$ by $u+\kappa-j$ and then take the reciprocal on both sides.)  Moreover, if $M(\lambda(u))$ is non-trivial, then it has a unique irreducible (non-zero) quotient $L(\lambda(u))$, and any
irreducible highest weight $X(\mfg_N)$-module with highest weight $\lambda(u)$ is isomorphic to $L(\lambda(u))$.  If $L(\lambda(u))$ exists (i.e.~if $M(\lambda(u))$ is non-trivial) then,
by \cite[Thm.~5.16]{AMR}, it is finite-dimensional if and only if there exist monic polynomials $P_1(u),\ldots, P_n(u)$ in $u$ such that
\begin{equation}
 \frac{\lambda_{i-1}(u)}{\lambda_i(u)}=\frac{P_i(u+1)}{P_i(u)} \quad  \textit{ for all }\quad 2\leq i\leq n, \label{HWT:Ext.All.P_n} 
\end{equation}
and in addition
\begin{alignat}{2}
&\frac{\lambda_0(u)}{\lambda_1(u)}=\frac{P_1(u+1/2)}{P_1(u)} \quad &&\textit{if }\quad \mfg_N=\mfso_{2n+1}, \label{HWT:Ext.B.P_1}\\
&\frac{\lambda_{-1}(u)}{\lambda_1(u)}=\frac{P_1(u+2)}{P_1(u)} \quad &&\textit{if }\quad \mfg_N=\mfsp_{2n}, \label{HWT:Ext.C.P_1} \\
&\frac{\lambda_{-1}(u)}{\lambda_2(u)}=\frac{P_1(u+1)}{P_1(u)} \quad &&\textit{if }\quad \mfg_N=\mfso_{2n} \label{HWT:Ext.D.P_1}.
\end{alignat}

The polynomials $P_1(u),\ldots,P_n(u)$ are called the Drinfeld polynomials corresponding to $L(\lambda(u))$, and they determine the module $L(\lambda(u))$
up to twisting by an automorphism $\mu_f$ as given in \eqref{P:Aut(X)} - see Corollary 5.19 of \cite{AMR}.


\subsection{Definitions and general theory}

We now turn to the representation theory of the extended twisted Yangians $X(\mfg_N,\mcG)^{tw}$, where the symmetric pair $(\mfg_N,\mfg_N^\rho)$ is not of type DI(b).
In order to treat all cases uniformly, we introduce the following notation:
\begin{itemize}
\item Let $\mathcal{I}_N$ be the indexing set $\{0,\ldots,n\}$ if $\mfg_N$ is of type B, and $\{1,\ldots,n\}$ otherwise.
\item Whenever the symbol $[\pm]$ or $[\mp]$ occurs, the lower sign corresponds to the case where the pair $(\mfg_N,\mfg_N^\rho)$ is of type BI(b), while the upper sign corresponds to the cases where $(\mfg_N,\mfg_N^\rho)$ is of type BCD0, CI, DIII, DI(a), BI(a), or CII. 
\end{itemize}

\begin{defn}
A representation $V$ of $X(\mfg_N,\mcG)^{tw}$ is called a \textit{highest weight representation} if there exists a nonzero vector $\eta\in V$ such that 
 $V=X(\mfg_N,\mcG)^{tw}\eta$ and the following conditions are met: 
\begin{alignat}{4}
  &s_{ij}(u)\,\eta=0 \quad &&\text{ for }\quad &&-n\leq i<j\leq n, \quad &&\text{ and } \nonumber\\
  &s_{ii}(u)\,\eta=\mu_i(u)\,\eta \quad  &&\text{ for }\quad && i\in \mathcal{I}_N, && \label{HWT:highestweightrep}
\end{alignat}
where $\mu_i(u)$ is a formal power series in $u^{-1}$ of the following form: 
\begin{equation*}
  \mu_i(u)=g_{ii}+\sum_{r=1}^{\infty}\mu_{i}^{(r)}u^{-r}, \quad \mu_i^{(r)}\in \mathbb{C}.
\end{equation*}
Set $\mu(u)=\left(\mu_{1}(u),\ldots,\mu_{n}(u)\right)$ if $N=2n$ and  $\mu(u)=\left(\mu_0(u),\ldots,\mu_n(u)\right)$ if $N=2n+1$. We call $\mu(u)$ the \textit{highest weight} of $V$ and $\eta$ the \textit{highest weight vector}. (The expression ``pseudo highest weight'' is also commonly used in the literature, but we will follow the terminology of \cite{AMR} and \cite{Mo5}.)
\end{defn}

\noindent Given a highest weight representation $V$ with highest weight vector $\eta$, a natural question to ask is whether or not $\eta$ is a simultaneous eigenvector for the diagonal elements $s_{-i,-i}(u)$ with $1\leq i \leq n$. This is indeed the case. By relation \eqref{s=s} we have
\begin{equation}
  s_{-i,-i}(u)+\frac{1}{2u-2\ka}\sum_{\ell=1}^n s_{-\ell,-\ell}(u)=p(u)s_{ii}(\ka-u)\pm\frac{s_{ii}(u)}{2u-\ka}-\frac{1}{2u-2\ka}\sum_{\ell \in \mathcal{I}_N} s_{\ell\ell}(u). \label{HWT:neg_weights.1}
\end{equation}
Taking the sums of both sides as $i$ goes from $1$ to $n$ we obtain: 
\begin{equation}
 \left(\frac{2u-2\ka+n}{2u-2\ka}\right)\sum_{\ell=1}^ns_{-\ell,-\ell}(u)= \sum_{\ell=1}^n\left(p(u) s_{\ell\ell}(\ka-u)\pm\frac{s_{\ell\ell}(u)}{2u-\ka}\right)-\frac{n}{2u-2\ka}\sum_{\ell \in \mathcal{I}_N} s_{\ell\ell}(u).\label{HWT:neg_weights.2}
\end{equation}
Substituting this equation back into \eqref{HWT:neg_weights.1} leads to the following result.

\begin{prop}\label{HWT:Prop.neg_weights}
 Let $V$ be a highest weight representation of $X(\mfg_N,\mcG)^{tw}$ with the highest weight vector $\eta$ and the highest weight $\mu(u)$. Then $\eta$ is an eigenvector 
 for the action of $s_{-i,-i}(u)$ for all $1\leq i \leq n$. More explicitly, for each $1\leq i \leq n$  we have the relation: 
 \begin{equation}
   (2\ka-2u-n)s_{-i,-i}(u)\eta=\sum_{\ell=1}^n \beta_{i,\ell}(u)\left(p(u)\mu_{\ell}(\ka-u)\pm\frac{\mu_{\ell}(u)}{2u-\ka}\right)\eta+\sum_{\ell\in \mathcal{I}_N}\mu_\ell(u)\eta, \label{HWT:neg_weights.0}
 \end{equation}
 where $\beta_{i,\ell}(u)=1$ if $\ell\neq i$ and $\beta_{i,\ell}(u)=(2\ka-2u-n+1)$ otherwise. 
\end{prop}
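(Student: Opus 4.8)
The plan is to derive \eqref{HWT:neg_weights.0} directly from the two displayed equations \eqref{HWT:neg_weights.1} and \eqref{HWT:neg_weights.2} that precede the proposition, by applying them to the highest weight vector $\eta$. Since $\eta$ is by hypothesis annihilated by $s_{ij}(u)$ for $i<j$ and is an eigenvector of $s_{\ell\ell}(u)$ with eigenvalue $\mu_\ell(u)$ for $\ell\in\mathcal{I}_N$, applying \eqref{HWT:neg_weights.2} to $\eta$ shows that $\eta$ is automatically an eigenvector of $\sum_{\ell=1}^n s_{-\ell,-\ell}(u)$, with eigenvalue obtained by replacing each $s_{\ell\ell}$ by $\mu_\ell$ on the right-hand side. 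Substituting this back into \eqref{HWT:neg_weights.1} applied to $\eta$ then exhibits $s_{-i,-i}(u)\eta$ as a scalar multiple of $\eta$, which gives the eigenvector claim; solving for $s_{-i,-i}(u)\eta$ and simplifying the coefficients yields the explicit formula \eqref{HWT:neg_weights.0}.

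In more detail, first I would record that equation \eqref{HWT:neg_weights.1} is just relation \eqref{s=s} with $i=j$ and $i$ replaced by $-i$, using $\theta_{-i,-i}=1$ and the fact that $\mcG$ is diagonal so that $\Tr(\mcG(u))=\sum_{\ell\in\mathcal{I}_N}g_{\ell\ell}+\text{(possible $-i$ contributions)}$ is handled consistently with the normalization of the $\mu_\ell$; the term $\tfrac{1}{2u-2\ka}\sum_\ell s_{-\ell,-\ell}(u)$ on the left comes from isolating the $\delta_{ij}\sum_k s_{kk}(u)$ contribution into its positive-index and negative-index parts. Then summing \eqref{HWT:neg_weights.1} over $i=1,\dots,n$ produces a factor $\tfrac{n}{2u-2\ka}$ in front of $\sum_\ell s_{-\ell,-\ell}(u)$ on the left, which combines with the pre-existing $\sum_\ell s_{-\ell,-\ell}(u)$ to give the coefficient $\tfrac{2u-2\ka+n}{2u-2\ka}$ of \eqref{HWT:neg_weights.2}. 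Applying this to $\eta$ and multiplying through by $\tfrac{2u-2\ka}{2u-2\ka+n}$ gives the eigenvalue of $\sum_{\ell=1}^n s_{-\ell,-\ell}(u)$ on $\eta$.

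The remaining step is bookkeeping: plug that eigenvalue back into \eqref{HWT:neg_weights.1} applied to $\eta$, move the $\tfrac{1}{2u-2\ka}\sum_\ell s_{-\ell,-\ell}(u)\,\eta$ term to the right, and clear denominators by multiplying by $2\ka-2u$ (or the appropriate factor $2\ka-2u-n$). One checks that the coefficient of the term $p(u)\mu_i(\ka-u)\pm\tfrac{\mu_i(u)}{2u-\ka}$ acquires an extra summand relative to the $\ell\neq i$ terms — precisely the $(2\ka-2u-n+1)$ versus $1$ discrepancy encoded in $\beta_{i,\ell}(u)$ — because the $i$-th summand appears both in the "isolated" term of \eqref{HWT:neg_weights.1} and inside the global sum \eqref{HWT:neg_weights.2}. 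I expect the only real obstacle to be this last coefficient-matching: keeping careful track of which powers of $(2u-\ka)$ and $(2u-2\ka)$ cancel and verifying that the final common denominator is exactly $2\ka-2u-n$ so that the stated integral form of $\beta_{i,\ell}(u)$ emerges. Everything else is a formal manipulation of the symmetry relation \eqref{s=s}, valid since the only inputs are that $\eta$ is highest weight and $\mcG$ is diagonal.
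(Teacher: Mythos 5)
Your proposal is correct and takes essentially the same route as the paper: the two displayed equations \eqref{HWT:neg_weights.1} and \eqref{HWT:neg_weights.2}, which the paper derives from the symmetry relation \eqref{s=s}, are applied to $\eta$; equation \eqref{HWT:neg_weights.2} shows $\sum_{\ell=1}^n s_{-\ell,-\ell}(u)\eta$ is a scalar multiple of $\eta$, and substituting back into \eqref{HWT:neg_weights.1} and clearing denominators yields \eqref{HWT:neg_weights.0} with the claimed $\beta_{i,\ell}(u)$. The paper's own argument is exactly this substitution, so your plan reproduces it faithfully.
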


Given a highest weight $\mu(u)$, we shall frequently make use of the corresponding tuple $\wt \mu(u)$ whose components are given by 
\begin{equation}
 \wt\mu_i(u)=(2u-n+i)\mu_i(u)+\sum_{\ell=i+1}^n\mu_\ell(u), \label{HWT:tilde_mu_i(u)}
\end{equation}
for each $i\in \mathcal{I}_N$. The following proposition imposes one condition on $\wt \mu_0(u)$ which will be important later.

\begin{prop} \label{HWT:Prop.mu_0(u)}
 Suppose $(\mfg_N,\mfg_N^\rho)$ is a symmetric pair of type B0 or BI. Let $V$ be a highest weight representation of $X(\mfg_N,\mcG)^{tw}$, with the highest weight vector $\eta$ and the highest weight $\mu(u)$. Then the series $\wt \mu_0(u)$ satisfies the relation
 \begin{equation}
 \wt\mu_0 (\ka-u)=\frac{\ka-u}{u}\cdot p_0(u)p(u)^{-1}\wt\mu_0(u) , \label{HWT:mu_0(u).0}  
 \end{equation}
where $p_0(u)=1-(2u-\ka)^{-1}+N(2u-2\ka)^{-1}$ is equal to the rational function $p(u)$ (see \eqref{p(u)} with $K(u)=\mcG(u)$) corresponding to the symmetric pair $(\mfg_N,\mfg_N^\rho)=(\mfso_{2n+1},\mfso_{2n+1})$. 
\end{prop}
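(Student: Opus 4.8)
The plan is to extract relation \eqref{HWT:mu_0(u).0} from the symmetry relation \eqref{s=s} specialized to the index $i=j=0$, combined with the eigenvalue information already recorded in Proposition \ref{HWT:Prop.neg_weights}. First I would write down \eqref{s=s} for $i=j=0$ in type B. Since $\theta_{00}=1$ and $-j=-i=0$, the left-hand side becomes $s_{00}(u)$, and the relation takes the form
\begin{equation*}
s_{00}(u)=(\pm)\,s_{00}(\ka-u)\pm\frac{s_{00}(u)-s_{00}(\ka-u)}{2u-\ka}+\frac{\Tr(\mcG(u))\,s_{00}(\ka-u)-\sum_{k=-n}^n s_{kk}(u)}{2u-2\ka},
\end{equation*}
where the relevant sign $(\pm)$ is $[\pm]$ according to type (BI(b) versus the other B-types). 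Applying this identity to the highest weight vector $\eta$ and using that $\eta$ is a simultaneous eigenvector for all the $s_{kk}(u)$ (Proposition \ref{HWT:Prop.neg_weights} handles the negative indices $s_{-\ell,-\ell}(u)$, and \eqref{HWT:highestweightrep} handles the non-negative ones), I obtain a scalar functional equation relating $\mu_0(u)$ and $\mu_0(\ka-u)$ together with the other $\mu_\ell(u)$, $\mu_\ell(\ka-u)$.

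Next I would use the defining formula \eqref{HWT:tilde_mu_i(u)} for $\wt\mu_0(u)$ to repackage this scalar equation. The key observation is that the combination $\sum_{k=-n}^n s_{kk}(u)$, once evaluated on $\eta$ via Proposition \ref{HWT:Prop.neg_weights}, produces exactly the "telescoping" sum $\sum_{\ell=1}^n\mu_\ell(u)$ plus shifted terms $p(u)\mu_\ell(\ka-u)$, and the quantity $(2u-n)\mu_0(u)+\sum_{\ell=1}^n\mu_\ell(u)=\wt\mu_0(u)$ appears naturally when one collects coefficients. I expect that after multiplying through by $2u-2\ka$ (or a suitable polynomial clearing denominators) and carefully substituting $\Tr(\mcG(u))=p_0$-related data — recall that for type B the trace of $\mcG$ equals $N$ minus twice the number of $-1$ eigenvalues, which is encoded in the definition $p_0(u)=1-(2u-\ka)^{-1}+N(2u-2\ka)^{-1}$ — the entire relation reorganizes into $\wt\mu_0(\ka-u)$ on one side and $\frac{\ka-u}{u}\,p_0(u)p(u)^{-1}\wt\mu_0(u)$ on the other. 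The appearance of the factor $\frac{\ka-u}{u}$ should come from comparing the leading polynomial factors $2u-n+i$ at $i=0$ under the substitution $u\mapsto\ka-u$, i.e.\ from the identity $(2(\ka-u)-n)=(N-2)-2u+ (\text{correction})$ with $\ka=N/2-1$ in the orthogonal odd case.

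The main obstacle, and the step I would spend the most care on, is the bookkeeping of the various $1/(2u-\ka)$ and $1/(2u-2\ka)$ denominators and the sign conventions $(\pm)$ versus $[\pm]$: one has to track which symmetric-pair type contributes which sign in \eqref{s=s}, verify that the central series contributions (the $q(u)$ or $w(u)$ factors relating $S(u)$ to $\Si(u)$) do not interfere — they shouldn't, since $\wt\mu_0$ is built from the diagonal weights directly — and confirm that $p(u)p(\ka-u)=1-(2u-\ka)^{-2}$ from \eqref{p(u)p(k-u)} is used at the right place to invert $p(u)$. A secondary subtlety is that Proposition \ref{HWT:Prop.neg_weights} gives $s_{-i,-i}(u)\eta$ with the coefficient $(2\ka-2u-n)$ in front, so one must divide by $(2\ka-2u-n)$ and then check that this factor cancels cleanly against a matching factor generated on the other side — if it did not cancel, the final relation would not have the stated clean form, so this cancellation is both the crux of the computation and a useful consistency check. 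Once these denominators and signs are handled, the identity \eqref{HWT:mu_0(u).0} should drop out by direct algebraic manipulation.
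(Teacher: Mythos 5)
Your approach is the same as the paper's: specialize the symmetry relation \eqref{s=s} to $i=j=0$, evaluate on $\eta$ using Proposition \ref{HWT:Prop.neg_weights} for the negatively-indexed diagonal series, and reorganize in terms of $\wt\mu_0(u)$. The paper streamlines the substitution by first multiplying the rearranged identity by $(2\ka-2u-n)$ and then plugging in \eqref{HWT:neg_weights.2} directly (rather than dividing to isolate each $s_{-\ell,-\ell}(u)\eta$ and worrying about cancellation afterward), but that is only a difference in how the bookkeeping is arranged.

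There is one slip you should correct before carrying out the computation. You write that in \eqref{s=s} the sign $(\pm)$ becomes $[\pm]$ depending on BI(b) versus the other B types. That is not what the notations mean. In \eqref{s=s} (and in Lemma \ref{L:TX-RE}) the parenthesized $(\pm)$ takes the lower sign \emph{only} for types CI and DIII, and the bare $\pm$ is the orthogonal/symplectic distinction. For every B type (B0, BI(a), BI(b)) both are the upper sign $+$, so the specialization reads
\[
s_{00}(u)=p(u)\,s_{00}(\ka-u)+\frac{s_{00}(u)}{2u-\ka}-\frac{1}{2u-2\ka}\sum_{\ell=-n}^{n}s_{\ell\ell}(u),
\]
uniformly across B0 and BI. The bracket notation $[\pm]$ is a \emph{different} dichotomy (BI(b) versus everything else) that the paper only invokes later, e.g.\ in the series $g(u)$ of Remark \ref{HWT:Rem.m_0(u)} and in Proposition \ref{HWT:refl.Prop.1}; it plays no role in the symmetry relation. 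If you were to substitute $[\pm]$ into \eqref{s=s} you would flip the sign of $s_{00}(\ka-u)$ in type BI(b) and the identity \eqref{HWT:mu_0(u).0} would fail in that case. The type-dependence that genuinely matters in this proposition is entirely encoded in $\Tr(\mcG(u))$, i.e.\ inside $p(u)$.
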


\begin{proof}
By the symmetry relation \eqref{s=s} we have 
\begin{equation*}
  s_{00}(u)=p(u)s_{00}(\ka-u)+\frac{s_{00}(u)}{2u-\ka}-\frac{1}{2u-2\ka}\sum_{\ell=-n}^ns_{\ell\ell}(u),
\end{equation*}
which can be rearranged to 
\begin{equation}
  \left(1-\frac{1}{2u-\ka}+\frac{1}{2u-2\ka}\right)s_{00}(u)+\frac{1}{2u-2\ka}\sum_{\ell=1}^n s_{\ell\ell}(u)=p(u)s_{00}(\ka-u)-\frac{1}{2u-2\ka}\sum_{\ell=1}^ns_{-\ell,-\ell}(u).\label{HWT:mu_0(u).1}
\end{equation}
Multiplying both sides of this relation by $(2\ka-2u-n)$ and substituting in relation \eqref{HWT:neg_weights.2}, the right hand side becomes:
\begin{equation*}
 p(u)\left((2\ka-2u-n)s_{00}(\ka-u)+\sum_{\ell=1}^ns_{\ell\ell}(\ka-u) \right)+\left(\frac{1}{2u-\ka}-\frac{n}{2u-2\ka}\right)\sum_{\ell=1}^n s_{\ell\ell}(u)-\frac{n}{2u-2\ka}s_{00}(u).
\end{equation*}
Therefore, on $\C\eta$, equation \eqref{HWT:mu_0(u).1} can be expressed as
\begin{equation*}
 p(u)\wt\mu_0(\ka-u)=\left(-1+\frac{(2u-\ka-1)(2\ka-2u-n)}{2u-\ka}\right)\mu_0(u)+\left(-1-\frac{1}{2u-\ka} \right)\sum_{\ell=1}^n\mu_\ell(u).
\end{equation*}
By definition of $\wt\mu_0(u)$, in order to obtain \eqref{HWT:mu_0(u).0}, it remains to see that 
\begin{equation}
 -1+\frac{(2u-\ka-1)(2\ka-2u-n)}{2u-\ka}=(2u-n)p_0(u)\frac{\ka-u}{u} \text{ and }-1-\frac{1}{2u-\ka}=p_0(u)\frac{\ka-u}{u}. \label{HWT:mu_0(u).2}
\end{equation}
We have
\begin{equation}
 p_0(u)=1-\frac{1}{2u-\ka}+\frac{2\ka+2}{2u-2\ka}=\frac{4u^2-2u\ka+2u}{(2u-\ka)(2u-2\ka)}=\frac{u(\ka-2u-1)}{(2u-\ka)(\ka-u)}, \label{p0u}
\end{equation}
and thus
\begin{equation*}
 p_0(u)\frac{\ka-u}{u}=\frac{\ka-2u-1}{2u-\ka}=-1-\frac{1}{2u-\ka}. 
\end{equation*}
Additionally, we have  
\begin{equation*}
 (2u-n)p_0(u)\frac{\ka-u}{u}=\frac{(2u-\ka-1/2)(\ka-2u-1)}{2u-\ka}=\frac{(2u-\ka-1)(\ka-2u-1/2)-2u+\ka}{2u-\ka},
\end{equation*}
which is equivalent to the first equality in \eqref{HWT:mu_0(u).2}. 
\end{proof}

\begin{rmk} \label{HWT:Rem.m_0(u)}
Let $g(u)$ be the formal series in $u^{-1}$ given by
\begin{equation}
g(u)=\begin{cases}
      1 &\text{ if } (\mfg_N,\mfg_N^\rho) \text{ is of type B0},\\
      \frac{1[\pm]\,c(\boldsymbol{\ell}-u)}{1-cu} &\text{ if } (\mfg_N,\mfg_N^\rho) \text{ is of type BI}.
     \end{cases}  \label{HWT:Rem.m_0(u).1}   
\end{equation}
Here we recall that if $(\mfg_N,\mfg_N^\rho)$ is of type BI, then $(\mfg_N,\mfg_N^\rho)=(\mfso_N,\mfso_p\oplus \mfso_q)$ with $p>q$, and the constant $c$ is equal to $4\,(p-q)^{-1}$. 
The constant $\boldsymbol{\ell}$ is then defined to be $p/2$ if $(\mfg_N,\mfg_N^\rho)$ is of type BI(b), and $q/2$ if $(\mfg_N,\mfg_N^\rho)$ is of type BI(a).
We claim that $p_0(u)p(u)^{-1}=g(\ka-u)g(u)^{-1}$. If $\mfg_N$ is of type B and $V$ is a highest weight $X(\mfg_N,\mcG)^{tw}$-module with the highest weight $\mu(u)$, then this claim implies that the relation \eqref{HWT:mu_0(u).0} for $\wt \mu_0(u)$ can be rewritten in a more symmetric way:
\begin{equation}
 u\cdot g(u)\,\wt\mu_0 (\ka-u)=(\ka-u)\cdot g(\ka-u)\,\wt\mu_0(u).\label{HWT:Rem.m_0(u).2}   
\end{equation}
\end{rmk}
\begin{proof}[Proof of the claim that $p_0(u)p(u)^{-1}=g(\ka-u)g(u)^{-1}$:]  If $(\mfg_N,\mfg_N^\rho)$ is of type B0 then we may identify $p$ with $N$ and $q$ with $0$. It follows that $\mathrm{tr}(\mcG(u))=(N-2q)\left(\frac{N-4u}{N-2q-4u} \right)$. This allows 
us to write $p(u)$ explicitly:
\begin{equation}
 p(u)=1-\frac{1}{2u-\ka}+\frac{\mathrm{tr}(\mcG(u))}{2u-2\ka}=\frac{(2u-\ka-1)(N-2q-4u)(2u-2\ka)+(N-2q)(N-4u)(2u-\ka)}{(N-2q-4u)(2u-\ka)(2u-2\ka)}. \label{p}
\end{equation}
We can now combine \eqref{p0u} and \eqref{p} to obtain, after simplifying, the following expression for $p_0(u)p(u)^{-1}$:
\begin{equation}
p_0(u)p(u)^{-1}= \frac{(2u-\ka+1)(\ka+1-q-2u)}{(2u-\ka-1)(\ka-1+q-2u)}. \label{p_0/p.1}
\end{equation}
If $(\mfg_N,\mfg_N^\rho)$ is of type B0 then $q=0$ and the above expression is equal to $1$, which agrees with $g(\ka-u)g(u)^{-1}$. Suppose instead that $(\mfg_N,\mfg_N^\rho)$ is 
of type BI. Then we have 
\begin{equation*}
 g(u)=[\pm]\left(\frac{p+q-4u}{p-q-4u} \right)=[\pm]\left(\frac{\ka+1-2u}{\ka+1-q-2u} \right),
\end{equation*}
which implies that $g(\ka-u)g(u)^{-1}=p_0(u)p(u)^{-1}$ as a consequence of \eqref{p_0/p.1}.
\end{proof}

Recall the generators $\F{i}{j}$ of $\mathfrak{U}\mfg_N^\rho$ defined in the statement of Proposition \ref{P:F->TY}. Since the matrix $\mcG$ is assumed 
diagonal, they are related to the generators $F_{ij}$ of $\mathfrak{U}\mfg_N$ by the expression
\begin{equation*}
\F{i}{j}=(g_{ii}+g_{jj})F_{ij} \quad \text{ for all }-n\leq i,j\leq n.
\end{equation*}

Proposition \ref{P:F->TY} allows us to identify the elements $\F{i}{j}\in \mfg_N^\rho$ with
their image in $X(\mfg_N,\mcG)^{tw}$ under the embedding $\mf{U}\mfg^\rho_N\into X(\mfg_N,\mcG)^{tw}$, and consequentially we can use the explicit form of the reflection equation \eqref{[s,s]} to compute the bracket relations $[\F{i}{j},s_{k\ell}(u)]$. We 
have
\begin{equation}
 [\F{i}{j},s_{k\ell}(v)]=(g_{ii}+g_{jj})\left(\delta_{kj}s_{i\ell}(v)-\delta_{i\ell}s_{kj}(v)-\delta_{k,-i}\theta_{ij}s_{-j,\ell}(v)+\delta_{\ell,-j}\theta_{ij}s_{k,-i}(v)\right)\label{HWT:EmbeddingBracket.2}.
\end{equation}
Observe that, by definition of $\mfg_N^\rho$ and of the elements
$\F{i}{j}$, $\mfh^\rho_N= \mathrm{span}_\C\{\F{i}{i}\,|\,1\leq i\leq n\}$ is a Cartan subalgebra of $\mfg_N^\rho$, which is actually just the Cartan subalgebra $\mfh_N$ of $\mfg_N$. Given $1\leq i\leq n$, let $\eps_i\in \mfh_N^*$ be defined by 
$\eps_i(F_{kk})=\delta_{ik}$ for all $1\leq k\leq n$. In addition, define auxiliary elements $\alpha_{k,\ell}\in \mfh_N^*$ by
$\alpha_{k,\ell}=\mathrm{sign}(k)\eps_{|k|}-\mathrm{sign}(\ell)\eps_{|\ell|}$  for all $-n\leq k,\ell \leq n$, where $\eps_0$ is the zero functional. Then from equation \eqref{HWT:EmbeddingBracket.2} we obtain
\begin{equation}
 [\F{i}{i},s_{k\ell}(v)]= 2g_{ii} \left(\delta_{ik}-\delta_{i\ell}-\delta_{i,-k}+\delta_{i,-\ell}\right)s_{k\ell}(v)=\alpha_{k,\ell}(\F{i}{i})s_{k\ell}(v) \label{HWT:EmbeddingBracket.Cartan}
\end{equation}
for all $1\leq i\leq n$ and $-n\leq k,\ell\leq n$. 

Let $\Delta^+$ be the standard set of positive roots of $\mfg_N$ for our choice of $\mfh_N$ (as in \cite{AMR}), so %
\begin{equation*}
 \Delta^+=\{\alpha_{k,\ell}: -n\leq k<\ell\leq n \text{ and } k+\ell \ge \tfrac{1}{2} \pm \tfrac{1}{2} \}.
\end{equation*}
We may define a partial order $\preceq$ on the set of weights of $\mfh_N$ by $\mu_1 \preceq\mu_2$ if and only if $\mu_2-\mu_1=\sum_{k, \ell}m_{k,\ell}\alpha_{k,\ell}$
with $m_{k,\ell}\in \Z_{\geq 0}$ and $\alpha_{k,\ell} \in \Delta^+$.


\begin{thrm}\label{HWT:Thm.HWT}
Every finite-dimensional irreducible representation $V$ of $X(\mfg_N,\mcG)^{tw}$ is a highest weight representation. Additionally, $V$ contains a unique highest weight vector $\eta$ up to
scalar multiplication. 
\end{thrm}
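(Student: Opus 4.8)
The plan is to follow the standard strategy for proving that finite-dimensional irreducible modules over RTT-type algebras are of highest weight type, adapted to the twisted Yangian setting. First I would show existence of a highest weight vector. Consider the subspace $V^0 = \{v \in V : s_{ij}(u)v = 0 \text{ for all } -n \le i < j \le n\}$ of "singular" vectors. The first task is to prove $V^0 \ne 0$. Since $V$ is finite-dimensional, the commutative subalgebra of $\mathfrak{U}\mfg_N^\rho$ generated by $\mfh_N^\rho = \mfh_N$ acts with at least one eigenvector, and among all weights of $V$ (with respect to the partial order $\preceq$ defined via $\Delta^+$) pick one that is maximal; let $0 \ne v$ be a corresponding weight vector. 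Then for any $-n \le i < j \le n$ with $\alpha_{i,j} \in \Delta^+$, relation \eqref{HWT:EmbeddingBracket.Cartan} shows that $s_{ij}(u)v$, if nonzero, lies in a strictly higher weight space, contradicting maximality; the coefficients $s_{ij}^{(r)}$ with $i<j$ raise the weight by $\alpha_{i,j}$ because the bracket with $\F{k}{k}$ gives exactly that weight. Hence $s_{ij}(u)v = 0$ for all $i<j$, so $v \in V^0$ and $V^0 \ne 0$.

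Next I would show that $V^0$ is invariant under all the diagonal series $s_{ii}(u)$ with $i \in \mathcal{I}_N$ (and, via Proposition \ref{HWT:Prop.neg_weights} applied once we have a genuine h.w.\ vector, under $s_{-i,-i}(u)$ as well). The key point is the following commutation-type statement extracted from the explicit reflection equation \eqref{[s,s]}: for $-n \le i < j \le n$, the commutator $[s_{ii}(u), s_{kl}(v)]$ with $k < l$ is a linear combination (with rational-function coefficients) of terms $s_{ab}(\cdot)s_{cd}(\cdot)$ in which at least one factor is an "upper" generator $s_{ab}$ with $a<b$, modulo terms of the same type. More precisely, one argues that $s_{ii}(u)$ maps $V^0$ into itself by checking $s_{kl}(v) s_{ii}(u) v = 0$ for $v \in V^0$, $k<l$: expand $[s_{kl}(v), s_{ii}(u)]$ using \eqref{[s,s]} and verify every resulting term, when applied to $v$, either vanishes (it contains an upper generator acting directly on $v$) or is again of the form $s_{i'i'}(u)s_{k'l'}(v)v$ with $k'<l'$ (so vanishes) up to rational coefficients; an inductive argument on the weight, or on the index, closes this. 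This is the step requiring the most care, since \eqref{[s,s]} has many terms; the bookkeeping is analogous to Molev's treatment for $Y^\pm(N)$ and to \cite[\S 5]{AMR} for $X(\mfg_N)$, and one leans on the structure of $\Delta^+$ and on \eqref{s=s} to handle the "lower" diagonal entries $s_{-i,-i}(u)$.

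Having established that $V^0$ is a nonzero subspace stable under the commutative family $\{s_{ii}(u) : i \in \mathcal{I}_N\}$, I would pick a common eigenvector $\eta \in V^0$ (these series commute among themselves on $V^0$, again by \eqref{[s,s]}, so a simultaneous eigenvector exists in the finite-dimensional space $V^0$). By construction $\eta$ is a highest weight vector; set $\mu_i(u)$ to be its eigenvalue under $s_{ii}(u)$, noting the constant term is $g_{ii}$ because $s_{ij}^{(0)} = g_{ij}$ and $\mcG$ is diagonal. Then $X(\mfg_N,\mcG)^{tw}\eta$ is a nonzero submodule of the irreducible $V$, hence equals $V$, so $V$ is a highest weight representation.

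Finally, for uniqueness up to scalar: suppose $\eta'$ is another highest weight vector with weight $\mu'(u)$. Using a triangular decomposition / PBW-type argument from Corollary \ref{X:PBW} — writing $X(\mfg_N,\mcG)^{tw}$ as (span of lowering monomials)$\cdot$(span of Cartan monomials)$\cdot$(span of raising monomials) — one shows $V = U^-\eta$ where $U^-$ is spanned by monomials in the $s_{ij}^{(r)}$ with $i>j$ (and $w_{2k}$), and that such monomials with at least one lowering generator strictly decrease the weight in the $\preceq$ order. Hence the $\mu(u)$-weight space of $V$ is exactly $\C\eta$. Since $\eta'$ is also a weight vector for $\mfh_N$ and lies in $V^0$, and since any weight of $V^0$ must be $\preceq \mu(u)$ (as $V = X(\mfg_N,\mcG)^{tw}\eta$ and raising/Cartan generators fix or lower weights on $V^0$... actually raising generators kill $\eta$), one deduces $\mu'(u) \preceq \mu(u)$; the symmetric argument with the roles of $\eta,\eta'$ swapped — or rather, the observation that $\eta' \in V = U^-\eta$ forces $\mu'(u) \preceq \mu(u)$, combined with the fact that the top weight space of a highest weight module is one-dimensional — gives $\eta' \in \C\eta$. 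The main obstacle throughout is the second step: verifying from the eight-line reflection relation \eqref{[s,s]} that $V^0$ is preserved by the diagonal series and that these commute on $V^0$, which is a delicate but routine term-by-term check paralleling \cite[\S 5]{AMR} and \cite[Ch.~3]{Mo5}.
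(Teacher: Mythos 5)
Your proposal follows essentially the same route as the paper's proof: show $V^0 \neq 0$ via a maximal weight argument using \eqref{HWT:EmbeddingBracket.Cartan}, verify from \eqref{[s,s]} and \eqref{s=s} that the diagonal series $s_{ii}(u)$, $i \in \mathcal{I}_N$, preserve $V^0$ and commute on it, extract a common eigenvector $\eta$, and then use Corollary~\ref{X:PBW} to conclude that $V$ is spanned by lowering monomials applied to $\eta$ so that the top weight space is one-dimensional. The only parts you defer — the term-by-term verification in \eqref{[s,s]} that $V^0$ is stable under and commutative for the $s_{ii}(u)$ — are precisely Steps~2 and~3 of the paper's proof, treated case-by-case ($N=2n$ and $N=2n+1$, with subcases $s_{k\ell}$, $s_{-k,\ell}$, $s_{-j,j}$, $s_{0j}$), so you have correctly identified both the strategy and where the real work lies.
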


\begin{proof}
Define the subspace $V^0$ of $V$ by 
$$V^0=\{\eta \in V:s_{ij}(u)\eta=0 \text{ for all }-n\leq i<j\leq n\}.$$

\medskip

\noindent\textit{Step 1: $V^0$ is nonzero.}

\medskip 

Via the embedding $\mfg_N^\rho \into X(\mfg_N,\mcG)^{tw}$, we may view $V$ as a $\mfg_N^\rho$-module. Since $V$ is finite-dimensional, the $F_{ii}'^\rho$ have a mutual weight vector $\eta$. 
Let $L$ be the set of all weights of the $\mfg_N^\rho$-module $V$, so $L$ is a nonempty finite set. Therefore, 
there exists $\mu\in L$ such that $\mu+\alpha_{k,\ell}$ is not a weight for any $-n\leq k<\ell\leq n$. Then the $\mu$-weight vector $\eta$ must belong to $V^0$. Indeed, suppose there exists
$-n\leq k<\ell\leq n$ such that $s_{k\ell}(u)\eta\neq 0$. Then from \eqref{HWT:EmbeddingBracket.Cartan} we obtain: 
\begin{equation*}
 \F{i}{i}\left(s_{k\ell}(v)\eta\right)=\left(\alpha_{k,\ell}+\mu\right)(\F{i}{i})s_{k\ell}(v)\eta
\end{equation*}
for all $1\leq i\leq n$. This contradicts the maximality of $\mu$, and so we must have $\eta\in V^0$. Therefore $V^0$ is nonzero.

\medskip 

\noindent\textit{Step 2: the subspace $V^0$ is preserved by the operators $s_{ii}(u)$ for all $i\in\mathcal{I}_N$. }

\medskip

We will consider separately the cases when $N$ is even and when $N$ is odd. 

\medskip

\noindent\textit{Step 2.1:}  $N=2n$. \nopagebreak

\medskip

By definition of $V^0$, we must show that $s_{k\ell}(u)s_{ii}(v)\equiv 0$ for all $k<\ell $ and $1\leq i\leq n$, where $\equiv$ denotes equality of operators on $V^0$.

\medskip 

\noindent\textit{Claim: It suffices to show that for all  $0<k<\ell$ and $i,j>0$, $s_{k\ell}(u)s_{ii}(v)$, $s_{-k,\ell}(u)s_{ii}(v)$ and $s_{-j,j}(u)s_{ii}(v)$ all are equal to the zero 
operator on the subspace $V^0$.}

\medskip

This claim follows from the symmetry relation \eqref{s=s}.

\medskip

\noindent\textit{Step 2.1.1:} $s_{k\ell}(u)s_{ii}(v)\equiv 0$ for all $0<k<\ell$ and $i>0$. 

\medskip

Assume first that $k<i$. Then it is immediate from \eqref{[s,s]} (using $s_{k\ell}(u)s_{ii}(v)\equiv [s_{k\ell}(u),s_{ii}(v)]$) 
that $s_{k\ell}(u)s_{ii}(v)\equiv 0$ unless $i=\ell$. If $i=\ell$, we obtain
\begin{equation*}
 s_{k\ell}(u)s_{\ell\ell}(v)\equiv\frac{1}{u+v}\sum_{a=\ell}^n s_{ka}(u)s_{a\ell}(v),
\end{equation*}
and thus for $a\geq \ell>k$, \eqref{[s,s]} yields 
\begin{equation*}
 s_{ka}(u)s_{a\ell}(v) \equiv  [s_{ka}(u),s_{a\ell}(v)] \equiv \frac{1}{u+v}\sum_{b=\ell}^n s_{kb}(u)s_{b\ell}(v)\equiv s_{k\ell}(u)s_{\ell\ell}(v).
\end{equation*}
Therefore we obtain the relation $\left(1-\frac{n-\ell+1}{u+v}\right)s_{k\ell}(u)s_{\ell\ell}(v)\equiv 0$ and so we must have $s_{k\ell}(u)s_{\ell\ell}(v)\equiv 0$, as desired. 

If instead $k\geq i$, then we write $[s_{k\ell}(u),s_{ii}(v)]=-[s_{ii}(v),s_{k\ell}(u)]$. Relation \eqref{[s,s]} then gives
\begin{align*}
 [s_{ii}(v),s_{k\ell}(u)]&\equiv \frac{\delta_{ik}}{v+u}\sum_{a=\ell}^n s_{ia}(v)s_{a\ell}(u) -\frac{1}{v^2-u^2}\sum_{a=\ell}^n \left(s_{ka}(v)s_{a\ell}(u)-s_{ka}(u)s_{a\ell}(v) \right) \\
                         &\equiv \left(\frac{\delta_{ik}}{v+u}-\frac{1}{v^2-u^2}\right)\sum_{a=\ell}^n s_{ka}(v)s_{a\ell}(u)+\frac{1}{v^2-u^2} \sum_{a=\ell}^n s_{ka}(u)s_{a\ell}(v).
 \end{align*}
From the above proof that $s_{k\ell}(u)s_{\ell\ell}(v)\equiv 0$, we see that the right-hand side of the previous line is $\equiv 0$. This completes the proof that $s_{k\ell}(u)s_{ii}(v)\equiv 0$ for all $0<k<\ell$ and $i>0$. 

\medskip

\noindent\textit{Step 2.1.2:} $s_{-k,\ell}(u)s_{ii}(v)\equiv 0$ for all $i>0$ and $\ell>k>0$. 

\medskip

This is an immediate consequence of relation \eqref{[s,s]} unless $i=\ell$ or $i=k$. The case $i=\ell$ is similar to Step 2.1.1, so we concentrate on the case $i=k$. By \eqref{[s,s]} we have 
\begin{equation}
 s_{-k,\ell}(u)s_{kk}(v)\equiv-\frac{1}{u-v-\ka}\sum_{a=k}^n s_{-a,\ell}(u)s_{ak}(v)+\frac{1}{(u+v)(u-v-\ka)}\sum_{a=k}^n s_{-\ell,a}(u)s_{ak}(v),\label{HWT:HWT.01}
\end{equation}
Since $s_{-a,\ell}(u)s_{ak}(v) \equiv [ s_{-a,\ell}(u),s_{ak}(v)]$, for $a\geq k$, we have 
\begin{align*}
 s_{-a,\ell}(u)s_{ak}(v)&\equiv \frac{\delta_{a\ell}}{u+v}\sum_{b=k}^ns_{-a,b}(u)s_{bk}(v) -\frac{1}{u-v-\ka}\left(\sum_{b=k}^n s_{-b,\ell}(u)s_{bk}(v)-\frac{1}{u+v}\sum_{b=k}^n s_{-\ell,b}(u)s_{bk}(v)\right)\\
                        &\equiv \frac{\delta_{a\ell}}{u+v}\sum_{b=k}^ns_{-\ell,b}(u)s_{bk}(v)+s_{-k,\ell}(u)s_{kk}(v).
\end{align*}
Substituting this result back into \eqref{HWT:HWT.01} we obtain 
\begin{equation*}
 s_{-k,\ell}(u)s_{kk}(v)\equiv-\frac{n-k+1}{u-v-\ka}s_{-k,\ell}(u) s_{kk}(v).
\end{equation*}
and so we must have $s_{-k,\ell}(u)s_{kk}(v)\equiv 0$ whenever $0<k<\ell$. 

 \medskip
 
\noindent\textit{Step 2.1.3:} $s_{-j,j}(u)s_{ii}(v)\equiv 0$ for all $i,j>0$.   \nopagebreak

\medskip

To begin, it is an immediate consequence of \eqref{[s,s]} that $s_{-j,j}(u)s_{ii}(v)\equiv 0$ unless
$i=j$, so without loss of generality we may assume $i=j$. We have: 
\begin{align}
 [s_{-i,i}(u),s_{ii}(v)] \equiv& \left(\frac{1}{u+v}+\frac{1}{(u+v)(u-v-\ka)}\right)\sum_{a=i}^n s_{-i,a}(u)s_{ai}(v)-\frac{1}{u-v-\ka}\sum_{a=i}^n s_{-a,i}(u)s_{ai}(v).\label{HWT:HWT.2}
\end{align}
Let us compute $s_{-a,i}(u)s_{ai}(v)$ for $a>i$. From \eqref{[s,s]} we see that 
\begin{align*}
 s_{-a,i}(u)s_{ai}(v)&\equiv-\frac{1}{u-v-\ka}\sum_{b=i}^n s_{-b,i}(u)s_{bi}(v)+\frac{1}{(u+v)(u-v-\ka)}\sum_{b=i}^n s_{-i,b}(u)s_{bi}(v)\\
                      &\equiv [s_{-i,i}(u),s_{ii}(v)]-\frac{1}{u+v}\sum_{a=i}^ns_{-i,a}(u)s_{ai}(v) \text{ by } \eqref{HWT:HWT.2}.
\end{align*}
Substituting this result back into relation \eqref{HWT:HWT.2}, we get
\begin{equation*}
 \left(1+\frac{n-i+1}{u-v-\ka} \right)[s_{-i,i}(u),s_{ii}(v)] \equiv \left(\frac{1}{u+v}+\frac{n-i+1}{(u-v-\ka)(u+v)}\right)\sum_{a=i}^n s_{-i,a}(u)s_{ai}(v),                      
\end{equation*}
from which we obtain  
\begin{equation}
 [s_{-i,i}(u),s_{ii}(v)]\equiv\frac{1}{u+v}\sum_{a=i}^n s_{-i,a}(u)s_{ai}(v). \label{HWT:HWT.4}
\end{equation}
By \eqref{[s,s]}, we have that for all $a>i$
\begin{equation*} 
 s_{-i,a}(u)s_{ai}(v)=\frac{1}{u+v}\sum_{b=i}^n s_{-i,b}(u)s_{bi}(v).
\end{equation*}
Substituting this into \eqref{HWT:HWT.4} leads to $[s_{-i,i}(u),s_{ii}(v)]\equiv 0$. This completes the proof of Step 2 when $N=2n$.

\medskip

\noindent\textit{Step 2.2:} $N=2n+1$.   \nopagebreak

\medskip

The argument is essentially the same in this case. By the symmetry relation \eqref{s=s}, it suffices to show that $s_{k\ell}(u)s_{ii}(v)$, $s_{-k,\ell}(u)s_{ii}(v)$, 
$s_{-j,j}(u)s_{ii}(v)$ and $s_{0j}(u)s_{ii}(v)$  are all equal to the zero operator on $V^0$, where $0<k<\ell$, $j>0$ and $i\geq 0$. 

\medskip

\noindent\textit{Step 2.2.1:} $s_{k\ell}(u)s_{ii}(v)$, $s_{-k,\ell}(u)s_{ii}(v)$ and $s_{-j,j}(u)s_{ii}(v)$  all $\equiv 0$ when $0<k<\ell$, $j>0$ and $i\geq 0$.

\medskip

The same arguments as those given for the $N=2n$ case show that 
\begin{equation*}
s_{k\ell}(u)s_{ii}(v)\equiv s_{-k,\ell}(u)s_{ii}(v)\equiv s_{-j,j}(u)s_{ii}(v)\equiv0 
\end{equation*}
whenever $i,j>0$ and $\ell>k>0$. Moreover, given the same restrictions on $j,\ell$ and $k$, the reflection equation \eqref{[s,s]} immediately yields 
\begin{equation*}
s_{-k,\ell}(u)s_{00}(v)\equiv s_{-j,j}(u)s_{00}(v)\equiv0.
\end{equation*}
Moreover if $0<k<\ell$, then $s_{k\ell}(u)s_{00}(v)\equiv-[s_{00}(v),s_{k\ell}(u)]$ and \eqref{[s,s]} yields 
\begin{equation*}
[s_{00}(v),s_{k\ell}(u)]\equiv-\frac{1}{v^2-u^2}\sum_{a=\ell}^n(s_{ka}(v)s_{a\ell}(u)- s_{ka}(u)s_{a\ell}(v)).
\end{equation*}
By the same argument as in Step 2.1.1, the right-hand side vanishes.

\medskip

\noindent\textit{Step 2.2.2:} $s_{0j}(u)s_{ii}(v)\equiv 0$ for all $j>0$ and $i\geq 0$.  \nopagebreak

\medskip

Assume first $i>0$. Then \eqref{[s,s]} implies that
$s_{0j}(u)s_{ii}(v)\equiv 0$ unless $i=j$. Moreover, the proof that $s_{0j}(u)s_{jj}(v)\equiv 0$ proceeds identically to the proof that 
$s_{-k,\ell}(u)s_{\ell\ell}(v)\equiv 0$ for all $\ell>k>0$. 

To prove that $s_{0j}(u)s_{00}(v)\equiv 0$ for all $j>0$, note first that 
$s_{0j}(u)s_{00}(v)\equiv -[s_{00}(v),s_{0j}(u)]$, and by \eqref{[s,s]} we have 
\begin{equation}
 [s_{00}(v),s_{0j}(u)]\equiv \left(1-\frac{1}{v-u}+\frac{1}{v-u-\ka} \right)B(v,u)+\frac{1}{v-u}B(u,v)-\frac{1}{v-u-\ka}\sum_{a=j}^ns_{-a,0}(v)s_{aj}(u), \label{HWT:HWT.02}
\end{equation}
where $B(u,v)=\frac{1}{u+v}\sum_{a=j}^n s_{0a}(u)s_{aj}(v)$. However, since $s_{0j}(u)s_{jj}(v)\equiv 0$ by the previous step, \eqref{[s,s]} yields
\begin{equation*}
 0\equiv s_{0j}(u)s_{jj}(v)\equiv \frac{1}{u+v}\sum_{a=j}^ns_{0a}(u)s_{aj}(v)=B(u,v),
\end{equation*}
and the symmetry relation \eqref{s=s} gives 
\begin{align*}
 \sum_{a=j}^ns_{-a,0}(v)s_{aj}(u)&\equiv p(v)(\ka-v+u)B(\ka-v,u)\pm \frac{v+u}{2v-\ka}B(v,u)\equiv 0.
\end{align*}
Therefore, by \eqref{HWT:HWT.02} we have $s_{0j}(u)s_{jj}(v)\equiv 0$ for all $j>0$. 
 
\medskip 

\noindent \textit{Step 3:} Viewed as operators on $V^0$, $s_{ii}(u)$ and $s_{jj}(v)$ commute for all $i,j\in \mathcal{I}_N$.

\medskip

Again, we will treat the cases $N=2n$ and $N=2n+1$ separately. 

\medskip 

\noindent \textit{Step 3.1:} $N=2n$.  \nopagebreak

\medskip

Let us define the operator $A_{ij}(u,v)$ on $V^0$ by 
\begin{equation}
 A_{ij}(u,v)=s_{ij}(u)s_{ji}(v)-s_{ij}(v)s_{ji}(u). \label{HWT:A_ij(u,v)}
\end{equation}
As consequence of \eqref{[s,s]} we have:
\begin{equation}
 A_{ii}(u,v)\equiv \frac{1}{u+v}\sum_{a=i}^n A_{ia}(u,v). \label{HWT:HWT.5}
\end{equation}
On the other hand, for $0< i<j$ we have $s_{ji}(v)s_{ij}(u)\equiv s_{ji}(u)s_{ij}(v)\equiv 0$, so can rewrite $A_{ij}(u,v)$ as
\begin{equation*}
 A_{ij}(u,v)\equiv [s_{ij}(u),s_{ji}(v)]+[s_{ji}(u),s_{ij}(v)]. 
\end{equation*}
Using \eqref{[s,s]} to compute $[s_{ij}(u),s_{ji}(v)]$ and $[s_{ji}(u),s_{ij}(v)]$, we get: 
\begin{equation}
 A_{ij}(u,v)\equiv \frac{1}{u-v}\left([s_{ii}(u),s_{jj}(v)]+[s_{jj}(u),s_{ii}(v)] \right)+\frac{1}{u+v}\left(\sum_{a=i}^n A_{ia}(u,v) +\sum_{a=j}^n A_{ja}(u,v) \right). \label{HWT:HWT.6}
\end{equation}
We apply \eqref{[s,s]} again to compute 
\begin{equation}
 [s_{ii}(u),s_{jj}(v)]\equiv -\frac{1}{u^2-v^2}\sum_{a=j}^n A_{ja}(u,v), \label{HWT:HWT.7}
\end{equation}
from which it follows that $[s_{ii}(u),s_{jj}(v)]+[s_{jj}(u),s_{ii}(v)]\equiv 0$. Combining this with \eqref{HWT:HWT.5}, equation \eqref{HWT:HWT.6} can be rewritten as
\begin{equation}
 A_{ij}(u,v)\equiv A_{ii}(u,v)+A_{jj}(u,v). 
\end{equation}
Taking the sum  as $j$ goes from $i+1$ to $n$ and adding $A_{ii}(u,v)$ to both sides we arrive at the relation
\begin{equation*}
 \sum_{j=i}^n A_{ij}(u,v)\equiv (n-i+1)A_{ii}(u,v)+\sum_{j=i+1}^nA_{jj}(u,v).
\end{equation*}
However, by \eqref{HWT:HWT.5}, the left hand side is equivalent to $(u+v)A_{ii}(u,v)$, so we may rewrite the above as 
\begin{equation*}
 (u+v-n+i-1)A_{ii}(u,v)\equiv \sum_{j=i+1}^nA_{jj}(u,v)
\end{equation*}
A simple downward induction on $i$ then proves that $A_{ii}(u,v)\equiv 0$ for all $i \in \mathcal{I}_N$. 

Since $A_{ii}(u,v)=[s_{ii}(u),s_{ii}(v)]$, this proves that $s_{ii}(u)$ and $s_{ii}(v)$
commute for all $i \in \mathcal{I}_N$. Moreover, combining equations \eqref{HWT:HWT.7} and \eqref{HWT:HWT.5}, we have that
\begin{equation}
 [s_{ii}(u),s_{jj}(v)]\equiv -\frac{1}{u^2-v^2}\sum_{a=j}^n A_{ja}(u,v)\equiv -\frac{1}{u-v}A_{jj}(u,v)\equiv 0 \label{HWT:HWT.9}
\end{equation}
for all $j>i>0$. 

\medskip 

\noindent \textit{Step 3.2:} $N=2n+1$.  \nopagebreak

\medskip

The arguments from Step 3.1 show that $[s_{ii}(u),s_{jj}(v)]\equiv 0$ whenever $1\leq i,j\leq n$, so it suffices to show that $[s_{00}(u),s_{jj}(v)]\equiv 0$ for all $j\geq 0$. Suppose first
that $j>0$. Then by \eqref{[s,s]} and \eqref{HWT:HWT.9}, we have
\eq{ \label{V0:[00,jj]=0}
[s_{00}(u),s_{jj}(v)]\equiv  -\frac{1}{u^2-v^2}\sum_{a=j}^n A_{ja}(u,v)\equiv -\frac{1}{u-v}A_{jj}(u,v)\equiv 0.
}
Hence, it remains to see $[s_{00}(u),s_{00}(v)]\equiv 0$. The same calculations as those done to obtain \eqref{HWT:HWT.6} give \begin{equation}
A_{0j}(u,v)\equiv \frac{1}{u+v}\sum_{a = 0}^n A_{0a}(u,v) \text{ for any $j>0$}. \label{s00s001}
\end{equation} Summing this expression over $1\leq j\leq n$ and adding $A_{00}(u,v)$ to both sides we obtain the relation 
\begin{equation}
 A_{00}(u,v)\equiv (u+v-n)A_{0j}(u,v) \text{ for any $j>0$}. \label{s00s002}
\end{equation}

It follows from \eqref{[s,s]} that
\begin{align}
 \left(1-\frac{1}{u-v}\right. & \left. +\frac{1}{u+v-\ka}-\frac{1}{(u-v)(u+v-\ka)}\right)A_{00}(u,v) \equiv \left(1-\frac{1}{u-v}+\frac{1}{u-v-\ka}\right)\frac{1}{u+v-n}A_{00}(u,v) \nonumber \\
    &-\frac{1}{u-v-\ka}\sum_{a=0}^n(s_{-a,0}(u)s_{a0}(v)-s_{0a}(v)s_{0,-a}(u))-\frac{1}{(u-v-\ka)(u+v-\ka)}\sum_{a=-n}^0[s_{aa}(u),s_{00}(v)]. \label{s00s003}
    \end{align}
Since $[s_{aa}(u),s_{00}(v)]\equiv 0$ for any $a>0$, the symmetry relation implies that 
\begin{equation}\label{s00s004}
 [s_{-a,-a}(u),s_{00}(v)]\equiv -\frac{1}{2u-2\ka}\sum_{b=0}^n[s_{-b,-b}(u),s_{00}(v)]. 
\end{equation}
Taking the sum of both sides as $a$ goes from $1$ to $n$ and adding $A_{00}(u,v)$ we obtain 
\begin{equation}
 \left(1+\frac{n}{2u-2\ka}\right)\sum_{b=0}^n[s_{-b,-b}(u),s_{00}(v)]\equiv A_{00}(u,v) \text{ and } (2\ka-2u-n)[s_{-a,-a}(u),s_{00}(v)]\equiv A_{00}(u,v) \label{s00s005}
\end{equation}
for any $a>0$. 

On the other hand, the explicit form of the defining reflection equation \eqref{[s,s]} implies that
\begin{align*}
 [s_{-a,-a}(u),s_{00}(v)] \equiv {} & -\frac{1}{u^2-v^2}\sum_{b=0}^nA_{0b}(u,v)+\frac{1}{(u-v)(u+v-\ka)}A_{0a}(u,v)\\
                         &-\frac{1}{u+v-\ka}(s_{-a,0}(u)s_{a0}(v)-s_{0a}(v)s_{0,-a}(u)).
\end{align*}
Multiplying both sides by $(u+v-n)$ and appealing to \eqref{s00s001},\eqref{s00s002} and \eqref{s00s005} we obtain 
\begin{equation*}
 \left(\frac{u+v-n}{2\ka-2u-n}+\frac{1}{u-v}-\frac{1}{(u-v)(u+v-\ka)}\right)A_{00}(u,v)\equiv -\frac{u+v-n}{u+v-\ka}(s_{-a,0}(u)s_{a0}(v)-s_{0a}(v)s_{0,-a}(u))
\end{equation*}
for any $a>0$. Taking the sum of both sides as $a$ goes from $1$ to $n$, adding $-\tfrac{u+v-n}{u+v-\ka}A_{00}(u,v)$, and then multiplying both sides by $\tfrac{u+v-\kappa}{u+v-n}$  we get 
\begin{align}
 \left(\frac{n(u+v-\ka)}{2\ka-2u-n}+\frac{n(u+v-\ka)}{(u-v)(u+v-n)} \right. & \left. -\frac{n}{(u-v)(u+v-n)}-1\right) A_{00}(u,v)\nonumber\\
 & \equiv -\sum_{a=0}^n(s_{-a,0}(u)s_{a0}(v)-s_{0a}(v)s_{0,-a}(u)). \label{s00s006}
\end{align}
Substituting \eqref{s00s006} and \eqref{s00s005} into \eqref{s00s003}, we obtain a relation of the form $f(u,v)A_{00}(u,v)\equiv0$, where $f(u,v)=1+\alpha(u,v)$ with $\alpha(u,v) \in u^{-1} \C[v][[u^{-1}]]$. This implies that we must have $A_{00}(u,v)\equiv 0$. 

\medskip 

\noindent \textit{Step 4:} $V$ is a highest weight representation.

\medskip

By Step 2, for all $r\geq 0$ and each $0\leq i\leq n$, $s_{ii}^{(r)}$ can be viewed as a linear endomorphism $V^0\to V^0$. By Step 3 these linear endomorphisms of $V^0$ all pairwise commute, 
and so they have a common eigenvector $\eta\in V^0$. Denote the eigenvalue of $s_{ii}^{(r)}$ corresponding to the eigenvector $\eta$ by $\mu_{i}^{(r)}$, where if $r=0$ then  $\mu_{i}^{(0)}=g_{ii}$. Set $ \mu_i(u)=g_{ii}+\sum_{r=1}^{\infty}\mu_i^{(r)}u^{-r}$. Then the submodule $X(\mfg_N,\mcG)^{tw}\eta$ is a highest weight representation, with highest weight vector $\eta$ and highest weight $(\mu_{i}(u))_{i\in\mcI_N}$. Moreover, since $V$ is irreducible, we must 
have $V=X(\mfg_N,\mcG)^{tw} \eta$. This proves that every finite-dimensional irreducible representation of $X(\mfg_N,\mcG)^{tw}$ is a highest weight representation. 

\medskip

\noindent \textit{Step 5:} Uniqueness of the highest weight vector.

\medskip

Let $\mu$ be the weight of the $\mfg_N^{\rho}$-module $V$ 
corresponding to $\eta$, i.e. $\mu(F_{ii}'^\rho)=\mu_i^{(1)}-\bar g_{ii}$ for all $1\leq i\leq n$. Since the central elements $w_i$, $i=2,4,6,\ldots$, must act
by scalar multiplication, Corollary \ref{X:PBW} implies that $V$ is spanned by elements of the form: 
\begin{equation}
 s_{j_1,i_1}^{(r_1)}\cdots s_{j_m,i_m}^{(r_m)}\eta \label{HWT:HWT.8}
\end{equation} 
with $j_a>i_a$, $j_a+i_a \ge \tfrac{1}{2} \pm \tfrac{1}{2}$, $r_a\geq 1$ for all $1\leq a\leq m$, and $m\geq 0$. It follows by \eqref{HWT:EmbeddingBracket.Cartan} that $v\in V$ can only belong to the $\mu$-weight space $V_\mu$ if 
$v\in \C\cdot \eta$, thus $V_\mu$ is one dimensional, and moreover any other weight of $V$ is $\prec \mu$.
\end{proof}


We now determine how the coefficients of the distinguished central series $w(u)$ (see \eqref{w(u)}) act on any highest weight representation of $X(\mfg_N,\mcG)^{tw}$. 
\begin{prop}\label{HWT:Prop.central_action}
 Let $V$ be a highest weight representation of $X(\mfg_N,\mcG)^{tw}$ with the highest weight $\mu(u)$.  Then the coefficients of the even series $w(u)$ act on $V$ 
 by scalar operators determined by: 
 $$w(u)|_V=\mu_n(-u)\mu_n(u).$$
\end{prop}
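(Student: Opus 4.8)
The plan is to read the statement off the unitarity relation $S(u)\,S(-u) = w(u)\cdot I$ from \eqref{w(u)}, by isolating a single matrix entry and evaluating it on the highest weight vector. First I would take the $(n,n)$-entry of both sides of $S(u)\,S(-u) = w(u)\cdot I$: since the right-hand side is a scalar matrix, this yields the identity $w(u) = \sum_{a=-n}^{n} s_{na}(u)\,s_{an}(-u)$ in $X(\mfg_N,\mcG)^{tw}[[u^{-1}]]$.

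Next I would apply both sides of this identity to the highest weight vector $\eta$ of $V$. The defining conditions \eqref{HWT:highestweightrep} of a highest weight representation give $s_{ij}(v)\,\eta = 0$ for all $i<j$, understood as an identity of formal series, so in particular $s_{an}(-u)\,\eta = 0$ for every $a$ with $-n \le a < n$. Therefore only the $a=n$ summand survives on $\eta$, and applying $s_{nn}(v)\,\eta = \mu_n(v)\,\eta$ first with $v=-u$ and then with $v=u$ gives $w(u)\,\eta = s_{nn}(u)\,s_{nn}(-u)\,\eta = \mu_n(-u)\,\mu_n(u)\,\eta$.

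Finally, since the coefficients of $w(u)$ are central in $X(\mfg_N,\mcG)^{tw}$ (as recalled just after \eqref{w(u)}) and $V = X(\mfg_N,\mcG)^{tw}\,\eta$, for any $x \in X(\mfg_N,\mcG)^{tw}$ one has $w(u)(x\eta) = x\bigl(w(u)\eta\bigr) = \mu_n(-u)\mu_n(u)\,(x\eta)$, so $w(u)$ acts on all of $V$ by the scalar series $\mu_n(-u)\mu_n(u)$, as claimed; along the way one also notes that this series is even in $u$ with constant term $g_{nn}^2 = 1$, consistent with the form of $w(u)$.

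This argument has no genuine obstacle: the only point requiring a little care is the choice of matrix entry. Taking the $(n,n)$-entry — rather than, say, the $(-n,-n)$-entry, which would force one to invoke the more complicated action of $s_{-n,-n}(u)$ from Proposition \ref{HWT:Prop.neg_weights} — is precisely what makes the highest weight annihilation relations collapse the sum $\sum_a s_{na}(u)\,s_{an}(-u)$ to the single term $s_{nn}(u)\,s_{nn}(-u)$, whose action on $\eta$ is immediate from the definition of highest weight.
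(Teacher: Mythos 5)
Your proposal is correct and matches the paper's own proof essentially verbatim: both take the $(n,n)$-entry of $S(u)S(-u) = w(u)\cdot I$, apply it to the highest weight vector $\eta$ so that the highest weight annihilation conditions collapse the sum to the single term $s_{nn}(u)s_{nn}(-u)\eta = \mu_n(-u)\mu_n(u)\eta$, and then invoke centrality of the coefficients of $w(u)$ together with cyclicity of $V$ to conclude.
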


\begin{proof}
 Let $\eta\in V$ be the highest weight vector. Since the coefficients of $w(u)$ belong to the center $ZX(\mfg_N,\mcG)^{tw}$ and $V$ is spanned by elements of the form given in \eqref{HWT:HWT.8}, the action of the $2i$-th coefficient
 $w_{2i}$ of $w(u)$ on $V$ is completely determined by its action on $\eta$. By \eqref{w(u)} we have the relation $S(u)S(-u)=w(u)\cdot I$. Applying the $(n,n)^{th}$ entry of both sides to
 the highest weight vector $\eta$ we obtain
\begin{equation*}
 w(u)\eta=\sum_{\ell=-n}^n s_{n\ell}(u)s_{\ell n}(-u)\eta =s_{nn}(u)s_{nn}(-u)\eta=\mu_n(-u)\mu_n(u)\eta. \qedhere
\end{equation*}
\end{proof}

\medskip

\begin{defn}\label{HWT:Defn.Verma_Module}
Let $\mu(u)=(\mu_i(u))_{i\in\mcI_N}$ be any tuple of formal series such that $\mu_i(u) \in g_{ii}+u^{-1}\C[[u^{-1}]]$
and $\wt \mu_0(u)$ satisfies \eqref{HWT:mu_0(u).0} if $\mfg_N$ is of type B. We define the Verma module $M(\mu(u))$ over $X(\mfg_N,\mcG)^{tw}$  as the quotient
$$M(\mu(u))=X(\mfg_N,\mcG)^{tw}/J, $$
where $J$ is the left ideal in $X(\mfg_N,\mcG)^{tw}$ generated by all elements $s_{ij}^{(r)}$ with $i<j$ and $s_{kk}^{(r)}-\mu_k^{(r)}$ where $r\geq 1$ and $k\in \mathcal{I}_N$.
\end{defn}

We will soon see that, similarly to the Verma modules for $X(\mfg_N)$, some choices of $\mu(u)$ may result in $M(\mu(u))$ being trivial (see Proposition \ref{HWT:refl.Prop.2}).  If $M(\mu(u))$ is non-trivial, then it is a highest weight module with the highest weight $\mu(u)$ and the highest weight vector $1_{\mu(u)}$ equal to the image of the identity element $1\in X(\mfg_N,\mcG)^{tw}$ under the natural quotient map  $X(\mfg_N,\mcG)^{tw}\to M(\mu(u))$. As consequence of the Poincar\'{e}-Birkhoff-Witt theorem for $X(\mfg_N,\mcG)^{tw}$, $M(\mu(u))$ is spanned by elements of the form: 
\begin{equation*}
 s_{j_1,i_1}^{(r_1)}\cdots s_{j_m,i_m}^{(r_m)}1_{\mu(u)} 
\end{equation*} 
with $j_a>i_a$, $j_a + i_a \ge \tfrac{1}{2} \pm \tfrac{1}{2}$, $r_a\geq 1$ for all $1\leq a\leq m$, and $m\geq 0$. Using this fact, together with the commutator relation \eqref{HWT:EmbeddingBracket.Cartan}, one can prove the following standard proposition.

\begin{prop}\label{HWT:Prop.VM}
Suppose $\mu(u)=(\mu_i(u))_{i\in \mcI_N}$ is such that the Verma module $M(\mu(u))$ is non-trivial. Then:
\begin{enumerate}
\item If $K$ is a submodule of $M(\mu(u))$, then $K=\bigoplus_{\lambda} K_\lambda$ where \[ K_\lambda = \{v\in K:F_{ii}^{\prime \rho}v=\lambda_iv \enspace \forall \enspace 1\leq i\leq n\} = M(\mu(u))_\lambda\cap K. \]
\item If $K$ is a proper submodule of $M(\mu(u))$, then $K\subseteq \bigoplus_{\lambda\neq \mu}M(\mu(u))_\lambda$ where $\mu = (\mu_i^{(1)} - \bar{g}_{ii})_{i\in\mcI_N}$.
\item $M(\mu(u))$ admits a unique irreducible quotient $V(\mu(u))$.  
\item Any irreducible highest weight $X(\mfg_N,\mcG)^{tw}$-module with the highest weight $\mu(u)$ is isomorphic to $V(\mu(u))$.
\end{enumerate}
\end{prop}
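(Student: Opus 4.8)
The plan is to prove the four parts of Proposition~\ref{HWT:Prop.VM} in order, since each builds on the previous one, following the standard template for Verma modules (as in \cite{AMR} or \cite{Mo5}) but adapted to the $\mfg_N^\rho$-weight decomposition established in the proof of Theorem~\ref{HWT:Thm.HWT}.

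For part (1), the key input is the $\mfg_N^\rho$-weight grading on $M(\mu(u))$. Since $M(\mu(u))$ is spanned by monomials $s_{j_1,i_1}^{(r_1)}\cdots s_{j_m,i_m}^{(r_m)}1_{\mu(u)}$ with each $j_a>i_a$, the commutator relation \eqref{HWT:EmbeddingBracket.Cartan} shows that each such monomial is a weight vector for the Cartan subalgebra $\mfh_N^\rho = \mathrm{span}_{\C}\{F_{ii}^{\prime\rho}\}$, with weight $\mu + \sum_a \alpha_{j_a,i_a}$ where $\mu = (\mu_i^{(1)}-\bar g_{ii})_i$ and each $\alpha_{j_a,i_a}$ is a sum of positive roots (recall $j_a+i_a\ge\frac12\pm\frac12$ forces $\alpha_{j_a,i_a}\in-\Delta^+$; more precisely the raising/lowering conventions must be tracked so that all these weights are $\preceq\mu$). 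Hence $M(\mu(u))=\bigoplus_\lambda M(\mu(u))_\lambda$ is a weight module, each weight space is finite-dimensional, and the weight $\mu$ occurs with multiplicity one (spanned by $1_{\mu(u)}$). The first claim of (1) is then the standard fact that a submodule of a weight module, all of whose weight spaces are finite-dimensional, is itself graded: given $v = \sum_\lambda v_\lambda \in K$, one applies suitable polynomials in the $F_{ii}^{\prime\rho}$ (a Vandermonde argument on the distinct weights appearing) to conclude each $v_\lambda \in K$; then $K_\lambda = K\cap M(\mu(u))_\lambda$ by definition.

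For part (2): a proper submodule $K$ cannot contain any vector of weight $\mu$, because such a vector would be a nonzero scalar multiple of $1_{\mu(u)}$, and $X(\mfg_N,\mcG)^{tw}\cdot 1_{\mu(u)} = M(\mu(u))$ would force $K = M(\mu(u))$. By part (1), $K$ is graded, so $K\subseteq\bigoplus_{\lambda\ne\mu}M(\mu(u))_\lambda$. Part (3) follows from the usual maximal-submodule argument: the sum $K^\circ$ of all proper submodules is, by part (2), contained in $\bigoplus_{\lambda\ne\mu}M(\mu(u))_\lambda$, hence is itself proper (it misses the $\mu$-weight space), so it is the unique maximal proper submodule; the quotient $V(\mu(u)) = M(\mu(u))/K^\circ$ is then the unique irreducible quotient. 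For part (4), if $W$ is any irreducible highest weight module with highest weight $\mu(u)$ and highest weight vector $\eta$, the universal property of the Verma module gives a surjection $M(\mu(u))\twoheadrightarrow W$ sending $1_{\mu(u)}\mapsto\eta$ (the defining left ideal $J$ maps to zero since $\eta$ satisfies \eqref{HWT:highestweightrep}); its kernel is a proper submodule, hence contained in $K^\circ$, and since $W$ is irreducible the kernel equals $K^\circ$, giving $W\cong V(\mu(u))$.

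The main obstacle is the bookkeeping in part (1): one must verify carefully that, under the PBW spanning set with the constraint $j_a + i_a \ge \frac12\pm\frac12$ and $j_a > i_a$, every monomial has $\mfh_N^\rho$-weight $\preceq\mu$ and that the $\mu$-weight space is exactly $\C\cdot 1_{\mu(u)}$ — this is precisely the content already extracted in Step~5 of the proof of Theorem~\ref{HWT:Thm.HWT}, so it can be invoked, but one should also confirm that $s_{ij}^{(r)}$ for $i<j$ (which generate part of $J$) indeed act as lowering operators in a way consistent with the stated partial order $\preceq$, i.e.\ that the set of roots $\{\alpha_{j,i} : j>i,\ j+i\ge\frac12\pm\frac12\}$ that can appear spans a submonoid inside $-\sum_{\alpha\in\Delta^+}\Z_{\ge0}\alpha$. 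Everything else is formal and routine once the grading is in place.
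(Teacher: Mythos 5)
Your proposal is correct and follows exactly the approach the paper intends (the paper declares the proposition ``standard'' and gives no written proof; the machinery you invoke---the PBW spanning set \eqref{HWT:HWT.8}, the commutator \eqref{HWT:EmbeddingBracket.Cartan}, the $\mfg_N^\rho$-weight grading, and the resulting universal property argument---is precisely what they have in mind). Two small corrections to the write-up, neither of which affects the validity of the argument: first, the weight spaces $M(\mu(u))_\lambda$ are in general \emph{not} finite-dimensional, because for a fixed multiset of pairs $(j_a,i_a)$ the exponents $r_a\ge 1$ range over all positive integers; fortunately the Vandermonde-extraction argument for part (1) only requires that each vector lie in a finite direct sum of weight spaces, which holds automatically in any weight module, so no finiteness of the individual $M(\mu(u))_\lambda$ is needed. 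Second, your phrase ``each $\alpha_{j_a,i_a}$ is a sum of positive roots'' is the wrong sign---under the constraints $j_a>i_a$, $j_a+i_a\ge\tfrac12\pm\tfrac12$ one gets $\alpha_{j_a,i_a}\in-\Delta^+$, as your own parenthetical then states---so the text should simply say the monomial weight is $\mu$ minus a nonnegative integer combination of positive roots, hence $\preceq\mu$, and the $\mu$-weight space is $\C\cdot 1_{\mu(u)}$ as established in Step 5 of the proof of Theorem \ref{HWT:Thm.HWT}.
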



Since $X(\mfg_N,\mcG)^{tw}$ is a subalgebra of $X(\mfg_N)$, we may view any $X(\mfg_N)$-module $L$ as a $X(\mfg_N,\mcG)^{tw}$-module by restricting the action of $X(\mfg_N)$.  Similarly, since $X(\mfg_N,\mcG)^{tw}$ is a left coideal subalgebra of $X(\mfg_N)$, the tensor product of an $X(\mfg_N)$-module $L$
and an $X(\mfg_N,\mcG)^{tw}$-module
$V$ inherits the structure of an $X(\mfg_N,\mcG)^{tw}$-module via the coproduct $\Delta$. More explicitly, for all $x\in X(\mfg_N,\mcG)^{tw}$, the action on $L\otimes V$ is given by 
$$x\cdot w\otimes v=\Delta(x)(w\otimes v) \text{ for all }w\in L \text{ and }v\in V.$$

In particular, we may take $L=L(\lambda(u))$ for some $N$-tuple $\lambda(u)=(\lambda_{-n}(u),\ldots, \lambda_{n}(u))$ satisfying \eqref{HWT:Ext.non-trivial}, and $V=V(\mu(u))$, where $\mu(u)$ is such that the Verma module $M(\mu(u))$ is non-trivial. If $L(\lambda(u))$ has the
highest weight vector $\xi$ and $V(\mu(u))$ has the highest weight vector $\eta$, then we may consider the $X(\mfg_N,\mcG)^{tw}$-modules $X(\mfg_N,\mcG)^{tw}\xi$ and $X(\mfg_N,\mcG)^{tw}(\xi\otimes\eta)$. Our 
present goal is to show that both these modules are of highest weight type, and to compute explicitly what the highest weights are. This will be achieved
in Proposition \ref{HWT:Prop.tensors} and Corollary \ref{HWT:Cor.restrictions}, however, first we need to prove a lemma concerning the extended Yangian $X(\mfg_N)$.

\medskip

In the extended twisted Yangian $X(\mfg_N)$ we have the relation $T(u)T^t(u+\ka)=z(u)I$, which immediately implies $T(u)^{-1}=z(u)^{-1}{T}^t(u+\ka)$. Let us denote
the $(i,j)^\text{th}$ entry of the inverse matrix $T(u)^{-1}$ by $t'_{ij}(u)$.

\begin{lemma}\label{HWT:Lem.restrictions.tensors}
 We have the following relations between the elements $t_{ij}(u)$ and $t'_{k\ell}(v)$ for all \mbox{$-n\leq i,j,k,\ell\leq n$:}
 \begin{align}
  [t_{ij}(u),t'_{k\ell}(v)]&=\frac{1}{u-v}\sum_{a=-n}^n\left(\delta_{kj}t_{ia}(u)t'_{a\ell}(v)-\delta_{i\ell}t'_{ka}(v)t_{aj}(u) \right)\nonumber\\
                                 &-\frac{1}{u-v-\ka}\left(\theta_{-k,j}t_{i,-k}(u)t'_{-j,\ell}(v)-\theta_{i,-\ell}t'_{k,-i}(v)t_{-\ell,j}(u) \right) \label{HWT:Inverse.1}.
 \end{align}
\end{lemma}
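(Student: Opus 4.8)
The plan is to derive \eqref{HWT:Inverse.1} directly from the defining $RTT$-relation \eqref{RTT} of $X(\mfg_N)$ together with the identity $T(u)^{-1}=z(u)^{-1}T^t(u+\ka)$, by a standard manipulation of the $R$-matrix presentation. First I would rewrite \eqref{RTT} in the form $T_1(u)\,R(u-v)\,T_2(v)^{-1} = T_2(v)^{-1}\,R(u-v)\,T_1(u)$; this is obtained from \eqref{RTT} by multiplying on the left and right by $T_2(v)^{-1}$ and using the fact (noted after \eqref{R(u)}) that $R(u)$ is invertible as a formal series. Expanding $R(u-v) = I - P/(u-v) + Q/(u-v-\ka)$ and matching the appropriate matrix entry in $\End(\C^N)^{\ot 2}$ (namely, acting with $E_{ij}\ot E_{k\ell}$ paired against the two auxiliary spaces in the conventional way) yields a relation expressing $t_{ij}(u)\,t'_{k\ell}(v) - t'_{k\ell}(v)\,t_{ij}(u)$ in terms of the $P$-contribution (which produces the $\frac{1}{u-v}$ sums over $a$) and the $Q$-contribution (which produces the $\frac{1}{u-v-\ka}$ terms with the $\theta$-factors, since $Q = \sum \theta_{ij}E_{ij}\ot E_{-i,-j}$). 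This is precisely the shape of the right-hand side of \eqref{HWT:Inverse.1}.

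The key intermediate step is to make the bookkeeping of indices in the $Q$-term transparent. The operator $Q$ carries the index pair $(i,j)$ on the first space to $(-i,-j)$ with weight $\theta_{ij}$, so when it appears between $T_1(u)$ and $T_2(v)^{-1}$ (or on the other side) it converts a $t_{i,-k}(u)$ into the form seen in \eqref{HWT:Inverse.1} with the factor $\theta_{-k,j}$, and likewise for $t'_{k,-i}(v)$ with $\theta_{i,-\ell}$. I would carry this out by writing $T_2(v)^{-1}$ as $z(v)^{-1}T_2^t(v+\ka)$ only if needed to check consistency of the $\ka$-shift in the $Q$-denominator; in fact the cleanest route keeps $T^{-1}$ abstract and simply uses that it is a two-sided inverse, so the $RTT$-relation for the mixed pair $(T, T^{-1})$ follows formally. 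Concretely: from $R(u-v)T_1(u)T_2(v) = T_2(v)T_1(u)R(u-v)$, conjugate appropriately to get $T_1(u)^{-1}R(u-v)T_2(v)^{-1}\cdot(\text{...})$; the honest derivation is to start from $R_{12}(u-v)T_1(u)T_2(v)=T_2(v)T_1(u)R_{12}(u-v)$, multiply on the left by $T_1(u)^{-1}$ and on the right by $T_1(u)^{-1}$, then relabel — obtaining $T_1(u)^{-1}R_{12}(u-v)T_2(v) = T_2(v)R_{12}(u-v)T_1(u)^{-1}$ — and then repeat the same trick in the second space to isolate $T_2(v)^{-1}$ as well when required; for \eqref{HWT:Inverse.1} we only need one inverse, so one such step suffices, after which reading off the $(ij),(k\ell)$ component gives the claim.

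The main obstacle I anticipate is purely notational rather than conceptual: getting every $\theta$-factor and every sign in the $Q$-term to match the stated formula exactly, since the transpose convention $(E_{ij})^t=\theta_{ij}E_{-j,-i}$ and the appearance of $Q=P^{t_1}=P^{t_2}$ interact in a way that is easy to mislabel. I would guard against this by checking the relation in a low-rank example (say $\mfg_N=\mfso_3$ or $\mfsp_2$) or, more cheaply, by verifying internal consistency: setting $v\to u$ formally and confirming the right-hand side vanishes (as it must, since $[t_{ij}(u),t'_{k\ell}(u)]$ should be controlled correctly), and confirming that contracting \eqref{HWT:Inverse.1} against $\sum_j t'_{jp}(u)$ on the right reproduces a relation consistent with $\sum_a t_{ia}(u)t'_{ap}(u)=\delta_{ip}$. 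Once the $R$-matrix manipulation and the index matching are in place, no further input is needed; everything used is already established in Section~\ref{sec:Y-BCD}.
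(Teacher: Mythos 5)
Your proposal matches the paper's proof: multiply the defining $RTT$-relation on both sides by $T_2(v)^{-1}$ to get $T_2(v)^{-1}R(u-v)T_1(u)=T_1(u)R(u-v)T_2(v)^{-1}$, expand $R(u-v)=I-\tfrac{P}{u-v}+\tfrac{Q}{u-v-\ka}$, and read off the $(ij),(k\ell)$ entry. Note that in your ``concretely'' paragraph you momentarily multiply by $T_1(u)^{-1}$ instead of $T_2(v)^{-1}$, which yields the equivalent relation for $[t'_{ij}(u),t_{k\ell}(v)]$ rather than the stated one; your first paragraph has the correct choice, so this is only a slip in exposition.
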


\begin{proof}
Multiplying both sides of  the relation $R(u-v)T_1(u)T_2(v)=T_2(v)T_1(u)R(u-v)$ by $T_2(v)^{-1}$ we obtain the equivalent relation
\begin{equation*}
T_2(v)^{-1}R(u-v)T_1(u)=T_1(u)R(u-v)T_2(v)^{-1}.
\end{equation*}
Expanding $R(u-v)$ as $R(u-v)=1-\frac{P}{u-v}+\frac{Q}{u-v-\ka}$ leads to \eqref{HWT:Inverse.1}.
\end{proof}

\medskip 


Recall that, given a tuple of series $\mu(u)$, $\wt \mu(u)$ is the corresponding tuple whose components have been defined in  \eqref{HWT:tilde_mu_i(u)}.

\begin{prop}\label{HWT:Prop.tensors}
 Let $\eta$ denote the highest weight vector of the irreducible $X(\mfg_N,\mcG)^{tw}$-module $V(\mu(u))$, and $\xi$ the highest weight vector of the irreducible $X(\mfg_N)$-module $L(\lambda(u))$.
 Then $X(\mfg_N,\mcG)^{tw}(\xi\otimes \eta)$ is a highest weight  $X(\mfg_N,\mcG)^{tw}$-module with the highest weight vector $ \xi\otimes \eta$, and the highest weight
 $\gamma(u)$ whose components are determined by the relations
 \begin{equation}
 \wt \gamma_i(u)=\wt \mu_{i}(u)\lambda_i(u-\ka/2)\lambda_{-i}(-u+\ka/2) \label{HWT:tensors}
 \end{equation}
 for all $i\in \mathcal{I}_N$.
\end{prop}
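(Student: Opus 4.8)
The plan is to compute the action of each series $s_{ij}(u)$ on $\xi\otimes\eta$ directly from the explicit coproduct formula \eqref{TX-cop}, namely
\[
s_{ij}(u)(\xi\otimes\eta)=\sum_{-n\le a,b\le n}\theta_{jb}\,\bigl(t_{ia}(u-\ka/2)\,t_{-j,-b}(-u+\ka/2)\,\xi\bigr)\otimes\bigl(s_{ab}(u)\,\eta\bigr),
\]
and then to feed in the highest weight structure of the two tensor factors. Since $\xi$ is the highest weight vector of the $X(\mfg_N)$-module $L(\la(u))$, the matrix entries $t_{k\ell}(w)$ and the entries $t'_{k\ell}(w)$ of $T(w)^{-1}$ act triangularly on $\xi$: one has $t_{k\ell}(w)\xi=t'_{k\ell}(w)\xi=0$ for $k<\ell$, while $t_{kk}(w)\xi=\la_k(w)\xi$ and $t'_{kk}(w)\xi=\la_k(w)^{-1}\xi$ (using $T(w)^{-1}=z(w)^{-1}T^t(w+\ka)$ from \eqref{z(u)}), and $L(\la(u))$ has no weights strictly above $\bar\la:=\mathrm{wt}(\xi)$, with $\dim L(\la(u))_{\bar\la}=1$. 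On the other side, $\eta$ satisfies $s_{ab}(u)\eta=0$ for $a<b$, $s_{aa}(u)\eta=\mu_a(u)\eta$ for $a\in\mcI_N$, and $s_{-a,-a}(u)\eta\in\C\eta$ by Proposition \ref{HWT:Prop.neg_weights}. It will be convenient to rewrite $\theta_{jb}\,t_{-j,-b}(-u+\ka/2)=z(-u-\ka/2)\,t'_{bj}(-u-\ka/2)$ by the same relation, so that all Yangian-side factors come from a single copy of $T$ and its inverse at shifted arguments, the central series $z(-u-\ka/2)$ acting on $L(\la(u))$ by the scalar $\la_n(-u-\ka/2)\la_{-n}(-u+\ka/2)$.

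First I would show that $\xi\otimes\eta$ is a highest weight vector, i.e. $s_{ij}(u)(\xi\otimes\eta)=0$ for $i<j$. For the pairs $(i,j)$ with $\alpha_{i,j}\in\Delta^+$ this is a weight argument: the $\mfg_N^\rho$-weight of $\xi\otimes\eta$ is obtained additively from $\bar\la$ and $\mathrm{wt}(\eta)$ (the coproduct acts additively on $\mfh_N^\rho$, as seen from the $u^{-1}$-coefficient in \eqref{TX-cop}), hence is the maximal weight of $L(\la(u))\otimes V(\mu(u))$ since all weights of $L(\la(u))$ are $\preceq\bar\la$ and all weights of $V(\mu(u))$ are $\preceq\mathrm{wt}(\eta)$; because $s_{ij}(u)$ raises weights by $\alpha_{i,j}\succ0$ (see \eqref{HWT:EmbeddingBracket.Cartan}), the vector $s_{ij}(u)(\xi\otimes\eta)$ would lie in a strictly larger weight space and so must vanish. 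For the remaining pairs $i<j$ one uses the symmetry relation \eqref{s=s}, which expresses the corresponding series through those already handled (together with $s_{ij}(\ka-u)$, yielding a recursion that closes). The uniqueness of the highest weight vector up to scalars then follows exactly as in Step~5 of the proof of Theorem \ref{HWT:Thm.HWT}, via Corollary \ref{X:PBW} and \eqref{HWT:EmbeddingBracket.Cartan}.

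It remains to identify $\gamma(u)$, where $s_{ii}(u)(\xi\otimes\eta)=\gamma_i(u)(\xi\otimes\eta)$. Specializing $j=i$ in the displayed formula and invoking triangularity, a summand $(a,b)$ survives only when $a=b\ge i$: it vanishes outright if $b<i$ or $a<b$; the case $b=i<a$ collapses because $t_{ia}(u-\ka/2)\xi=0$; and if $a>b\ge i$ the Yangian-side factor is a weight vector of $L(\la(u))$ of weight $\bar\la+\alpha_{b,a}\succ\bar\la$, hence zero. Thus
\[
\gamma_i(u)=\la_i(u-\ka/2)\,\la_{-i}(-u+\ka/2)\,\mu_i(u)+\sum_{b=i+1}^{n}d^{(i)}_b(u)\,\mu_b(u),
\]
where $d^{(i)}_b(u)$ is the scalar by which $t_{ib}(u-\ka/2)\,t_{-i,-b}(-u+\ka/2)$ acts on the line $L(\la(u))_{\bar\la}$. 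Since $t_{ib}(u-\ka/2)\xi=0$, this product equals $\bigl[t_{ib}(u-\ka/2),t_{-i,-b}(-u+\ka/2)\bigr]\xi$, which one expands using the defining relations of $X(\mfg_N)$ (equivalently Lemma \ref{HWT:Lem.restrictions.tensors}, after passing to $t'$) and reduces once more by triangularity; together with, where needed, the non-triviality identities \eqref{HWT:Ext.non-trivial} satisfied by $\la(u)$, this yields a linear recursion, decreasing in $i$, for the $d^{(i)}_b(u)$. Passing to the modified weights $\wt\gamma_i(u)$ of \eqref{HWT:tilde_mu_i(u)} turns this recursion into the multiplicative statement \eqref{HWT:tensors}, $\wt\gamma_i(u)=\wt\mu_i(u)\,\la_i(u-\ka/2)\,\la_{-i}(-u+\ka/2)$; alternatively one checks directly that the right-hand side satisfies the same recursion with boundary value $\wt\gamma_n(u)=2u\,\mu_n(u)\,\la_n(u-\ka/2)\,\la_{-n}(-u+\ka/2)$. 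The main obstacle is exactly this bookkeeping: evaluating the mixed products $t_{ib}(u-\ka/2)\,t_{-i,-b}(-u+\ka/2)$ on the highest weight line of $L(\la(u))$ — which itself unfolds into a nested recursion among such scalars — and assembling these into the recursion for $\wt\gamma_i(u)$. It is here that the twist by $(2u-n+i)$ and by $\sum_{\ell>i}(\cdot)$ built into \eqref{HWT:tilde_mu_i(u)} is indispensable, since it is $\wt\gamma_i(u)$, not $\gamma_i(u)$, that the $RTT$ and reflection-equation relations control in closed form.
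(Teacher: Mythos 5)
Your first step — the highest-weight-vector claim $s_{ij}(u)(\xi\otimes\eta)=0$ for $i<j$ — is handled by a weight argument rather than the paper's direct computation via Lemma \ref{HWT:Lem.restrictions.tensors}, and this is a genuinely different and, in principle, cleaner route. However, as phrased it has a hole in the orthogonal case: for the antidiagonal series $s_{-m,m}(u)$ ($m>0$) one has $\alpha_{-m,m}=-2\eps_m$, which is \emph{not} in the $\Delta^+$ defined in the paper (since $\pm 2\eps_m$ is not a root of $\mfso_N$), so your criterion ``$\alpha_{i,j}\in\Delta^+$'' excludes these; meanwhile your fallback via \eqref{s=s} degenerates precisely here, because the symmetry relation sends $s_{-m,m}(u)$ to itself and $s_{-m,m}(\ka-u)$, giving (together with $p(u)p(\ka-u)=1-(2u-\ka)^{-2}$) a system whose determinant vanishes identically, so the ``recursion that closes'' does not close. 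The conclusion is still correct — one can show $-2\eps_m\succ0$ for $m\geq 2$ and that $-2\eps_1$ is incomparable to $0$, so in both cases $\mathrm{wt}(\xi\otimes\eta)+\alpha_{-m,m}$ lies outside the cone $\preceq\mathrm{wt}(\xi\otimes\eta)$ — but this requires the sharper statement ``$\alpha_{i,j}\not\preceq 0$,'' not the one you used, and neither variant appears in your proposal. Also, your stated auxiliary fact $t'_{kk}(w)\xi=\la_k(w)^{-1}\xi$ is wrong for $k<n$ (what holds is $t'_{kk}(w)\xi=z(w)^{-1}\la_{-k}(w+\ka)\xi$), although you do not end up relying on it.

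The larger gap is in the second half. You correctly reduce $\gamma_i(u)$ to evaluating the scalars $d^{(i)}_b(u)$ by which $t_{ib}(u-\ka/2)\,t_{-i,-b}(-u+\ka/2)$ acts on $L(\la(u))_{\bar\la}$, which is exactly the paper's setup. But you then write that ``the main obstacle is exactly this bookkeeping'' and leave it unfinished. This is precisely where the paper's proof has real content: it introduces the two operator families $A_i(u)=\sum_{a\geq i} z(-u-\ka/2)t_{ia}(u-\ka/2)t'_{ai}(-u-\ka/2)$ and $B_a(u)$, derives matching recursions \eqref{HWT:Rest.1} and \eqref{HWT:Rest.2} for them, establishes the key identity $A_i\equiv B_i$ on $\C\xi$ by downward induction, deduces $A_i(u)\xi=\mu_i^\bullet(u)\xi$ with $\wt\mu_i^\bullet(u)=2u\,\la_i(u-\ka/2)\la_{-i}(-u+\ka/2)$, and uses $A_i\equiv B_i$ to write the mixed products $d^{(i)}_b(u)$ as $\tfrac{1}{2u}(\mu_i^\bullet(u)-\mu_b^\bullet(u))$ before a final rearrangement. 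Your proposal identifies neither the $A_i/B_a$ symmetry nor the scalars $\mu_i^\bullet(u)$, so the assertion that ``passing to $\wt\gamma_i(u)$ turns this recursion into the multiplicative statement'' is left unsubstantiated. As it stands, the heart of the proposition is not proved.
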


\begin{proof}
We will use the symbol $``\equiv"$ to denote equality of operators on the spaces $\C(\xi\otimes \eta)$ or $\C\xi$.
We begin by showing that $s_{ij}(u)\cdot (\xi\otimes \eta)=0$ for all $i<j$. By the symmetry relation \eqref{s=s}, it is enough to consider the cases
where $i<0<j$ or $0\leq i<j$. We have 
\begin{equation*}
\Delta(s_{ij}(u)) \equiv z(-u-\ka/2)\sum_{-n \le b\le a \le n} t_{ia}(u-\ka/2)t'_{bj}(-u-\ka/2)\otimes s_{ab}(u).
\end{equation*}
Moreover, we have $t'_{bj}(v)\xi=0$ whenever $b<j$, so we can assume $b\geq j$.
Since $i<j\leq b\leq a$, we have $t_{ia}(u-\ka/2)\xi=0$ and also $a,b>0$ since $j>0$. By Lemma \ref{HWT:Lem.restrictions.tensors} we have $t_{ia}(u)t'_{bj}(v)\equiv 0$
unless $a=b$. Therefore it suffices to show that $t_{ia}(u)t'_{aj}(v)\equiv 0$ for $i<j$ and $a\geq j>0$.  From Lemma \ref{HWT:Lem.restrictions.tensors} we arrive at 
\begin{equation*}
t_{ia}(u)t'_{aj}(v)\equiv \frac{1}{u-v}\sum_{b=j}^n t_{ib}(u)t'_{bj}(v)
\end{equation*}
for all $a\geq j$. This gives 
 \begin{equation}
  \sum_{a=j}^n t_{ia}(u)t'_{aj}(v)\equiv \frac{n-j+1}{u-v}\sum_{a=j}^n t_{ia}(u)t'_{aj}(v), 
 \end{equation}
and so $t_{ia}(u)t'_{aj}(v)=0$. This completes the proof that $\Delta(s_{ij}(u))(\xi\otimes \eta)=0$ for all  $i<j$. 

\medskip 

Next, we compute $\Delta(s_{ii}(u))(\xi\otimes \eta)$ for all $i\in \mathcal{I}_N$. Using computations similar to those above, we can show that $t_{ia}(u)t'_{bi}(v)\xi=0$ whenever $a>b$. Thus, 
\begin{equation}
\Delta(s_{ii}(u))(\xi\otimes \eta)= z(-u-\ka/2)\sum_{a=i}^nt_{ia}(u-\ka/2)t'_{ai}(-u-\ka/2)\xi\otimes s_{aa}(u)\eta=(\hat{s}_{ii}(u)\xi)\otimes \eta, \label{HWT:Rest.1V}
\end{equation}
where $\hat{s}_{ii}(u)$ is the operator defined by the formula 
\begin{equation*}
\hat{s}_{ii}(u)=z(-u-\ka/2)\sum_{a=i}^n\mu_a(u)t_{ia}(u-\ka/2)t'_{ai}(-u-\ka/2).
\end{equation*}
As a consequence of our work so far, it remains only to determine the eigenvalue $\gamma_i(u)$ of the operator $\hat{s}_{ii}(u)$ corresponding to the vector $\xi$. Define the operator
$A_i(u)$ by the formula 
\begin{equation}
 A_{i}(u)=\sum_{a=i}^nz(-u-\ka/2)t_{ia}(u-\ka/2)t'_{ai}(-u-\ka/2). \label{HWT:Rest.A_i(u)}
\end{equation}
We first show that $A_{i}(u)\xi=\mu_i^\bullet(u)\xi$ for some scalar series $\mu_i^\bullet(u)$. From Lemma \ref{HWT:Lem.restrictions.tensors} we obtain for all $a>i\geq0$: 
\begin{equation}
 t_{ia}(u-\ka/2)t'_{ai}(-u-\ka/2)\equiv \frac{1}{2u}\left(\sum_{r=i}^nt_{ir}(u-\ka/2)t'_{ri}(-u-\ka/2)-\sum_{r=a}^nt'_{ar}(-u-\ka/2)t_{ra}(u-\ka/2) \right).\label{eq0}
\end{equation}
This implies that
\begin{equation*}
 A_i(u)\equiv z\left(-u-\ka/2\right)t_{ii}\left(u-\ka/2\right)t'_{ii}\left(-u-\ka/2 \right)+\frac{n-i}{2u}A_i(u)-\frac{1}{2u}\sum_{a=i+1}^nB_a(u),
\end{equation*}
where 
$$B_a(u)= \sum_{r=a}^nz\left(-u-\ka/2\right)t'_{ar}\left(-u-\ka/2\right)t_{ra}\left(u-\ka/2\right).$$ 
Consequently, this proves that 
\begin{equation}
 \frac{2u-n+i}{2u}A_i(u)\equiv z\left(-u-\ka/2\right)t_{ii}\left(u-\ka/2\right)t'_{ii}\left(-u-\ka/2 \right)-\frac{1}{2u}\sum_{a=i+1}^nB_a(u). \label{HWT:Rest.1}
\end{equation}
Using the same method, one shows using Lemma \ref{HWT:Lem.restrictions.tensors} that 
\begin{equation}
 \frac{2u-n+i}{2u}B_i(u)\equiv z\left(-u-\ka/2\right)t_{ii}\left(u-\ka/2\right)t'_{ii}\left(-u-\ka/2 \right)-\frac{1}{2u}\sum_{a=i+1}^nA_{a}(u) \label{HWT:Rest.2}
\end{equation}
for all $i\in \mathcal{I}_N$. An easy downward induction then shows $B_i(u)\equiv A_i(u)$ for all such $i$. Substituting this result back into \eqref{HWT:Rest.1}
and using that $z(v)t'_{ii}(v)=t_{-i,-i}(v+\ka)$ for all $i$, we obtain: 
\begin{equation}
\frac{2u-n+i}{2u}A_i(u)\equiv t_{ii}\left(u-\ka/2\right)t_{-i,-i}(-u+\ka/2)-\frac{1}{2u}\sum_{a=i+1}^n A_{a}(u). \label{HWT:Rest.3}                         
\end{equation}
It follows from downward induction on $i\in \mathcal{I}_N$ that there is a tuple $\mu^\bullet(u)=(\mu_i^\bullet(u))$ such that  $A_{i}(u)\xi=\mu_i^\bullet(u)\xi$ for all $i\in \mathcal{I_N}$. Moreover, the components of $\mu^\bullet(u)$ are determined
by the relations
\begin{equation*}
 \wt \mu_i^\bullet(u)=2u\lambda_i(u-\ka/2)\lambda_{-i}(-u+\ka/2) \quad \text{ for all }i\in \mathcal{I}_N.
\end{equation*}
As $B_i(u)\equiv A_{i}(u)$ for all $i\in \mathcal{I}_N$, we may express \eqref{eq0} as 
\begin{equation*}
z(-u-\ka/2)t_{ia}(u-\ka/2)t'_{ai}(-u-\ka/2)\equiv \frac{1}{2u}\left(A_{i}(u)-A_{a}(u) \right)
\end{equation*}
for all $a>i$. This gives: 
\begin{equation*}
\hat{s}_{ii}(u)\equiv \frac{1}{2u}\sum_{a=i+1}^n\mu_{a}(u)\left(A_{i}(u)-A_{a}(u)\right)+\mu_i(u)t_{ii}(u-\ka/2)t_{-i,-i}(-u+\ka/2).
\end{equation*}
Since $A_{i}(u)\xi=\mu_i^\bullet(u)$ for each $i\in \mathcal{I}_N$, applying the above expression to $\xi$ we obtain the identity
\begin{equation}
\gamma_i(u)=\mu_{i}(u)\lambda_i(u-\ka/2)\lambda_{-i}(-u+\ka/2)+\frac{1}{2u}\sum_{a=i+1}^n\mu_a(u)\left(\mu_i^\bullet(u)-\mu_a^\bullet(u) \right). \label{gamma_i(u)}
\end{equation}
We now want to obtain the formula \eqref{HWT:tensors}. Since $\mu_i^\bullet(u)=\mfrac{1}{2u-n+i}\left(\wt \mu_i^\bullet(u)-\sum_{a\geq i+1}\mu_a^\bullet(u) \right)$, equation \eqref{gamma_i(u)} implies that 
\begin{align}
 (2u-n+i)\gamma_i(u)&=\frac{2u-n+i}{2u}\mu_i(u)\wt \mu_i^\bullet (u)+\frac{1}{2u}\sum_{j\geq i+1}\mu_j(u)\wt \mu_i^\bullet(u)\nonumber\\
                    &-\frac{1}{2u}\sum_{a,j\geq i+1} \mu_j(u)\mu_a^\bullet(u)-\frac{2u-n+i}{2u}\sum_{j\geq i+1}\mu_j(u)\mu_j^\bullet(u).\label{gamma_i(u).1}
\end{align}
A straightforward downward induction on $i\in \mathcal{I}_n$ then shows that
\begin{equation*}
 \sum_{j\geq i+1}\gamma_j(u)=\frac{1}{2u}\sum_{a,j\geq i+1} \mu_j(u)\mu_a^\bullet(u)+\frac{2u-n+i}{2u}\sum_{j\geq i+1}\mu_j(u)\mu_j^\bullet(u). 
\end{equation*}
Combining this with \eqref{gamma_i(u).1} proves that \eqref{HWT:tensors} holds for all $i\in \mathcal{I}_N$. 
\end{proof}


For each matrix $\mcG$, consider the corresponding finite sequence $(g_{ii})_{i\in \mathcal{I}_N}$. If $(g_{ii})_{i\in \mathcal{I}_N}$ is the sequence $(1,\ldots, 1)$, set $\mbfk=n$. Otherwise, let $\mbfk$ be the unique integer in $\mathcal{I}_N\setminus\{n\}$ with the property that $g_{\mbfk\mbfk}\neq g_{\mbfk+1,\mbfk+1}$. Set $\boldsymbol{\ell}=n-\mbfk$. In particular, if $\mfg_N=\mfso_{2n+1}$, this coincides with the constant $\boldsymbol{\ell}$ defined in Remark \ref{HWT:Rem.m_0(u)}.

\begin{crl}\label{HWT:Cor.restrictions}
 Assume $L(\lambda(u))$ exists, with the highest weight vector $\xi$. Then  $X(\mfg_N,\mcG)^{tw}\xi$ is a highest weight module with the highest weight $\mu(u)$ whose components are 
 determined by the relations 
 \begin{equation}
 \wt \mu_i(u)=\begin{cases}
           [\mp]2u\lambda_i(u-\ka/2)\lambda_{-i}(-u+\ka/2) &\text{ if } \quad i>\mbfk,\\
           [\mp](2\boldsymbol{\ell}-2u)\lambda_i(u-\ka/2)\lambda_{-i}(-u+\ka/2)  &\text{ if } \quad 0\leq i\leq \mbfk, 
          \end{cases} \label{HWT:Rest.FirstKind}
 \end{equation}
 if $\mcG$ is of the first kind, while 
  \begin{equation}
 \wt\mu_i(u)=\begin{cases}
              \left(\mfrac{1[\pm]\,cu}{1-cu} \right)2u\lambda_i(u-\ka/2)\lambda_{-i}(-u+\ka/2) &\text{ if } \quad i>\mbfk,\\[.5em]
              \left(\mfrac{1[\pm]\,c(\boldsymbol{\ell}-u)}{1-cu}\right)2u\lambda_i(u-\ka/2)\lambda_{-i}(-u+\ka/2)  &\text{ if } \quad 0\leq i\leq \mbfk.
             \end{cases} \label{HWT:Rest.SecondKind}
 \end{equation}
  if $\mcG$ is of the second kind.
\end{crl}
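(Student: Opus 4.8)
The plan is to specialize Proposition \ref{HWT:Prop.tensors} to the case where the highest weight representation $V(\mu(u))$ is taken to be the trivial (one-dimensional) $X(\mfg_N,\mcG)^{tw}$-module on which $S(u)$ acts by the matrix $\mcG(u)$, i.e. $\mu_i(u) = g_{ii}(u)$ for all $i\in\mcI_N$. Here the one-dimensional module is genuinely a representation of $X(\mfg_N,\mcG)^{tw}$: the assignment $s_{ij}(u)\mapsto g_{ij}(u)$ respects the reflection equation \eqref{TX-RE} and the symmetry relation \eqref{s=s} precisely because $\mcG(u)=K(u)$ satisfies \eqref{RE} and Lemma \ref{L:K-RE-Sym}. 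With this choice, the module $X(\mfg_N,\mcG)^{tw}(\xi\ot\eta)$ from Proposition \ref{HWT:Prop.tensors} is exactly (isomorphic to) $X(\mfg_N,\mcG)^{tw}\xi$ as a submodule of the restriction $L(\lambda(u))\big|_{X(\mfg_N,\mcG)^{tw}}$, since $\eta$ spans a one-dimensional module and $\xi\ot\eta$ is identified with $\xi$. Hence Corollary \ref{HWT:Cor.restrictions} will follow once we compute the tuple $\wt\mu(u)$ attached to the constant highest weight $\mu_i(u)=g_{ii}(u)$, and then plug into \eqref{HWT:tensors}.

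The computation of $\wt\mu_i(u)$ for the trivial module is a direct evaluation of the defining formula \eqref{HWT:tilde_mu_i(u)}, namely $\wt\mu_i(u)=(2u-n+i)g_{ii}(u)+\sum_{\ell=i+1}^n g_{\ell\ell}(u)$, using the explicit form of $\mcG(u)$ from Definition \ref{D:K-matrices}. For $\mcG$ of the first kind (types ABCD0, AIII, CI, DIII, and DI/CII with $p=q$), $g_{ii}(u)=g_{ii}\in\{+1,-1\}$ is constant; the break occurs at the index $\mbfk$, so $g_{\ell\ell}=g_{\mbfk+1,\mbfk+1}$ for $\ell>\mbfk$ and $g_{\ell\ell}=g_{\mbfk\mbfk}=-g_{\mbfk+1,\mbfk+1}$ for $\ell\le\mbfk$. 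Summing the tail $\sum_{\ell=i+1}^n g_{\ell\ell}$ in the two regimes $i>\mbfk$ and $0\le i\le\mbfk$ gives telescoping counts that collapse $(2u-n+i)g_{ii}(u)+\sum_{\ell=i+1}^n g_{\ell\ell}$ to $[\mp]\,2u$ respectively $[\mp](2\boldsymbol{\ell}-2u)$ after recalling $\boldsymbol{\ell}=n-\mbfk$; the overall sign $[\mp]$ is just $-g_{\mbfk+1,\mbfk+1}$, which matches the sign convention fixed above Theorem \ref{HWT:Thm.HWT}. For $\mcG$ of the second kind ($K(u)=(I-cu\mcG)(1-cu)^{-1}$), one has $g_{\ell\ell}(u)=(1-cu\,\varepsilon_\ell)(1-cu)^{-1}$ with $\varepsilon_\ell=\pm1$, and the same bookkeeping produces the rational prefactors $(1[\pm]cu)(1-cu)^{-1}$ and $(1[\pm]c(\boldsymbol{\ell}-u))(1-cu)^{-1}$ in \eqref{HWT:Rest.SecondKind}; here the shift by $\boldsymbol{\ell}$ in the numerator comes from summing the $-cu\varepsilon_\ell$ terms over the tail of length recording how many sign flips lie above index $i$. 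Substituting $\wt\mu_i(u)$ into $\wt\gamma_i(u)=\wt\mu_i(u)\lambda_i(u-\ka/2)\lambda_{-i}(-u+\ka/2)$ and renaming $\gamma$ back to $\mu$ (as the highest weight of $X(\mfg_N,\mcG)^{tw}\xi$) gives \eqref{HWT:Rest.FirstKind} and \eqref{HWT:Rest.SecondKind}.

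The only genuinely delicate point — and the place I expect to spend the most care — is verifying that the one-dimensional $\mcG(u)$-module is a legitimate $X(\mfg_N,\mcG)^{tw}$-module with highest weight vector in the sense of the definition, i.e. that $s_{ij}(u)$ acts as $g_{ij}(u)=0$ for $i<j$ (true since $\mcG$ is diagonal, away from type DI(b)) and that $\wt\mu_0(u)$ for this module satisfies the constraint \eqref{HWT:mu_0(u).0} of Definition \ref{HWT:Defn.Verma_Module} in the type B cases; this last check is exactly the content of the claim proved in Remark \ref{HWT:Rem.m_0(u)} (that $p_0(u)p(u)^{-1}=g(\ka-u)g(u)^{-1}$), applied with $\mu_i(u)=g_{ii}(u)$ so that $\wt\mu_0(u)$ equals $(\ka-u)/u$ up to the factor $g$, consistent with \eqref{HWT:Rem.m_0(u).2}. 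Everything else is the elementary telescoping summation described above; once the identification of the module is in place, Proposition \ref{HWT:Prop.tensors} does all the work.
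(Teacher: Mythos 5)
Your proposal is correct and takes essentially the same route as the paper: define the one-dimensional module $V(\mcG)$ on which $S(u)$ acts by $\mcG(u)$, observe that the restriction of $L(\lambda(u))$ to $X(\mfg_N,\mcG)^{tw}$ is isomorphic to $L(\lambda(u))\otimes V(\mcG)$ via \eqref{S=TGT} and \eqref{TX-cop}, and then apply Proposition \ref{HWT:Prop.tensors} with $\mu_i(u)=g_{ii}(u)$, reducing everything to the elementary evaluation of $(2u-n+i)g_{ii}(u)+\sum_{j\geq i+1}g_{jj}(u)$. Your extra care about legitimacy of $V(\mcG)$ (in particular the $\wt\mu_0$ constraint in type B via Remark \ref{HWT:Rem.m_0(u)}) is sound, though not strictly necessary since Proposition \ref{HWT:Prop.mu_0(u)} already forces the constraint for any highest weight module.
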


\begin{proof}
 By \eqref{RE} and Lemma \ref{L:K-RE-Sym}, the assignment $S(u)\mapsto \mcG(u)$ defines a one-dimensional representation of $X(\mfg_N,\mcG)^{tw}$, which we shall denote by
 $V(\mcG)$. As $X(\mfg_N,\mcG)^{tw}$-modules, $L(\lambda(u))$ and $L(\lambda(u))\otimes V(\mcG)$ are isomorphic (see \eqref{S=TGT} and \eqref{TX-cop}). Therefore, by Proposition 
 \ref{HWT:Prop.tensors} it suffices to observe that for each $i\in \mathcal{I}_N$ the following relations hold:
 \begin{equation*}
(2u-n+i)g_{ii}+\sum_{j\geq i+1}g_{jj}=
      \begin{cases}
           [\mp]2u &\text{ if } \quad i>\mbfk,\\
           [\mp](2\boldsymbol{\ell}-2u)  &\text{ if } \quad 0\leq i\leq \mbfk, 
          \end{cases}  
 \end{equation*}
 if $\mcG$ is of the first kind, while 
  \begin{equation*}
(2u-n+i)g_{ii}(u)+\sum_{j\geq i+1}g_{jj}(u)=
      \begin{cases}
           2u\left(\mfrac{1[\pm]\,cu}{1-cu} \right) &\text{ if } \quad i>\mbfk,\\[0.5em]
           2u\left(\mfrac{1[\pm]\,c(\boldsymbol{\ell}-u)}{1-cu}\right) &\text{ if } \quad 0\leq i\leq \mbfk,
          \end{cases}  
 \end{equation*}
 if $\mcG$ is of the second kind. 
\end{proof}



\subsection{Producing representations of lower rank twisted Yangians}\label{Subsection:Induction}

Let $\mfg_{N-2}$ denote the rank $n-1$ subalgebra of $\mfg_N$ which is of the same Dynkin type, that is, $\mfg_{N-2}=\mfso_{2n-1}$ if $\mfg_N=\mfso_{2n+1}$, $\mfg_{N-2}=\mfso_{2n-2}$ if $\mfg_N=\mfso_{2n}$ and $\mfg_{N-2}=\mfsp_{2n-2}$ if $\mfg_N=\mfsp_{2n}$.
Additionally, let $\mcG^\prime$ be the $(N-2)\times (N-2)$ matrix obtained from $\mcG$ by deleting the outermost rows and columns, i.e.  $\mcG ^\prime=\sum_{i,j=-n+1}^{n-1}g_{ij}E_{ij}$, and let $\kappa' = \kappa-1$. We will denote the standard generators of $\wt X(\mfg_{N-2},\mcG^\prime)^{tw}$ and $X(\mfg_{N-2},\mcG ^\prime)^{tw}$ by $\tilde{s}^\prime_{ij}(u)$ and $s^\prime_{ij}(u)$, respectively, where $-n+1\leq i,j\leq n-1$.

\medskip 

As a consequence of the summations which appear in the expansion \eqref{[s,s]}
of the defining reflection equation, there is no natural way of viewing $X(\mfg_{N-2},\mcG ^\prime)^{tw}$ as a subalgebra of $X(\mfg_N,\mcG)^{tw}$.
Our present goal is to show that, despite this fact, there is a systematic way of constructing a $X(\mfg_{N-2},\mcG ^\prime)^{tw}$
highest weight module from any $X(\mfg_N,\mcG)^{tw}$ highest weight module.

\medskip 

For the remainder of this subsection we fix an $X(\mfg_N,\mcG)^{tw}$-module $V$. 
Define the subspace $V_+$ of $V$ as: 
\[
V_+=\{ w \in V : \enspace  s_{kn}(u)\,w =0 \qu \text{for } k<n\}.
\]
Note that by the symmetry relation  \eqref{s=s}, if $w \in V_+$ then we also have $s_{-n,\ell}(u)w  =0$ for all $\ell>-n$. In addition, 
if $V$ contains a highest weight vector $\eta$, then $\eta$ belows to $V_+$, so in particular if $V$ is a highest weight module then $V_+$ is nonempty. 
Define, for all $-n+1\leq i,j\leq n-1$, the elements $s_{ij}^\circ(u)\in X(\mfg_N,\mcG)^{tw}$~by:
\begin{equation*}
s_{ij}^\circ(u)=s_{ij}(u+1/2)+\frac{\delta_{ij}}{2u}s_{nn}(u+1/2).
\end{equation*}
\begin{lemma}\label{CT:Lemma.induction}
 $V_+$ is stable under the action of all operators $s_{ij}^\circ(u)$ with $-n+1\leq i,j\leq n-1$. Moreover, the assignment $\tilde{s}^\prime_{ij}(u)\mapsto s_{ij}^\circ (u)$ defines a representation of $\wt X(\mfg_{N-2},\mcG ^\prime)^{tw}$ in the space $V_+$.
\end{lemma}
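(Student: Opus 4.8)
The statement has two parts: (i) $V_+$ is stable under each $s_{ij}^\circ(u)$ for $-n+1 \le i,j \le n-1$, and (ii) the assignment $\tilde s^\prime_{ij}(u) \mapsto s_{ij}^\circ(u)$ defines a representation of $\wt X(\mfg_{N-2},\mcG^\prime)^{tw}$ on $V_+$. For part (ii), since $\wt X(\mfg_{N-2},\mcG^\prime)^{tw}$ is presented by the reflection equation \eqref{RE:XB} (equivalently \eqref{[s,s]}) with shifted parameter $\kappa^\prime = \kappa - 1$, and the zeroth coefficient of $\tilde s^\prime_{ij}(u)$ is $g_{ij}$, I must check that the operators $s_{ij}^\circ(u)$ satisfy those relations as operators on $V_+$ and that $s_{ij}^\circ(u)$ has the correct leading term. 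The leading term is immediate: the constant term of $s_{ij}(u+1/2)$ is $g_{ij}$ and of $\tfrac{\delta_{ij}}{2u}s_{nn}(u+1/2)$ is $0$, so $s_{ij}^\circ(u) \in g_{ij} + u^{-1}(\cdot)[[u^{-1}]]$ as needed. So the heart of the proof is a (long) direct computation.

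\textbf{Step 1 (stability).} I would first establish part (i) and, simultaneously, a set of auxiliary "annihilation" identities on $V_+$. The idea is to use the explicit relation \eqref{[s,s]} to compute $s_{k\ell}(v)\, s_{ij}^\circ(u)\, w$ for $w \in V_+$, $k < \ell$ with $(k,\ell)$ ranging over pairs that would detect membership in $V_+$ — equivalently, using the symmetry relation \eqref{s=s}, it suffices to treat $s_{kn}(v)$ for $k < n$. Writing $s_{kn}(v)\,s_{ij}^\circ(u)\,w = [s_{kn}(v), s_{ij}^\circ(u)]\,w$ (since $s_{kn}(v)\,w = 0$), I expand the commutator via \eqref{[s,s]}. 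Many terms vanish because they end in $s_{an}(v)\,w$ or $s_{-n,b}(v)\,w$ with the appropriate index in the "annihilated" range, or because $\delta$-symbols force indices outside $\{-n+1,\ldots,n-1\}$; the terms involving $s_{nn}(u+1/2)$ (from the correction term $\tfrac{\delta_{ij}}{2u}s_{nn}$) are designed to cancel the residual terms coming from the $a = \pm n$ contributions in the sums $\sum_{a=-n}^n$. This is exactly the role of the $\tfrac{\delta_{ij}}{2u}$ shift: it absorbs the "boundary" index $a = \pm n$. I expect to get a system of the form $\bigl(1 + O(u^{-1},v^{-1})\bigr)\cdot(\text{operator})\,w = 0$ after resummation (as in Steps 2 and 3 of the proof of Theorem \ref{HWT:Thm.HWT}), forcing the operator to vanish, hence $s_{ij}^\circ(u)\,w \in V_+$.

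\textbf{Step 2 (reflection equation on $V_+$).} Here I would substitute $S^\circ(u) := \sum E_{ij} \otimes s_{ij}^\circ(u)$ into the reflection equation with $R$-matrix $R(u) = I - P/u + Q/(u-\kappa^\prime)$ and $\kappa^\prime = \kappa - 1$, and verify it holds on $V_+$. Rather than expand everything componentwise, the cleaner route is matrix-level: write $s_{ij}^\circ(u) = s_{ij}(u+1/2) + \tfrac{\delta_{ij}}{2u} s_{nn}(u+1/2)$, i.e. $S^\circ(u) = \bar S(u+1/2) + \tfrac{1}{2u} s_{nn}(u+1/2)\,\bar I$ where $\bar S$ is the $(N-2)\times(N-2)$ "truncation" of $S$ (entries with $|i|,|j| \le n-1$) acting on $V_+$, and $\bar I$ the corresponding identity. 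The point is that on $V_+$ the entries $s_{\pm n, j}$ acting on the left of a product collapse (by Step 1 we know $V_+$ is $S^\circ$-stable, and on $V_+$ itself $s_{kn}(u) = 0 = s_{-n,\ell}(u)$), so when one expands the $N\times N$ reflection equation \eqref{TX-RE} for $S$ at shifted argument $u + 1/2$, $v + 1/2$, restricts the free tensor-factor indices to $\{-n+1,\ldots,n-1\}$, and projects onto $V_+$, the sums $\sum_{a=-n}^n$ split off their $a = \pm n$ pieces; those pieces combine with the contributions of $Q$ (which pairs $a$ with $-a$) and with the $\tfrac{1}{2u}s_{nn}$ correction to reproduce exactly the reduced reflection equation with $\kappa$ replaced by $\kappa - 1$. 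The shift by $1/2$ is precisely what converts $\kappa = N/2 \mp 1$ into $\kappa^\prime = (N-2)/2 \mp 1 = \kappa - 1$ in the $Q$-term, since the outer rows/columns carry the $\theta$-weights that contribute $\pm 1$ to $\operatorname{tr}$-type corrections. I would organize this as: (a) prove the matrix identity $S^\circ(u)\,S^\circ(-u) = (\text{scalar})\cdot \bar I$ on $V_+$ first, as a warm-up and because it is used implicitly; (b) then the full reflection equation, either by the matrix manipulation above or by grinding \eqref{[s,s]} term by term with the boundary-index bookkeeping.

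\textbf{Main obstacle.} The hard part is purely the bookkeeping in Step 2: correctly tracking how the $a = \pm n$ terms in the various sums of \eqref{[s,s]}, the $Q$-contributions (which carry the shift of $\kappa$), and the $\tfrac{\delta_{ij}}{2u}s_{nn}$ correction term conspire to give the reduced relation with $\kappa^\prime = \kappa - 1$ — and verifying that no extra terms survive. A secondary subtlety is that one uses, within these computations, the symmetry relation \eqref{s=s} for $S$ (to rewrite $s_{-n,\bullet}$ and $s_{\bullet,-n}$ boundary terms, and to know $s_{-n,\ell}(u)\,w = 0$ on $V_+$), and one should check this is consistent with the symmetry relation one would want for $S^\circ$ — but note $\wt X(\mfg_{N-2},\mcG^\prime)^{tw}$ is the \emph{extended} reflection algebra, defined by the reflection equation \emph{alone} (Definition \ref{D:XBG}), so no symmetry relation for $S^\circ$ needs to be imposed; this is why the statement targets $\wt X$ and not $X$. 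I would flag that the symmetry relation for $S^\circ$ on $V_+$ fails in general (it would involve $\operatorname{tr}$-corrections over the full $N$ indices), which is consistent with only claiming an $\wt X(\mfg_{N-2},\mcG^\prime)^{tw}$-action.
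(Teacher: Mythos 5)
Your outline is, in substance, the same as the paper's proof: Step 1 is handled by exactly the commutator-and-resummation argument you describe (the paper first shows $V_+$ is stable under $s_{ij}(u)$ with $-n+1\le i,j\le n-1$ and under $s_{nn}(u)$, from which stability under $s_{ij}^\circ(u)$ is immediate), and Step 2 is the same boundary-index bookkeeping you call the ``grind.'' That said, a few things you gloss over are load-bearing and are made explicit in the paper. First, the paper establishes as a preliminary the commutation relations $[s_{nn}(u),s_{nn}(v)]\equiv 0$, $[s_{nn}(u),s_{ij}(v)]\equiv 0$, $[s_{-n,-n}(u),s_{nn}(v)]\equiv 0$ on $V_+$ (display \eqref{CT:induction.5}); these are used at every step to move the $\tfrac{1}{2u}s_{nn}(u+1/2)$ correction past the other operators and are not automatic --- you will need them whether you work componentwise or matrix-level. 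Second, your claim that the $\tfrac12$-shift ``precisely'' converts $\ka$ to $\ka'$ is only half right: $\tilde u+\tilde v-\ka = u+v-\ka'$ comes for free, but $\tilde u-\tilde v-\ka = u-v-\ka$ does \emph{not}, and the terms with that denominator only convert to $u-v-\ka'$ after the $a=\pm n$ contributions are peeled off and recombined via partial-fraction identities (see the paper's Step 2.5); the shift alone is not responsible. Third, the advertised ``cleaner matrix-level route'' does not materialize: the truncation of $S$ to its central block is not compatible with the $R$-matrix structure (the $Q$-operators for $\mfg_N$ and $\mfg_{N-2}$ live on different spaces), so that organization collapses back to the componentwise bookkeeping. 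Finally, your proposed warm-up with $S^\circ(u)S^\circ(-u)$ is unnecessary: since the target is the \emph{extended} reflection algebra $\wt X(\mfg_{N-2},\mcG')^{tw}$, defined by the reflection equation alone, no unitarity relation needs to be verified --- you correctly note this but then still suggest proving it, which is wasted effort.
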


\begin{proof}
\noindent\textit{Step 1:} 

\medskip

 Let us begin by showing that $V_+$ is stable under the action of all operators $s_{ij}(u)$ with $-n+1\leq i,j\leq n-1$. As usual, we use $``\equiv"$ to denote equality of operators on the space
 $V_+$. Let $i,j$ be such that $-n+1\leq i,j\leq n-1$. We must show $s_{kn}(u)s_{ij}(v)\equiv 0$ for all $k<n$. Since $s_{kn}(u)s_{ij}(v)\equiv -[s_{ij}(v),s_{kn}(u)]$, 
 it is enough to show $[s_{ij}(v),s_{kn}(u)]\equiv 0$. By \eqref{[s,s]}, 
 \begin{align*}
  [s_{ij}(v),s_{kn}(u)]&\equiv \frac{\delta_{kj}}{v+u}s_{in}(v)s_{nn}(u)-\frac{\delta_{ij}}{v^2-u^2}\left(s_{kn}(v)s_{nn}(u)-s_{kn}(u)s_{nn}(v) \right)\\
                       &-\frac{\delta_{k,-i}}{v-u-\ka}\theta_{i,-n}s_{-n,j}(v)s_{nn}(u) +\frac{\delta_{k,-i}}{(v+u)(v-u-\ka)}\theta_{i,-j}s_{-j,n}(v)s_{nn}(u).
 \end{align*}
As a consequence of the above and the symmetry relation \eqref{s=s}, it remains only to see $s_{\ell n}(v)s_{nn}(u)\equiv 0$ for any $-n+1\leq\ell\leq n-1$. Using the expansion \eqref{[s,s]}, we compute:
\begin{equation}
 s_{\ell n}(v)s_{nn}(u)\equiv \frac{1}{v+u}s_{\ell n}(v)s_{nn}(u). 
\end{equation}
Therefore, $s_{\ell n}(v)s_{nn}(u)\equiv 0$ for all $-n+1\leq \ell\leq n-1$. This completes the proof that $V_+$ is stable under the action of all $s_{ij}(u)$ with 
$-n+1\leq i,j\leq n-1$. Moreover, it shows that $V_+$ is stable under the action of the operator $s_{nn}(u)$. Thus, by definition $V_+$ is also stable under the action of all operators
$s_{ij}^\circ (u)$.

\medskip

\noindent\textit{Step 2:}

\medskip

Let us now turn to proving the second statement of the lemma. 
As a consequence of the first part of the proof, we may view the operators $s_{ij}^\circ(u)$  with $-n+1\leq i,j\leq n-1$ as elements in $\End(V_+)[[u^{-1}]]$. We wish to show
the corresponding map $\wt X(\mfg_{N-2},\mcG ^\prime)^{tw}\to \End(V_+)$ is a homomorphism of algebras. 
\medskip 
First observe~that 
\begin{equation}
 [s_{-n,-n}(u),s_{nn}(v)]\equiv 0, \qu [s_{nn}(u),s_{nn}(v)]\equiv 0 \qu\text{and}\qu [s_{nn}(u),s_{ij}(v)]\equiv 0 \label{CT:induction.5}
\end{equation}
for all $-n+1\leq i,j\leq n-1$. To see this, note that by \eqref{[s,s]} we have 
\begin{equation*}
 [s_{nn}(u),s_{nn}(v)]\equiv \left(\frac{1}{u-v}+\frac{1}{u+v}-\frac{1}{u^2-v^2} \right)[s_{nn}(u),s_{nn}(v)], 
\end{equation*}
which implies $[s_{nn}(u),s_{nn}(v)]\equiv 0$. Furthermore, 
\begin{equation*}
 [s_{ij}(u),s_{nn}(v)]\equiv -\frac{\delta_{ij}}{u^2-v^2}[s_{nn}(u),s_{nn}(v)]\equiv 0
\end{equation*}
for all $-n+1\leq i,j\leq n-1$. The symmetry relation \eqref{s=s} together with the second and third equivalences of \eqref{CT:induction.5} then give
\begin{equation*}
 [s_{-n,-n}(u),s_{nn}(v)]\equiv -\frac{1}{2u-2\ka}\sum_{a=-n}^n[s_{aa}(u),s_{nn}(v)]\equiv -\frac{1}{2u-2\ka}[s_{-n,-n}(u),s_{nn}(v)].
\end{equation*}
Hence, $[s_{-n,-n}(u),s_{nn}(v)]\equiv 0$. 

Now let $i,j,k,\ell$ be such that $-n+1\leq i,j,k,\ell\leq n-1$. As a consequence of the second and third equivalences in \eqref{CT:induction.5}, we have 
\begin{equation*}
[s_{ij}^\circ(u),s_{k\ell}^\circ(v)]\equiv[s_{ij}(\wt{u}),s_{k\ell}(\wt{v})], 
\end{equation*}
where $\wt{u}=u+1/2$ and $\wt{v}=v+1/2$. Thus, appealing to \eqref{[s,s]} we have: 
\begin{align}
 [s_{ij}^\circ(u),s_{k\ell}^\circ(v)]&\equiv \frac{1}{u-v}\left(s_{kj}(\wt{u})s_{i\ell}(\wt{v})-s_{kj}(\wt{v})s_{i\ell}(\wt{u}) \right)+\frac{1}{\wt{u}+\wt{v}}\sum_{a=-n+1}^{n-1}\left(\delta_{kj}s_{ia}(\wt{u})s_{a\ell}(\wt{v})-\delta_{i\ell}s_{ka}(\wt{v})s_{aj}(\wt{u})\right) \nonumber\\
                                     &-\frac{\delta_{ij}}{\wt{u}^2-\wt{v}^2}\sum_{a=-n+1}^{n-1}\left(s_{ka}(\wt{u})s_{a\ell}(\wt{v})-s_{ka}(\wt{v})s_{a\ell}(\wt{u}) \right)\nonumber\\
                                     &-\frac{1}{\wt{u}-\wt{v}-\ka}\sum_{a=-n+1}^{n-1}\left(\delta_{k,-i}\theta_{ia}s_{aj}(\wt{u})s_{-a,\ell}(\wt{v})-\delta_{l,-j}\theta_{aj}s_{k,-a}(\wt{v})s_{ia}(\wt{u}) \right)\nonumber\\
                                     &-\frac{1}{\wt{u}+\wt{v}-\ka}\left(\theta_{j,-k}s_{i,-k}(\wt{u})s_{-j,\ell}(\wt{v})-\theta_{i,-\ell}s_{k,-i}(\wt{v})s_{-\ell,j}(\wt{u}) \right)\nonumber\\
                                     &+\frac{\theta_{i,-j}}{(\wt{u}+\wt{v})(\wt{u}-\wt{v}-\ka)}\sum_{a=-n+1}^{n-1}\left(\delta_{k,-i}s_{-j,a}(\wt{u})s_{a\ell}(\wt{v})-\delta_{\ell,-j}s_{ka}(\wt{v})s_{a,-i}(\wt{u}) \right)\nonumber\\
                                     &+\frac{\theta_{i,-j}}{(\wt{u}-\wt{v})(\wt{u}+\wt{v}-\ka)}\left(s_{k,-i}(\wt{u})s_{-j,\ell}(\wt{v})-s_{k,-i}(\wt{v})s_{-j,\ell}(\wt{u}) \right)\nonumber\\
                                     &-\frac{\theta_{ij}}{(\wt{u}-\wt{v}-\ka)(\wt{u}+\wt{v}-\ka)}\sum_{a=-n+1}^{n-1}\left(\delta_{k,-i}s_{aa}(\wt{u})s_{-j,\ell}(\wt{v})-\delta_{\ell,-j}s_{k,-i}(\wt{v})s_{aa}(\wt{u}) \right)\nonumber\\
                                     &+\frac{1}{\wt{u}+\wt{v}}\left(\delta_{kj}s_{in}(\wt{u})s_{n\ell}(\wt{v})-\delta_{i\ell}s_{kn}(\wt{v})s_{nj}(\wt{u})\right) \label{CT:induction.6}\\
                                     &-\frac{\delta_{ij}}{\wt{u}^2-\wt{v}^2}\left(s_{kn}(\wt{u})s_{n\ell}(\wt{v})-s_{kn}(\wt{v})s_{n\ell}(\wt{u}) \right) \label{CT:induction.7}\\
                                     &-\frac{1}{\wt{u}-\wt{v}-\ka}\left(\delta_{k,-i}\theta_{i,-n}s_{-n,j}(\wt{u})s_{n\ell}(\wt{v})-\delta_{l,-j}\theta_{-n,j}s_{kn}(\wt{v})s_{i,-n}(\wt{u}) \right) \label{CT:induction.8}\\
                                     &+\frac{\theta_{i,-j}}{(\wt{u}+\wt{v})(\wt{u}-\wt{v}-\ka)}\left(\delta_{k,-i}s_{-j,n}(\wt{u})s_{n\ell}(\wt{v})-\delta_{\ell,-j}s_{kn}(\wt{v})s_{n,-i}(\wt{u}) \right) \label{CT:induction.9}\\
                                     &-\frac{\theta_{ij}}{(\wt{u}-\wt{v}-\ka)(\wt{u}+\wt{v}-\ka)}\left(\delta_{k,-i}s_{-n,-n}(\wt{u})s_{-j,\ell}(\wt{v})-\delta_{\ell,-j}s_{k,-i}(\wt{v})s_{-n,-n}(\wt{u}) \right)\nonumber\\
                                     &-\frac{\theta_{ij}}{(\wt{u}-\wt{v}-\ka)(\wt{u}+\wt{v}-\ka)}\left(\delta_{k,-i}s_{nn}(\wt{u})s_{-j,\ell}(\wt{v})-\delta_{\ell,-j}s_{k,-i}(\wt{v})s_{nn}(\wt{u}) \right) \nonumber.
\end{align}
We now need to rewrite \eqref{CT:induction.6}-\eqref{CT:induction.9} in a way that will enable us to compare the right-hand side above with the right-hand side of the reflection equation \eqref{[s,s]} for $\wt{X}(\mfg_{N-2},\mcG')^{tw}$ with $\tilde{s}_{**}'(u)$ replaced by $s_{**}^\circ(u)$.

\medskip

\noindent\textit{Step 2.1:} Re-expressing \eqref{CT:induction.6}.

\medskip

For any $-n+1\leq i,\ell\leq n-1$, using the reflection equation \eqref{[s,s]} for $[s_{in}(\wt{u}),s_{n\ell}(\wt{v})]$ and rearranging the terms yields
\begin{align}
 \frac{1}{\wt{u}+\wt{v}}\, s_{in}(\wt{u})s_{n\ell}(\wt{v})&\equiv  \frac{1}{(\wt{u}-\wt{v})(\wt{u}+\wt{v}-1)}\left(s_{nn}(\wt{u})s_{i\ell}(\wt{v})-s_{nn}(\wt{v})s_{i\ell}(\wt{u})\right)\nonumber\\
                                       &+\frac{1}{(\wt{u}+\wt{v})(\wt{u}+\wt{v}-1)}\sum_{a=-n+1}^{n-1}s_{ia}(\wt{u})s_{a\ell}(\wt{v})-\frac{\delta_{i\ell}}{(\wt{u}+\wt{v})(\wt{u}+\wt{v}-1)}\,s_{nn}(\wt{v})s_{nn}(\wt{u})\label{CT:induction.10}.
\end{align}
This computation, together with \eqref{CT:induction.5}, implies that the expression \eqref{CT:induction.6} can be rewritten as follows:
\begin{align}
 \frac{1}{\wt{u}+\wt{v}}&\left(\delta_{kj}s_{in}(\wt{u})s_{n\ell}(\wt{v})-\delta_{i\ell}s_{kn}(\wt{v})s_{nj}(\wt{u})\right)\nonumber \\
                           &\equiv \frac{1}{(\wt{u}-\wt{v})(\wt{u}+\wt{v}-1)}\left(\delta_{kj}\left(s_{nn}(\wt{u})s_{i\ell}(\wt{v})-s_{nn}(\wt{v})s_{i\ell}(\wt{u})\right)+\delta_{i\ell}\left(s_{nn}(\wt{v})s_{kj}(\wt{u})-s_{nn}(\wt{u})s_{kj}(\wt{v})\right) \right)\nonumber\\
                           &+\frac{1}{(\wt{u}+\wt{v})(\wt{u}+\wt{v}-1)}\sum_{a=-n+1}^{n-1}\left(\delta_{kj}s_{ia}(\wt{u})s_{a\ell}(\wt{v})-\delta_{i\ell}s_{ka}(\wt{v})s_{aj}(\wt{u}) \right)\label{CT:induction.11}.
\end{align}

\medskip

\noindent\textit{Step 2.2:} Re-expressing \eqref{CT:induction.7}.

\medskip

Similarly, \eqref{CT:induction.10} and \eqref{CT:induction.5} imply that \eqref{CT:induction.7} can be expressed as:
\spl{
-\frac{\delta_{ij}}{\wt{u}^2-\wt{v}^2} & \left(s_{kn}(\wt{u})s_{n\ell}(\wt{v})-s_{kn}(\wt{v})s_{n\ell}(\wt{u}) \right) \\
 & \equiv -\frac{\delta_{ij}}{(\wt{u}^2-\wt{v}^2)(\wt{u}+\wt{v}-1)}\sum_{a=-n+1}^{n-1}\left(s_{ka}(\wt{u})s_{a\ell}(\wt{v})- s_{ka}(\wt{v})s_{a\ell}(\wt{u})\right). \label{CT:induction.12}
}

\medskip

\noindent\textit{Step 2.3:} Re-expressing \eqref{CT:induction.8}.

\medskip

We would like to obtain a similar expression for \eqref{CT:induction.8}. An analogous but more lengthy computation to that used in obtaining \eqref{CT:induction.10} gives the relation
\begin{align}
 \frac{\wt{u}-\wt{v}-\ka+1}{\wt{u}-\wt{v}-\ka}s_{-n,j}(\wt{u})s_{n\ell}(\wt{v})&\equiv -\frac{1}{\wt{u}-\wt{v}-\ka}\sum_{a=-n+1}^{n-1}\theta_{-n,a}s_{aj}(\wt{u})s_{-a,\ell}(\wt{v})+\frac{\delta_{\ell,-j}}{\wt{u}-\wt{v}-\ka}\theta_{-n,j}s_{nn}(\wt{v})s_{-n,-n}(\wt{u})\nn\\
                                                      &-\frac{1}{\wt{u}+\wt{v}-\ka}\left(\theta_{j,-n}s_{-n,-n}(\wt{u})s_{-j,\ell}(\wt{v})-\theta_{-n,-\ell}s_{nn}(\wt{v})s_{-\ell,j}(\wt{u}) \right)\nn \\
                                                      &+\frac{\theta_{-n,-j}}{(\wt{u}+\wt{v})(\wt{u}-\wt{v}-\ka)}\sum_{a=-n+1}^{n-1}s_{-j,a}(\wt{u})s_{a\ell}(\wt{v})\nn\\
                                                      &+\frac{\theta_{-n,-j}}{(\wt{u}+\wt{v})(\wt{u}-\wt{v}-\ka)}s_{-j,n}(\wt{u})s_{n\ell}(\wt{v})-\frac{\theta_{-n,-j}\delta_{\ell,-j}}{(\wt{u}+\wt{v})(\wt{u}-\wt{v}-\ka)}s_{nn}(\wt{v})s_{nn}(\wt{u})\nn\\
                                                      &+\frac{\theta_{-n,-j}}{(\wt{u}-\wt{v})(\wt{u}+\wt{v}-\ka)}\left(s_{nn}(\wt{u})s_{-j,\ell}(\wt{v})-s_{nn}(\wt{v})s_{-j,\ell}(\wt{u}) \right)\nn\\
                                                      &-\frac{\theta_{-n,j}}{(\wt{u}-\wt{v}-\ka)(\wt{u}+\wt{v}-\ka)}\sum_{a=-n}^n \left(s_{aa}(\wt{u})s_{-j,\ell}(\wt{v})-\delta_{\ell,-j}s_{nn}(\wt{v})s_{aa}(\wt{u}) \right)\label{CT:induction.13}.
\end{align}
Similarly, since $s_{kn}(\wt{v})s_{i,-n}(\wt{u})\equiv -[s_{i,-n}(\wt{u}),s_{kn}(\wt{v})]$, we have 
\begin{align}
  \frac{\wt{u}-\wt{v}-\ka+1}{\wt{u}-\wt{v}-\ka}s_{kn}(\wt{v})s_{i,-n}(\wt{u})&\equiv-\frac{1}{\wt{u}-\wt{v}-\ka}\sum_{a=-n+1}^{n-1}\theta_{a,-n}s_{k,-a}(\wt{v})s_{ia}(\wt{u}) +\frac{\delta_{k,-i}}{\wt{u}-\wt{v}-\ka}\theta_{i,-n}s_{-n,-n}(\wt{u})s_{nn}(\wt{v})\nn\\
                                                                             &+\frac{1}{\wt{u}+\wt{v}-\ka}\left(\theta_{-n,-k}s_{i,-k}(\wt{u})s_{nn}(\wt{v})-\theta_{i,-n}s_{k,-i}(\wt{v})s_{-n,-n}(\wt{u}) \right)\nn\\
                                                                             &+\frac{\theta_{in}}{(\wt{u}+\wt{v})(\wt{u}-\wt{v}-\ka)}\sum_{a=-n+1}^{n-1}s_{ka}(\wt{v})s_{a,-i}(\wt{u})\nn\\
                                                                             &+\frac{\theta_{in}}{(\wt{u}+\wt{v})(\wt{u}-\wt{v}-\ka)}s_{kn}(\wt{v})s_{n,-i}(\wt{u})-\frac{\theta_{in}\delta_{k,-i}}{(\wt{u}+\wt{v})(\wt{u}-\wt{v}-\ka)}s_{nn}(\wt{u})s_{nn}(\wt{v})\nn\\
                                                                             &-\frac{\theta_{in}}{(\wt{u}-\wt{v})(\wt{u}+\wt{v}-\ka)}\left(s_{k,-i}(\wt{u})s_{nn}(\wt{v})-s_{k,-i}(\wt{v})s_{nn}(\wt{u}) \right)\nn\\
                                                                             &+\frac{\theta_{i,-n}}{(\wt{u}-\wt{v}-\ka)(\wt{u}+\wt{v}-\ka)}\sum_{a=-n}^n\left(\delta_{k,-i}s_{aa}(\wt{u})s_{nn}(\wt{v})-s_{k,-i}(\wt{v})s_{aa}(\wt{u}) \right) . \label{CT:induction.14}
\end{align}
Using the two equivalences \eqref{CT:induction.13} and \eqref{CT:induction.14}, together with \eqref{CT:induction.5}, we obtain the following for \eqref{CT:induction.8}:
\begin{align}
 -\frac{1}{\wt{u}-\wt{v}-\ka}&\left(\delta_{k,-i}\theta_{i,-n}s_{-n,j}(\wt{u})s_{n\ell}(\wt{v})-\delta_{l,-j}\theta_{-n,j}s_{kn}(\wt{v})s_{i,-n}(\wt{u}) \right)\nonumber \\
                    &=\frac{1}{(\wt{u}-\wt{v}-\ka)(\wt{u}-\wt{v}-\ka+1)}\sum_{a=-n+1}^{n-1}\left(\delta_{k,-i}\theta_{ia}s_{aj}(\wt{u})s_{-a,\ell}(\wt{v})-\delta_{l,-j}\theta_{aj}s_{k,-a}(\wt{v})s_{ia}(\wt{u}) \right)\nonumber\\
                    &-\frac{\theta_{i,-j}}{(\wt{u}+\wt{v})(\wt{u}-\wt{v}-\ka)(\wt{u}-\wt{v}-\ka+1)}\sum_{a=-n+1}^{n-1}\left(\delta_{k,-i}s_{-j,a}(\wt{u})s_{a\ell}(\wt{v})-\delta_{\ell,-j}s_{ka}(\wt{v})s_{a,-i}(\wt{u}) \right)\nonumber\\
                    &+\frac{\theta_{ij}}{(\wt{u}-\wt{v}-\ka)(\wt{u}+\wt{v}-\ka)(\wt{u}-\wt{v}-\ka+1)}\sum_{a=-n}^{n}\left(\delta_{k,-i}s_{aa}(\wt{u})s_{-j,\ell}(\wt{v})-\delta_{\ell,-j}s_{k,-i}(\wt{v})s_{aa}(\wt{u}) \right)\nonumber\\
                    &+\frac{1}{(\wt{u}+\wt{v}-\ka)(\wt{u}-\wt{v}-\ka+1)}\delta_{k,-i}\left(\theta_{ij}s_{-n,-n}(\wt{u})s_{-j,\ell}(\wt{v})-\theta_{i,-\ell}s_{nn}(\wt{v})s_{-\ell,j}(\wt{u}) \right) \nonumber\\
                    &+\frac{1}{(\wt{u}+\wt{v}-\ka)(\wt{u}-\wt{v}-\ka+1)}\delta_{\ell,-j}\left(\theta_{j,-k}s_{i,-k}(\wt{u})s_{nn}(\wt{v})-\theta_{ij}s_{k,-i}(\wt{v})s_{-n,-n}(\wt{u}) \right)\nonumber\\
                    &-\frac{\theta_{i,-j}\delta_{k,-i}}{(\wt{u}-\wt{v})(\wt{u}+\wt{v}-\ka)(\wt{u}-\wt{v}-\ka+1)}\left(s_{nn}(\wt{u})s_{-j,\ell}(\wt{v})-s_{nn}(\wt{v})s_{-j,\ell}(\wt{u}) \right)\nonumber\\
                    &-\frac{\theta_{i,-j}\delta_{\ell,-j}}{(\wt{u}-\wt{v})(\wt{u}+\wt{v}-\ka)(\wt{u}-\wt{v}-\ka+1)}\left(s_{k,-i}(\wt{u})s_{nn}(\wt{v})-s_{k,-i}(\wt{v})s_{nn}(\wt{u}) \right)\nonumber\\
                    &-\frac{\theta_{i,-j}}{(\wt{u}+\wt{v})(\wt{u}-\wt{v}-\ka)(\wt{u}-\wt{v}-\ka+1)}\left(\delta_{k,-i}s_{-j,n}(\wt{u})s_{n,\ell}(\wt{v})-\delta_{\ell,-j}s_{kn}(\wt{v})s_{n,-i}(\wt{u}) \right)\label{CT:induction.15}.               
\end{align}

\medskip

\noindent\textit{Step 2.4:} Re-expressing \eqref{CT:induction.9}.

\medskip

If we add the last line of the above to \eqref{CT:induction.9}, we obtain: 

\begin{equation*}
 \frac{\theta_{i,-j}}{(\wt{u}+\wt{v})(\wt{u}-\wt{v}-\ka+1)}\left(\delta_{k,-i}s_{-j,n}(\wt{u})s_{n,\ell}(\wt{v})-\delta_{\ell,-j}s_{kn}(\wt{v})s_{n,-i}(\wt{u}) \right).
\end{equation*}
We can also re-express this using \eqref{CT:induction.10} and \eqref{CT:induction.5}. This yields:
\begin{align}
  \frac{\theta_{i,-j}}{(\wt{u}+\wt{v})(\wt{u}-\wt{v}-\ka+1)}&\left(\delta_{k,-i}s_{-j,n}(\wt{u})s_{n,\ell}(\wt{v})-\delta_{\ell,-j}s_{kn}(\wt{v})s_{n,-i}(\wt{u}) \right)\nonumber\\
                                           =&\frac{\theta_{i,-j}\delta_{k,-i}}{(\wt{u}-\wt{v})(\wt{u}+\wt{v}-1)(\wt{u}-\wt{v}-\ka+1)}\left(s_{nn}(\wt{u})s_{-j,\ell}(\wt{v})-s_{nn}(\wt{v})s_{-j,\ell}(\wt{u}) \right)\nonumber\\
                                           +&\frac{\theta_{i,-j}\delta_{\ell,-j}}{(\wt{u}-\wt{v})(\wt{u}+\wt{v}-1)(\wt{u}-\wt{v}-\ka+1)}\left(s_{nn}(\wt{v})s_{k,-i}(\wt{u})-s_{nn}(\wt{u})s_{k,-i}(\wt{v}) \right)\nonumber  \\
                                           +&\frac{\theta_{i,-j}}{(\wt{u}+\wt{v})(\wt{u}+\wt{v}-1)(\wt{u}-\wt{v}-\ka+1)}\sum_{a=-n+1}^{n-1}(\delta_{k,-i}s_{-j,a}(\wt{u})s_{a\ell}(\wt{v}) - \delta_{l,-j}s_{ka}(\wt{v})s_{a,-i}(\wt{u}))\label{CT:induction.16}.
\end{align}

\medskip

\noindent\textit{Step 2.5:} 

\medskip

Next, observe that the following identities hold: 
\begin{equation*}
 \frac{1}{\wt{u}+\wt{v}}+\frac{1}{(\wt{u}+\wt{v})(\wt{u}+\wt{v}-1)}=\frac{1}{u+v},
\end{equation*}
\begin{equation*}
 \frac{1}{(\wt{u}-\wt{v}-\ka)(\wt{u}-\wt{v}-\ka+1)}-\frac{1}{\wt{u}-\wt{v}-\ka}=-\frac{1}{u-v-\ka'},
\end{equation*}
\begin{align*}
 \frac{1}{(\wt{u}+\wt{v})(\wt{u}-\wt{v}-\ka)}-\frac{1}{(\wt{u}+\wt{v})(\wt{u}-\wt{v}-\ka)(\wt{u}-\wt{v}-\ka+1)}&+\frac{1}{(\wt{u}+\wt{v})(\wt{u}+\wt{v}-1)(\wt{u}-\wt{v}-\ka+1)}\\
                            &=\frac{1}{(u-v-\ka')(u+v)} .
\end{align*}
Therefore, combining the new expressions \eqref{CT:induction.11}, \eqref{CT:induction.12}, \eqref{CT:induction.15} and \eqref{CT:induction.16} and substituting them back into \eqref{CT:induction.6}-\eqref{CT:induction.9} gives:
\begin{align}
 [s_{ij}^\circ(u),s_{k\ell}^\circ(v)]&\equiv \frac{1}{u-v}\left(s_{kj}(\wt{u})s_{i\ell}(\wt{v})-s_{kj}(\wt{v})s_{i\ell}(\wt{u}) \right)+\frac{1}{u+v}\sum_{a=-n+1}^{n-1}\left(\delta_{kj}s_{ia}(\wt{u})s_{a\ell}(\wt{v})-\delta_{i\ell}s_{ka}(\wt{v})s_{aj}(\wt{u})\right) \nonumber\\
                                     &-\frac{\delta_{ij}}{u^2-v^2}\sum_{a=-n+1}^{n-1}\left(s_{ka}(\wt{u})s_{a\ell}(\wt{v})-s_{ka}(\wt{v})s_{a\ell}(\wt{u}) \right)\nonumber\\
                                     &-\frac{1}{u-v-\ka'}\sum_{a=-n+1}^{n-1}\left(\delta_{k,-i}\theta_{ia}s_{aj}(\wt{u})s_{-a,\ell}(\wt{v})-\delta_{l,-j}\theta_{aj}s_{k,-a}(\wt{v})s_{ia}(\wt{u}) \right)\nonumber\\
                                     &-\frac{1}{u+v-\ka'}\left(\theta_{j,-k}s_{i,-k}(\wt{u})s_{-j,\ell}(\wt{v})-\theta_{i,-\ell}s_{k,-i}(\wt{v})s_{-\ell,j}(\wt{u}) \right)\nonumber\\
                                     &+\frac{\theta_{i,-j}}{(u+v)(u-v-\ka')}\sum_{a=-n+1}^{n-1}\left(\delta_{k,-i}s_{-j,a}(\wt{u})s_{a\ell}(\wt{v})-\delta_{\ell,-j}s_{ka}(\wt{v})s_{a,-i}(\wt{u}) \right)\nonumber\\
                                     &+\frac{\theta_{i,-j}}{(u-v)(u+v-\ka')}\left(s_{k,-i}(\wt{u})s_{-j,\ell}(\wt{v})-s_{k,-i}(\wt{v})s_{-j,\ell}(\wt{u}) \right)\nonumber\\
                                     &-\frac{\theta_{ij}}{(u-v-\ka')(u+v-\ka')}\sum_{a=-n+1}^{n-1}\left(\delta_{k,-i}s_{aa}(\wt{u})s_{-j,\ell}(\wt{v})-\delta_{\ell,-j}s_{k,-i}(\wt{v})s_{aa}(\wt{u}) \right)+\mathcal{B}(u,v),\nonumber                                    
\end{align}
where $\mathcal{B}(u,v)$ is defined as the following operator on $V_+$: 
\begin{align*}
 \mathcal{B}(u,v)&=\frac{1}{u^2-v^2}\left(\delta_{kj}\left(s_{nn}(\wt{u})s_{i\ell}(\wt{v})-s_{nn}(\wt{v})s_{i\ell}(\wt{u})\right)+\delta_{i\ell}\left(s_{nn}(\wt{v})s_{kj}(\wt{u})-s_{nn}(\wt{u})s_{kj}(\wt{v})\right) \right)\\
                 &+\frac{1}{(u+v-\ka')(u-v-\ka')}\delta_{k,-i}\left(\theta_{ij}s_{-n,-n}(\wt{u})s_{-j,\ell}(\wt{v})-\theta_{i,-\ell}s_{nn}(\wt{v})s_{-\ell,j}(\wt{u}) \right)\\
                 &+\frac{1}{(u+v-\ka')(u-v-\ka')}\delta_{\ell,-j}\left(\theta_{j,-k}s_{i,-k}(\wt{u})s_{nn}(\wt{v})-\theta_{ij}s_{k,-i}(\wt{v})s_{-n,-n}(\wt{u}) \right)\\
                 &-\frac{\theta_{i,-j}\delta_{k,-i}}{(u-v)(u+v-\ka')(u-v-\ka')}\left(s_{nn}(\wt{u})s_{-j,\ell}(\wt{v})-s_{nn}(\wt{v})s_{-j,\ell}(\wt{u}) \right)\\
                 &-\frac{\theta_{i,-j}\delta_{\ell,-j}}{(u-v)(u+v-\ka')(u-v-\ka')}\left(s_{k,-i}(\wt{u})s_{nn}(\wt{v})-s_{k,-i}(\wt{v})s_{nn}(\wt{u}) \right)\\
                 &+\frac{\theta_{i,-j}\delta_{k,-i}}{(u-v)(u+v)(u-v-\ka')}\left(s_{nn}(\wt{u})s_{-j,\ell}(\wt{v})-s_{nn}(\wt{v})s_{-j,\ell}(\wt{u}) \right)\\
                 &+\frac{\theta_{i,-j}\delta_{\ell,-j}}{(u-v)(u+v)(u-v-\ka')}\left(s_{nn}(\wt{v})s_{k,-i}(\wt{u})-s_{nn}(\wt{u})s_{k,-i}(\wt{v}) \right)\\
                 &-\frac{\theta_{ij}}{(u-v-\ka')(u+v-\ka')}\left(\delta_{k,-i}s_{-n,-n}(\wt{u})s_{-j,\ell}(\wt{v})-\delta_{\ell,-j}s_{k,-i}(\wt{v})s_{-n,-n}(\wt{u}) \right)\\
                 &-\frac{\theta_{ij}}{(u-v-\ka')(u+v-\ka')}\left(\delta_{k,-i}s_{nn}(\wt{u})s_{-j,\ell}(\wt{v})-\delta_{\ell,-j}s_{k,-i}(\wt{v})s_{nn}(\wt{u}) \right).
\end{align*}
Adding terms together, and applying \eqref{CT:induction.5} where necessary, we obtain the equivalence of operators: \enlargethispage{1em}
\begin{align}
\mathcal{B}(u,v)&\equiv\frac{1}{u^2-v^2}\left(\delta_{kj}\left(s_{nn}(\wt{u})s_{i\ell}(\wt{v})-s_{nn}(\wt{v})s_{i\ell}(\wt{u})\right)+\delta_{i\ell}\left(s_{nn}(\wt{v})s_{kj}(\wt{u})-s_{nn}(\wt{u})s_{kj}(\wt{v})\right) \right)\nonumber\\
&+\frac{1}{(u+v-\ka')(u-v-\ka')}\delta_{\ell,-j}\left(\theta_{j,-k}s_{i,-k}(\wt{u})s_{nn}(\wt{v})+\theta_{ij}s_{k,-i}(\wt{v})s_{nn}(\wt{u})\right)\nonumber\\
&-\frac{1}{(u+v-\ka')(u-v-\ka')}\delta_{k,-i}\left(\theta_{i,-\ell}s_{nn}(\wt{v})s_{-\ell,j}(\wt{u})+\theta_{ij}s_{nn}(\wt{u})s_{-j,\ell}(\wt{v}) \right) \nonumber\\
&-\frac{\ka'\delta_{k,-i}\theta_{i,-j} }{(u^2-v^2)(u+v-\ka')(u-v-\ka')}\left(s_{nn}(\wt{u})s_{-j,\ell}(\wt{v})-s_{nn}(\wt{v})s_{-j,\ell}(\wt{u})\right)\nonumber\\
&-\frac{\ka'\delta_{\ell,-j}\theta_{i,-j} }{(u^2-v^2)(u+v-\ka')(u-v-\ka')}\left(s_{nn}(\wt{v})s_{k,-i}(\wt{u})-s_{nn}(\wt{u})s_{k,-i}(\wt{v}) \right)\label{CT:induction.B(u,v)}.
\end{align}

Conversely, let $\mathcal{D}(u,v)$ be the expression on the right-hand side of the reflection equation \eqref{[s,s]} for $\wt{X}(\mfg_{N-2},\mcG')^{tw}$ with $\wt{s}_{**}'(u)$ replaced by $s_{**}^\circ(u)$. Using the definition of the elements $s_{ij}^\circ(u)$ and again appealing to \eqref{CT:induction.5} where necessary, we obtain the following result after a lengthy computation: 
\begin{equation*}
 \mathcal{D}(u,v)\equiv [s_{ij}^\circ(u),s_{k\ell}^\circ(v)]-\mathcal{B}(u,v)+\mcA(u,v) ,
\end{equation*}
where $\mcA(u,v)$ is the operator defined by: 
\begin{align}
 \mcA(u,v)&=\frac{1}{u^2-v^2}\left(\delta_{kj}\left(s_{nn}(\wt{u})s_{i\ell}(\wt{v})-s_{nn}(\wt{v})s_{i\ell}(\wt{u})\right)+\delta_{i\ell}\left(s_{nn}(\wt{v})s_{kj}(\wt{u})-s_{nn}(\wt{u})s_{kj}(\wt{v})\right) \right)\nonumber\\
                 &+\frac{1}{(u+v-\ka')(u-v-\ka')}\delta_{\ell,-j}\left(\theta_{j,-k}s_{i,-k}(\wt{u})s_{nn}(\wt{v})+\theta_{ij}s_{k,-i}(\wt{v})s_{nn}(\wt{u})\right)\nonumber\\
                 &-\frac{1}{(u+v-\ka')(u-v-\ka')}\delta_{k,-i}\left(\theta_{i,-\ell}s_{nn}(\wt{v})s_{-\ell,j}(\wt{u})+\theta_{ij}s_{nn}(\wt{u})s_{-j,\ell}(\wt{v}) \right) \nonumber\\
                 &+\frac{\ka'\theta_{ij}+\theta_{i,-j}}{u(u+v-\ka')(u-v-\ka')}\left(\delta_{k,-i}s_{nn}(\wt{u})s_{-j,\ell}(\wt{v})-\delta_{\ell,-j}s_{nn}(\wt{u})s_{k,-i}(\wt{v}) \right)\nonumber\\
                 &-\frac{\ka'\delta_{k,-i}\theta_{i,-j} }{(u^2-v^2)(u+v-\ka')(u-v-\ka')}\left(s_{nn}(\wt{u})s_{-j,\ell}(\wt{v})-s_{nn}(\wt{v})s_{-j,\ell}(\wt{u})\right)\nonumber\\
                 &-\frac{\ka'\delta_{\ell,-j}\theta_{i,-j}  }{(u^2-v^2)(u+v-\ka')(u-v-\ka')}\left(s_{nn}(\wt{v})s_{k,-i}(\wt{u})-s_{nn}(\wt{u})s_{k,-i}(\wt{v}) \right)\nonumber\\
                 &-\frac{\theta_{ij}(N-2)}{2u(u-v-\ka')(u+v-\ka')}\left(\delta_{k,-i}s_{nn}(\wt{u})s_{-j,\ell}(\wt{v})-\delta_{\ell,-j}s_{k,-i}(\wt{v})s_{nn}(\wt{u}) \right) \label{CT:induction.A(u,v)}.
                 \end{align}
Therefore, to complete the proof of the lemma it remains only to see $\mcA(u,v)\equiv \mathcal{B}(u,v)$. Comparing \eqref{CT:induction.A(u,v)} with \eqref{CT:induction.B(u,v)}, we see that it is enough to show $ \ka'\theta_{ij}+\theta_{i,-j}-\theta_{ij}(\tfrac{N}{2}-1)=0$, which follows from $\ka=\frac{N}{2}\mp 1$, $\frac{N}{2}-1=\ka-1\pm 1=\ka'\pm1$ and  $\theta_{i,-j}=\pm\theta_{i,j}$.
Thus the family of operators $\{s_{ij}^\circ(u)\}_{-n+1\leq i,j\leq n-1}$ satisfy the defining reflection equation of the algebra $\wt X(\mfg_{N-2},\mcG^\prime)^{tw}$. 
\end{proof}

It is natural to ask whether the action of the extended reflection algebra $\wt X(\mfg_{N-2},\mcG^\prime)^{tw}$ on the space $V_+$ factors through the extended twisted Yangian $X(\mfg_{N-2},\mcG^\prime)^{tw}$.
We will soon see that if the symmetric pair $(\mfg_N,\mfg_N^\rho)$ is of type CI or DIII, then this is indeed the case. However, if the pair
$(\mfg_N,\mfg_N^\rho)$ is not of type CI or DIII, then the operators $s^\circ_{ij}(u)$ fail to satisfy the defining symmetry relation of the algebra $X(\mfg_{N-2},\mcG^\prime)^{tw}$. Our goal for the remainder of this subsection is to show that, in the general situation, the action of $\wt X(\mfg_{N-2},\mcG^\prime)^{tw}$ on the space $V_+$ can be twisted by a suitable automorphism in a way which results in the modified action of $\wt X(\mfg_{N-2},\mcG^\prime)^{tw}$ on $V_+$ factoring through the extended Yangian $X(\mfg_{N-2},\mcG^\prime)^{tw}$.

Recall the rational function $p(u)$ in $u$ associated to the algebra $X(\mfg_N,\mcG)^{tw}$ which has been defined in \eqref{p(u)}. Let $p^\prime(u)$ be the corresponding 
series for the algebra $X(\mfg_{N-2},\mcG^\prime)^{tw}$. That is, 
\begin{equation*}
 p(u)=(\pm)1\mp\frac{1}{2u-\ka}+\frac{\Tr\,(\mcG(u))}{2u-2\ka} \qu \text{ and } \qu p^\prime(u)=(\pm)1\mp\frac{1}{2u-\ka'}+\frac{\Tr\,(\mcG^\prime(u))}{2u-2\ka'}.
\end{equation*}

\begin{prop}\label{CT:Prop.induction}
 Let $h(u)$ be any series in $1+u^{-1}\C[[u^{-1}]]$ satisfying the relation
 \begin{equation}
  h(u)h(\ka'-u)^{-1}=p(u+1/2)^{-1}p^\prime(u). \label{h-p}
 \end{equation}
Then the assignment $s^\prime_{ij}(u)\mapsto h(u)s_{ij}^\circ(u)$ defines a representation of $X(\mfg_{N-2},\mcG^\prime)^{tw}$ in the space $V_+$. Moreover, if $V$ is a highest 
weight module with the highest weight $\mu(u)$ and the highest weight vector $\xi$, then the cyclic span $X(\mfg_{N-2},\mcG^\prime)^{tw}\xi$ is a highest weight module 
with the highest weight $h(u)\mu^\circ(u)=(h(u)\mu_i^\circ(u))_{i\in \mathcal{I}_{N-2}}$, where $\mu_i^\circ(u)=\mu_i(u+1/2)+\tfrac{1}{2u}\mu_n(u+1/2)$ for all $i\in \mathcal{I}_{N-2}$. 
\end{prop}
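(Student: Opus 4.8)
The plan is to deduce the statement from Lemma \ref{CT:Lemma.induction} together with the isomorphism $X(\mfg_{N-2},\mcG')^{tw} \cong \mcX\mcB(\mcG')/(\msc'(u)-1)$ coming from \eqref{iso:TX-BG}, applied to the rank $n-1$ setting. By Lemma \ref{CT:Lemma.induction} the assignment $\tl\mss'_{ij}(u) \mapsto s^\circ_{ij}(u)$ already gives an action of $\mcX\mcB(\mcG') = \wt X(\mfg_{N-2},\mcG')^{tw}$ on $V_+$. Pre-composing with the automorphism $\wt\nu_h$ of \eqref{tnu_h} — which multiplies the generating matrix by $h(u)$ — produces the action $s'_{ij}(u) \mapsto h(u)\,s^\circ_{ij}(u)$ we want to study. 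To check that this action descends to $X(\mfg_{N-2},\mcG')^{tw}$, it suffices to show that the central series $\msc'(u)$ of $\mcX\mcB(\mcG')$ acts by the identity under the $h$-twisted action; equivalently, that the matrix $h(u)\,S^\circ(u)$ (where $S^\circ(u) = \sum E_{ij}\ot s^\circ_{ij}(u)$) satisfies the symmetry relation \eqref{TX-Sym} for the rank $n-1$ algebra, i.e.\ with $\ka$ replaced by $\ka'$ and with $p(u)$ replaced by $p'(u)$.

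First I would establish the analogue of the symmetry relation for the untwisted operators $S^\circ(u)$ on $V_+$. The starting point is the symmetry relation \eqref{s=s} for $S(u)$, evaluated at the shifted argument $u+1/2$; using $s^\circ_{ij}(u) = s_{ij}(u+1/2) + \tfrac{\delta_{ij}}{2u}s_{nn}(u+1/2)$, one expresses $\theta_{ij} s^\circ_{-j,-i}(u)$ in terms of $s^\circ_{ij}(\ka-1-u) = s^\circ_{ij}(\ka'-u)$ and $s^\circ_{ij}(u)$. The key point is that the ``outer'' index $n$ is killed on $V_+$: $s_{kn}(v)$ and $s_{-n,\ell}(v)$ annihilate $V_+$ for $k<n$, $\ell>-n$, so the trace term $\sum_{k=-n}^n s_{kk}(u+1/2)$ in \eqref{s=s} contributes, on $V_+$, essentially $\sum_{k=-n+1}^{n-1} s_{kk}(u+1/2) + s_{nn}(u+1/2) + s_{-n,-n}(u+1/2)$, and $s_{-n,-n}(u+1/2)$ must itself be re-expressed via \eqref{HWT:neg_weights.1} (or \eqref{s=s} with $i=n$). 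Carrying this bookkeeping through, one finds that $S^\circ(u)$ satisfies \eqref{TX-Sym} for rank $n-1$ up to a scalar discrepancy of the form $\big(p(u+1/2)/p'(u) - 1\big)$ multiplying $S^\circ(\ka'-u)$, plus trace corrections that are arranged by the defining shift $\tfrac{\delta_{ij}}{2u}s_{nn}(u+1/2)$ precisely so that they reassemble into the $\Tr(\mcG'(u))$ and $\Tr$ terms of the rank $n-1$ relation. Multiplying by $h(u)$ and using $h(u)h(\ka'-u)^{-1} = p(u+1/2)^{-1}p'(u)$ from \eqref{h-p} then cancels the scalar discrepancy exactly, yielding \eqref{TX-Sym} for $h(u)S^\circ(u)$; hence $\msc'(u) \equiv 1$ on $V_+$ under the $h$-twisted action, and the action factors through $X(\mfg_{N-2},\mcG')^{tw}$.

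For the highest weight statement, I would argue as follows. If $V$ is highest weight with highest weight vector $\xi$, then $\xi \in V_+$ (as noted before Lemma \ref{CT:Lemma.induction}), and for $-n+1 \le i<j \le n-1$ we have $s^\circ_{ij}(u)\xi = s_{ij}(u+1/2)\xi = 0$ since $i<j$; multiplying by $h(u)$ preserves this. For the diagonal entries, $s^\circ_{ii}(u)\xi = s_{ii}(u+1/2)\xi + \tfrac{1}{2u}s_{nn}(u+1/2)\xi = \big(\mu_i(u+1/2) + \tfrac{1}{2u}\mu_n(u+1/2)\big)\xi = \mu_i^\circ(u)\xi$ for $i \in \mathcal I_{N-2}$, using that $n \in \mathcal I_N$ so $s_{nn}(u)\xi = \mu_n(u)\xi$. (In the type B case, the extra index $0$ requires checking that the constraint \eqref{HWT:mu_0(u).0} for the rank $n-1$ algebra holds for $h(u)\mu_0^\circ(u)$; this should follow from Proposition \ref{HWT:Prop.mu_0(u)} applied to $V$ together with the relation \eqref{h-p} satisfied by $h(u)$, by the same scalar cancellation used above.) Thus $X(\mfg_{N-2},\mcG')^{tw}\xi$ is generated by the single vector $\xi$, annihilated by the $s'_{ij}(u)$ with $i<j$, and an eigenvector for the $s'_{ii}(u)$ with eigenvalue $h(u)\mu_i^\circ(u)$, so it is a highest weight module with highest weight $h(u)\mu^\circ(u)$.

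The main obstacle is the first step: verifying that $h(u)S^\circ(u)$ satisfies the rank $n-1$ symmetry relation \eqref{TX-Sym} on $V_+$. This is where the correction term $\tfrac{\delta_{ij}}{2u}s_{nn}(u+1/2)$ in the definition of $s^\circ_{ij}(u)$ earns its keep, and where one must carefully re-express $s_{-n,-n}(u+1/2)$ and the full trace $\sum_{k=-n}^n s_{kk}(u+1/2)$ on $V_+$; the algebra is somewhat delicate because of the interplay between the $\ka \to \ka' = \ka-1$ shift, the $u \to u+1/2$ shift, and the trace of $\mcG$ versus the trace of $\mcG'$ (which differ by $2g_{nn}$ when $\mcG$ is of the first kind, and by a rational function when $\mcG$ is of the second kind — this is exactly what forces the appearance of $p'(u)/p(u+1/2)$ and hence of $h(u)$). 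Everything else is formal.
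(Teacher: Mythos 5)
Your approach matches the paper's proof exactly: twist the $\wt X(\mfg_{N-2},\mcG')^{tw}$-action from Lemma \ref{CT:Lemma.induction} by $\wt\nu_h$, then verify the rank-$(n-1)$ symmetry relation for $h(u)S^\circ(u)$ (the paper splits into $i\neq j$, where the claim reduces immediately to \eqref{h-p}, and $i=j$, where the $\tfrac{\delta_{ij}}{2u}s_{nn}(u+1/2)$ correction performs precisely the bookkeeping you describe, with the $s_{-n,-n}$ contributions cancelling), and finally read the highest weight off $s^\circ_{ii}(u)\xi$ as you do. The only small imprecision is in the type-B remark: the constraint \eqref{HWT:mu_0(u).0} for $h(u)\mu_0^\circ(u)$ is automatic once the cyclic span is known to be a highest-weight $X(\mfg_{N-2},\mcG')^{tw}$-module, by Proposition \ref{HWT:Prop.mu_0(u)} applied to that module rather than to $V$.
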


\begin{proof}
From Lemma \ref{CT:Lemma.induction}, we know that $V_+$ admits the structure of a $\wt X(\mfg_{N-2},\mcG^\prime)^{tw}$-module via the assignment $\tilde{s}^\prime_{ij}(u)\mapsto s_{ij}^{\circ}(u)$. We may consider the $\wt X(\mfg_{N-2},\mcG^\prime)^{tw}$-module $V_+^{\wt \nu_h}$ obtained by twisting $V_+$ by an automorphism $\wt \nu_h$ of the form \eqref{tnu_h}. Therefore, it suffices to show that the operators $h(u)\,s^\circ_{ij}(u)$ satisfy the defining symmetry relation of $X(\mfg_{N-2},\mcG^\prime)^{tw}$. That is, 
\begin{samepage}
\begin{align}
 \theta_{ij}h(u)s^\circ_{-j,-i}(u)\equiv & \,(\pm)h(\ka'-u)s_{ij}^\circ(\ka'-u)\pm \frac{h(u)s^\circ_{ij}(u)-h(\ka'-u)s^\circ_{ij}(\ka'-u)}{2u-\ka'}\nonumber\\
                              + & \frac{\Tr\, (\mcG'(u))h(\ka'-u)s^\circ_{ij}(\ka'-u)-\delta_{ij}h(u)\sum_{k=-n+1}^{n-1}s^\circ_{kk}(u)}{2u-2\ka'} \label{CT:Prop.induction.1}
\end{align}
\end{samepage}

\noindent for all $-n+1\leq i,j\leq n-1$, where $``\equiv"$ denotes equivalence of operators on the space $V_+$.

Suppose first that $i\neq j$ and set $\wt u=u+1/2$. Then $s^\circ_{ij}(u)=s_{ij}(\wt u)$ and $s^\circ_{-j,-i}(u)=s_{-j,-i}(\wt u)$, so equation	 \eqref{CT:Prop.induction.1} becomes equivalent to the relation 
\begin{equation*}
h(u)\left(\theta_{ij}s_{-j,-i}(\wt u)\mp\frac{s_{ij}(\wt u)}{2\wt{u}-\ka}\right)\equiv h(\ka'-u)s_{ij}(\ka-\wt{u})\left((\pm)1\mp\frac{1}{2u-\ka'}+\frac{\Tr\, (\mcG'(u))}{2u-2\ka'}\right).
\end{equation*}
By the defining symmetry relation in $X(\mfg_N,\mcG)^{tw}$, the left hand side is just $h(u)p(\wt u)s_{ij}(\ka-\wt{u})$, whereas by definition of $p'(u)$
the right hand side is $h(\ka'-u)s_{ij}(\ka-\wt{u})p^\prime(u)$. Therefore, since $h(u)$ satisfies the relation \eqref{h-p}, both sides are equal.

Now suppose instead that $i=j$. Then \eqref{CT:Prop.induction.1} is equivalent to the relation
\begin{equation*}
 h(u)\left(s^\circ_{-i,-i}(u)\mp\frac{s^\circ_{ii}(u)}{2u-\ka'}+\frac{\sum_{k=-n+1}^{n-1}s^\circ_{kk}(u)}{2u-2\ka'}\right)\equiv h(\ka'-u)s^\circ_{ii}(\ka'-u)p^\prime(u).
\end{equation*}
Therefore, by \eqref{h-p}, it suffices to show that 
\begin{equation}
 p(\wt u)s^\circ_{ii}(\ka'-u)\equiv s^\circ_{-i,-i}(u)\mp\frac{s^\circ_{ii}(u)}{2u-\ka'}+\frac{\sum_{k=-n+1}^{n-1}s^\circ_{kk}(u)}{2u-2\ka'}. \label{CT:Prop.induction.2}
\end{equation}
By definition of the operators $s^\circ_{ij}(u)$, we have 
\begin{equation}
 s^\circ_{-i,-i}(u)\mp\frac{s^\circ_{ii}(u)}{2u-\ka'}+\frac{\sum_{k=-n+1}^{n-1}s^\circ_{kk}(u)}{2u-2\ka'}=s_{-i,-i}(\wt u)\mp\frac{s_{ii}(\wt u)}{2u-\ka'}+\frac{\sum_{k=-n+1}^{n-1}s_{kk}(\wt u)}{2u-2\ka'}+\frac{s_{nn}(\wt{u})}{2u}p_0(u),\label{CT:Prop.induction.3}
\end{equation}
where 
\begin{equation}
 p_0(u)=1\mp\frac{1}{2u-\ka'}+\frac{N-2}{2u-2\ka'}=\frac{u(2u-\ka'\pm 1)}{(\ka'-2u)(\ka'-u)}. \label{p0(u)}
\end{equation}
By the symmetry relation in $X(\mfg_N,\mcG)^{tw}$, 
\begin{equation*}
p(\wt u)s_{ii}(\ka-\wt{u})=s_{-i,-i}(\wt{u})\mp\frac{s_{ii}(\wt{u})}{2u-\ka'}+\frac{\sum_{k=-n}^ns_{kk}(\wt u)}{2u-2\ka'-1},
\end{equation*}
and this formula also holds for $i=n$. This implies that the left hand side of \eqref{CT:Prop.induction.2} can be expressed as 
\begin{align*}
 p(\wt u)s^\circ_{ii}(\ka'-u)& =p(\wt u)s_{ii}(\ka-\wt{u})+\frac{p(\wt u)}{2\ka'-2u} \, s_{nn}(\ka-\wt{u})\\
                             & =s_{-i,-i}(\wt{u})\mp\frac{s_{ii}(\wt{u})}{2u-\ka'}+\frac{\sum_{k=-n+1}^{n-1}s_{kk}(\wt u)}{2u-2\ka'}\\
                             &+\left(\frac{1}{2u-2\ka'-1}+\frac{1}{2\ka'-2u}+\frac{1}{(2\ka'-2u)(2u-2\ka'-1)} \right)s_{-n,-n}(\wt u)\\
                             &+\left(\frac{1}{2u-2\ka'-1}\mp\frac{1}{(2u-\ka')(2\ka'-2u)}+\frac{1}{(2u-2\ka'-1)(2\ka'-2u)}\right)s_{nn}(\wt u)\\
                             & =s_{-i,-i}(\wt{u})\mp\frac{s_{ii}(\wt{u})}{2u-\ka'}+\frac{\sum_{k=-n+1}^{n-1}s_{kk}(\wt u)}{2u-2\ka'}+\frac{(2u-\ka'\pm 1)}{2(\ka'-2u)(\ka'-u)} \, s_{nn}(\wt u),                             
\end{align*}
which, by \eqref{CT:Prop.induction.3} and \eqref{p0(u)}, is precisely the right hand side of \eqref{CT:Prop.induction.2}. The second statement of the proposition is an immediate consequence of the definition of the action of $X(\mfg_{N-2},\mcG^\prime)^{tw}$ on the space $V_+$. 
\end{proof}

Observe that if $(\mfg_N,\mfg_N^\rho)$ is equal to $(\mfg_N,\mfgl_n)$ with $\mfg_N=\mfso_{2n}$ or $\mfsp_{2n}$, then $\Tr\,(\mcG(u))=\Tr\,(\mcG^\prime(u))=0$, and
so $p(u+1/2)=p'(u)$. An immediate corollary of Proposition \ref{CT:Prop.induction} is then that, if $(\mfg_N,\mfg_N^\rho)$ is of type CI or DIII, the action of $\wt X(\mfg_{N-2},\mcG^\prime)^{tw}$ on $V_+$ 
provided by Lemma \ref{CT:Lemma.induction} factors through $X(\mfg_{N-2},\mcG^\prime)^{tw}$.

We conclude this subsection with a few comments regarding the existence of a series $h(u)$ satisfying \eqref{h-p}, as well as an equivalent interpretation of Proposition \ref{CT:Prop.induction}. Let $c(u)\in 1+u^{-1}\C[[u^{-1}]]$ be the series defined by $c(u)=p(u+1/2)p'(u)^{-1}$. Since $p(u)$ and $p'(u)$ satisfy \eqref{p(u)p(k-u)}, we have that $c(u)=c(\ka'-u)^{-1}$. Let $h(u)\in 1+u^{-1}\C[[u^{-1}]]$ be such that $c(u)^{-1} = h(u)^2$; then $c(u)=c(\ka'-u)^{-1}$ implies that $h(u)^{-1} = h(\ka'-u)$, hence $c(u) = h^{-1}(u)h(\ka'-u)$. In particular, this implies that there always exists a series $h(u)\in 1+u^{-1}\C[[u^{-1}]]$ satisfying \eqref{h-p}.

Recall that we have the isomorphism $X(\mfg_{N-2},\mcG^\prime)^{tw}\cong \wt X(\mfg_{N-2},\mcG^\prime)^{tw}/(\msc(u)-1)$, where $\msc(u)$ is the central series defined in \eqref{sc(u)}.
The image of $\msc(u)$ under the automorphism $\wt\nu_h$ is $h(u)h(\ka'-u)^{-1}\msc(u)$. Since the action of $\wt X(\mfg_{N-2},\mcG^\prime)^{tw}$
on $V_+^{\wt \nu_h}$ factors through $X(\mfg_{N-2},\mcG^\prime)^{tw}$, the central series $h(u)h(\ka'-u)^{-1}\msc(u)$ must act as $1$ in $V_+$. Therefore, Proposition
\ref{CT:Prop.induction} is equivalent to the statement that $\msc(u)$ must act as the series $c(u)=p(u+1/2)p'(u)^{-1}$ in the representation $V_+$ from Lemma \ref{CT:Lemma.induction}.



\subsection{Connection with representation theory of Molev-Ragoucy reflection algebras}\label{Subsection:HWT.reflection}
In this subsection we introduce an important connection between the representation theory of the extended twisted Yangians $X(\mfg_N,\mcG)^{tw}$ and that of the 
Molev-Ragoucy reflection algebras $\mcB(\wt{N},\tilde q)$ for some appropriate choice of $\wt N$ and $\tilde q$. The definition of the latter algebras was recalled in Subsection \ref{sec:TY-MR}: they are the twisted Yangians associated to
the symmetric pairs $(\mfsl_{\wt N},(\mfgl_{\wt N-\tilde q}\oplus \mfgl_{\tilde q})\cap \mfsl_{\wt N})$ of type AIII (if $\tilde q>0$) and type A0 (if $\tilde q=0$). We begin by recalling the classification of finite-dimensional irreducible $\mcB(\wt{N},\tilde{q})$-modules
obtained in \cite{MR}. 

\medskip 

Fix $0\leq \tilde{q}\leq \wt N$. A representation $V$ of $\mcB(\wt{N},\tilde{q})$ is a \textit{highest weight representation} if there exists a nonzero vector $\xi\in V$ such that $V=\mcB(\wt{N},\tilde{q})\xi$ and the following 
conditions are satisfied: 
 \begin{alignat*}{4}
  &b_{ij}(u)\xi=0 \quad &&\text{ for } && 1\leq i<j\leq \wt{N}, \quad &&\text{ and } \nonumber\\
  &b_{ii}(u)\xi=\mu_i(u)\xi \quad  &&\text{ for } &&1\leq i\leq \wt{N}, && 
 \end{alignat*}
where for each $1\leq i\leq \wt{N}$, $\mu_i(u)$ is a formal power series in $u^{-1}$ with constant term equal to $\veps_i$ (see \eqref{B=TGT}): 
$$\mu_i(u)=\veps_i+\sum_{r=1}^{\infty}\mu_i^{(r)}u^{-r}, \quad \mu_i^{(r)}\in \mathbb{C}. $$

As usual, we call $\mu(u)=(\mu_1(u),\ldots,\mu_{\wt{N}}(u))$ the highest weight, and the vector $\xi$ the highest weight vector. 

\medskip 

Given an $\wt{N}$-tuple $(\mu_1(u),\ldots,\mu_{\wt{N}}(u))$, the Verma module $M(\mu(u))$ is defined the same way as for $X(\mfg_N)$ and  $X(\mfg_N,\mcG)^{tw}$ and, by Theorem 4.2 in \cite{MR}, is 
non-trivial if and only if the components of the highest weight satisfy the relations
\begin{gather}
\mu_{\wt{N}}(u)\mu_{\wt{N}}(-u)=1, \label{HWT:refl.nontrivial.1}\\
\wt\mu_i(u)\wt\mu_i(-u+\wt{N}-i)=\wt\mu_{i+1}(u)\wt\mu_{i+1}(-u+\wt{N}-i). \label{HWT:refl.nontrivial.2}
\end{gather}
for all $i=1,\ldots,\wt{N}-1$, where the components of $\wt \mu(u)$ are defined in \eqref{HWT:tilde_mu_i(u)}. 

\medskip 

For each $\wt{N}$-tuple $\mu(u)$ whose components satisfy \eqref{HWT:refl.nontrivial.1} and \eqref{HWT:refl.nontrivial.2}, there is a unique irreducible module $V(\mu(u))$ with the highest weight $\mu(u)$. Moreover, by Theorem 4.6 (i) in \cite{MR}, if $\tilde{q}=0$ or $\tilde{q}=\wt{N}$, then $V(\mu(u))$ is finite-dimensional if and only if there exist monic polynomials $P_2(u),\ldots, P_{\wt{N}}(u)$ in $u$ such that 
$P_i(-u+\wt{N}-i+2)=P_i(u)$ for each $i$, and 
\begin{equation}
 \frac{\wt\mu_{i-1}(u)}{\wt\mu_{i}(u)}=\frac{P_i(u+1)}{P_i(u)} \quad \text{ for all }\quad 2\leq i\leq \wt{N}. \label{HWT:refl.fd.1}
\end{equation}
If $0 < \tilde{q} < N$, set $\tilde{p}=\wt{N}-\tilde{q}$. By Theorem 4.6 (ii) in \cite{MR}, $V(\mu(u))$ is finite-dimensional if and only if there exists $\gamma\in \C$ and  monic polynomials $P_2(u),\ldots,P_{\wt{N}}(u)$ in $u$ such that 
$P_i(-u+\wt{N}-i+2)=P_i(u)$ for each $i$, $P_{\tilde{p}+1}(\gamma)\neq 0$, and 
\begin{flalign}
  &    &&\frac{\wt\mu_{i-1}(u)}{\wt\mu_{i}(u)}=\frac{P_i(u+1)}{P_i(u)} \quad \text{ for }\quad 2\leq i\leq \wt{N} \text{ with }i\neq \tilde{p}+1, &&& \label{HWT:refl.fd.2}\\
\textit{while}&  \hspace{20mm} &&  &&& \nonumber\\ 
  &     &&\frac{\wt\mu_{\tilde{p}}(u)}{\wt\mu_{\tilde{p}+1}(u)}=\frac{P_{\tilde{p}+1}(u+1)}{P_{\tilde{p}+1}(u)}\cdot \frac{\gamma-u}{\gamma+u-\tilde{q}}. &&& \label{HWT:refl.fd.3}
\end{flalign}


\medskip

Now let $V$ be a non-trivial highest weight $X(\mfg_N,\mcG)^{tw}$-module, and let $J$ be the left ideal in $X(\mfg_N,\mcG)^{tw}$ generated by the non-constant coefficients of the series $s_{-i,j}(u)$ and $s_{0j}(u)$ for all $1\leq i,j\leq n$. (Henceforth, all occurrences of $s_{ij}(u)$ with $i=0$ or $j=0$ should be ignored in types C and D.) We define
$V^J$ to be the subspace of $V$ annihilated by $J$:
\begin{equation}
V^J=\{\eta \in V:s_{-i,j}(u)\eta=s_{0j}(u)\eta=0 \text{ for all }1\leq i,j\leq n\}. \label{HWT:V^J}
\end{equation}
If $\xi\in V$ is the highest weight vector, then $\xi$ belongs to $V^J$, so in particular $V^J$ is nonzero. 
The idea of considering the subspace $V^J$ comes from the proof of Proposition 4.2.8 in \cite{Mo5}.  Let $\mbfk$ and $\boldsymbol{\ell}$ be as defined before Corollary \ref{HWT:Cor.restrictions}. The following proposition will be very important to prove our main classification theorems in Section \ref{sec:CT}.

\begin{prop}\label{HWT:refl.Prop.1}
$V^J$ is stable under the action of all operators $s_{ij}(u)$ with $1\leq i,j\leq n$. Moreover, the assignment $\wt{b}_{ij}(u)\mapsto [\pm]s_{ij}(u)$ defines a representation 
of the extended reflection algebra $ \wt{\mcB}(n,\boldsymbol{\ell})$ in $V^J$.
\end{prop}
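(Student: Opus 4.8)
The plan is to mimic, in the reflection-algebra setting, the kind of argument already carried out in the proof of Theorem \ref{HWT:Thm.HWT} and Lemma \ref{CT:Lemma.induction}: first establish stability of $V^J$ under the $s_{ij}(u)$ with $1\le i,j\le n$, then verify that the rescaled operators $[\pm]s_{ij}(u)$ satisfy the defining reflection equation of $\wt{\mcB}(n,\boldsymbol{\ell})$. For the stability statement, let me write $``\equiv"$ for equality of operators on $V^J$. Since $s_{-i,j}(u)$ and $s_{0j}(u)$ annihilate $V^J$ by definition (and, via the symmetry relation \eqref{s=s}, so do $s_{i,-j}(u)$ and $s_{i0}(u)$), I want to show $s_{-k,\ell}(u)\,s_{ij}(v)\equiv 0$ and $s_{0\ell}(u)\,s_{ij}(v)\equiv 0$ for $1\le i,j,k,\ell\le n$. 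Because $s_{-k,\ell}(u)\,s_{ij}(v)\equiv -[s_{ij}(v),s_{-k,\ell}(u)]$ and similarly with $s_{0\ell}$, it is enough to show the relevant commutators vanish on $V^J$. Expanding $[s_{ij}(v),s_{-k,\ell}(u)]$ using \eqref{[s,s]}, all terms involve products $s_{*,*}(w)$ in which at least one factor has a nonpositive first (or zero) index sitting to the right — after using \eqref{s=s} to move any ``$-$'' index to a first position — so each term is $\equiv 0$; the only terms that require a small closed-system argument (of the same self-referential ``$(1-\frac{n-\text{const}}{w})X\equiv 0$'' type as in Steps 2.1.1–2.1.3 of Theorem \ref{HWT:Thm.HWT}) are those where a summation index $a$ ranges over values that keep both factors with positive indices; these are handled exactly as there. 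The case $N=2n+1$ needs the extra check involving $s_{0\ell}(u)$, again parallel to Step 2.2.2.

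For the algebra statement, recall that $\wt{\mcB}(n,\boldsymbol{\ell})$ is generated by $\wt b_{ij}(u)$, $1\le i,j\le n$, subject to the reflection equation \eqref{TX-RE} with $R(u)=I-u^{-1}P$ (Lemma \ref{L:B(u)}), and with $\wt b_{ij}^{(0)}=\veps_i\delta_{ij}$ where $\veps_i=1$ for $1\le i\le n-\boldsymbol{\ell}$ and $\veps_i=-1$ for $n-\boldsymbol{\ell}<i\le n$ — which, by the definition of $\mbfk$ and $\boldsymbol{\ell}$ before Corollary \ref{HWT:Cor.restrictions} and the fact that $g_{ii}=1$ for $i\le\mbfk$, matches $[\pm]g_{ii}$. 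So I must check that the operators $[\pm]s_{ij}(u)$ on $V^J$ satisfy \eqref{[b,b]}, i.e. \eqref{[s,s]} with the index sums truncated to $1\le a\le n$ and with all the ``$\kappa$-terms'' (the lines of \eqref{[s,s]} carrying denominators $u\pm v-\kappa$ or products thereof) absent. The strategy is: start from the full relation \eqref{[s,s]} for the $s_{ij}(u)$, restrict to $1\le i,j,k,\ell\le n$, and apply it as an identity of operators on $V^J$. Every term in \eqref{[s,s]} that contains a factor $s_{-a,*}$, $s_{*,-a}$, $s_{0,*}$, $s_{*,0}$, or whose Kronecker delta forces a ``$-$'' or ``$0$'' index ($\delta_{k,-i}$, $\delta_{\ell,-j}$ with positive $i,j$, etc.) vanishes on $V^J$ by the stability step just proved; what survives is precisely the first three lines of \eqref{[s,s]} but with the internal sum over $a$ from $-n$ to $n$. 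I then use the stability relations once more to discard the terms with $a\le 0$ from those surviving sums, landing exactly on \eqref{[b,b]}. The overall sign $[\pm]^2=1$ makes the prefactors work out; one only needs to track that the constant terms of $[\pm]s_{ij}(u)$ are $[\pm]g_{ij}=\veps_i\delta_{ij}$, which is immediate.

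The main obstacle I anticipate is bookkeeping rather than conceptual: \eqref{[s,s]} has eight lines with many delta-functions and theta-factors, and one must be scrupulous in checking that \emph{every} non-surviving term truly does vanish on $V^J$, including subtle ones where a delta collapses an index range so that a product like $s_{ka}(u)s_{a\ell}(v)$ with $a$ running down to $1$ hides a would-be ``$-a$'' or ``$0$'' contribution that has already been shown $\equiv 0$. In particular, the self-referential arguments (of the shape: express $s_{-a,\ell}(u)s_{ak}(v)$ via \eqref{[s,s]}, observe it equals $s_{-k,\ell}(u)s_{kk}(v)$ plus annihilating terms, sum over $a$, and conclude a scalar-multiple-of-itself identity forcing $\equiv 0$) must be set up carefully for each of the index configurations $k<\ell$, $k=\ell$ with a ``$-$'' on one side, and the $N$ odd cases with a $0$ index; these are direct analogues of Steps 2.1–2.2 in the proof of Theorem \ref{HWT:Thm.HWT}, so I would phrase them by explicit reference to that proof rather than repeating all the computations. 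No deep new idea is needed; the work is in organizing the term-by-term cancellations and invoking \eqref{s=s} consistently to normalize indices.
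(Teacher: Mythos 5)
Your overall plan — prove stability of $V^J$ under the $s_{ij}(u)$, $1\le i,j\le n$, via a commutator computation, then verify that on $V^J$ the full relation \eqref{[s,s]} collapses to \eqref{[b,b]} by discarding the $\kappa$-terms and the $a\le 0$ part of the surviving sums — is exactly the paper's strategy, and your second half (the verification of the $\wt\mcB(n,\boldsymbol{\ell})$ reflection equation, and the observation that $[\pm]g_{ii}=\veps_i$ so the constant terms match) is correct as you sketch it.

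However, the stability step as you have set it up does not go through. You propose to expand the commutator $[s_{ij}(v),s_{-k,\ell}(u)]$ directly via \eqref{[s,s]}, i.e.\ with the generator carrying the negative index placed in the \emph{second} slot. The very first line of the resulting expansion is
\[
\frac{1}{v-u}\Big(s_{-k,j}(v)\,s_{i\ell}(u)-s_{-k,j}(u)\,s_{i\ell}(v)\Big),
\]
and here the factor sitting to the right is $s_{i\ell}$ with \emph{both} indices positive. This does not annihilate $V^J$, and it is not of the ``summation over $a$'' type you reserve for the closed-system argument; attempting to re-express $s_{-k,j}(v)s_{i\ell}(u)$ on $V^J$ as a commutator again reproduces the problem, so your claim that ``each term is $\equiv 0$'' is false for this term and the plan circles back on itself. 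The correct setup — which is the paper's — is to expand $[s_{-k,\ell}(u),s_{ij}(v)]$, placing the negative index in the \emph{first} slot, so that the first line of \eqref{[s,s]} reads $\frac{1}{u-v}\big(s_{ij}(u)s_{-k,\ell}(v)-\dots\big)$ (after relabelling) with the annihilating factor $s_{-k,\ell}$ on the right; then indeed every term either has a factor of the form $s_{-a,b}$, $a,b>0$, or $s_{0,b}$, $b>0$, in the rightmost position (so vanishes on $V^J$), or contains a sum $\sum_{a\ge 1}s_{-\ast,a}(\cdot)s_{a\ast}(\cdot)$ and is handled by the self-referential scalar-multiple trick. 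This is a small fix, since $[s_{ij}(v),s_{-k,\ell}(u)]=-[s_{-k,\ell}(u),s_{ij}(v)]$, but as written the proposal has a genuine hole.

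A secondary point: the parenthetical claim that ``via \eqref{s=s}, $s_{i,-j}(u)$ and $s_{i0}(u}$ also annihilate $V^J$'' is not correct. The symmetry relation sends the index pair $(a,b)$ to $(-b,-a)$, so it relates $s_{i,-j}$ to $s_{j,-i}$ and $s_{i0}$ to $s_{0,-i}$; these have the same sign pattern (positive first index, nonpositive second) as the series you started from and are \emph{not} among the generators cut out in the definition of $V^J$. Fortunately this claim is never used in the argument — the surviving self-referential terms all involve products with right factors $s_{a\ast}$, $a\ge 1$, not $s_{\ast,-a}$ — so deleting the parenthesis costs nothing, but it should be deleted. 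Finally, be aware that the self-referential closed-system computations are \emph{analogues} of Steps 2.1--2.2 of Theorem \ref{HWT:Thm.HWT}, not literal repetitions: the invariance conditions defining $V^0$ and $V^J$ are different, so the specific formulas (which sums appear, which indices are constrained) must be rederived rather than cited.
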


\begin{proof}

 We begin by showing that $V^J$ is stable under the action of all operators $s_{ij}(u)$ for $1\leq i,j\leq n$. We must show that $s_{-i,j}(u)s_{k\ell}(v)=0 \mod J$ for 
 all $1\leq i,j,k,\ell\leq n$, or equivalently $[s_{-i,j}(u),s_{k\ell}(v)]\equiv 0$, where $``\equiv "$ is used to denote equality of operators on $V^J$. Let us first show $[s_{-i,j}(u),s_{k\ell}(v)]\equiv 0$. This is immediate if $k\neq i,j$ by \eqref{[s,s]}. Consider the case where $i=j=k$. As a consequence of 
relation \eqref{[s,s]} we have
\begin{equation}
 [s_{-i,i}(u),s_{i\ell}(v)]\equiv \left(\frac{1}{u+v}+\frac{1}{(u+v)(u-v-\ka)} \right)\sum_{a=1}^n s_{-i,a}(u)s_{a\ell}(v)-\frac{1}{u-v-\ka}\sum_{a=1}^ns_{-a,i}(u)s_{a\ell}(v). \label{HWT:refl.eq1}
\end{equation}
Computing $s_{-a,i}(u)s_{a\ell}(v)$ for $a\neq i$ we obtain
\begin{align*}
 [s_{-a,i}(u),s_{a\ell}(v)]&\equiv -\frac{1}{u-v-\ka}\sum_{b=1}^ns_{-b,i}(u)s_{b\ell}(v)+\frac{1}{(u+v)(u-v-\ka)}\sum_{b=1}^n s_{-i,b}(u)s_{b\ell}(v)\\
                           &\equiv [s_{-i,i}(u),s_{i\ell}(v)]-\frac{1}{u+v}\sum_{b=1}^n s_{-i,b}(u)s_{b\ell}(v),
\end{align*}
where the last equivalence is a direct consequence of equation \eqref{HWT:refl.eq1}. Substituting the above result back into \eqref{HWT:refl.eq1}, we get
\begin{align*}
 [s_{-i,i}(u),s_{i\ell}(v)] & \equiv \left(\frac{1}{u+v}+\frac{1}{(u+v)(u-v-\ka)} \right)\sum_{a=1}^n s_{-i,a}(u)s_{a\ell}(v)\\
                                   &-\frac{n}{u-v-\ka} [s_{-i,i}(u),s_{i\ell}(v)]+\frac{n-1}{(u+v)(u-v-\ka)}\sum_{b=1}^ns_{-i,b}(u)s_{b\ell}(v)\\
                            & \equiv \left(1+\frac{n}{u-v-\ka} \right)\frac{1}{u+v}\sum_{a=1}^ns_{-i,a}(u)s_{a\ell}(v)-\frac{n}{u-v-\ka}[s_{-i,i}(u),s_{i\ell}(v)],
\end{align*}
which implies that 
\begin{equation}
 [s_{-i,i}(u),s_{i\ell}(v)]\equiv\frac{1}{u+v}\sum_{a=1}^ns_{-i,a}(u)s_{a\ell}(v). \label{HWT:refl.eq2}
\end{equation}
By \eqref{[s,s]}, for all $a\neq i$ and $a\ge 1$, we have the relation 
\begin{equation*}
 s_{-i,a}(u)s_{a\ell}(v)\equiv\frac{1}{u+v}\sum_{b=1}^ns_{-i,b}(u)s_{b\ell}(v)\equiv [s_{-i,i}(u),s_{i\ell}(v)].
\end{equation*}
Substituting this into \eqref{HWT:refl.eq2}, we arrive at
\begin{equation*}
 [s_{-i,i}(u),s_{i\ell}(v)]\equiv\frac{n}{u+v}[s_{-i,i}(u),s_{i\ell}(v)],
\end{equation*}
which allows us to conclude that $[s_{-i,i}(u),s_{i\ell}(v)]\equiv 0$ for all $1\leq i\leq n$. 

Now, let us consider the case $i\neq j$. As a consequence of relation \eqref{s=s}, it is 
enough to consider the case where $j=k$. By \eqref{[s,s]} we have: 
\begin{equation*}
 [s_{-i,j}(u),s_{j\ell}(v)]\equiv \frac{1}{u+v}\sum_{a=1}^n s_{-i,a}(u)s_{a\ell}(v).
\end{equation*}
However, by \eqref{HWT:refl.eq2}, the right hand side of the above is equivalent to $0$, and so we obtain $[s_{-i,j}(u),s_{j\ell}(v)]\equiv 0$ for all $1\leq i,j,\ell,\leq n$ such that $i\neq j$. Thus,
we have shown that $[s_{-i,j}(u),s_{k\ell}(v)]\equiv 0$ for all $1\leq i,j,k,\ell\leq n$. If $N=2n+1$, then we must also show $[s_{0j}(u),s_{k\ell}(v)]\equiv 0$ for all 
$1\leq j,k,\ell\leq n$. This is immediate from \eqref{[s,s]} unless $j=k$, and in this case we obtain
\begin{equation}
 [s_{0j}(u),s_{j\ell}(v)]\equiv \frac{1}{u+v}\sum_{a=1}^n s_{0a}(u)s_{a\ell}(v). \label{HWT:refl.eq3}
\end{equation}
However, the same computation shows that
\begin{equation*}
 s_{0a}(u)s_{a\ell}(v)\equiv \frac{1}{u+v}\sum_{b=1}^n s_{0b}(u)s_{b\ell}(v)\equiv [s_{0j}(u),s_{j\ell}(v)],
\end{equation*}
and so \eqref{HWT:refl.eq3} is equivalent to 
\begin{equation*}
 \left(1-\frac{n}{u+v}\right)[s_{0j}(u),s_{j\ell}(v)]\equiv 0.
\end{equation*}
Therefore, we must have $s_{0j}(u)s_{j\ell}(v)\equiv [s_{0j}(u),s_{j\ell}(v)]\equiv 0$ for all $1\leq j, \ell\leq n$. 
This completes the proof that $V^J$ is stable under the action of all operators $s_{ij}(u)$ with $1\leq i,j\leq n$. 

\medskip 

Next, observe from relation \eqref{[s,s]} that for all $1\leq i,j,k,l\leq n$ we have the following equivalence of operators on $V^J$: 
\begin{align*}
 [s_{ij}(u),s_{kl}(v)]&\equiv \frac{1}{u-v}\left(s_{kj}(u)s_{i\ell}(v)-s_{kj}(v)s_{il}(u)\right)\\
                            &+\frac{1}{u+v}\sum_{a=1}^n\left(\delta_{kj}s_{ia}(u)s_{al}(v)-\delta_{il}s_{ka}(v)s_{aj}(u) \right)\\
                            &-\frac{1}{u^2-v^2}\sum_{a=1}^n\delta_{ij}\left(s_{ka}(u)s_{al}(v)-s_{ka}(v)s_{al}(u) \right).
\end{align*}
Comparing these relations with the defining relations of the reflection algebra $\wt{B}(n,\boldsymbol{\ell})$ implies the second part of the lemma. To explain the appearance of the sign 
$[\pm]$ in the statement of the lemma, notice that the matrix $\mcG^+=\sum_{i,j\geq 1} g_{ij}E_{ij}$ coincides with the constant matrix
$[\pm]{\rm diag}(\veps_1,\ldots,\veps_n)$ associated to $\wt{\mcB}(n,\boldsymbol{\ell})$.
 \end{proof}


\begin{rmk}\label{HWT:Rem.reflection}
 We may view $V$ as a $Y(\mfg_N,\mcG)^{tw}$-module by restricting the action of $X(\mfg_N,\mcG)^{tw}$ to the subalgebra $Y(\mfg_N,\mcG)^{tw}\subset X(\mfg_N,\mcG)^{tw}$. Set
 $\Si^+(u)=\sum_{i,j\geq 1} E_{ij}\otimes \si_{ij}(u)$. Since $\Si(u)\Si(-u)=I_N$, we also have the equivalence of operators $\Si^+(u)\Si^+(-u)\equiv I_n$ on $V^J$. As a consequence of this fact, the above proposition, and the definition of 
 $\mcB(n,\boldsymbol{\ell})$, the assignment $b_{ij}(u)\mapsto [\pm]\si_{ij}(u)$ defines a representation of $\mcB(n,\boldsymbol{\ell})$ in the space $V^J$. 
\end{rmk}


The following simple result will be instrumental in the proof of Proposition \ref{HWT:refl.Prop.2} and also in the proofs of the main results in Section \ref{sec:CT}.
\begin{lemma}\label{HWT:Ext}
 Let $(\lambda(u))_{i\in \mathcal{I}_N}$ be any tuple of formal series with $\lambda_i(u)\in 1+u^{-1}\C[[u^{-1}]]$, and let $\nu(u)$ be any series of the same form. Then:
 \begin{enumerate}
   \item If $N=2n+1$, then there is a unique $(2n+1)$-tuple $\lambda(u)$ extending $ (\lambda(u))_{i\in \mathcal{I}_N}$ with the property that the 
  $X(\mfg_N)$ Verma module $M(\lambda(u))$ is non-trivial. 
  \item If $N=2n$, then for each $k\in \mathcal{I}_N$ there exists a unique $2n$-tuple $\lambda(u)=(\lambda_{-n}(u),\ldots,\lambda_n(u))$ extending 
  $(\lambda(u))_{i\in \mathcal{I}_N}$ with the property that $\lambda_{-k}(u)=\nu(u)$ and the $X(\mfg_N)$ Verma module $M(\lambda(u))$ is non-trivial. 
 \end{enumerate}
\end{lemma}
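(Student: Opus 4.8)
The plan is to use the non-triviality criterion \eqref{HWT:Ext.non-trivial} recalled from \cite{AMR}, which states that the $X(\mfg_N)$ Verma module $M(\lambda(u))$ is non-trivial if and only if
\begin{equation*}
 \frac{\lambda_{-i}(u)}{\lambda_{-i-1}(u)}=\frac{\lambda_{i+1}(u-\ka+n-i)}{\lambda_{i}(u-\ka+n-i)}
\end{equation*}
for $i\in\{0,\ldots,n-1\}$ in type B and $i\in\{1,\ldots,n-1\}$ otherwise. The point is that these relations, read in the right order, recursively determine the negative components $\lambda_{-1}(u),\ldots,\lambda_{-n}(u)$ (and, in type B, the relation for $i=0$ imposes a compatibility condition rather than determining a new component, since $\lambda_0(u)$ is already among the given data).

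First I would treat the case $N=2n+1$. Here the given data is $(\lambda_i(u))_{i\in\mcI_N}=(\lambda_0(u),\lambda_1(u),\ldots,\lambda_n(u))$. I would solve the non-triviality relations for $i=0,1,\ldots,n-1$ successively for $\lambda_{-i-1}(u)$ in terms of $\lambda_{-i}(u)$ and the already-known positive components, starting from $\lambda_{-0}(u)=\lambda_0(u)$; this yields a unique formal series $\lambda_{-i-1}(u)$ at each step, and one checks inductively that each lies in $1+u^{-1}\C[[u^{-1}]]$ because it is a product/quotient of series of that form evaluated at shifted arguments (the shifts do not affect membership in $1+u^{-1}\C[[u^{-1}]]$). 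This produces the unique $(2n+1)$-tuple extending the data with $M(\lambda(u))$ non-trivial, proving part (1).

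Next I would treat $N=2n$, where the non-triviality relations only run over $i\in\{1,\ldots,n-1\}$, so they give $n-1$ equations relating the $n$ unknowns $\lambda_{-1}(u),\ldots,\lambda_{-n}(u)$. Fixing $k\in\mcI_N$ and setting $\lambda_{-k}(u)=\nu(u)$ provides the one extra equation needed. Concretely: if $k<n$, the relations for $i=k,k+1,\ldots,n-1$ propagate the value downward from $\lambda_{-k}(u)=\nu(u)$ to determine $\lambda_{-k-1}(u),\ldots,\lambda_{-n}(u)$ uniquely, while the relations for $i=k-1,k-2,\ldots,1$ propagate upward to determine $\lambda_{-k+1}(u),\ldots,\lambda_{-1}(u)$ uniquely — note each relation can be solved both ``for $\lambda_{-i-1}$ given $\lambda_{-i}$'' and ``for $\lambda_{-i}$ given $\lambda_{-i-1}$'' since all factors are invertible series; if $k=n$ only the downward direction is vacuous and everything is propagated upward. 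Again one checks membership in $1+u^{-1}\C[[u^{-1}]]$ at each step. Since the solution is forced at every stage, both existence and uniqueness of the $2n$-tuple follow, giving part (2).

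The only genuinely delicate point — and the main obstacle, though it is a mild one — is bookkeeping the argument shifts in \eqref{HWT:Ext.non-trivial} correctly and confirming that solving the recursion in either direction stays within $1+u^{-1}\C[[u^{-1}]]$; this is routine once one observes that the relation expresses one series as another series (in $1+u^{-1}\C[[u^{-1}]]$) times a ratio $\lambda_{i+1}/\lambda_i$ of such series with a shifted argument, and that $1+u^{-1}\C[[u^{-1}]]$ is a group under multiplication stable under the substitution $u\mapsto u+a$ for any constant $a$.
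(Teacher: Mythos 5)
Your proof is correct and follows essentially the same recursive construction as the paper's: solve \eqref{HWT:Ext.non-trivial} successively for the negative-index components, starting from the given $\lambda_0(u)$ in type B (or from the imposed $\lambda_{-k}(u)=\nu(u)$ in types C and D), and observe that each step produces a unique series in $1+u^{-1}\C[[u^{-1}]]$. One small slip: your opening parenthetical claims the $i=0$ relation in type B is a ``compatibility condition rather than determining a new component,'' but in fact it determines $\lambda_{-1}(u)$ from $\lambda_0(u)$ and $\lambda_1(u)$ --- which is exactly how you then correctly use it in the next paragraph, so the argument itself is unaffected.
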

\begin{proof}
 Suppose first that $N=2n+1$. Recall that the Verma module $M(\lambda(u))$ is non-trivial if and only if the components of $\lambda(u)$ satisfy \eqref{HWT:Ext.non-trivial}. 
 This forces us to define $\lambda_{-1}(u)=\frac{\lambda_0(u-\ka+n)}{\lambda_{1}(u-\ka+n)}\lambda_{0}(u)$, and recursively $\lambda_{-i-1}(u)=\frac{\lambda_i(u-\ka+n-i)}{\lambda_{i+1}(u-\ka+n-i)}\lambda_{-i}(u)$
for each $1\leq i \leq n-1 $. In this way we can associate a unique $(2n+1)$-tuple $\lambda(u)$ to $(\lambda(u))_{i\in \mathcal{I}_N}$ satisfying the claimed properties. 

If instead $N=2n$, then the condition \eqref{HWT:Ext.non-trivial} alone no longer uniquely determines an $N$-tuple $\lambda(u)$ from $(\lambda(u))_{i\in \mathcal{I}_N}$. However, fixing $k\in \mathcal{I}_N$ and 
setting $\lambda_{-k}(u)=\nu(u)$, a simple modification of the argument used in the $N=2n+1$ case shows that the condition \eqref{HWT:Ext.non-trivial} does produce a unique $2n$-tuple with 
the desired properties extending 
$(\lambda_{-k}(u),\lambda_{1}(u),\ldots,\lambda_n(u))$.
\end{proof}

We are now prepared to make precise the sufficient and necessary conditions on the tuple $\mu(u)$ which results in a non-trivial $X(\mfg_N,\mcG)^{tw}$ Verma module $M(\mu(u))$.

\begin{prop}\label{HWT:refl.Prop.2}
Let $\mu(u)=(\mu_1(u),\ldots, \mu_n(u))$ or $\mu(u)=(\mu_0(u),\ldots,\mu_n(u))$ be any tuple of formal series such that the $i$-th component belongs to $g_{ii}+u^{-1}\C[[u^{-1}]]$
and $ u\cdot \wt \mu_0(\ka-u)=(\ka-u)\cdot p_0(u)p(u)^{-1}\wt\mu_0(u)$ (see \eqref{HWT:mu_0(u).0}). Then 
the $X(\mfg_N,\mcG)^{tw}$ Verma module $M(\mu(u))$ is nontrivial if and only if
\begin{equation}
\wt\mu_i(u)\wt\mu_i(-u+n-i)=\wt{\mu}_{i+1}(u)\wt{\mu}_{i+1}(-u+n-i) \label{HWT:nontrivial}
\end{equation}
for all $i\in \mathcal{I}_N\setminus\{n\}$, and where the components of the series $\wt \mu_i(u)$ have been defined in \eqref{HWT:tilde_mu_i(u)}.
\end{prop}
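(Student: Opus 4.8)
The plan is to connect the non-triviality of the $X(\mfg_N,\mcG)^{tw}$ Verma module $M(\mu(u))$ to the non-triviality of a Verma module over a Molev--Ragoucy reflection algebra, and then invoke Theorem 4.2 of \cite{MR} (whose statement is recorded in \eqref{HWT:refl.nontrivial.1}--\eqref{HWT:refl.nontrivial.2}). The bridge is Proposition \ref{HWT:refl.Prop.1} (and Remark \ref{HWT:Rem.reflection}): if $V$ is any non-trivial highest weight $X(\mfg_N,\mcG)^{tw}$-module with highest weight $\mu(u)$ and highest weight vector $\xi$, then the subspace $V^J$ carries an action of $\wt{\mcB}(n,\boldsymbol{\ell})$ via $\wt b_{ij}(u)\mapsto[\pm]s_{ij}(u)$, and $\xi\in V^J$ is annihilated by all $b_{ij}(u)$ with $i<j$ and is a common eigenvector of the $b_{ii}(u)$ with eigenvalues $[\pm]\mu_i(u)$. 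So $\wt{\mcB}(n,\boldsymbol{\ell})\,\xi$ is a highest weight $\wt{\mcB}(n,\boldsymbol{\ell})$-module, hence descends to a highest weight module over $\mcB(n,\boldsymbol{\ell})$ (the unitary/symmetry quotient) with highest weight $([\pm]\mu_i(u))_{1\le i\le n}$; applying Theorem 4.2 of \cite{MR} with $\wt N=n$, $\tilde q=\boldsymbol{\ell}$ shows that the tilde-components of this weight satisfy \eqref{HWT:refl.nontrivial.2}, i.e.\ precisely \eqref{HWT:nontrivial} for $i\in\{1,\dots,n-1\}$ (the sign $[\pm]$ cancels in the ratios defining $\wt\mu$, and the constant-term shift between $\mcB(\wt N,\tilde q)$ and $\mcX\mcB(\mcG)$ normalizations is harmless). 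This handles all $i\in\mathcal I_N\setminus\{n\}$ except, in type B, the index $i=0$; for $i=0$ the relation \eqref{HWT:nontrivial} with $i=0$ turns out to follow from the hypothesis $u\cdot\wt\mu_0(\ka-u)=(\ka-u)p_0(u)p(u)^{-1}\wt\mu_0(u)$ together with the $i=1$ case, using the rational-function identities already established in Remark \ref{HWT:Rem.m_0(u)} and Proposition \ref{HWT:Prop.mu_0(u)}. This proves necessity.

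For sufficiency, suppose $\mu(u)$ satisfies \eqref{HWT:nontrivial} for all $i\in\mathcal I_N\setminus\{n\}$ (and the type-B constraint on $\wt\mu_0$). The strategy is to construct a non-trivial highest weight $X(\mfg_N,\mcG)^{tw}$-module with highest weight $\mu(u)$; then, since $M(\mu(u))$ surjects onto any such module (by the universal property of Verma modules once one checks the defining relations of $J$ are respected), $M(\mu(u))$ is itself non-trivial. By Corollary \ref{HWT:Cor.restrictions}, starting from an irreducible $X(\mfg_N)$-module $L(\lambda(u))$ with highest weight vector $\xi$ one obtains that $X(\mfg_N,\mcG)^{tw}\xi$ is a highest weight module whose highest weight $\mu'(u)$ is determined by \eqref{HWT:Rest.FirstKind} or \eqref{HWT:Rest.SecondKind}, i.e.\ the $\wt\mu'_i(u)$ are explicit multiples of $\lambda_i(u-\ka/2)\lambda_{-i}(-u+\ka/2)$. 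So it suffices to show that for any $\mu(u)$ satisfying \eqref{HWT:nontrivial}, there is an $N$-tuple $\lambda(u)$ satisfying the $X(\mfg_N)$ non-triviality condition \eqref{HWT:Ext.non-trivial} such that the resulting $\mu'(u)$ equals $\mu(u)$. This is an existence statement solvable recursively: Lemma \ref{HWT:Ext} guarantees that, given the ``positive half'' $(\lambda_i(u))_{i\in\mathcal I_N}$ (and, when $N=2n$, one extra series $\lambda_{-k}(u)$), the condition \eqref{HWT:Ext.non-trivial} uniquely extends it to a full $N$-tuple with $M(\lambda(u))$ non-trivial. One then checks that the map sending such input data to $\mu(u)$ via the formulas of Corollary \ref{HWT:Cor.restrictions} is surjective onto the set of tuples obeying \eqref{HWT:nontrivial}; concretely, one solves for $\lambda_1(u),\dots,\lambda_n(u)$ (and $\lambda_{-n}(u)$ when needed) from the triangular system $\wt\mu_i(u)=(\text{prefactor})\cdot\lambda_i(u-\ka/2)\lambda_{-i}(-u+\ka/2)$ going downward from $i=n$, using \eqref{HWT:nontrivial} exactly to guarantee compatibility at each step, and the type-B constraint on $\wt\mu_0$ to guarantee compatibility at $i=0$. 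Since $L(\lambda(u))$ exists as soon as $M(\lambda(u))$ is non-trivial, this produces the desired non-trivial highest weight $X(\mfg_N,\mcG)^{tw}$-module, and hence $M(\mu(u))$ is non-trivial.

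The main obstacle I expect is bookkeeping rather than conceptual: making the dictionary between the normalizations of $\mcB(\wt N,\tilde q)$ in \cite{MR} and those of $\wt{\mcB}(\mcG)=\mcX\mcB(\mcG)$ here completely precise, in particular tracking the constant terms $\veps_i$ versus $g_{ii}$, the spectral-parameter shift by $\ka/2$ hidden in passing between $\Si(u)$ and the reflection algebra generators, and the role of the sign $[\pm]$ (which must be shown to drop out of every ratio of $\wt\mu$'s so that \eqref{HWT:refl.nontrivial.2} really becomes \eqref{HWT:nontrivial}). The only genuinely delicate point is the extra index $i=0$ in type B, where \eqref{HWT:nontrivial} is \emph{not} directly supplied by \cite{MR} and must instead be extracted from the functional equation for $\wt\mu_0$; here one invokes the identity $p_0(u)p(u)^{-1}=g(\ka-u)g(u)^{-1}$ proved in Remark \ref{HWT:Rem.m_0(u)} and matches it against the $i=1$ instance of \eqref{HWT:nontrivial}. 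Everything else is a finite recursion combined with Lemma \ref{HWT:Ext} and Corollary \ref{HWT:Cor.restrictions}, both already available.
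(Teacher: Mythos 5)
Your overall strategy for both directions matches the paper's: reduce to the Molev--Ragoucy reflection algebra via Proposition \ref{HWT:refl.Prop.1}, invoke Theorem 4.2 of \cite{MR} for necessity of \eqref{HWT:nontrivial} with $1\le i\le n-1$, and for sufficiency construct a non-trivial $X(\mfg_N)$-module $L(\lambda(u))$ whose cyclic span under $X(\mfg_N,\mcG)^{tw}$ has highest weight $\mu(u)$ using Corollary \ref{HWT:Cor.restrictions} and Lemma \ref{HWT:Ext}. The small hand-wave about ``descends to a highest weight module over $\mcB(n,\boldsymbol{\ell})$'' is harmless: in the paper one twists the $\wt\mcB(n,\boldsymbol{\ell})$-action by $\wt\nu_{h^{-1}}$ where $h(u)h(-u)=\ms f(u)$ on $V^J$ (rather than descending directly), but the twist multiplies each $\mu_i(u)$ by the common factor $h(u)^{-1}$, which cancels in \eqref{HWT:refl.nontrivial.2}, exactly as you observed for the sign $[\pm]$.

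However, there is a genuine gap in your treatment of the index $i=0$ in type B. You assert that \eqref{HWT:nontrivial} for $i=0$, namely $\wt\mu_0(u)\wt\mu_0(-u+n)=\wt\mu_1(u)\wt\mu_1(-u+n)$, ``turns out to follow from the hypothesis $u\cdot\wt\mu_0(\ka-u)=(\ka-u)\,p_0(u)p(u)^{-1}\wt\mu_0(u)$ together with the $i=1$ case.'' This cannot work. The functional equation on $\wt\mu_0$ relates $\wt\mu_0(u)$ only to $\wt\mu_0(\ka-u)$ and contains no information relating $\wt\mu_0$ to $\wt\mu_1$; the $i=1$ instance of \eqref{HWT:nontrivial} relates $\wt\mu_1$ to $\wt\mu_2$, not $\wt\mu_1$ to $\wt\mu_0$. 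So neither hypothesis pins down the joint constraint between $\wt\mu_0$ and $\wt\mu_1$ that \eqref{HWT:nontrivial} with $i=0$ encodes, and there is no ``rational-function identity'' route around this. The $i=0$ relation is genuinely independent additional information extracted from the module: the paper proves it by redoing the computation underlying Theorem 4.2 of \cite{MR} directly inside $X(\mfg_N,\mcG)^{tw}$, introducing $\beta_i(u,v)=\sum_{a=i}^n s_{ia}(u)s_{ai}(v)$, showing $\beta_i(u,v)\equiv\beta_i(v,u)$ on $\C\,1_{\mu(u)}$ (using Steps 3.1 and 3.2 of Theorem \ref{HWT:Thm.HWT}), and then subtracting the $i=0$ and $i=1$ instances of the resulting identity, specializing $v=n-u$. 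Your proof would need to carry out this computation; it is not a consequence of what you listed.

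One further (minor) imprecision: in your description of the sufficiency recursion you say you would ``solve for $\lambda_1(u),\dots,\lambda_n(u)$ ... from the triangular system $\wt\mu_i(u)=(\text{prefactor})\cdot\lambda_i(u-\ka/2)\lambda_{-i}(-u+\ka/2)$.'' For each $i$ this is one equation in two unknowns, so it is not itself a triangular system. The mechanism the paper uses is to form the ratios $\wt\mu_i/\wt\mu_{i+1}$, combine them with the $X(\mfg_N)$ non-triviality relation \eqref{HWT:Ext.non-trivial} (which converts $\lambda_{-i}(-u+\ka/2)/\lambda_{-i-1}(-u+\ka/2)$ into a ratio of $\lambda_{i+1},\lambda_i$ at a shifted argument), and then factor the resulting function $f_i(u)=f_i(n-u-i)^{-1}$ as $g_i(u)g_i(n-u-i)^{-1}$ before recursively prescribing $\lambda_i(u-\ka/2)=g_i(u)\lambda_{i+1}(-u-\ka/2+n-i)^{-1}$. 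If you spell the recursion out in those terms, your outline becomes a correct argument.
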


\begin{proof}
Set $V=M(\mu(u))$, and denote the highest vector of $V$ by $1_{\mu(u)}$. Suppose
that $V$ is non-trivial. We will show the components of $\mu(u)$ satisfy \eqref{HWT:nontrivial}, first restricting ourselves to the case where $N=2n$. Since $V$ is a  non-trivial
highest weight module, the subspace $V^J$ is non-zero and, by Proposition \ref{HWT:refl.Prop.1}, admits the structure of a $\wt \mcB(n,\boldsymbol{\ell})$-module.
Consider the submodule $W=\wt \mcB(n,\boldsymbol{\ell})1_{\mu(u)}\subset V^J$. Recall the even central series $\ms{f}(u)$ defined by Proposition \ref{P:B=Z*SB}.
Choose $h(u)\in 1+u^{-1}\C[[u^{-1}]]$ such that $h(u)h(-u)=\ms{f}(u)$ as operators on $V^J$. Then the assignment $\wt{B}(u)\mapsto h^{-1}(u)\wt{B}(u)$ defines an automorphism 
$\wt \nu_{h^{-1}}$ of the algebra $\wt{\mcB}(n,\boldsymbol{\ell})$. We have: 
\begin{equation*}
\wt \nu_{h^{-1}}(\ms{f}(u))=h^{-1}(u)\,h^{-1}(-u)\,\ms{f}(u)\equiv \ms{f}(u)^{-1}\ms{f}(u)=1,
\end{equation*}
where $``\equiv"$ denotes equality of operators on $V^J$. Therefore, twisting the action of $\wt \mcB(n,\ell)$ on $V^J$ by $\wt \nu_{h^{-1}}$,
we get a non-trivial representation $W^{\wt \nu_{h^{-1}}}$ of $\mcB(n,\boldsymbol{\ell})$ with the highest weight $(h(u)^{-1}\mu_1(u),\ldots,h(u)^{-1}\mu_n(u))$. 
In particular, by \eqref{HWT:refl.nontrivial.2} we have 
\begin{equation*}
\wt{\mu}_i(u)\wt{\mu}_i(-u+n-i)=\wt{\mu}_{i+1}(u)\wt{\mu}_{i+1}(-u+n-i)
\end{equation*}
for all $1\leq i\leq n-1$, as desired. 

If $N=2n+1$, the above argument still shows that \eqref{HWT:nontrivial} holds for all $1\leq i\leq n-1$, but does not allow us to conclude that 
\begin{equation}
\wt{\mu}_0(u)\wt{\mu}_0(-u+n)=\wt{\mu}_{1}(u)\wt{\mu}_{1}(-u+n). \label{HWT:nontrivial.1}
\end{equation}
That being said, the same argument as used in \cite{MR} to establish \eqref{HWT:refl.nontrivial.2}
(see the proof of Theorem 4.2) can be applied to show that \eqref{HWT:nontrivial.1} does hold. Let us recall the main steps of this argument. 
For each $0\leq i \leq n$ define $\beta_i(u,v)=\sum_{a=i}^n s_{ia}(u)s_{ai}(v).$ Using $``\equiv"$ to denote equality of operators on $\C1_{\mu(u)}$, we have 
\begin{equation*}
 \beta_{i}(u,v)-\beta_{i}(v,u)=\sum_{a=i}^nA_{ia}(u,v)\equiv  0,
\end{equation*}
where the definition of $A_{ij}(u,v)$ has been given in \eqref{HWT:A_ij(u,v)}, and the second equivalence has been proven in Step 3.1 of the proof of Theorem \ref{HWT:Thm.HWT} for $i>0$, 
and in Step 3.2 of the same proof for $i=0$. As a consequence, we have $\beta_{i}(u,v)\equiv\beta_i(v,u)$ for all $i\geq 0$. 
From \eqref{[s,s]} we obtain 
\begin{equation*}
 \beta_i(u,v)\equiv s_{ii}(u)s_{ii}(v)+ \frac{1}{u-v}\sum_{a=i+1}^n(s_{aa}(u)s_{ii}(v)-s_{aa}(v)s_{ii}(u)) +\frac{1}{u+v}\sum_{a=i+1}^n\left( \beta_i(u,v)- \beta_a(v,u)\right),
\end{equation*}
which is equivalent to 
\begin{equation}
 \left(\frac{u+v-n+i}{u+v}\right)\beta_i(u,v)\equiv s_{ii}(u)s_{ii}(v)+ \frac{1}{u-v}\sum_{a=i+1}^n(s_{aa}(u)s_{ii}(v)-s_{aa}(v)s_{ii}(u))-\frac{1}{u+v}\sum_{a=i+1}^n\beta_a(v,u). \label{HWT:nontrivial.2}
\end{equation}
Subtracting \eqref{HWT:nontrivial.2} with $i=1$ from \eqref{HWT:nontrivial.2} with $i=0$ and rearranging, we obtain 
\begin{align*}
 \frac{u+v-n}{u+v}\left(\beta_0(u,v)-\beta_{1}(u,v)\right) & \equiv s_{00}(u)s_{00}(v)+\frac{1}{u-v}\sum_{a=1}^n(s_{aa}(u)s_{00}(v)-s_{aa}(v)s_{00}(u))\\
                                                                &-s_{11}(u)s_{11}(v)-\frac{1}{u-v}\sum_{a=2}^n(s_{aa}(u)s_{11}(v)-s_{aa}(v)s_{11}(u)).
\end{align*}
Substituting $v\mapsto n-u$, the left hand side becomes the zero operator and, after applying both sides to $1_{\mu(u)}$, we arrive at the relation 
\begin{equation*}
 \mu_{0}(u)\mu_{0}(v)+\frac{1}{2u-n}\sum_{a=1}^n(\mu_{a}(u)\mu_{0}(v)-\mu_{a}(v)\mu_{0}(u))=\mu_{1}(u)\mu_{1}(v)+\frac{1}{2u-n}\sum_{a=2}^n(\mu_{a}(u)\mu_{1}(v)-\mu_{a}(v)\mu_{1}(u)).
\end{equation*}
By expanding equation \eqref{HWT:nontrivial.1} (using the definition of $\wt \mu_i(u)$), we see that it is equivalent to the above relation. This completes the proof that the
components of $\mu(u)$ must satisfy \eqref{HWT:nontrivial.1}.

\medskip 

Conversely, suppose the components of $\mu(u)$ satisfy condition \eqref{HWT:nontrivial} as well as the condition \eqref{HWT:mu_0(u).0} if $N=2n+1$ . 
Let $h(u)$ be the rational function in $u$ defined by
\begin{equation}
 h(u)=\begin{cases}
        \mfrac{\boldsymbol{\ell}-u}{u} &\text{ if } \mcG \text{ is of the first kind},\\[.5em]
        \mfrac{1[\pm]\,c(\boldsymbol{\ell}-u)}{1[\pm]\,cu} &\text{ if } \mcG \text{ is of the second kind}.
       \end{cases} \label{HWT:h(u)}
\end{equation}
Note that $h(u)$ satisfies the relation $h(u)h(\boldsymbol{\ell}-u)=1$. For each $i\in \mathcal{I}_N\setminus\{n\}$ define 
$f_i(u)=(h(u))^{-\delta_{i\mbfk}}\frac{\wt \mu_i(u)}{\wt \mu_{i+1}(u)}$. Then, by \eqref{HWT:nontrivial} and the aforementioned property of $h(u)$, $f_i(u)=f_i(n-u-i)^{-1}$ for all
$i\in \mathcal{I}_{N}\setminus\{n\}$. Thus, for each $i$, there exists $g_i(u)\in 1+u^{-1}\C[[u^{-1}]]$ such that $f_i(u)=g_i(u)g_i(n-u-i)^{-1}$. We will 
use these series to construct a non-trivial $X(\mfg_N)$ Verma module $M(\lambda(u))$ containing an $X(\mfg_N,\mcG)^{tw}$ highest weight module with the highest weight $\mu(u)$. 

If $N=2n$, let $\lambda_n(u),\lambda_{-n}(u)\in 1+u^{-1}\C[[u^{-1}]]$ be any two series satisfying the relation 
\begin{equation}
\mu_n(u)=\begin{cases}
          g_{nn}\,\lambda_n(u-\ka/2)\lambda_{-n}(-u+\ka/2) &\text{ if } \mcG \text{ is of the first kind},\\[.25em]
          \left(\mfrac{1[\pm]\,cu}{1-cu}\right)\lambda_n(u-\ka/2)\lambda_{-n}(-u+\ka/2)  &\text{ if } \mcG \text{ is of the second kind}.
         \end{cases} \label{NT.mu_n}
\end{equation}
For each $1\leq i\leq n-1$ define $\lambda_i(u)\in 1+u^{-1}\C[[u^{-1}]]$ recursively in terms of $\lambda_{i+1}(u)$ by 
\begin{equation*}
 \lambda_i(u-\ka/2)=g_i(u)\lambda_{i+1}(-u-\ka/2+n-i)^{-1}.
\end{equation*}
By Lemma \ref{HWT:Ext}, there is a unique $2n$-tuple $\lambda(u)$ extending $(\lambda_{-n}(u),\lambda_1(u),\ldots,\lambda_n(u))$ with the property that the $X(\mfg_N)$ 
Verma module $M(\lambda(u))$ is non-trivial.

If instead $N=2n+1$, then by assumption $u\cdot g(u)\wt \mu_0(\ka-u)=(\ka-u)\cdot g(\ka-u)\wt\mu_0(u)$, where the series $g(u)$ has been defined in \eqref{HWT:Rem.m_0(u).1}. Therefore there
exists $\lambda_0(u)\in 1+u^{-1}\C[[u^{-1}]]$ such that  $\wt \mu_0(u)=2u\,g(u)\lambda_0(u-\ka/2)\lambda_0(-u+\ka/2).$ For each $0\leq i\leq n-1$, 
define $\lambda_{i+1}(u)\in 1+u^{-1}\C[[u^{-1}]]$ recursively in terms of $\lambda_{i}(u)$ by 
\begin{equation*}
\lambda_{i+1}(-u-\ka/2+n-i)=g_{i}(u)\lambda_{i}(u-\ka/2)^{-1}.
\end{equation*}
Then, by Lemma \ref{HWT:Ext}, there is a unique $(2n+1)$-tuple $\lambda(u)$ extending $(\lambda_i(u))_{i\in \mathcal{I}_N}$ so that $M(\lambda(u))$ is non-trivial.

In either case, we have produced a nontrivial $X(\mfg_N)$ Verma module $M(\lambda(u))$ with the highest weight $\lambda(u)$ whose components satisfy the relations 
\begin{equation*}
\frac{\widetilde{\mu}_i(u)}{\widetilde{\mu}_{i+1}(u)}=h(u)^{\delta_{i\mbfk}}\frac{\lambda_i(u-\ka/2)\lambda_{i+1}(-u-\ka/2+n-i)}{\lambda_{i+1}(u-\ka/2)\lambda_{i}(-u-\ka/2+n-i)}
=h(u)^{\delta_{i\mbfk}}\frac{\lambda_i(u-\ka/2)\lambda_{-i}(-u+\ka/2)}{\lambda_{i+1}(u-\ka/2)\lambda_{-i-1}(-u+\ka/2)}
\end{equation*}
for all $i\in \mathcal{I}_N\setminus\{n\}$, in addition to the relation \eqref{NT.mu_n} if $N=2n$ and $\wt \mu_0(u)=2u\,g(u)\lambda_0(u-\ka/2)\lambda_0(-u+\ka/2)$ if $N=2n+1$.

Hence, the module $X(\mfg_N,\mcG)^{tw}1_{\lambda(u)}\subset M(\lambda(u))$ is a non-trivial highest weight module, and the above relations together with Corollary \ref{HWT:Cor.restrictions}
 imply that the highest weight is equal to $\mu(u)$. Therefore the $X(\mfg_N,\mcG)$ Verma module $M(\mu(u))$ is non-trivial.
 \end{proof}


\bigskip

The following proposition is analogous to Proposition 4.2.8 in \cite{Mo5} and gives a necessary condition for any irreducible highest weight $X(\mfg_N,\mcG)^{tw}$-module to be finite-dimensional. 
\begin{prop}\label{HWT:refl.Prop.3}
 Let the components of $\mu(u)$ satisfy the conditions of Proposition \ref{HWT:refl.Prop.2}, so that the irreducible module $V(\mu(u))$ exists. Suppose further that $V(\mu(u))$ is finite-dimensional. Then:
 \begin{enumerate}
  \item If $(\mfg_N,\mfg_N^\rho)$ is of type BCD0, BI with $q=1$, CI or DIII, then there exist monic polynomials $P_2(u),\ldots,P_{n}(u)$ in $u$ such that $P_i(-u+n-i+2)=P_i(u)$ and 
	\begin{equation}
	   \frac{\wt \mu_{i-1}(u)}{\wt\mu_{i}(u)}=\frac{P_i(u+1)}{P_i(u)} \quad \text{ for all }\quad 2\leq i\leq n.\label{HWT:necessary.1} 
	 \end{equation}
 
  \item If $(\mfg_N,\mfg_N^\rho)$ is of type BDI (excluding the $q=1$ case) or CII, then there exists $\gamma \in \C$ and monic polynomials $P_2(u),\ldots,P_{n}(u)$ in $u$ such that $P_i(-u+n-i+2)=P_i(u)$, 
  $P_{\mbfk+1}(\gamma)\neq 0$ and 
  
  \begin{flalign}
              &   &&\frac{\widetilde{\mu}_{i-1}(u)}{\widetilde{\mu}_{i}(u)}=\frac{P_i(u+1)}{P_i(u)} \quad \text{ for all }\quad 2\leq i\leq n \quad\text{ with }\quad i\neq \mbfk+1,&&& \label{HWT:necessary.2}\\
\textit{while}& \hspace{20mm}  && &&& \nn\\
              &   &&\frac{\widetilde{\mu}_{\mbfk}(u)}{\widetilde{\mu}_{\mbfk+1}(u)}=\frac{P_{\mbfk+1}(u+1)}{P_{\mbfk+1}(u)}\cdot \frac{\gamma-u}{\gamma+u-\boldsymbol{\ell}},\label{HWT:necessary.3}&&&
  \end{flalign}
  where $\mbfk\in \mathcal{I}_N\setminus\{n\}$ is the unique integer such that $g_{\mbfk\mbfk}\neq g_{\mbfk+1,\mbfk+1}$, and $\boldsymbol{\ell}=n-\mbfk$. 
 \end{enumerate}
\end{prop}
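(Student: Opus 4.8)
The plan is to reduce both parts of the proposition to the corresponding classification statements for Molev--Ragoucy reflection algebras, Theorem~4.6 in \cite{MR}, via the representation of $\mcB(n,\boldsymbol{\ell})$ on the subspace $V^J$ constructed in Proposition~\ref{HWT:refl.Prop.1} (together with Remark~\ref{HWT:Rem.reflection}). Concretely, assume $V(\mu(u))$ is finite-dimensional. Then the subspace $V^J\subset V(\mu(u))$ (see \eqref{HWT:V^J}) is a nonzero finite-dimensional space, and by Proposition~\ref{HWT:refl.Prop.1} the assignment $\wt b_{ij}(u)\mapsto[\pm]s_{ij}(u)$ makes $V^J$ a module over $\wt{\mcB}(n,\boldsymbol{\ell})$; passing to $\Si^+(u)$ as in Remark~\ref{HWT:Rem.reflection}, the cyclic span $W=\mcB(n,\boldsymbol{\ell})\eta\subseteq V^J$ (where $\eta$ is the highest weight vector of $V(\mu(u))$) is a finite-dimensional highest weight $\mcB(n,\boldsymbol{\ell})$-module. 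Its highest weight is $([\pm]\mu_1(u),\ldots,[\pm]\mu_n(u))$ up to a twist by an automorphism $\nu_g$ of the form appearing in Subsection~\ref{sec:TY-MR}, since $\si_{ij}(u)$ and $s_{ij}(u)$ differ by the central factor $q(u)$ (see \eqref{S=qSi}, and recall $q(u)$ has trivial $u^{-1}$-coefficient). The point is that such a twist does not affect the ratios $\wt\mu_{i-1}(u)/\wt\mu_i(u)$ for $i\ge 2$, because twisting by $\nu_g$ multiplies every $\mu_i(u)$ by the same scalar series $g(u-\ka/2)$ (cf.\ \eqref{nu_g}), hence multiplies every $\wt\mu_i(u)$ by $g(u-\ka/2)$ as well, and the factor cancels in the quotient.

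Next I would match the parameters. The reflection algebra on $V^J$ is $\mcB(\wt N,\tilde q)$ with $\wt N=n$; the value of $\tilde q$ is dictated by the constant matrix $[\pm]\,\mathrm{diag}(\veps_1,\ldots,\veps_n)=\mcG^+=\sum_{i,j\ge1}g_{ij}E_{ij}$ identified at the end of the proof of Proposition~\ref{HWT:refl.Prop.1}. In types BCD0, CI, DIII, and BI with $q=1$, this diagonal part is (up to overall sign) the identity, so $\tilde q=0$ or $\tilde q=n$, and we are in the situation of Theorem~4.6(i) of \cite{MR}: there exist monic $P_2,\ldots,P_n$ with $P_i(-u+\wt N-i+2)=P_i(u)$, i.e.\ $P_i(-u+n-i+2)=P_i(u)$, and $\wt\mu_{i-1}(u)/\wt\mu_i(u)=P_i(u+1)/P_i(u)$, which is exactly \eqref{HWT:necessary.1}. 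In types BDI with $q>1$ and CII, the diagonal part $\mcG^+$ has a genuine sign change at position $\mbfk$, so $0<\tilde q<n$ with $\tilde p=\mbfk$ and $\tilde q=n-\mbfk=\boldsymbol{\ell}$; Theorem~4.6(ii) of \cite{MR} then furnishes $\gamma\in\C$ and monic $P_2,\ldots,P_n$ with $P_i(-u+n-i+2)=P_i(u)$, $P_{\mbfk+1}(\gamma)\ne0$, relations \eqref{HWT:refl.fd.2} for $i\ne\mbfk+1$ (giving \eqref{HWT:necessary.2}), and \eqref{HWT:refl.fd.3} with $\tilde q=\boldsymbol{\ell}$, namely $\wt\mu_{\mbfk}(u)/\wt\mu_{\mbfk+1}(u)=\big(P_{\mbfk+1}(u+1)/P_{\mbfk+1}(u)\big)\cdot(\gamma-u)/(\gamma+u-\boldsymbol{\ell})$, which is \eqref{HWT:necessary.3}.

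One technical point to address carefully is that Theorem~4.6 of \cite{MR} is stated for $V(\mu(u))$ the \emph{irreducible} $\mcB(n,\boldsymbol{\ell})$-module with the given highest weight, whereas $W$ is only a highest weight module, not a priori irreducible. This is handled in the standard way: $W$ being finite-dimensional, it has a finite-dimensional irreducible quotient $\bar W$ with the same highest weight as $W$; the Drinfeld-type polynomial conditions of \cite{MR} are then extracted from $\bar W$, and since the conditions \eqref{HWT:refl.fd.1}--\eqref{HWT:refl.fd.3} only involve the highest weight, they hold for our $\mu(u)$ as well. I would also remark that the non-triviality of the relevant Verma modules, needed to ensure $V(\mu(u))$ and the auxiliary $\mcB(n,\boldsymbol{\ell})$-modules actually exist, is guaranteed by the hypothesis that $\mu(u)$ satisfies the conditions of Proposition~\ref{HWT:refl.Prop.2}, together with relations \eqref{HWT:refl.nontrivial.1}--\eqref{HWT:refl.nontrivial.2} which follow from \eqref{HWT:nontrivial}.

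The main obstacle I anticipate is bookkeeping rather than conceptual: one must verify precisely that the highest weight of $W$ as a $\mcB(n,\boldsymbol{\ell})$-module has $i$-th component differing from $[\pm]\mu_i(u)$ only by a common scalar series, so that the associated tilde-tuple $\wt\mu(u)$ entering \eqref{HWT:refl.fd.1}--\eqref{HWT:refl.fd.3} agrees, after cancellation of the common factor, with the tilde-tuple \eqref{HWT:tilde_mu_i(u)} of the original $X(\mfg_N,\mcG)^{tw}$-module; and one must match the indexing conventions (rows/columns labelled $1,\ldots,n$ on the $\mcB(n,\boldsymbol{\ell})$ side versus $-n,\ldots,n$ on the twisted Yangian side) so that the $[\pm]$ sign and the identification $\tilde p=\mbfk$, $\tilde q=\boldsymbol{\ell}$ come out correctly in each Cartan type. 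Once these identifications are in place, the proposition is an immediate consequence of \cite[Thm.~4.6]{MR}.
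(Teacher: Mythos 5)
Your proof is correct and follows essentially the same route as the paper's: restrict to $V^J$, use Proposition \ref{HWT:refl.Prop.1} to get a $\wt\mcB(n,\boldsymbol\ell)$-module, pass to a genuine $\mcB(n,\boldsymbol\ell)$-module on the cyclic span of the highest weight vector, and then invoke Theorem~4.6 of \cite{MR} after observing that the common scalar twist cancels in the ratios $\wt\mu_{i-1}(u)/\wt\mu_i(u)$. The paper phrases the passage to $\mcB(n,\boldsymbol\ell)$ via an explicit twist $\wt\nu_{h^{-1}}$ with $h(u)h(-u)=\ms f(u)$ on $V^J$, whereas you use Remark~\ref{HWT:Rem.reflection} and the factorization $s_{ij}(u)=q(u)\si_{ij}(u)$, but these amount to the same normalization and your additional care about $W$ being merely highest-weight (not a priori irreducible) and about the $\tilde p=\mbfk$, $\tilde q=\boldsymbol\ell$ bookkeeping is a welcome, correct expansion of details the paper leaves implicit.
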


\begin{proof}
Denote the highest weight vector of $V(\mu(u))$ by $\xi$ and let $V(\mu(u))^J$ be as in \eqref{HWT:V^J}. Then, by 
Proposition \ref{HWT:refl.Prop.1}, allowing $\wt b_{ij}(u)$ to operate as $[\pm]s_{ij}(u)$ for all $1\leq i,j\leq n$ makes $V(\mu(u))^J$ into a 
$\wt \mcB(n,\boldsymbol{\ell})$-module. Choose $h(u)\in 1+u^{-1}\C[[u^{-1}]]$ such that $h(u)h(-u)=\ms{f}(u)$ as operators on $V^J$ (see \eqref{P:B=Z*SB}). 
Twisting the action of $\wt \mcB(n,\boldsymbol{\ell})$ on $V^J$ by the automorphism $\wt \nu_{h^{-1}}$, we obtain a non-trivial representation of $\mcB(n,\boldsymbol{\ell})$ such that 
the cyclic span $\mcB(n,\boldsymbol{\ell})\xi$ is a finite-dimensional highest weight module with the highest weight $[\pm](h(u)^{-1}\mu_1(u),\ldots,h(u)^{-1}\mu_n(u))$. 
The proposition now follows directly from the equations \eqref{HWT:refl.fd.1} through \eqref{HWT:refl.fd.3} (see also \cite[Thm.~4.6]{MR}).
\end{proof}



\section{Twisted Yangians of small rank and their representations} \label{sec:SR}  
In this section, we use the classification results for finite-dimensional irreducible representations of $Y^{\pm}(2)$ \cite{Mo2} together with the isomorphisms from \cite{GRW} to classify all finite-dimensional irreducible representations of low rank extended twisted Yangians of type BCD0, CI, and DIII.
These low rank classification results will play a crucial role in the proofs of the main results in Section \ref{sec:CT}. We also obtain explicit formulas for evaluation morphisms $\mathrm{ev}:X(\mfg_N,\mfg_N^\rho)^{tw}\onto \mathfrak{U}\mfg_N^\rho$, where 
$\mfg_N=\mfsp_2,\mfso_3$ or $\mfso_4$, and study the corresponding evaluation modules.

\medskip 

The definitions of the twisted Yangians $Y^\pm(N)$ were recalled in Subsection \ref{sec:TY-Ols}. They are the twisted Yangians associated to the symmetric pairs $(\mfgl_N,\mfg_N)$, where 
$\mfg_N=\mfso_{2n}$ or $\mfsp_{2n}$ if $N=2n$, and $\mfg_N=\mfso_{2n+1}$ if $N=2n+1$. In this section we will only be concerned with $Y^{\pm}(N)$ where $N=2$. 
In order to distinguish between the generators of $X(\mfg_N,\mfg_N^\rho)^{tw}$ and $Y^\pm(2)$, we shall follow the convention established in \cite{GRW} and denote the 
generators of $Y^\pm(2)$ by $s^{\circ (r)}_{ij}$, where $i,j\in \{\pm1\}$ and $r\geq0$. These generators are then arranged as the coefficients of the various series' $s_{ij}^\circ(u)$, which 
in turn form the $(i,j)^{th}$ entry of the matrix $S^\circ(u)$. Similarly, the generators of the special twisted Yangian $SY^\pm(2)$ are denoted by $\si^{\circ(r)}_{ij}$, and the
corresponding series and matrix are denoted by $\si^\circ_{ij}(u)$ and $\Si^\circ(u)$, respectively. We shall also denote the $R$-matrix $I-u^{-1}P\in \End\,(\C^2\otimes \C^2) [[u^{-1}]]$ from  $a)$ of \eqref{R(u)}
by $R^\circ(u)$, and the evaluation morphism $s^\circ_{ij}(u)\mapsto \delta_{ij}+F_{ij}\left(u\pm \frac{1}{2}\right)^{-1}$ from Proposition \ref{P:Y-Ols->U} by $\mathrm{ev}^\circ_{\pm}$. We will make use of the following explicit formulas for the Sklyanin determinant ${\rm sdet}\,S^\circ(u)$ (see \cite[Sec.~4]{MNO}):
\begin{align}
 {\rm sdet}\,S^\circ(u)&=\frac{2u+1}{2u\pm1}\left(s^\circ_{-1,-1}(u-1)s^\circ_{-1,-1}(-u)\mp s^\circ_{-1,1}(u-1)s^\circ_{1,-1}(-u) \right)\nonumber\\
                    &=\frac{2u+1}{2u\pm1}\left(s^\circ_{11}(-u)s^\circ_{11}(u-1)\mp s^\circ_{1,-1}(-u)s^\circ_{-1,1}(u-1)\right) . \label{LRR:sdet}
\end{align}

We now recall the classification results for finite-dimensional irreducible representations of the twisted Yangians $Y^\pm(2)$.

\medskip 

 A representation $V$ of $Y^\pm(2)$ is called a \textit{highest weight representation} if there exists a nonzero vector $\xi\in V$ such that $V=Y^\pm(2)\xi$, 
 $s_{-1,1}(u)\xi=0$, and $s^\circ_{11}(u)\xi=\mu^\circ(u)\xi$ for some scalar series $\mu^\circ(u)\in 1+u^{-1}\C[[u^{-1}]]$. As usual, we call $\mu^\circ(u)$ the highest weight of $V$, and the vector $\xi$ the highest weight vector. 

\medskip 

Given $\mu^\circ(u)\in 1+u^{-1}\C[[u^{-1}]]$, the Verma module $M(\mu^\circ(u))$ is defined the same way as for $X(\mfg_N)$ and $X(\mfg_N,\mcG)^{tw}$, and is always non-trivial.
It admits a unique irreducible quotient $V(\mu^\circ(u))$, and any irreducible highest weight module with the highest weight $\mu^\circ(u)$ is isomorphic to $V(\mu^\circ(u))$.
The following classification results are restatements of Theorems 4.4 and 5.4 of \cite{Mo2} (see also Theorems 4.3.3 and 4.4.3 of \cite{Mo5}): 

\medskip

The irreducible $Y^-(2)$-module $V(\mu^\circ(u))$ is finite-dimensional if and only if there exists a monic polynomial $P(u)$ in $u$ such that $P(u)=P(-u+1)$ and 
\begin{equation}
 \frac{\mu^\circ(-u)}{\mu^\circ(u)}=\frac{P(u+1)}{P(u)}. \label{LRR:AII-class}
\end{equation}
In this case, the monic polynomial $P(u)$ is unique. 

On the other hand, the irreducible $Y^+(2)$-module $V(\mu^\circ(u))$ is finite-dimensional if and only if there exists a scalar $\alpha\in \C$ together with a monic 
polynomial $Q(u)$ such that $Q(u)=Q(-u+1)$, $Q(\alpha)\neq 0$, and 
\begin{equation}
 \frac{(1-\frac{1}{2u})\mu^\circ(-u)}{(1+\frac{1}{2u})\mu^\circ(u)}=\frac{Q(u+1)}{Q(u)}\cdot \frac{u-\alpha}{u+\alpha}. \label{LRR:AI-class}
\end{equation}
In this case, the pair $(Q(u),\alpha)$ is unique. 

Let us now briefly recall the isomorphisms from Section 4 of \cite{GRW} which are relevant to our present study, beginning with those concerning the twisted Yangians associated to the pairs 
$(\mfsp_2,\mfsp_2)$ and $(\mfsp_2,\mfgl_1)$. Let $K=E_{11}-E_{-1,-1}\in \End\, \C^2$. Then the mappings 
\begin{alignat}{2}
 &\varphi^\prime_0: X(\mfsp_2,\mfsp_2)^{tw}\to Y^-(2), \quad &&S(u)\mapsto S^\circ(u/2-1/2), \label{LRR:C0->AII}\\
 &\varphi^\prime_1: X(\mfsp_2,\mfgl_1)^{tw}\to Y^+(2), \quad &&S(u)\mapsto S^\circ(u/2-1/2)K, \label{LRR:CI->AI}
\end{alignat}
are isomorphisms of algebras. Moreover, they induce algebra isomorphisms $Y(\mfsp_2,\mfsp_2)^{tw}\cong SY^-(2)$ and $Y(\mfsp_2,\mfgl_1)^{tw}\cong SY^+(2)$. 

Consider now the symmetric pair $(\mfso_3,\mfso_3)$ of type B0. Let the standard basis of $\C^2$ be given by the vectors $e_{-1}$ and $e_1$ and let $V$ be the three-dimensional subspace of $\C^2\otimes \C^2$ spanned by the elements 
$v_{-1}=e_{-1}\otimes e_{-1}$, $v_0=\tfrac{1}{\sqrt{2}}(e_{-1}\otimes e_1+e_1\otimes e_{-1})$, and $v_1=-e_1\otimes e_1$. 
We may identify $V$ with $\C^3$ by regarding $\{v_{-1},v_0,v_1\}$ as the canonical basis of $\C^3$. In this way we may consider $S(u)$ as an element of  
$\End\,V\otimes X(\mfso_3,\mfso_3)^{tw}[[u^{-1}]]$. Moreover, the operator $\tfrac{1}{2}R^\circ(-1)\in \End\,(\C^2\otimes \C^2)$ is a projection of $\C^2\otimes \C^2$ onto the subspace $V$ and the mapping 
\begin{equation}
 \varphi_0: X(\mfso_3,\mfso_3)^{tw}\to Y^-(2), \quad S(u)\mapsto \tfrac{1}{2}R^\circ(-1)S_1^\circ(2u-1)R^\circ(-4u+1)^{t_-}S_2^\circ(2u) \label{LRR:B0->AII}
\end{equation}
is an algebra isomorphism whose restriction to the subalgebra $Y(\mfso_3,\mfso_3)^{tw}$ induces an isomorphism between $Y(\mfso_3,\mfso_3)^{tw}$  and $SY^-(2)$.

Lastly, we recall the isomorphisms for the twisted Yangians associated to the symmetric pairs $(\mfso_4,\mfgl_2)$ and $(\mfso_4,\mfso_4)$. These isomorphisms involve
the tensor products $SY^+(2)\otimes Y^-(2)$ and $SY^-(2)\otimes Y^-(2)$. We shall denote the corresponding generating series of $SY^\pm(2)$ by $\sigma^\circ_{ij}(u)$ and those of $Y^-(2)$
by $s^\bullet_{ij}(u)$, where in both cases $i,j\in \{\pm1\}$. These are then arranged into the matrices $\Si^\circ(u)$ and $S^\bullet(u)$, respectively. Let 
$V=\C^2\otimes \C^2$ with ordered basis given by $v_{-2}=e_{-1}\otimes e_{-1}$, $v_{-1}=e_{-1}\otimes e_{1}$, $v_1=e_1\otimes e_{-1}$ and $v_2=-e_1\otimes e_1$. 
By identifying $V$ with $\C^4$ equipped with canonical basis $\{v_{-2},v_{-1},v_1,v_2\}$, we can consider $S(u)$ as an element of $\End\,V\otimes X(\mfso_4,\mfso_4^\rho)^{tw}[[u^{-1}]]$, where $\mfso_4^\rho$ is either $\mfgl_2$ or $\mfso_4$. The following
maps are isomorphisms of algebras: 
\begin{alignat}{2}
 &\chi^{(1)}_0: X(\mfso_4,\mfgl_2)^{tw}\to SY^+(2)\otimes Y^-(2), \quad &&S(u)\mapsto \Si^\circ(u-1/2)K_1S^\bullet(u-1/2),\label{LRR:DIII->AII+AI}\\
 &\chi^{(1)}_1: X(\mfso_4,\mfso_4)^{tw}\to SY^-(2)\otimes Y^-(2), \quad &&S(u)\mapsto \Si^\circ(u-1/2)S^\bullet(u-1/2).\label{LRR:D0->AII+AII}
\end{alignat}
Their restrictions to the subalgebras $Y(\mfso_4,\mfgl_2)^{tw}$ and $Y(\mfso_4,\mfso_4)^{tw}$ yield isomorphisms $Y(\mfso_4,\mfgl_2)^{tw}\cong SY^+(2)\otimes SY^-(2)$ and 
$Y(\mfso_4,\mfso_4)^{tw}\cong SY^-(2)\otimes SY^-(2)$, respectively. The isomorphisms $\chi^{(1)}_0$ and 
$\chi^{(1)}_1$ can be obtained from the embeddings 
\begin{alignat}{2}
 &\wt \chi^{(1)}_0: X(\mfso_4,\mfgl_2)^{tw}\to Y^+(2)\otimes Y^-(2), \quad &&S(u)\mapsto S^\circ(u-1/2)K_1S^\bullet(u-1/2),\label{LRR:DIII.into.AII+AI}\\
 &\wt \chi^{(1)}_1: X(\mfso_4,\mfso_4)^{tw}\to Y^-(2)\otimes Y^-(2), \quad &&S(u)\mapsto S^\circ(u-1/2)S^\bullet(u-1/2),\label{LRR:D0.into.AII+AII}
\end{alignat}
by composing with the epimorphisms $\mathrm{Pr}_+\otimes 1$ or $\mathrm{Pr}_-\otimes 1$, respectively, where $\mathrm{Pr}_\pm$ is the natural projection $\mathrm{Pr}_\pm:Y^\pm(2)\onto SY^\pm(2)$. 

\medskip 


We now turn our attention to the finite-dimensional representation theory of the low rank twisted Yangians of type B-C-D.

\subsection{Twisted Yangians for the symmetric pairs \texorpdfstring{$(\mfsp_2,\mfsp_2^\rho)$}{}}
We begin with the classification of the finite-dimensional irreducible representations of the extended twisted Yangian $X(\mfsp_2,\mfsp_2^\rho)^{tw}$, where~$\mfsp_2^\rho=\mfgl_1$~or~$\mfsp_2$.
\begin{prop}\label{LRR:CI-C0.Class}
Let $\mu(u) \in 1 + u^{-1}\C[[u^{-1}]]$. The irreducible $X(\mathfrak{sp}_2,\mfsp_2^\rho)^{tw}$-module $V(\mu(u))$ is finite-dimensional if and only if there exists a monic polynomial $P(u)$ in $u$, in addition to a scalar $\gamma\in \C$ with $P(\gamma)\neq 0$ if $\mfsp_2^\rho=\mfgl_1$, such that $P(u)=P(-u+4)$ and  
 \begin{alignat}{2}
  &\frac{\wt\mu(2-u)}{\wt\mu(u)}=\frac{P(u+2)}{P(u)}\cdot \frac{2-u}{u}\quad &&\textit{if }\quad \mfsp_2^\rho=\mfsp_2,\label{LRR:C0.1}\\
  &\frac{\wt \mu(2-u)}{\wt\mu(u)}=\frac{P(u+2)}{P(u)}\cdot \frac{\gamma-u}{\gamma+u-2}\quad &&\textit{if }\quad \mfsp_2^\rho=\mfgl_1.\label{LRR:CI.Class.eq}
  \end{alignat}
 Moreover, when they exist, the polynomial $P(u)$ and the scalar $\gamma$ are uniquely determined. 
\end{prop}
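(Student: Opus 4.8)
The plan is to reduce the statement about $X(\mfsp_2,\mfsp_2^\rho)^{tw}$ to the already-known classification for the Olshanskii twisted Yangians $Y^\pm(2)$ (equations \eqref{LRR:AII-class} and \eqref{LRR:AI-class}) via the isomorphisms $\varphi'_0$ and $\varphi'_1$ from \eqref{LRR:C0->AII} and \eqref{LRR:CI->AI}. First I would observe that since $n=1$ here, the highest weight is a single series $\mu(u)\in 1+u^{-1}\C[[u^{-1}]]$ (after accounting for $g_{11}$, which is $1$ in both cases), and the associated series $\wt\mu(u)$ from \eqref{HWT:tilde_mu_i(u)} is simply $\wt\mu(u)=(2u-1+1)\mu(u)=2u\,\mu(u)$ since the sum $\sum_{\ell=2}^{1}$ is empty. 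Thus the conditions \eqref{LRR:C0.1} and \eqref{LRR:CI.Class.eq} can be rephrased directly in terms of $\mu(u)$: the ratio $\wt\mu(2-u)/\wt\mu(u)$ equals $\tfrac{2-u}{u}\cdot\tfrac{\mu(2-u)}{\mu(u)}$.

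Next I would track how an $X(\mfsp_2,\mfsp_2^\rho)^{tw}$ highest weight module corresponds, under $\varphi'_0$ or $\varphi'_1$, to a $Y^\mp(2)$ highest weight module. Since $\varphi'_0: S(u)\mapsto S^\circ(u/2-1/2)$, a highest weight vector $\xi$ with $s_{11}(u)\xi=\mu(u)\xi$ and $s_{-1,1}(u)\xi=0$ corresponds to a $Y^-(2)$ highest weight vector with $s^\circ_{11}(v)\xi=\mu(2v+1)\xi$ (setting $v=u/2-1/2$, i.e. $u=2v+1$). So the $Y^-(2)$ highest weight is $\mu^\circ(v)=\mu(2v+1)$. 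For $\varphi'_1: S(u)\mapsto S^\circ(u/2-1/2)K$ with $K=E_{11}-E_{-1,-1}$, I would compute the $(1,1)$ entry of $S^\circ(v)K$: since $K$ is diagonal with $K_{11}=1$, the $(1,1)$ entry is just $s^\circ_{11}(v)$, so again $\mu^\circ(v)=\mu(2v+1)$, while one checks $s_{-1,1}(u)\xi=0$ translates to $s^\circ_{-1,1}(v)\xi=0$ (the relevant off-diagonal entry of $S^\circ(v)K$ is $-s^\circ_{-1,1}(v)$, which annihilates $\xi$ iff $s^\circ_{-1,1}(v)$ does). A dimension count is immediate since the isomorphisms are of algebras: $V(\mu(u))$ is finite-dimensional iff the corresponding $Y^\mp(2)$-module $V(\mu^\circ(v))$ is. I should also note that in the $n=1$ case the non-triviality condition of Proposition \ref{HWT:refl.Prop.2} is vacuous (the index set $\mcI_N\setminus\{n\}$ is empty), so Verma modules over $X(\mfsp_2,\mfsp_2^\rho)^{tw}$ are always non-trivial, matching the fact that $Y^\pm(2)$ Verma modules are always non-trivial.

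Then I would substitute $\mu^\circ(v)=\mu(2v+1)$ into the $Y^\mp(2)$ criteria and change variables back to $u=2v+1$ (so $v=(u-1)/2$, $-v=(1-u)/2$, hence $2(-v)+1=2-u$). For the $\mfsp_2$ case (type C0, using $Y^-(2)$ and \eqref{LRR:AII-class}): finite-dimensionality is equivalent to the existence of a monic $P^\circ(v)$ with $P^\circ(v)=P^\circ(-v+1)$ and $\mu^\circ(-v)/\mu^\circ(v)=P^\circ(v+1)/P^\circ(v)$; writing this out gives $\mu(2-u)/\mu(u)=P^\circ((u+1)/2)/P^\circ((u-1)/2)$. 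Setting $P(u)=2^{\deg P^\circ}P^\circ((u-2)/2)$ (a monic rescaling) converts $P^\circ((u+1)/2)/P^\circ((u-1)/2)$ into $P(u+2)/P(u)$, and the symmetry $P^\circ(v)=P^\circ(-v+1)$ becomes $P(u)=P(-u+4)$; multiplying by the extra factor $\tfrac{2-u}{u}$ coming from $\wt\mu$ versus $\mu$ then yields exactly \eqref{LRR:C0.1}. For the $\mfgl_1$ case (type CI, using $Y^+(2)$ and \eqref{LRR:AI-class}): the factor $(1-\tfrac{1}{2v})/(1+\tfrac{1}{2v})=\tfrac{2v-1}{2v+1}=\tfrac{2-u}{u}$ is precisely the factor relating $\wt\mu$ to $\mu$, so \eqref{LRR:AI-class} reads $\wt\mu(2-u)/\wt\mu(u)=Q(v+1)/Q(v)\cdot\tfrac{v-\alpha}{v+\alpha}$; rescaling $Q$ to $P$ as above and setting $\gamma=2\alpha+1$ (so $\tfrac{v-\alpha}{v+\alpha}=\tfrac{(u-1)/2-\alpha}{(u-1)/2+\alpha}=\tfrac{u-1-2\alpha}{u-1+2\alpha}=\tfrac{\gamma-u}{\gamma+u-2}$) gives \eqref{LRR:CI.Class.eq}, with $Q(\alpha)\neq 0$ translating to $P(\gamma)\neq 0$. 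Uniqueness of $P$ (and of $\gamma$) follows from the uniqueness of $P^\circ$ (and of $(Q,\alpha)$) in the $Y^\pm(2)$ classification, since all the transformations used are invertible bijections between the relevant data.

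The main obstacle, though entirely routine, is the bookkeeping of the affine change of variable $u=2v+1$ together with the monic rescaling of polynomials: one must check carefully that $P^\circ(v)\mapsto P(u)$ is a bijection between \emph{monic} polynomials respecting the two symmetry conditions and the two forms of the ratio condition, and that the shifts $v+1\leftrightarrow u+2$ and the reflection points $v\mapsto -v+1$ versus $u\mapsto -u+4$ are consistent. I do not anticipate any genuine difficulty beyond ensuring the degrees and leading coefficients are handled correctly so that $P$ comes out monic; everything else is a direct transport of structure along the algebra isomorphisms $\varphi'_0$ and $\varphi'_1$.
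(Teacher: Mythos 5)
Your approach is exactly the paper's: push the highest weight through the isomorphisms $\varphi'_0,\varphi'_1$ to get $\mu^\circ(v)=\mu(2v+1)$, apply the known $Y^\pm(2)$ classification, change variable $u=2v+1$, and then rescale the polynomial to make the symmetry point and shift match the statement. The logical scaffolding, the identification of the non-triviality condition as vacuous when $n=1$, and the identification $\wt\mu(u)=2u\,\mu(u)$ are all right.

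There is, however, a real arithmetic slip in the polynomial substitution which your own closing paragraph flags as ``the main obstacle'' and then gets wrong. You set $P(u)=2^{\deg P^\circ}P^\circ\!\left(\tfrac{u-2}{2}\right)$, but with that choice $P(u+2)/P(u)=P^\circ(u/2)/P^\circ((u-2)/2)$, which is \emph{not} $P^\circ((u+1)/2)/P^\circ((u-1)/2)$, and the reflection symmetry $P^\circ(v)=P^\circ(-v+1)$ becomes $P(u)=P(-u+3)$ rather than $P(u)=P(-u+4)$. The correct substitution (the one the paper uses) is $P(u)=2^{\deg P^\circ}P^\circ\!\left(\tfrac{u-1}{2}\right)$: then $P(u+2)/P(u)=P^\circ((u+1)/2)/P^\circ((u-1)/2)$ and $P(-u+4)=2^{\deg}P^\circ((-u+3)/2)=2^{\deg}P^\circ((u-1)/2)=P(u)$ by the symmetry of $P^\circ$. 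Separately, in the $\mfgl_1$ case there are two compensating sign slips: with $v=(u-1)/2$ one gets $\tfrac{2v-1}{2v+1}=\tfrac{u-2}{u}$ (not $\tfrac{2-u}{u}$) and $\tfrac{v-\alpha}{v+\alpha}=-\tfrac{\gamma-u}{\gamma+u-2}$ (not $+$); these cancel so the final identity is correct, but the intermediate equalities as written are false. Once the shift by $1$ (rather than $2$) is corrected, the argument is complete and coincides with the paper's proof.
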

\begin{proof}
 
The proposition follows from the classification results \eqref{LRR:AII-class} and \eqref{LRR:AI-class}, together with the existence of the explicit isomorphisms
$\varphi_0^\prime$ and  $\varphi_1^\prime$ given by \eqref{LRR:C0->AII} and \eqref{LRR:CI->AI}, respectively. Due to the similarities between these isomorphisms, 
we will only include a detailed proof for the case $\mfsp_2^\rho=\mfgl_1$.

\medskip 

The isomorphism $\varphi_1^\prime$ from \eqref{LRR:CI->AI} defines an equivalence between the highest weight representations 
of $X(\mfsp_2,\mfgl_1)^{tw}$ and those of $Y^+(2)$. To see this, given a series $\mu^\circ(u)\in 1+u^{-1}\C[[u^{-1}]]$, let $V(\mu^\circ(u))$ denote the irreducible $Y^+(2)$-module with the highest weight $\mu^\circ(u)$. 
Then, viewed as a $Y^+(2)$-module via $\varphi_1^\prime$, the irreducible $X(\mathfrak{sp}_2,\mathfrak{gl}_1)^{tw}$-module $V(\mu(u))$ is isomorphic to $V(\mu^\circ(u))$ with $\mu^\circ(u)=\mu(2u+1)$.  Indeed, if $\xi\in V(\mu(u))$ 
is the highest weight vector, then we have $s^\circ_{-1,1}(u)\cdot \xi={\varphi_1^\prime}^{-1}(s^\circ_{-1,1}(u))\xi=s_{-1,1}(2u+1)\xi=0$ and 
\begin{equation*}
s^\circ_{11}(u)\cdot \xi=\varphi^{-1}(s^\circ_{11}(u))\,\xi=s_{11}(2u+1)\,\xi=\mu(2u+1)\,\xi.
\end{equation*}
 The $Y^+(2)$-module $V(\mu^\circ(u))$ is finite-dimensional if and only there exists a (unique) pair 
 $(Q(u),\alpha)$, where $Q(u)$ is a monic polynomial in $u$ with $Q(u)=Q(-u+1)$, $\alpha\in \C$ is such that $Q(\alpha)\neq 0$, and \eqref{LRR:AI-class} holds.

Rewriting \eqref{LRR:AI-class} using $\mu(u)$ and substituting $u\mapsto \frac{u-1}{2}$ we obtain the expression 
\begin{equation}
 \frac{\left(2-u\right)\mu(2-u)}{u\mu(u)}=\frac{Q(\frac{u+1}{2})}{Q(\frac{u-1}{2})}\cdot\frac{2\alpha-(u-1)}{(u-1)+2\alpha}. \label{LRR:CI.1}
\end{equation}
Set $P(u)=2^{\deg  Q(u)}Q\left(\frac{u-1}{2}\right)$ and $\gamma=2\alpha+1$, so that $P(u)$ is a monic polynomial with $P(u)= P(-u+4)$ (since $Q(u) = Q(-u+1)$) 
and $P(\gamma)=2^{\deg  Q(u)}Q(\alpha)\neq 0$. Then by \eqref{LRR:CI.1}, we have shown that $V(\mu(u))$ is finite-dimensional if and only if there exists
a pair $(P(u),\gamma)$ as in the statement of the proposition, satisfying 
\begin{equation*}
\frac{\wt \mu(2-u)}{\wt \mu(u)}=\frac{P(u+2)}{P(u)}\cdot \frac{\gamma-u}{\gamma+u-2}.
\end{equation*}
The uniqueness of the pair $(P(u),\gamma)$ follows immediately from the uniqueness of $(Q(u),\alpha)$. 
\end{proof}

Composing the isomorphisms \eqref{LRR:C0->AII} and \eqref{LRR:CI->AI} with the evaluation morphisms $\mathrm{ev}^\circ_\pm$ from Proposition \ref{P:Y-Ols->U}, we obtain evaluation morphisms for $X(\mfsp_2,\mfsp_2^\rho)^{tw}$. 

\begin{prop}\label{LRR:Prop.ev_CI-C0}
 Let $F^{\prime\rho}=\sum_{i,j=\pm1} E_{ij}\otimes F^{\prime \rho}_{ij}$ where $F^{\prime \rho}_{ij}=(g_{ii}+g_{jj})F_{ij}$  (see \eqref{P:F->TY}). The mappings
 \begin{align}
 & \mathrm{ev}_0:X(\mfsp_2,\mfsp_2)^{tw}\to \mathfrak{U}\mfsp_2,\qu S(u)\mapsto I+F^{\prime\rho} (u-2)^{-1}, \label{LRR:ev.C0}\\
 & \mathrm{ev}_1:X(\mfsp_2,\mfgl_1)^{tw}\to \mathfrak{U}\mfgl_1,\qu\; S(u)\mapsto \mcG+F^{\prime\rho} u^{-1}, \label{LRR:ev.CI}
 \end{align}
are surjective algebra homomorphisms. 
\end{prop}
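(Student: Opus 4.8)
\textbf{Proof proposal for Proposition \ref{LRR:Prop.ev_CI-C0}.}

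The plan is to obtain both evaluation morphisms as compositions of known maps. For the symplectic case $\mfsp_2^\rho=\mfsp_2$, I would set $\mathrm{ev}_0 = \mathrm{ev}^\circ_- \circ \varphi_0'$, where $\varphi_0'\colon X(\mfsp_2,\mfsp_2)^{tw}\to Y^-(2)$ is the isomorphism \eqref{LRR:C0->AII} and $\mathrm{ev}^\circ_-\colon Y^-(2)\to\mfU\mfsp_2$ is the evaluation morphism of Proposition \ref{P:Y-Ols->U}. Both are algebra homomorphisms and $\mathrm{ev}^\circ_-$ is surjective (indeed split by $\iota$), so the composition is a surjective algebra homomorphism. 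It remains to check that on the generating matrix it is given by the stated formula. Chasing $S(u)$ through $\varphi_0'$ gives $S^\circ(u/2-1/2)$, and applying $\mathrm{ev}^\circ_-$ entrywise sends this to $I + \big(\tfrac{u}{2}-\tfrac12 - \tfrac12\big)^{-1} F = I + (u-2)^{-1}\,2F$. Since in type CI/C0 (diagonal $\mcG=I$) one has $F'^{\rho}_{ij}=(g_{ii}+g_{jj})F_{ij}=2F_{ij}$, this is exactly $I + F'^{\rho}(u-2)^{-1}$, as claimed.

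For the case $\mfsp_2^\rho=\mfgl_1$, I would likewise set $\mathrm{ev}_1 = \mathrm{ev}^\circ_+\circ\varphi_1'$ with $\varphi_1'$ as in \eqref{LRR:CI->AI}. Again this is a surjective algebra homomorphism onto $\mfU\mfg_2^\rho$; note that $\mathrm{ev}^\circ_+$ lands in $\mfU\mfso_2\cong\mfU\mfgl_1$ (with $\mfso_2$ being the abelian one-dimensional Lie algebra, matching $\mfg_2^\rho=\mfgl_1$ under the identification of Subsection \ref{sec:symm-pairs}). Tracking $S(u)$: $\varphi_1'$ sends it to $S^\circ(u/2-1/2)K$, and then $\mathrm{ev}^\circ_+$ gives $\big(I + (\tfrac{u}{2}-1)^{-1}F\big)K = K + (u-2)^{-1}\,2F\,K$ — but one must be careful, since here $F$ denotes the matrix of $\mfsp_2$-generators while the relevant Lie algebra is $\mfgl_1$. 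In fact, under $\mathrm{ev}^\circ_+$ only the $\mfso_2$-part (here $\mfgl_1$-part) of $F$ survives, and $FK$ rearranges the entries so that the result reads $\mcG + u^{-1}F'^{\rho}$ with $F'^{\rho}_{ij}=(g_{ii}+g_{jj})F_{ij}$ and $\mcG=K=E_{11}-E_{-1,-1}$. One verifies this by writing out the $2\times2$ matrices explicitly: $K = \mathrm{diag}(1,-1) = \mcG$, and the off-diagonal entries of $F'^{\rho}$ vanish (since $g_{11}+g_{-1,-1}=0$ for the symmetric-pair index convention), while the surviving diagonal/Cartan part combines with the shift to produce the factor $u^{-1}$ in place of $(u-2)^{-1}$ precisely because the $R^{t_-}$-twist in $\varphi_1'$ interacts with $K$. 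I would present this last point as a short direct matrix computation.

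The main obstacle I anticipate is bookkeeping: making the index conventions for $\mfg_2^\rho$ (rows/columns labelled $\{-1,1\}$ here versus $\{1,2\}$ for $Y^\pm(2)$ as set up in \cite{GRW}), the relation $F'^{\rho}_{ij}=(g_{ii}+g_{jj})F_{ij}$, and the spectral-parameter shifts in \eqref{LRR:C0->AII}–\eqref{LRR:CI->AI} all line up so that the composed map has constant term $\mcG$ (resp.\ $I$) rather than $I$ (resp.\ something off) and the correct pole location. Everything else — surjectivity, the homomorphism property — is formal, following immediately from the fact that $\varphi_0',\varphi_1'$ are isomorphisms and $\mathrm{ev}^\circ_\pm$ are surjective by Proposition \ref{P:Y-Ols->U}. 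I would therefore devote the bulk of the written proof to the explicit verification on $S(u)$, treating the surjectivity and the algebra-homomorphism property in a single sentence each, and for the $\mfsp_2^\rho=\mfsp_2$ case I would remark that the computation is entirely parallel and hence omit the details, as the authors signal they intend to do.
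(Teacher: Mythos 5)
Your decomposition $\mathrm{ev}_0 = \mathrm{ev}^\circ_- \circ \varphi_0'$, $\mathrm{ev}_1 = \mathrm{ev}^\circ_+ \circ \varphi_1'$ is exactly the construction the paper has in mind (it is announced in the sentence preceding the proposition, and the paper states the result without further proof), and your C0 computation is correct. The CI computation, however, contains a sign error that you then patch with an incorrect explanation. Proposition \ref{P:Y-Ols->U} reads
\[
\mathrm{ev}^\circ_{\pm}\colon s^\circ_{ij}(u)\longmapsto \delta_{ij} + \bigl(u \pm \tfrac12\bigr)^{-1} F_{ij},
\]
so for $\mathrm{ev}^\circ_+$ the shift is $+\tfrac12$, not $-\tfrac12$. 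Applied to $S^\circ(u/2-1/2)$ this gives
\[
I + \Bigl(\tfrac{u}{2}-\tfrac12+\tfrac12\Bigr)^{-1}F \;=\; I + \frac{2F}{u},
\]
not $I + (u/2-1)^{-1}F$. Then $(I+2u^{-1}F)K = K + u^{-1}\cdot 2FK$, and a short matrix computation ($F_{1,-1}=F_{-1,1}=0$ in $\mfso_2$, $F_{-1,-1}=-F_{11}$, so $2FK = 2(E_{11}+E_{-1,-1})\otimes F_{11} = F'^{\rho}$, using $F'^{\rho}_{\pm1,\mp1}=0$ and $F'^{\rho}_{-1,-1}=-2F_{-1,-1}=2F_{11}$ in $\mfgl_1\subset\mfsp_2$) gives $\mcG + u^{-1}F'^{\rho}$ directly. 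Your appeal to ``the $R^{t_-}$-twist in $\varphi_1'$'' is a red herring: unlike $\varphi_0$ of \eqref{LRR:B0->AII}, the map $\varphi_1'$ of \eqref{LRR:CI->AI} contains no $R$-matrix at all; the pole moving from $u-2$ to $u$ is governed entirely by the $\pm\tfrac12$ in $\mathrm{ev}^\circ_{\pm}$. With that fix, your proof is complete: surjectivity and the homomorphism property follow formally since $\varphi_0',\varphi_1'$ are isomorphisms and $\mathrm{ev}^\circ_{\pm}$ is surjective (split by $\iota$).
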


For any $\mu\in \C$, let $V_\rho(\mu)$ denote the irreducible highest weight representation of $\mfsp_2^\rho$ with the highest weight $\mu$. The pull-back of $V_\rho(\mu)$ via $\mathrm{ev}_0$ and $\mathrm{ev}_1$ is an irreducible $X(\mfsp_2,\mfsp_2^\rho)^{tw}$-module. 

\begin{crl}
 Given $\mu\in \C$, $V_\rho(\mu)$ is isomorphic to the irreducible $X(\mfsp_2,\mfsp_2^\rho)^{tw}$-module $V(\mu(u))$ with  
 \begin{equation}
  \mu(u)=1+(2\mu)u^{-1}\qu\text{ if }\qu \mfsp_2^\rho=\mfgl_1, \qu\text{ and }\qu \mu(u)=1+2\mu(u-2)^{-1} \qu\text{ if }\qu \mfsp_2^\rho=\mfsp_2.
 \end{equation}
\end{crl}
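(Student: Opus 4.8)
The plan is to identify the one‑dimensional $\mfsp_2^\rho$‑module $V_\rho(\mu)$ with a one‑dimensional $X(\mfsp_2,\mfsp_2^\rho)^{tw}$‑module obtained by pulling back along the evaluation morphism $\mathrm{ev}_0$ or $\mathrm{ev}_1$ from Proposition \ref{LRR:Prop.ev_CI-C0}, and then to read off the highest weight $\mu(u)$ directly from the explicit formula for $\mathrm{ev}_\rho(S(u))$. Since the cases $\mfsp_2^\rho=\mfgl_1$ and $\mfsp_2^\rho=\mfsp_2$ are entirely parallel, I would treat them simultaneously, pointing out the (trivial) differences where they occur.

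First I would recall that, for $\mfsp_2^\rho=\mfgl_1$, the algebra $\mfgl_1$ is one‑dimensional, spanned by $F_{11}=E_{11}-E_{-1,-1}$, and the irreducible module $V_\rho(\mu)$ is the one‑dimensional space $\C v$ on which $F_{11}$ acts by the scalar $\mu$; for $\mfsp_2^\rho=\mfsp_2\cong\mfsl_2$, the only one‑dimensional module is again $\C v$ on which the Cartan element $F_{11}$ acts by $\mu$ and the nilpotent elements $F_{1,-1}$, $F_{-1,1}$ act by zero (so in fact only $\mu=0$ gives a genuine module here, but the statement is phrased for general $\mu$ so I would simply carry $\mu$ through). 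In either case $\F{1}{1}=(g_{11}+g_{11})F_{11}=2g_{11}F_{11}$ acts by $2g_{11}\mu$, and all off‑diagonal entries of $F^{\prime\rho}$ act by $0$ on $v$ (in the $\mfgl_1$ case because they are simply absent; in the $\mfsp_2$ case because $F_{1,-1}v=F_{-1,1}v=0$). Here $g_{11}=1$ in both cases, so $\F{1}{1}$ acts by $2\mu$.

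Next I would apply $\mathrm{ev}_\rho$. For $\mfsp_2^\rho=\mfgl_1$, $\mathrm{ev}_1(S(u))=\mcG+F^{\prime\rho}u^{-1}$, so on $v$ the matrix $S(u)$ acts by the diagonal matrix $\mathrm{diag}(g_{11}+2\mu u^{-1},\,g_{-1,-1}+2\mu_{-}u^{-1})$, where by the symmetry relation (or directly) $s_{-1,-1}(u)$ is determined by $s_{11}$; in particular the $(1,1)$‑entry is $s_{11}(u)\mapsto 1+2\mu u^{-1}$, and all strictly upper‑triangular entries vanish on $v$, so $\C v$ is a highest weight module with highest weight $\mu(u)=1+2\mu u^{-1}$. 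For $\mfsp_2^\rho=\mfsp_2$, $\mathrm{ev}_0(S(u))=I+F^{\prime\rho}(u-2)^{-1}$, so on $v$ the $(1,1)$‑entry is $s_{11}(u)\mapsto 1+2\mu(u-2)^{-1}$ and again the upper‑triangular entries vanish, giving highest weight $\mu(u)=1+2\mu(u-2)^{-1}$. Since a one‑dimensional module is automatically irreducible and of highest weight type, Proposition \ref{HWT:Prop.VM}(4) then identifies it with $V(\mu(u))$ for the stated $\mu(u)$.

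The only point requiring a little care — and the place I expect the main (mild) obstacle to lie — is verifying that the off‑diagonal generators $s_{ij}(u)$ with $i<j$ really do annihilate $v$, i.e.\ that the image $\F{1}{-1}$ (or, in the $\mfgl_1$ case, the relevant combination) acts as zero; this is immediate from the explicit form of $F^{\prime\rho}$ in Proposition \ref{LRR:Prop.ev_CI-C0} together with the fact that $V_\rho(\mu)$ is one‑dimensional, so no real computation is needed. One could alternatively derive the corollary by composing the isomorphisms \eqref{LRR:C0->AII}–\eqref{LRR:CI->AI} with the evaluation modules for $Y^\pm(2)$ and translating the known $Y^\pm(2)$ highest weights through the substitution $u\mapsto u/2-1/2$, but the direct computation via $\mathrm{ev}_\rho$ is shorter and is what I would write up.
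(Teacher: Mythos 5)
Your overall strategy — pull back along the evaluation morphism of Proposition \ref{LRR:Prop.ev_CI-C0}, act on the highest weight vector, and read off $\mu(u)$ from the $(1,1)$-entry — is the right one and matches what the paper intends. The answer you obtain is also correct. However, there is a genuine error in the justification for the case $\mfsp_2^\rho = \mfsp_2$.

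You assert that $V_\rho(\mu)$ is one-dimensional in both cases, and you use this to conclude that $F_{-1,1}v = 0$. This is true only for $\mfsp_2^\rho = \mfgl_1$, where $\mfgl_1$ is abelian and one-dimensional. For $\mfsp_2^\rho = \mfsp_2 \cong \mfsl_2$ the module $V_\rho(\mu)$ is by definition the irreducible \emph{highest weight} $\mfsp_2$-module with highest weight $\mu$, which has dimension $\mu+1$ when $\mu \in \Z_{\geq 0}$ and is infinite-dimensional otherwise; it is one-dimensional only when $\mu=0$. Your remark that ``only $\mu=0$ gives a genuine module'' misreads the statement: $V_\rho(\mu)$ is not required to be one-dimensional. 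The correct reason that $\mathrm{ev}_0(s_{-1,1}(u))\,v = 2F_{-1,1}(u-2)^{-1}v = 0$ is simply that $v$ is the highest weight vector of the $\mfsp_2$-module, so the positive root vector $F_{-1,1}$ annihilates it; no dimensional argument is needed. With this fix the remainder of your calculation stands, and one can then conclude via Proposition \ref{HWT:Prop.VM}(4). Relatedly, you justify irreducibility only by saying the module is one-dimensional; for the $\mfsp_2$ case you should instead invoke surjectivity of $\mathrm{ev}_0$ (stated in Proposition \ref{LRR:Prop.ev_CI-C0}), which guarantees that the pullback of an irreducible $\mathfrak{U}\mfsp_2$-module is an irreducible $X(\mfsp_2,\mfsp_2)^{tw}$-module. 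Finally, the undefined notation $\mu_-$ in your description of the $(-1,-1)$-entry for the $\mfgl_1$ case should be removed or made precise; the relevant entries are the $(1,1)$-entry and the vanishing of the $(-1,1)$-entry, both of which you treat correctly.
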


\subsection{Twisted Yangians for the symmetric pairs \texorpdfstring{$(\mfso_4,\mfso_4^\rho)$}{}}
We aim to establish results for $X(\mfso_4,\mfso_4^\rho)^{tw}$, where $\mfso_4^\rho=\mfgl_2$ or $\mfso_4$, which 
are analogous to those obtained in the previous subsection for  $X(\mfsp_2,\mfsp_2^\rho)^{tw}$. 

\begin{prop}\label{LRR:DIII-D0.Class}
Let the components of $\mu(u)=(\mu_1(u),\mu_2(u))$ satisfy the condition \eqref{HWT:nontrivial} so that the irreducible $X(\mfso_4,\mfso_4^\rho)^{tw}$-module 
$V(\mu(u))$ exists. Then $V(\mu(u))$ is finite-dimensional if and only if there exist monic polynomials $P(u)$ and $Q(u)$ in $u$, 
in addition to a scalar $\gamma\in \C$ with $Q(\gamma)\neq 0$ if $\mfso_4^\rho=\mfgl_2$, such that $P(u)=P(-u+2)$, $Q(u)=Q(-u+2)$ and 
\begin{equation}
 \frac{\widetilde{\mu}_1(u)}{\widetilde{\mu}_2(u)}=\frac{P(u+1)}{P(u)}, \label{LRR:DIII.2}
\end{equation}
while
\begin{alignat}{2}
  &\frac{\wt{\mu}_1(1-u)}{\wt{\mu}_2(u)}=\frac{Q(u+1)}{Q(u)}\cdot\frac{1-u}{u}  \quad &&\textit{if }\quad \mfso_4^\rho=\mfso_4,\label{LRR:D0.3}\\
  &\frac{\widetilde{\mu}_1(1-u)}{\widetilde{\mu}_2(u)}=\frac{Q(u+1)}{Q(u)}\cdot \frac{\gamma-u}{\gamma+u-1} \quad &&\textit{if }\quad \mfso_4^\rho=\mfgl_2. \label{LRR:DIII.3}
\end{alignat}
Moreover, when they exist, the pair $(Q(u),P(u))$ and the scalar $\gamma$ are uniquely determined. 
\end{prop}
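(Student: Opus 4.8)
The plan is to deduce this classification from the known finite-dimensional classification for $Y^{\pm}(2)$-modules (equations \eqref{LRR:AII-class} and \eqref{LRR:AI-class}) by transporting it through the explicit isomorphisms $\chi^{(1)}_0$ and $\chi^{(1)}_1$ of \eqref{LRR:DIII->AII+AII} and \eqref{LRR:D0->AII+AII}, exactly as was done for rank one in Proposition \ref{LRR:CI-C0.Class}. Because of the similarity between the two cases ($\mfso_4^\rho=\mfgl_2$ versus $\mfso_4^\rho=\mfso_4$), I would give the argument in detail only for $\mfso_4^\rho=\mfgl_2$, using $\wt\chi^{(1)}_0:X(\mfso_4,\mfgl_2)^{tw}\to Y^+(2)\otimes Y^-(2)$, and then indicate the changes needed for $\mfso_4^\rho=\mfso_4$.

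First I would pin down how highest weight $X(\mfso_4,\mfso_4^\rho)^{tw}$-modules correspond to highest weight modules over $SY^\pm(2)\otimes Y^-(2)$ under $\chi^{(1)}_0$ (resp.\ $\chi^{(1)}_1$). Writing $S(u)$ as a $4\times4$ matrix in the basis $\{v_{-2},v_{-1},v_1,v_2\}$, the defining relation $S(u)\mapsto \Si^\circ(u-1/2)K_1 S^\bullet(u-1/2)$ expresses the diagonal entries $s_{ii}(u)$ of $S(u)$ (for $i=1,2$) in terms of products of the highest weight series of $\Si^\circ$ and $S^\bullet$; since $S^\circ(v)=K\Si^\circ(v)$ only up to the central factor from $SY^+(2)\hookrightarrow Y^+(2)$, I need to be a bit careful and pass instead through $\wt\chi^{(1)}_0$, then project by $\mathrm{Pr}_+\otimes 1$ and absorb the resulting central series into an automorphism $\nu_g$ of $X(\mfso_4,\mfgl_2)^{tw}$ — this is harmless since twisting by $\nu_g$ does not change finite-dimensionality nor the tuple $\wt\mu(u)/\wt\mu(u)$ ratios of interest. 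If $V(\mu(u))$ has highest weight vector $\xi$, then applying the $(2,2)$ and $(1,1)$ matrix entries of $S(u)$ to $\xi$ and comparing with $\Si^\circ\otimes S^\bullet$ gives two scalar relations; solving them yields that $\xi\otimes\xi$ (under the identification) is the highest weight vector of $V(\nu^\circ(u))\otimes V(\nu^\bullet(u))$ for explicit series $\nu^\circ(u),\nu^\bullet(u)$ expressed via $\mu_1(u),\mu_2(u)$ (equivalently via $\wt\mu_1(u),\wt\mu_2(u)$), after the shift $u\mapsto u-1/2$. The key point is that $V(\mu(u))$ is finite-dimensional if and only if both tensor factors are.

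Next I would apply the $Y^+(2)$ and $Y^-(2)$ classifications: the $Y^-(2)$-factor being finite-dimensional gives a monic $P(u)$ with $P(u)=P(-u+1)$ and $\mu^\bullet(-u)/\mu^\bullet(u)=P(u+1)/P(u)$, while the $Y^+(2)$-factor gives a pair $(Q(u),\alpha)$ with $Q(u)=Q(-u+1)$, $Q(\alpha)\neq0$, and the relation \eqref{LRR:AI-class}. Substituting the explicit forms of $\nu^\bullet(u)$ and $\nu^\circ(u)$ in terms of $\wt\mu_1,\wt\mu_2$ and performing the change of variables $u\mapsto u$ (the shift by $1/2$ having already been accounted for, plus possibly $u\mapsto u$ or $u\mapsto 1-u$ to symmetrize) converts the $Y^-$ relation into \eqref{LRR:DIII.2} and the $Y^+$ relation into \eqref{LRR:DIII.3} after setting, say, $\gamma=\alpha+\tfrac12$ or a similar affine reparametrization and rescaling $Q$ to stay monic. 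For $\mfso_4^\rho=\mfso_4$ one uses $SY^-(2)\otimes Y^-(2)$ instead and both factors contribute $Y^-(2)$-type conditions, yielding \eqref{LRR:DIII.2} and \eqref{LRR:D0.3} (with the extra factor $\tfrac{1-u}{u}$ reflecting the $\ga_N(u)$-type normalization rather than a free parameter, just as $\tfrac{2-u}{u}$ appeared in \eqref{LRR:C0.1}). Uniqueness of $(P(u),Q(u))$ and of $\gamma$ follows directly from the corresponding uniqueness statements for $Y^{\pm}(2)$.

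The main obstacle I anticipate is bookkeeping: correctly reading off the two scalar series $\nu^\circ(u)$ and $\nu^\bullet(u)$ from the matrix identity defining $\chi^{(1)}_0$ (resp.\ $\wt\chi^{(1)}_0$), since the $(1,1)$-entry of a product of three $4\times4$ matrices involves a sum over intermediate indices, and then verifying that after the substitution the ratio $\nu^\bullet(-u)/\nu^\bullet(u)$ is genuinely $\wt\mu_1(u)/\wt\mu_2(u)$ up to the shift (this is where the precise form \eqref{HWT:tilde_mu_i(u)} of $\wt\mu_i(u)$ enters, together with the constraint \eqref{HWT:nontrivial} guaranteeing $V(\mu(u))$ exists), and similarly for the $Y^+$-factor. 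Matching the normalization factors — the $\tfrac{2u+1}{2u\pm1}$ appearing in the Sklyanin determinants and the $\ga_N(u)$ — to produce exactly the stated shifts $P(u)=P(-u+2)$, $Q(u)=Q(-u+2)$ and the argument $u+1$ (rather than $u+2$ as in rank one, reflecting $\ka=n-1=0$ here versus $\ka=2$ for $\mfsp_2$) is the delicate computational heart of the proof, but it is entirely routine once the dictionary is set up.
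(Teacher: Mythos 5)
Your overall strategy — reduce to the $Y^{\pm}(2)$ classification via the explicit isomorphism $\chi^{(1)}_0$ (resp.\ $\chi^{(1)}_1$) and transport the Drinfeld-polynomial conditions through the substitution $u\mapsto u-1/2$ — is exactly the route the paper takes, and the concluding change-of-variables, normalization, and uniqueness bookkeeping you describe match the paper's final steps. However, there is a genuine gap at the heart of the argument, namely the claim that ``applying the $(2,2)$ and $(1,1)$ matrix entries of $S(u)$ to $\xi$ and comparing with $\Si^\circ\otimes S^\bullet$ gives two scalar relations; solving them yields ... explicit series $\nu^\circ(u),\nu^\bullet(u)$.'' Using the formulas \eqref{LRR:DIII.explicit}, those two diagonal relations read
\[
 \lambda^\circ(\wt u)\,\lambda^\bullet(\wt u)=\mu_2(u),
 \qquad
 \lambda^\circ(\wt u)\,\Bigl(\lambda^\bullet(-\wt u)-\tfrac{1}{2\wt u}\bigl(\lambda^\bullet(\wt u)-\lambda^\bullet(-\wt u)\bigr)\Bigr)=\mu_1(u),
\]
(with $\wt u=u-1/2$, after applying the $Y^-(2)$ symmetry relation to $s^\bullet_{-1,-1}$). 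These determine only the product $\lambda^\circ\lambda^\bullet$ and, upon dividing, a relation between $\lambda^\bullet(\wt u)$ and $\lambda^\bullet(-\wt u)$; they do not determine $\lambda^\bullet(u)$ individually — any rescaling $\lambda^\bullet\mapsto f\lambda^\bullet$, $\lambda^\circ\mapsto f^{-1}\lambda^\circ$ by a suitable series $f$ preserves both equations.

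What pins down the factorization, and what your proposal never explicitly invokes, is that $\Si^\circ$ is the $S$-matrix of the \emph{special} twisted Yangian $SY^+(2)$, so $\sdet\,\Si^\circ(u)=1$. The paper exploits this by applying the formula \eqref{LRR:sdet} to the particular generator combination $s_{11}(-\wt u)s_{22}(\wt u)-s_{1,-2}(-\wt u)s_{-1,2}(\wt u)$; under $\chi^{(1)}_0$ this factors as $\sdet\,\Si^\circ(u)\cdot s^\bullet_{-1,-1}(-u)s^\bullet_{11}(u-1)=s^\bullet_{-1,-1}(-u)s^\bullet_{11}(u-1)$, isolating a $Y^-(2)$-only series. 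Combined with the $Y^-(2)$ symmetry relation this produces a single functional equation determining the coefficients of $\lambda^\bullet(u)$ recursively; then $\lambda^\circ(u)=\mu_2(u+1/2)\lambda^\bullet(u)^{-1}$. (One also needs the standard observation, cited as Theorem 3.10.2 of \cite{EGH+}, that the irreducible $V(\mu(u))$ over $SY^+(2)\otimes Y^-(2)$ must tensor-factor.) This Sklyanin-determinant step is not ``routine bookkeeping'' or ``matching normalization factors''; it is the structural ingredient that makes the factorization, and therefore the reduction to \eqref{LRR:AII-class} and \eqref{LRR:AI-class}, well-defined. Your aside about routing through $\wt\chi^{(1)}_0$ and ``absorbing a central series into an automorphism $\nu_g$'' is also a bit of a red herring: $\chi^{(1)}_0 = (\mathrm{Pr}_+\otimes1)\circ\wt\chi^{(1)}_0$ is already an algebra isomorphism onto $SY^+(2)\otimes Y^-(2)$, and it is precisely passing to $SY^+(2)$ (not $Y^+(2)$) that buys you the missing constraint.
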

\begin{proof}
The proposition follows from the existence of the explicit isomorphisms $\chi_0^{(1)}$ and $\chi_1^{(1)}$ given in \eqref{LRR:DIII->AII+AI} and \eqref{LRR:D0->AII+AII}, respectively, together with the classification results \eqref{LRR:AII-class} and \eqref{LRR:AI-class}.
We will give details of the proof only for the case $\mfso_4^\rho=\mfgl_2$. 

\medskip

It is a general fact that any simple finite-dimensional module over a tensor product $A \otimes B$  of two associative unital $\C$-algebras $A$ and $B$ is of the form $M_A \otimes M_B$, where $M_A$ (resp.~$M_B$) is a simple, finite-dimensional module over $A$ (resp. over $B$): see Theorem 3.10.2 in \cite{EGH+}. We show more precisely that the $X(\mathfrak{so}_4,\mathfrak{gl}_2)^{tw}$-module $V(\mu(u))$, viewed as a $SY^+(2)\otimes Y^-(2)$-module via the isomorphism $\chi^{(1)}_0$, is isomorphic to $V(\lambda^\circ(u))\otimes V(\lambda^\bullet(u))$, where $V(\lambda^\circ(u))$ denotes the irreducible highest weight $SY^+(2)$-module
 of weight $\lambda^\circ(u)$, $V(\lambda^\bullet(u))$ denotes the irreducible highest weight $Y^-(2)$-module of weight $\lambda^\bullet(u)$, and where the series $\lambda^\circ(u)$ and $\lambda^\bullet(u)$ are 
 completely determined by the two relations
 \begin{align}
  \lambda^\bullet(u)\lambda^\bullet(u-1)-\tfrac{1}{2u}&\left(\lambda^\bullet(u)-\lambda^\bullet(-u) \right)\lambda^\bullet(u-1)=\mu_1(-u+1/2)\mu_2(u-1/2), \label{LRR:DIII.4}\\
  &\lambda^\circ(u)=\mu_2(u+1/2)\lambda^\bullet(u)^{-1} \label{LRR:DIII.5}.
\end{align}
Equivalently, $\lambda^\circ(u)$ and $\lambda^\bullet(u)$ are completely determined by the two equations
\begin{equation}
 \widetilde{\mu}_2(u)=2u\cdot\lambda^\circ(u-1/2)\lambda^\bullet(u-1/2) \quad \mathrm{and}\quad \widetilde{\mu}_1(u)=2u\cdot\lambda^\circ(u-1/2)\lambda^\bullet(-u+1/2) \label{LRR:DIII.6}.
\end{equation}
We will need the following explicit formulas for the images of the generators $s_{ij}(u)$ under the isomorphism $\chi^{(1)}_0$ from \eqref{LRR:DIII->AII+AI}:
\begin{equation}
\begin{aligned}
 s_{-2,-2}(u)&\mapsto -\si^\circ_{-1,-1}(\wt{u})s^\bullet_{-1,-1}(\wt{u}), \\
 s_{-2,-1}(u)&\mapsto -\si^\circ_{-1,-1}(\wt{u})s^\bullet_{-1,1}(\wt{u}), \\
 s_{-2,1}(u)&\mapsto \si^\circ_{-1,1}(\wt{u})s^\bullet_{-1,-1}(\wt{u}), \\
 s_{-2,2}(u)&\mapsto -\si^\circ_{-1,1}(\wt{u})s^\bullet_{-1,1}(\wt{u}),  \\
 s_{-1,-2}(u)&\mapsto -\si^\circ_{-1,-1}(\wt{u})s^\bullet_{1,-1}(\wt{u}), \\
 s_{-1,-1}(u)&\mapsto -\si^\circ_{-1,-1}(\wt{u})s^\bullet_{11}(\wt{u}), \\
 s_{-1,1}(u)&\mapsto \si^\circ_{-1,1}(\wt{u})s^\bullet_{1,-1}(\wt{u}), \\
 s_{-1,2}(u)&\mapsto -\si^\circ_{-1,1}(\wt{u})s^\bullet_{11}(\wt{u}),
\end{aligned}
\quad
\begin{aligned}
 s_{1,-2}(u)&\mapsto -\si^\circ_{1,-1}(\wt{u})s^\bullet_{-1,-1}(\wt{u}), \\
 s_{1,-1}(u)&\mapsto -\si^\circ_{1,-1}(\wt{u})s^\bullet_{-1,1}(\wt{u}),\\
 s_{11}(u)&\mapsto \si^\circ_{11}(\wt{u})s^\bullet_{-1,-1}(\wt{u}), \\
 s_{12}(u)&\mapsto -\si^\circ_{11}(\wt{u})s^\bullet_{-1,1}(\wt{u}), \\
 s_{2,-2}(u)&\mapsto \si^\circ_{1,-1}(\wt{u})s^\bullet_{1,-1}(\wt{u}),\\
 s_{2,-1}(u)&\mapsto \si^\circ_{1,-1}(\wt{u})s^\bullet_{11}(\wt{u}), \\
 s_{21}(u)&\mapsto -\si^\circ_{11}(\wt{u})s^\bullet_{1,-1}(\wt{u}),\\
 s_{22}(u)&\mapsto \si^\circ_{11}(\wt{u})s^\bullet_{11}(\wt{u}),
\end{aligned} \label{LRR:DIII.explicit}
\end{equation}
where $\wt{u}=u-1/2$. It is explained how to obtain these formulas from the assignment \eqref{LRR:DIII->AII+AI} at the end of the proof of Proposition 4.8 in \cite{GRW}.
These formulas together with the expression \eqref{LRR:sdet} and the fact that ${\rm sdet}\,\Si^\circ(u)=1$ give
\begin{align*}
 \chi^{(1)}_0&\left(s_{11}(-\wt{u})s_{22}(\wt{u})-s_{1,-2}(-\wt{u})s_{-1,2}(\wt{u}) \right)\\
                   &=\left(\si^\circ_{11}(-u)\si^\circ_{11}(u-1)-\si^\circ_{1,-1}(-u)\si^\circ_{-1,1}(u-1)\right)s^\bullet_{-1,-1}(-u)s^\bullet_{11}(u-1)\\
                   &=s^\bullet_{-1,-1}(-u)s^\bullet_{11}(u-1).
\end{align*}
Letting $\xi\in V(\mu(u))$ denote the highest weight vector, this gives $s^\bullet_{-1,-1}(-u)s^\bullet_{11}(u-1)\xi=\mu_1(-\wt{u})\mu_2(\wt{u})\xi.$
Employing the defining symmetry relation of $Y^-(2)$, we can rewrite this as 
\begin{equation*}
 \left(s^\bullet_{11}(u)-\tfrac{1}{2u}\left(s^\bullet_{11}(u)-s^\bullet_{11}(-u)\right) \right)s^\bullet_{11}(u-1)\xi=\mu_1(-\wt{u})\mu_2(\wt{u})\xi.
\end{equation*}
By induction on the coefficients $s^{\bullet(r)}_{11}$ of $s^\bullet_{11}(u)$, this implies that there exists $\lambda^\bullet(u)\in 1+u^{-1}\C[[u^{-1}]]$ such that
$s^\bullet_{11}(u)\xi=\lambda^\bullet(u)\xi,$
and $\lambda^\bullet(u)$ is determined by \eqref{LRR:DIII.4}. Again appealing to the formulas \eqref{LRR:DIII.explicit}, we have $\chi^{(1)}_0(s_{22}(u))=\si^\circ_{11}(\wt{u})s^\bullet_{11}(\wt{u})$, which implies that $\xi$ is an eigenvector for the action of $\si^\circ_{11}(u)$ with weight $\lambda^\circ(u)$ determined by the relation \eqref{LRR:DIII.5}. Notice that it now follows
immediately from the explicit formulas \eqref{LRR:DIII.explicit}  that $\si^\circ_{-1,1}(u)\xi=s^\bullet_{-1,1}(u)\xi=0$. Conversely, any vector $\eta$ with the property that $\si^\circ_{-1,1}(u)\eta=s^\bullet_{-1,1}(u)\eta=0$ and which is a weight vector for $s^\bullet_{ii}(u)$ must be a highest weight vector of the $X(\mathfrak{so}_4,\mathfrak{gl}_2)^{tw}$-module $V(\mu(u))$ by \eqref{LRR:DIII.explicit}, hence a scalar multiple of $\xi$. Thus, by the irreducibility of 
$V(\mu(u))$ we can conclude that 
$$V(\mu(u))\cong V(\lambda^\circ(u))\otimes V(\lambda^\bullet(u)).$$
To see that \eqref{LRR:DIII.4} and \eqref{LRR:DIII.5} are equivalent to the relations given in equation \eqref{LRR:DIII.6}, we observe first that relation \eqref{LRR:DIII.5} is clearly equivalent
to $\widetilde{\mu}_2(u)=2u\cdot\lambda^\circ(u-1/2)\lambda^\bullet(u-1/2)$. Notice also that we may rewrite \eqref{LRR:DIII.4} as 
\begin{equation}
 \lambda^\bullet(u)\lambda^\bullet(u-1)-\tfrac{1}{2u}\left(\lambda^\bullet(u)-\lambda^\bullet(-u) \right)\lambda^\bullet(u-1)=\mu_1(-u+1/2)\lambda^\circ(u-1)\lambda^\bullet(u-1).\label{LRR:DIII.7}
\end{equation}
Since ${\rm sdet}\, \Si^\circ(u)=1$, by formula \eqref{LRR:sdet}, we also have
\begin{equation*}
 1\cdot\xi=\left(\si^\circ_{11}(-u)\si^\circ_{11}(u-1)-\si^\circ_{1,-1}(-u)\si^\circ_{-1,1}(u-1)\right)\xi=\lambda^\circ(-u)\lambda^\circ(u-1)\xi,
\end{equation*}
and thus $\lambda^\circ(-u)^{-1}=\lambda^\circ(u-1)$. Using this, we may rewrite \eqref{LRR:DIII.7} as 
\begin{equation*}
 \lambda^\bullet(-u+1/2)\lambda^\circ(u-1/2)-\tfrac{1}{1-2u}\left(\lambda^\bullet(-u+1/2)\lambda^\circ(u-1/2)-\lambda^\bullet(u-1/2)\lambda^\circ(u-1/2)\right)=\mu_1(u),
\end{equation*}
which is equivalent to 
\begin{equation*}
 2u\cdot \lambda^\bullet(-u+1/2)\lambda^\circ(u-1/2)=(2u-1)\mu_1(u)+\mu_2(u)=\widetilde{\mu}_1(u).
\end{equation*}

As a consequence of the isomorphism of $SY^+(2)\otimes Y^-(2)$-modules $V(\mu(u))\cong V(\lambda^\circ(u))\otimes V(\lambda^\bullet(u))$, we can deduce exactly when 
$V(\mu(u))$ is finite-dimensional. Indeed, due to the classification results \eqref{LRR:AII-class} and \eqref{LRR:AI-class}, $V(\lambda^\circ(u))\otimes V(\lambda^\bullet(u))$
is finite-dimensional if and only if there exists $\alpha\in \C$ together with monic polynomials $P'(u)$, $Q'(u)$ such that $P'(u)=P'(-u+1)$, $Q'(u)=Q'(-u+1)$, $Q'(\alpha)\neq 0$ and the following equations hold: 
\begin{equation}
 \frac{\left(1-\tfrac{1}{2u}\right)\lambda^\circ(-u)}{\left(1+\tfrac{1}{2u}\right)\lambda^\circ(u)}=\frac{Q'(u+1)}{Q'(u)}\cdot \frac{u-\alpha}{u+\alpha}, \quad
 \frac{\lambda^\bullet(-u)}{\lambda^\bullet(u)}=\frac{P'(u+1)}{P'(u)}. \label{LRR:DIII.8}
\end{equation}
In this case the triple $(Q'(u),P'(u),\gamma)$ is unique. Since 
\begin{equation*}
 \frac{\widetilde{\mu}_1(u)}{\widetilde{\mu}_2(u)}=\frac{2u\cdot\lambda^\circ(u-1/2)\lambda^\bullet(-u+1/2)}{2u\cdot\lambda^\circ(u-1/2)\lambda^\bullet(u-1/2)}=\frac{\lambda^\bullet(-u+1/2)}{\lambda^\bullet(u-1/2)},
\end{equation*}
the second equation in \eqref{LRR:DIII.8} is equivalent to 
\begin{equation*}
 \frac{\widetilde{\mu}_1(u)}{\widetilde{\mu}_2(u)}=\frac{P'(u+1/2)}{P'(u-1/2)}.
\end{equation*}
Setting $P(u)=P^\prime(u-1/2)$, we have $P(u)=P(-u+2)$ and the above equality becomes that given in \eqref{LRR:DIII.2}. Similarly, since 
\begin{equation*}
 \frac{\widetilde{\mu}_1(1-u)}{\widetilde{\mu}_2(u)}=\frac{2(1-u)\lambda^\circ(-u+1/2)\lambda^\bullet(u-1/2)}{2u\lambda^\circ(u-1/2)\lambda^\bullet(u-1/2)}=\frac{1-u}{u}\cdot \frac{\lambda^\circ(-u+1/2)}{\lambda^\circ(u-1/2)},
\end{equation*}
we may rewrite the first equation in \eqref{LRR:DIII.8} as
\begin{align*}
 \frac{\widetilde{\mu}_1(1-u)}{\widetilde{\mu}_2(u)}&=\frac{1-u}{u}\cdot \frac{Q'(u-1/2+1)}{Q'(u-1/2)}\cdot \frac{u-\frac{1}{2}-\alpha}{u-\frac{1}{2}+\alpha}\cdot\frac{u}{u-1}\\
                                                            &=\frac{Q'(u-1/2+1)}{Q'(u-1/2)}\cdot \frac{\alpha+\frac{1}{2}-u}{u-\frac{1}{2}+\alpha}.
\end{align*}

Setting $\gamma=\alpha+1/2$ and $Q(u)=Q'(u-1/2)$, we have $Q(\gamma)\neq 0$, $Q(u)=Q(-u+2)$, and the above equality becomes equivalent to \eqref{LRR:DIII.3}. Finally,
we note that the uniqueness of the triple $(Q(u),P(u),\gamma)$ is immediate from the uniqueness of $(Q'(u),P'(u),\alpha)$. 
\end{proof}

We now turn to the construction of evaluation morphisms $X(\mfso_4,\mfso_4^\rho)^{tw}\onto \mathfrak{U}\mfso_4^\rho$. The enveloping algebra
$\mathfrak{U}\mfso_4^\rho$ is generated by the elements $F^{\prime\rho}_{ij}=(g_{ii}+g_{jj})F_{ij}$ for $-2\leq i,j\leq 2$ (see Proposition \ref{P:F->TY}). Let 
$\Omega_\rho$ be the Casimir element of $\mathfrak{U}\mfso_4$ if $\mfso_4^\rho=\mfso_4$, or of $\mathfrak{U}\mfsl_2$ if $\mfso_4^\rho=\mfgl_2$, defined by
\begin{equation*}
\Omega_\rho=\begin{cases}
              F_{11}^2+F_{22}^2-2F_{22}+2F_{21}F_{12}+2F_{2,-1}F_{-1,2} &\text{ if }\mfso_4^\rho=\mfso_4,\\
              \tfrac{1}{2}(F_{22}-F_{11})^2+F_{12}F_{21}+F_{21}F_{12} &\text{ if }\mfso_4^\rho=\mfgl_2. 
             \end{cases}
\end{equation*}
If $\mfso_4^\rho=\mfgl_2$, define the auxiliary central element $z\in \mathfrak{U}\mfgl_2$ by 
\begin{equation*}
 z=F^2_{11}+F^2_{22}+F_{12}F_{21}+F_{21}F_{12}=\Omega_\rho+\tfrac{1}{2}(F_{11}+F_{22})^2. 
\end{equation*}
In the following proposition it will be convenient to denote the Casimir element $\Omega_\rho$ corresponding to the case $\mfso_4^\rho=\mfso_4$ simply by $\Omega$. 

\begin{prop}\label{LRR:Prop.ev_DIII-D0}
 Let $F^{\prime\rho}=\sum_{i,j=-2}^2 E_{ij}\otimes F^{\prime \rho}_{ij}\in \End\,\C^4\otimes \mathfrak{U}\mfso_4^\rho$. The mappings
 \begin{align}
 & \mathrm{ev}_0:X(\mfso_4,\mfso_4)^{tw}\to \mathfrak{U}\mfso_4,\qu S(u)\mapsto I+\frac{F^{\prime\rho}}{u-1}+\frac{(F^{\prime\rho})^2-2F^{\prime\rho}-2\Omega\cdot I}{2(u-1)^2}, \label{LRR:ev.D0}\\
 & \mathrm{ev}_1:X(\mfso_4,\mfgl_2)^{tw}\to \mathfrak{U}\mfgl_2,\qu\; S(u)\mapsto \mcG+\frac{F^{\prime\rho}}{u}+\mcG\frac{(F^{\prime\rho})^2-2z\cdot I}{2u(u-1)}, \label{LRR:ev.DIII}
 \end{align}
are surjective algebra homomorphisms. 
\end{prop}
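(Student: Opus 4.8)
\textbf{Proof strategy for Proposition \ref{LRR:Prop.ev_DIII-D0}.}

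The plan is to construct these evaluation morphisms by the same route used for $X(\mfsp_2,\mfsp_2^\rho)^{tw}$ in Proposition \ref{LRR:Prop.ev_CI-C0}, namely by composing the explicit isomorphisms $\chi^{(1)}_0$ and $\chi^{(1)}_1$ of \eqref{LRR:D0->AII+AII} and \eqref{LRR:DIII->AII+AI} with suitable evaluation-type maps out of $SY^\pm(2)\otimes Y^-(2)$, and then verifying that the composite sends $S(u)$ to the matrix power series claimed. Concretely, for $\mathrm{ev}_0$ we would use the composition $X(\mfso_4,\mfso_4)^{tw}\xrightarrow{\chi^{(1)}_1} SY^-(2)\otimes Y^-(2) \hookrightarrow Y^-(2)\otimes Y^-(2)$ followed by the tensor product $\mathrm{ev}^\circ_-\otimes \mathrm{ev}^\circ_-$ of the Olshanskii evaluation morphisms from Proposition \ref{P:Y-Ols->U}, landing in $\mfU\mfsp_2\otimes\mfU\mfsp_2 \cong \mfU(\mfsp_2\oplus\mfsp_2)$; but since the image of $\mfU\mfso_4^\rho$ inside must be the diagonal copy (via Proposition \ref{P:F->TY}), one must first compose with the coproduct-type diagonal embedding $\mfU\mfso_4 \hookrightarrow \mfU\mfsl_2\otimes\mfU\mfsl_2$ coming from the exceptional isomorphism $\mfso_4\cong\mfsl_2\oplus\mfsl_2$ underlying the $\chi$-maps. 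For $\mathrm{ev}_1$ the analogous composition uses $\chi^{(1)}_0$, the embedding into $Y^+(2)\otimes Y^-(2)$ from \eqref{LRR:DIII.into.AII+AI}, and $\mathrm{ev}^\circ_+\otimes \mathrm{ev}^\circ_-$, landing in $\mfU\mfsp_2\otimes\mfU\mfsp_2$; here the relevant subalgebra $\mfgl_2 = \mfsl_2 \oplus \mfgl_1$ sits inside via the diagonal $\mfsl_2$ together with a central $\mfgl_1$ direction.

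The first concrete step would be to pin down the images of the four diagonal generators $s_{ii}(u)$ and enough off-diagonal ones, using the explicit formula table \eqref{LRR:DIII.explicit} (and its analogue for $\chi^{(1)}_1$ obtained the same way, cf.\ the end of the proof of Proposition 4.8 in \cite{GRW}), then substitute $\si^\circ_{ij}(u)\mapsto \delta_{ij}+F^\circ_{ij}(u\pm\tfrac12)^{-1}$ and $s^\bullet_{ij}(u)\mapsto \delta_{ij}+F^\bullet_{ij}(u+\tfrac12)^{-1}$. After collecting terms over a common denominator, one obtains a matrix power series in $u^{-1}$ whose entries are quadratic expressions in the generators $F^\circ_{ij}, F^\bullet_{kl}$ of the two copies; the identification of these with $F^{\prime\rho}_{ij}$ and the Casimir $\Omega_\rho$ (respectively the element $z$) is then a direct comparison. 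The appearance of $\Omega$ (resp.\ $z$) at order $u^{-2}$ is exactly the reflection of the fact that the Olshanskii evaluation morphism $\mathrm{ev}^\circ_\pm$ is quadratic in $u^{-1}$ once one clears the denominator $u\pm\tfrac12$; in the rank-one case $\mfsp_2$ that quadratic term is proportional to the $\mfsl_2$-Casimir, and the diagonal embedding $\mfso_4\hookrightarrow \mfsl_2\oplus\mfsl_2$ sends $\Omega$ to the sum of the two $\mfsl_2$-Casimirs. Surjectivity is then immediate: the coefficient of $u^{-1}$ already gives all of $F^{\prime\rho}_{ij}$, which generate $\mfU\mfso_4^\rho$.

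The main obstacle I anticipate is bookkeeping rather than conceptual: one must be careful about which \emph{normalized} generators of the special twisted Yangians appear under $\chi^{(1)}_0,\chi^{(1)}_1$ versus the unnormalized $Y^\pm(2)$-generators, since $\chi^{(1)}$ involves $SY^\pm(2)$ while the evaluation $\mathrm{ev}^\circ_\pm$ is most naturally stated on $Y^\pm(2)$; this is why the embeddings \eqref{LRR:DIII.into.AII+AI}--\eqref{LRR:D0.into.AII+AII} into the unnormalized tensor products are the right tool, and one should check that the Sklyanin/quantum determinant corrections (the series making $SY^\pm(2)$ out of $Y^\pm(2)$) evaluate trivially on the finite-dimensional image, or else absorb them into a scalar twist $\mu_f$. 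A secondary point to verify carefully is the spectral shift: the $\chi$-maps carry an argument shift $u\mapsto u-1/2$, and $\mathrm{ev}^\circ_\pm$ carries $u\mapsto (u\pm\tfrac12)^{-1}$, so the net effect is that the pole sits at $u=1$ and $u=0$ for $\mathrm{ev}_0$ and $\mathrm{ev}_1$ respectively, matching $\ka = \tfrac{N}{2}\mp1 = 1$ for $\mfso_4$; getting this shift right is what produces the precise denominators $(u-1)$ and $u(u-1)$ in \eqref{LRR:ev.D0}--\eqref{LRR:ev.DIII}. Once the images are computed it only remains to note that the target formulas manifestly define algebra homomorphisms because they arise as honest compositions of homomorphisms, so no separate verification of the reflection equation is needed.
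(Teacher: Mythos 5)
Your overall strategy is the same as the paper's: compose the embeddings $\wt\chi^{(1)}_0,\wt\chi^{(1)}_1$ of \eqref{LRR:DIII.into.AII+AI}--\eqref{LRR:D0.into.AII+AII} with $\mathrm{ev}^\circ_\pm\otimes\mathrm{ev}^\circ_-$, land in a tensor product of rank-one enveloping algebras, and then identify that tensor product with $\mfU\mfso_4^\rho$ via a low-rank exceptional isomorphism. Using the $\wt\chi$ embeddings rather than the $\chi$ isomorphisms precisely because $\mathrm{ev}^\circ_\pm$ lives on $Y^\pm(2)$ is also the right call. However, there are two genuine conceptual errors in how you describe the last leg.

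First, there is no ``diagonal embedding'' anywhere in this construction. You write that one must compose with a ``coproduct-type diagonal embedding $\mfU\mfso_4\hookrightarrow\mfU\mfsl_2\otimes\mfU\mfsl_2$'' and that $\mfgl_2=\mfsl_2\oplus\mfgl_1$ ``sits inside via the diagonal $\mfsl_2$.'' This is not what happens. The relevant maps are \emph{isomorphisms} $\mfsp_2\oplus\mfsp_2\iso\mfso_4$ and $\mfso_2\oplus\mfsp_2\iso\mfgl_2$ (the exceptional isomorphisms), inducing isomorphisms $\wh\Phi$ of the enveloping algebras, not embeddings into something larger; the dimensions match exactly ($3+3=6$ and $1+3=4$). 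The diagonal $\mfsl_2$ inside $\mfsl_2\oplus\mfsl_2$ plays no role, and $\mfsl_2\oplus\mfsl_2$ has trivial center, so the picture of a ``diagonal $\mfsl_2$ plus a central $\mfgl_1$'' inside $\mfU\mfsp_2\otimes\mfU\mfsp_2$ is simply incoherent. Second, you claim both maps land in $\mfU\mfsp_2\otimes\mfU\mfsp_2$. By Proposition~\ref{P:Y-Ols->U}, $\mathrm{ev}^\circ_+:Y^+(2)\to\mfU\mfso_2$ and $\mathrm{ev}^\circ_-:Y^-(2)\to\mfU\mfsp_2$. So for $\mathrm{ev}_1$ the composite $\mathrm{ev}^\circ_+\otimes\mathrm{ev}^\circ_-$ lands in $\mfU\mfso_2\otimes\mfU\mfsp_2$ (one abelian factor, one nonabelian), which is then identified with $\mfU\mfgl_2$ via $\mfso_2\mapsto$ center, $\mfsp_2\mapsto\mfsl_2$. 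Only $\mathrm{ev}_0$ lands in $\mfU\mfsp_2\otimes\mfU\mfsp_2$.

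A smaller but still misleading point: the quadratic terms producing $\Omega$ and $z$ at order $u^{-2}$ do \emph{not} arise because $\mathrm{ev}^\circ_\pm$ is ``quadratic in $u^{-1}$.'' The Olshanskii evaluation $s^\circ_{ij}(u)\mapsto\delta_{ij}+F_{ij}(u\pm\tfrac12)^{-1}$ is a rank-one rational matrix function with a simple pole. The quadratic expressions appear because each entry $\wt\chi^{(1)}(s_{ij}(u))$ is a \emph{product} of two $Y^\pm(2)$-entries (cf.\ the table \eqref{LRR:DIII.explicit}), so after evaluation one multiplies two such simple-pole factors. Matching the resulting double-pole coefficient against the formulas \eqref{LRR:ev.D0}--\eqref{LRR:ev.DIII} is then exactly the ``lengthy but routine'' calculation the paper carries out for the entry $s_{-2,-2}(u)$.

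Your remark about the Sklyanin/quantum-determinant normalization is correct and is precisely why one uses $\wt\chi^{(1)}$ rather than $\chi^{(1)}$. If you clear up the two errors above, the rest of your outline (compute images of $s_{ij}(u)$, compare, deduce surjectivity from the $u^{-1}$ coefficient) matches the paper's proof.
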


\begin{proof}
 Suppose first that $\mfso_4^\rho=\mfgl_2$. 
 Consider the Lie algebra $\mfso_2\oplus\mfsp_2$. Denote the generators of $\mfso_2$ in this direct sum by $F^\circ_{ij}$, and those of $\mfsp_2$ by $F^\bullet_{ij}$, 
 where $i,j\in \{\pm 1\}$. The Lie algebra
 $\mfso_2$ is one-dimensional with basis $F^\circ_{11}$, while $\mfsp_2$ is three-dimensional with basis $\{F^\bullet_{1,1},F^\bullet_{-1,1},F^\bullet_{1,-1}\}$. 
 Let $\Phi$ be the isomorphism $\mfso_2\oplus \mfsp_2\iso \mfgl_2$ given by
 \begin{equation*}
  F^\circ_{11}\mapsto F_{11}+F_{22},\quad
 F^\bullet_{11}\mapsto F_{22}-F_{11}, \quad F^\bullet_{-1,1}\mapsto -2F_{12},\quad F^\bullet_{1,-1}\mapsto -2F_{21}.
\end{equation*}
$\Phi$ induces an isomorphism $\wh\Phi:\mathfrak{U}\mfso_2\otimes \mathfrak{U}\mfsp_2\iso \mathfrak{U}\mfgl_2$. Therefore, the composition  $\wh\Phi\circ (\mathrm{ev}_+^\circ\otimes \mathrm{ev}^\circ_-)$
yields a surjective homomorphism $ Y^+(2)\otimes Y^-(2)\onto \mathfrak{U}\mfgl_2$. Writing this map explicitly and using Proposition \ref{P:Y-Ols->U}, we have $s^\circ_{ij}(u)\mapsto 0$ for $i\neq j$, and
\begin{equation*}
\begin{aligned}
 &s^\circ_{-1,-1}(u)\mapsto 1-\frac{F_{11}+F_{22}}{u+1/2},\\
 &s^\circ_{11}(u)\mapsto 1+\frac{F_{11}+F_{22}}{u+1/2},
 \end{aligned}
 \quad
\begin{aligned}
 &s^\bullet_{-1,-1}(u)\mapsto 1+\frac{F_{11}-F_{22}}{u-1/2}, \quad && s^\bullet_{-1,1}(u)\mapsto -\frac{2F_{12}}{u-1/2},\\
 &s^\bullet_{11}(u)\mapsto 1+\frac{F_{22}-F_{11}}{u-1/2}, \quad && s^\bullet_{1,-1}(u)\mapsto -\frac{2F_{21}}{u-1/2}.
\end{aligned}
\end{equation*}
The proof is now completed as follows: Composing $\wh\Phi\circ (\mathrm{ev}_+^\circ\otimes \mathrm{ev}^\circ_-)$ with the embedding $\wt\chi^{(1)}_0$
from \eqref{LRR:DIII.into.AII+AI} gives a homomorphism $\mathrm{ev}_1: X(\mfso_4,\mfgl_2)^{tw}\to \mathfrak{U}\mfgl_2$. It remains to see that it is given by the assignment
\eqref{LRR:ev.DIII} and that it is surjective. However, if it is indeed given by \eqref{LRR:ev.DIII} then it must be surjective, so it remains only to check
the former claim. This can be shown by a direct calculation using the formulas \eqref{LRR:DIII.explicit}. For instance, since 
\begin{equation*}
\wt\chi^{(1)}_0(s_{-2,-2}(u))=-s^\circ_{-1,-1}(u-1/2)s^\bullet_{-1,-1}(u-1/2),
\end{equation*}
we have 
\begin{equation}
 \mathrm{ev}_1(s_{-2,-2}(u))=-\left(1-\frac{F_{11}+F_{22}}{u} \right)\left(1+\frac{F_{11}-F_{22}}{u-1} \right) =-1+\frac{2F_{22}}{u}+\frac{F^2_{11}-F^2_{22}-F_{11}+F_{22}}{u(u-1)}. \label{LRR.DIII.ev.1}
\end{equation}
On the other hand, since $F^{\prime \rho}_{ij}=(g_{ii}+g_{jj})F_{ij}$ and $F_{ij}=-F_{-j,-i}$ in $\mfso_4$, the $(-2,-2)$-th entry of the right hand side of \eqref{LRR:ev.DIII} is given by
\begin{equation*}
 -1+\frac{2F_{22}}{u}+\frac{-2F_{22}^2-2F_{12}F_{21}+F^2_{11}+F^2_{22}+F_{12}F_{21}+F_{21}F_{12}}{u(u-1)}=-1+\frac{2F_{22}}{u}+\frac{F^2_{11}-F^2_{22}+F_{22} - F_{11}}{u(u-1)},
\end{equation*}
which coincides with \eqref{LRR.DIII.ev.1}. The images of the remaining generators can be verified similarly. 

\medskip 

If instead $\mfso_4^\rho=\mfso_4$, the argument is similar.  Denote the generators corresponding to the first copy of $\mfsp_2$ in the direct sum $\mfsp_2\oplus\mfsp_2$ by $F^\circ_{ij}$, and those corresponding to the second copy of $\mfsp_2$  by $F^\bullet_{ij}$, where in both cases $i,j\in \{\pm 1\}$.  A basis for $\mfsp_2\oplus \mfsp_2$ is then given by the union of $\{F^\circ_{1,1},F^\circ_{-1,1},F^\circ_{1,-1}\}$ 
 and $\{F^\bullet_{1,1},F^\bullet_{-1,1},F^\bullet_{1,-1}\}$.  
 Let $\Phi$ be the isomorphism $\mfsp_2\oplus \mfsp_2\iso \mfso_4$ given by
 \begin{alignat*}{3}
 & F^\circ_{11}\mapsto F_{11}+F_{22}, \quad &&F^\circ_{-1,1}\mapsto 2F_{-2,1},\quad  &&F^\circ_{1,-1}\mapsto 2F_{1,-2}\\
 &F^\bullet_{11}\mapsto F_{22}-F_{11}, \quad &&F^\bullet_{-1,1}\mapsto -2F_{12},\quad &&F^\bullet_{1,-1}\mapsto -2F_{21}.
\end{alignat*}
$\Phi$ induces an isomorphism $\wh\Phi:\mathfrak{U}\mfsp_2\otimes \mathfrak{U}\mfsp_2\iso \mathfrak{U}\mfso_4$, and so the composition  $\wh\Phi\circ (\mathrm{ev}_-^\circ\otimes \mathrm{ev}^\circ_-)$
is a surjective homomorphism $ Y^-(2)\otimes Y^-(2)\onto \mathfrak{U}\mfso_4$. The composition of this map with the embedding $\wt\chi^{(1)}_1$
from \eqref{LRR:D0.into.AII+AII} gives a homomorphism $\mathrm{ev}_0: X(\mfso_4,\mfso_4)^{tw}\to \mathfrak{U}\mfso_4$. If it is indeed given by 
the assignment \eqref{LRR:ev.D0} then it is surjective, so we need only verify that this is the case. This can be shown directly
by first computing the explicit images $\wt\chi^{(1)}_1(s_{ij}(u))$ (as in \eqref{LRR:DIII.explicit}), and then performing computations similar to those carried out in 
the $\mfso_4^\rho=\mfgl_2$ case.
\end{proof}

Given $\mu_1,\mu_2\in \C$, let $V_\rho(\mu_1,\mu_2)$ denote the irreducible $\mfso_4^\rho$-module with the highest weight $(\mu_1,\mu_2)$ (so, in particular, $F_{ii}\xi=\mu_i\xi$ for $i=1,2$).  The pull-back of $V_\rho(\mu_1,\mu_2)$ via $\mathrm{ev}_0$ and $\mathrm{ev}_1$ is an irreducible $X(\mfso_4,\mfso_4^\rho)^{tw}$-module which we call an evaluation module.

\begin{crl}\label{LRR:Cor.ev_DIIID0}
 Given $\mu_1,\mu_2\in\C$, the evaluation module $V_\rho(\mu_1,\mu_2)$ is isomorphic to the $X(\mfso_4,\mfso_4^\rho)^{tw}$-module $V(\mu_1(u),\mu_2(u))$ where
 \begin{alignat*}{3}
  &\mu_1(u)=1+\frac{2\mu_1}{u}+\frac{\mu_1^2-\mu_2^2+\mu_1-\mu_2}{u(u-1)},&&\quad \mu_2(u)=1+\frac{2\mu_2}{u}+\frac{\mu_2^2-\mu_1^2+\mu_2-\mu_1}{u(u-1)} &&\quad\text{ if }\quad\mfso_4^\rho=\mfgl_2,\\
  &\mu_1(u)=1+\frac{2\mu_1}{u-1}+\frac{\mu_1^2-\mu_2^2}{(u-1)^2},&&\quad \mu_2(u)=1+\frac{2\mu_2}{u-1}+\frac{\mu_2^2-\mu_1^2}{(u-1)^2} &&\quad \text{ if }\quad\mfso_4^\rho=\mfso_4.
  \end{alignat*}
\end{crl}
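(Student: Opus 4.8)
The plan is to read off the highest weight of the evaluation module $V_\rho(\mu_1,\mu_2)$ directly from the formulas for the evaluation morphisms $\mathrm{ev}_0$ and $\mathrm{ev}_1$ given in Proposition \ref{LRR:Prop.ev_DIII-D0}. By definition, the pull-back $V_\rho(\mu_1,\mu_2)$ is irreducible as an $X(\mfso_4,\mfso_4^\rho)^{tw}$-module (since $\mathrm{ev}_0,\mathrm{ev}_1$ are surjective), so by Theorem \ref{HWT:Thm.HWT} and Proposition \ref{HWT:Prop.VM} it is isomorphic to $V(\mu_1(u),\mu_2(u))$ for the unique highest weight $(\mu_1(u),\mu_2(u))$ determined by the action of $s_{11}(u)$ and $s_{22}(u)$ on the highest weight vector $\xi$ of $V_\rho(\mu_1,\mu_2)$. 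Thus the entire statement reduces to computing the $(1,1)$ and $(2,2)$ entries of the matrices $\mathrm{ev}_0(S(u))$ and $\mathrm{ev}_1(S(u))$ and applying them to $\xi$.

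Concretely, first I would recall that $\xi$ is annihilated by $F^{\prime\rho}_{ij}$ for $i<j$ and satisfies $F_{ii}\xi=\mu_i\xi$ for $i=1,2$, hence also $F_{-i,-i}\xi=-\mu_i\xi$ since $F_{-i,-i}=-F_{ii}$ in $\mfso_4$. For the case $\mfso_4^\rho=\mfso_4$, the $(i,i)$ entry of $\mathrm{ev}_0(S(u))$ is $1+\frac{F^{\prime\rho}_{ii}}{u-1}+\frac{((F^{\prime\rho})^2)_{ii}-2F^{\prime\rho}_{ii}-2\Omega}{2(u-1)^2}$; since $F^{\prime\rho}_{ij}=2F_{ij}$, one has $F^{\prime\rho}_{ii}=2F_{ii}$ and $((F^{\prime\rho})^2)_{ii}=\sum_k F^{\prime\rho}_{ik}F^{\prime\rho}_{ki}=4\sum_k F_{ik}F_{ki}$. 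Applying this to $\xi$, all terms $F_{ik}F_{ki}\xi$ with $k>i$ vanish by the highest weight condition, and the Casimir $\Omega$ acts by the scalar $\Omega(\mu_1,\mu_2)$; a short computation using $\Omega=F_{11}^2+F_{22}^2-2F_{22}+2F_{21}F_{12}+2F_{2,-1}F_{-1,2}$ and the bracket relations \eqref{[F,F]} gives $\Omega\xi=(\mu_1^2+\mu_2^2-2\mu_2+\dots)\xi$, and after substituting one obtains exactly $\mu_i(u)=1+\frac{2\mu_i}{u-1}+\frac{\mu_i^2-\mu_j^2}{(u-1)^2}$. The case $\mfso_4^\rho=\mfgl_2$ is handled identically using the central element $z=\Omega_\rho+\tfrac12(F_{11}+F_{22})^2$, whose eigenvalue on $\xi$ is $z\xi=(\mu_1^2+\mu_2^2+\mu_1+\mu_2)\xi$ — here I would use $z=F_{11}^2+F_{22}^2+F_{12}F_{21}+F_{21}F_{12}$, noting $F_{12}F_{21}\xi=0$ and $F_{21}F_{12}\xi=[F_{21},F_{12}]\xi=(F_{22}-F_{11})\xi$ is not quite it, so care is needed with the precise bracket — and the diagonal entry $\mcG_{ii}+\frac{F^{\prime\rho}_{ii}}{u}+\mcG_{ii}\frac{((F^{\prime\rho})^2)_{ii}-2z}{2u(u-1)}$ with $\mcG_{ii}=1$ for $i=1,2$.

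The main obstacle is purely computational: correctly evaluating the Casimir $\Omega$ (resp. the central element $z$) on the highest weight vector, which requires reordering products $F_{ij}F_{ji}$ into ``raising-before-lowering'' form via \eqref{[F,F]} and being careful about the shift terms (the ``$-2F_{22}$'' in $\Omega$, the $\tfrac12(F_{22}-F_{11})^2$ term, etc.) so that the final numerators match. There is also the mild subtlety that one should double-check the sign conventions $F^{\prime\rho}_{ij}=(g_{ii}+g_{jj})F_{ij}=2F_{ij}$ (using $g_{ii}=1$ for $i=1,2$ in both the $\mfso_4$ and $\mfgl_2$ cases, per the matrices $\mcG$ in Subsection \ref{sec:symm-pairs}), and that the $(2,2)$ entry picks up the $F_{2,-1}F_{-1,2}$-type cross terms which vanish on $\xi$ only after one verifies $-1<2$ places it among the ``lowering'' directions. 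Once these scalar eigenvalues are pinned down, the identification with $V(\mu_1(u),\mu_2(u))$ is immediate from the uniqueness clause of Proposition \ref{HWT:Prop.VM}.
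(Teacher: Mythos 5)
Your plan is the same as the paper's: pull back to $\mathfrak{U}\mfso_4^\rho$ via $\mathrm{ev}_0,\mathrm{ev}_1$, note irreducibility from surjectivity, read off the highest weight from the $(1,1)$ and $(2,2)$ entries of $\mathrm{ev}(S(u))$, and use the fact that $\Omega$ (resp.\ $z$) acts by a scalar computed on the highest weight vector together with the uniqueness clause of Proposition \ref{HWT:Prop.VM}.

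However, you have the roles of the lowering/raising products swapped in the $\mfgl_2$ computation. Since $F_{12}$ is a raising operator ($F_{12}\xi=0$), the vanishing term is $F_{21}F_{12}\xi=0$, not $F_{12}F_{21}\xi$; the surviving term is
\[
F_{12}F_{21}\xi=[F_{12},F_{21}]\xi=(F_{11}-F_{22})\xi=(\mu_1-\mu_2)\xi,
\]
so $z\xi=(\mu_1^2+\mu_2^2+\mu_1-\mu_2)\xi$, not $\mu_1^2+\mu_2^2+\mu_1+\mu_2$ as you wrote. You noticed something was off (``is not quite it''), and indeed with $\mu_1+\mu_2$ the final numerator would come out as $\mu_1^2-\mu_2^2+\mu_1-3\mu_2$ rather than $\mu_1^2-\mu_2^2+\mu_1-\mu_2$. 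A second small slip: the identity $((F^{\prime\rho})^2)_{ii}=4\sum_k F_{ik}F_{ki}$ only holds verbatim in the $\mfso_4$ case where $\mcG=I$; in the $\mfgl_2$ case $F^{\prime\rho}_{1,-1}=F^{\prime\rho}_{1,-2}=0$ because $g_{ii}+g_{kk}=0$ for $k<0$, so the sum collapses to $4F_{11}^2+4F_{12}F_{21}$, and one should say this rather than rely on the would-be vanishing of $F_{1,k}F_{k,1}\xi$ for $k<0$. Once these two points are corrected the computation closes and agrees with the paper.
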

\begin{proof}
 Consider first the case where $\mfso_4^\rho=\mfgl_2$. We first show that $z$ acts on $V_\rho(\mu_1,\mu_2)$ as the scalar $\mu_1^2+\mu_2^2+\mu_1-\mu_2$. Since $z$ belongs to the center of $\mathfrak{U}\mfgl_2$ and 
 $V_\rho(\mu_1,\mu_2)$ is a highest weight module, $z$ acts by scalar multiplication. Therefore, it suffices to determine how $z$ operates on the highest weight vector $\xi$. We have
 \begin{equation*}
(F^2_{11}+F^2_{22}+F_{12}F_{21}+F_{21}F_{12})\xi=(F^2_{11}+F^2_{22}+F_{11}-F_{22})\xi=(\mu_1^2+\mu_2^2+\mu_1-\mu_2)\xi.
 \end{equation*}
Hence, $z$ acts on $V_\rho(\mu_1,\mu_2)$ as the scalar $\mu_1^2+\mu_2^2+\mu_1-\mu_2$.  Finally, applying \eqref{LRR:ev.DIII} yields the desired formula. 

\medskip 

 If instead $\mfso_4^\rho=\mfso_4$, observe that the Casimir element $\Omega$ operates on $V_\rho(\mu_1,\mu_2)$ as multiplication by the scalar $\mu_1^2+\mu_2^2-2\mu_2$. The corollary now follows from the formula \eqref{LRR:ev.D0}. 
\end{proof}

\subsection{Twisted Yangians for the symmetric pair \texorpdfstring{$(\mfso_3,\mfso_3)$}{}}

The isomorphism $\varphi_0$ given in \eqref{LRR:B0->AII} allows us to use the representation theory of the twisted Yangian
$Y^-(2)$ to study the representation theory of $X(\mfso_3,\mfso_3)^{tw}$.

We will use below the following observation, which can be seen by expanding in powers of $u^{-1}$: if $h(u)\in 1+u^{-1}\C[[u^{-1}]]$ and $a\in \C$, then there exists a unique series $k(u)\in 1+u^{-1}\C[[u^{-1}]]$ such that $h(u)=k(u)k(u+a)$.

Let $\mu(u)=(\mu_0(u),\mu_1(u))$ with $\mu_i(u)\in 1+u^{-1}\C[[u^{-1}]]$. The next lemma will be used in the proof of Proposition \ref{LRR:B0.Class}.

\begin{lemma}\label{lem:mufact}
Suppose the components of $\mu(u)$ satisfy the relations $u\cdot\wt \mu_0(1/2-u)=(1/2-u)\cdot \wt \mu_0(u)$ and $\wt \mu_0(u)\wt \mu_0(-u+1)=\wt \mu_1(u)\wt\mu_1(-u+1)$. Then there exists
$\mu^\circ(u)\in 1+u^{-1}\C[[u^{-1}]]$ such that
\begin{equation}
 \wt \mu_1(u)=2u\,\mu^\circ(2u)\,\mu^\circ(2u-1)\quad \text{ and }\quad \wt \mu_0(u)=2u\,\mu^\circ(2u)\,\mu^\circ(1-2u).
\end{equation}
\end{lemma}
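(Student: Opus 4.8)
The plan is to reduce the factorization of the pair $(\wt\mu_0(u),\wt\mu_1(u))$ to a single scalar series via the observation stated just before Lemma \ref{lem:mufact}. First I would use the hypothesis $\wt\mu_0(u)\wt\mu_0(-u+1)=\wt\mu_1(u)\wt\mu_1(-u+1)$, which (with $\kappa=1/2$, $n=1$, $i=0$ for $\mfso_3$) is precisely the non-triviality relation \eqref{HWT:nontrivial}, together with the symmetry relation $u\cdot\wt\mu_0(1/2-u)=(1/2-u)\cdot\wt\mu_0(u)$ coming from Proposition \ref{HWT:Prop.mu_0(u)} (specialized via $p_0(u)p(u)^{-1}=1$ in type B0). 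The idea is that these two relations say that the ``extended'' $X(\mfso_3)$-weight data attached to $\mu(u)$ collapses, and I expect to recognize $\wt\mu_1(u)/(2u)$ as a series of the form $g(u)g(u+a)$ for an appropriate shift.

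Concretely, here is the order of steps I would carry out. Write $\wt\mu_1(u)=2u\,h(u)$ with $h(u)\in 1+u^{-1}\C[[u^{-1}]]$; this is legitimate since $\wt\mu_1(u)=(2u-1)\mu_1(u)+\ldots$ has the right leading behaviour (one checks the constant term using $\mu_1(u)\in g_{11}+u^{-1}\C[[u^{-1}]]$ with $g_{11}=1$ in type B0). Substituting $u\mapsto 2u$ we get a series $\wt\mu_1(2u)/(4u)\in 1+u^{-1}\C[[u^{-1}]]$; by the cited observation (with $a=-1$) there is a unique $\mu^\circ(u)\in 1+u^{-1}\C[[u^{-1}]]$ with $\wt\mu_1(2u)/(4u)=\mu^\circ(2u)\mu^\circ(2u-1)$, i.e. $\wt\mu_1(u)=2u\,\mu^\circ(2u)\mu^\circ(2u-1)$ after renaming. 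This establishes the first equality. For the second, I would define $\psi(u):=\wt\mu_0(u)/\big(2u\,\mu^\circ(2u)\mu^\circ(1-2u)\big)$ and show $\psi(u)=1$; the relation $\wt\mu_0(u)\wt\mu_0(-u+1)=\wt\mu_1(u)\wt\mu_1(-u+1)$ combined with the first equality forces $\psi(u)\psi(-u+1)=1$ (using $\mu^\circ(2(-u+1))\mu^\circ(1-2(-u+1))=\mu^\circ(2-2u)\mu^\circ(2u-1)$ and matching factors with $\wt\mu_1(-u+1)=2(-u+1)\mu^\circ(2-2u)\mu^\circ(1-2u)$), while the symmetry relation $u\cdot\wt\mu_0(1/2-u)=(1/2-u)\cdot\wt\mu_0(u)$ forces $\psi(1/2-u)=\psi(u)$ (here one checks $\mu^\circ(2(1/2-u))\mu^\circ(1-2(1/2-u))=\mu^\circ(1-2u)\mu^\circ(2u)$, so the $\mu^\circ$-part already obeys the same symmetry). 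A series satisfying both $\psi(u)=\psi(1/2-u)$ and $\psi(u)\psi(-u+1)=1$ must satisfy $\psi(u)\psi(u+1/2)=1$, hence $\psi(u)^2=\psi(u)\psi(u+1/2)\cdot\psi(u)/\psi(u+1/2)$; more directly, iterating gives $\psi(u)=\psi(u-1/2)^{-1}=\psi(u)$, and a short power-series argument (the constant term is $1$ and any shift-$1/2$-involutive unit squaring via the relation to $1$ must be trivial) yields $\psi\equiv 1$.

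The main obstacle I anticipate is the last step: showing that a series $\psi(u)\in 1+u^{-1}\C[[u^{-1}]]$ with $\psi(1/2-u)=\psi(u)$ and $\psi(u)\psi(1-u)=1$ is identically $1$. From the two relations one deduces $\psi(u)\psi(u+1/2)=\psi(1/2-u)\psi(u+1/2)$, and replacing $u$ by $-u+1$ in the second relation and comparing should pin down $\psi(u)=\psi(u+1/2)^{-1}$ as well, giving $\psi(u)=\psi(u+1)$; combined with $\psi(u)\in 1+u^{-1}\C[[u^{-1}]]$ a periodicity-versus-decay argument (a power series in $u^{-1}$ with $\psi(u)=\psi(u+1)$ and $\psi(\infty)=1$ must be constant) finishes it. I would phrase this cleanly by passing to $\log\psi(u)$, which is a well-defined element of $u^{-1}\C[[u^{-1}]]$, turning the multiplicative relations into additive ones, where the conclusion $\log\psi=0$ is transparent. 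The rest of the argument is routine power-series bookkeeping, so I would keep it brief in the write-up.
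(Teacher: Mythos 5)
Your proof is correct, and the first half (factoring $\wt\mu_1(u)=2u\,\mu^\circ(2u)\mu^\circ(2u-1)$ via the observation preceding the lemma) coincides with the paper's. The second half takes a genuinely different, and arguably cleaner, route. The paper writes $f(u)=\wt\mu_0(u)/\wt\mu_1(u)$, uses $f(u)f(1-u)=1$ to produce an auxiliary series $\alpha(u)$ with $f(u)=\alpha(1-2u)\alpha(2u-1)^{-1}$, sets $g(u)=\alpha(2u-1)\mu^\circ(2u-1)^{-1}$ so that $\wt\mu_0(u)=2u\,\mu^\circ(2u)\mu^\circ(1-2u)\,g(1-u)g(u)^{-1}$, and then invokes the \emph{uniqueness} part of the observation a second time to force $g(u)=g(1-u)$. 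You instead define $\psi(u)=\wt\mu_0(u)/\bigl(2u\,\mu^\circ(2u)\mu^\circ(1-2u)\bigr)$ (which is exactly the paper's factor $g(1-u)g(u)^{-1}$), observe directly from the two hypotheses and the $\wt\mu_1$-factorization that $\psi(1/2-u)=\psi(u)$ and $\psi(u)\psi(1-u)=1$, deduce $\psi(u+1/2)=\psi(u)^{-1}$, and then finish with a log/periodicity argument. That finishing step is sound: if $\log\psi(u)=\sum_{k\ge 1}c_k u^{-k}$ with $c_m$ the first nonzero coefficient, comparing the $u^{-m}$ coefficient in $\log\psi(u+1/2)=-\log\psi(u)$ gives $c_m=-c_m$, a contradiction; so in fact the half-period anti-symmetry alone already forces $\psi\equiv 1$, and you don't even need to pass to the full period-$1$ relation. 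The trade-off: your version avoids introducing $\alpha$ and $g$ and replaces the second appeal to the observation's uniqueness with an elementary leading-coefficient argument, at the cost of having to articulate the ``anti-periodic and decaying at infinity implies trivial'' step explicitly. One cosmetic note: the sentence in your writeup ``iterating gives $\psi(u)=\psi(u-1/2)^{-1}=\psi(u)$'' is a tautology as written; what you mean (and what you say correctly in the following paragraph) is $\psi(u+1/2)=\psi(u)^{-1}$, hence $\psi(u+1)=\psi(u)$. Tighten that when you write it out.
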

\begin{proof}
 Set $f(u)=\wt \mu_0(u)/\wt \mu_1(u)$. By the observation at the beginning of this subsection, we can find a unique series $\lambda(u)\in 1+u^{-1}\C[[u^{-1}]]$ such that $\mu_1(u)=\lambda(u)\,\lambda(u-1/2)$. Set $\mu^\circ(u)=\lambda(u/2)$. Then
 \begin{equation*}
  \wt \mu_1(u)=2u\, \mu^\circ(2u)\,\mu^\circ(2u-1). 
 \end{equation*}
Since $f(u)=f(-u+1)^{-1}$, there exists a series $\alpha(u)\in 1+u^{-1}\C[[u^{-1}]]$ such that $f(u)=\alpha(1-2u)\,\alpha(2u-1)^{-1}$. Set $g(u)=\alpha(2u-1)\,\mu^\circ(2u-1)^{-1}$. Then: 
\begin{equation*}
 f(u)=\frac{\mu^\circ(1-2u)g(1-u)}{\mu^\circ(2u-1)g(u)}=\frac{2u\mu^\circ(2u)\mu^\circ(1-2u)g(1-u)g(u)^{-1}}{\wt \mu_1(u)},
\end{equation*}
which implies that $\wt \mu_0(u)=2u\,\mu^\circ(2u)\,\mu^\circ(1-2u)\,g(1-u)\,g(u)^{-1}$. Since $u\cdot\wt \mu_0(1/2-u)=(1/2-u)\cdot \wt \mu_0(u)$, we obtain 
\begin{equation*}
 g(1-u)\,g(u)^{-1}=g(u+1/2)\,g(1/2-u)^{-1}\implies k(u+1/2)\,k(u)=g(u+1/2)\,g(u) \text{ where }k(u)=g(1-u).
\end{equation*}
Setting $h(u)=g(u+1/2)g(u)$ and applying the uniqueness of the decomposition in the observation preceding this lemma, we obtain that $g(u)=k(u)=g(1-u)$. Hence
\begin{equation*}
 \wt \mu_0(u)=2u\,\mu^\circ(2u)\,\mu^\circ(1-2u)\,g(1-u)\,g(u)^{-1}=2u\,\mu^\circ(2u)\,\mu^\circ(1-2u). \qedhere
\end{equation*}
\end{proof}

\begin{prop}\label{LRR:B0.Class}
 Let $\mu(u)=(\mu_0(u),\mu_1(u))$ satisfy the conditions of Proposition \ref{HWT:refl.Prop.2} so that the irreducible 
 $X(\mfso_3,\mfso_3)^{tw}$ module $V(\mu_0(u),\mu_1(u))$ exists. Then $V(\mu(u))$ is finite-dimensional if and only if there exists a monic polynomial 
$P(u)$ in $u$ with $P(u)=P(-u+3/2)$, and
\begin{equation}
 \frac{\wt{\mu}_0(u)}{\wt{\mu}_1(u)}=\frac{P(u+1/2)}{P(u)}. \label{LRR:B0.2}
\end{equation}
In this case the polynomial $P(u)$ is unique.
\end{prop}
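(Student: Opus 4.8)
The strategy is to transfer the classification along the isomorphism $\varphi_0 : X(\mfso_3,\mfso_3)^{tw} \to Y^-(2)$ of \eqref{LRR:B0->AII}, exactly as was done for $X(\mfsp_2,\mfsp_2^\rho)^{tw}$ in Proposition \ref{LRR:CI-C0.Class}, but with the extra wrinkle that here the relevant $Y^-(2)$ is being ``doubled'' via the fusion formula $S(u) \mapsto \tfrac12 R^\circ(-1) S_1^\circ(2u-1) R^\circ(-4u+1)^{t_-} S_2^\circ(2u)$. First I would show that $\varphi_0$ sets up a bijection between highest weight modules: given $\mu(u) = (\mu_0(u),\mu_1(u))$ satisfying the conditions of Proposition \ref{HWT:refl.Prop.2} (non-triviality \eqref{HWT:nontrivial} together with the type-B relation \eqref{HWT:mu_0(u).0}), I would use Lemma \ref{lem:mufact} to produce the single series $\mu^\circ(u) \in 1+u^{-1}\C[[u^{-1}]]$ with $\wt\mu_1(u) = 2u\,\mu^\circ(2u)\mu^\circ(2u-1)$ and $\wt\mu_0(u) = 2u\,\mu^\circ(2u)\mu^\circ(1-2u)$, and then check — by applying the explicit formula for $\varphi_0$ to the highest weight vector $\xi$ and using that $S_1^\circ$, $S_2^\circ$ act diagonally on $\xi \otimes \xi$ in the appropriate sense — that $V(\mu(u))$, viewed as a $Y^-(2)$-module, is the irreducible module $V(\mu^\circ(u))$ (or a small rescaling thereof). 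The key point is that the $(v_{-1},v_{-1})$ and $(v_1,v_1)$ matrix entries of the fused $S(u)$ act on $\xi$ as prescribed products of $\mu^\circ$, which one reads off from the fusion formula; the components $\mu_0(u),\mu_1(u)$ are then recovered from $\wt\mu_0,\wt\mu_1$ via \eqref{HWT:tilde_mu_i(u)}.

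Next I would invoke the classification \eqref{LRR:AII-class} for $Y^-(2)$: $V(\mu^\circ(u))$ is finite-dimensional iff there is a monic $P'(u)$ with $P'(u) = P'(-u+1)$ and $\mu^\circ(-u)/\mu^\circ(u) = P'(u+1)/P'(u)$. It then remains to translate this single scalar condition into the condition \eqref{LRR:B0.2} on the ratio $\wt\mu_0(u)/\wt\mu_1(u)$. Using the two displayed expressions for $\wt\mu_0$ and $\wt\mu_1$ in terms of $\mu^\circ$, I compute
\[
\frac{\wt\mu_0(u)}{\wt\mu_1(u)} = \frac{\mu^\circ(1-2u)}{\mu^\circ(2u-1)},
\]
so that the $Y^-(2)$ classification relation, with $u$ replaced by $2u-1$, becomes $\mu^\circ(1-2u)/\mu^\circ(2u-1) = P'(2u)/P'(2u-1)$. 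Setting $P(u) = P'(2u-1)$ (up to the usual power-of-$2$ normalization to keep $P$ monic) gives a monic polynomial with $P(-u+3/2) = P'(-2u+2) = P'(2u-1) = P(u)$ (using $P'(v) = P'(-v+1)$), and the relation becomes precisely \eqref{LRR:B0.2}. Uniqueness of $P(u)$ follows from uniqueness of $P'(u)$ in the $Y^-(2)$ classification.

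\textbf{Expected main obstacle.} The routine but genuinely fiddly part will be verifying the action of the fused matrix $\varphi_0(S(u))$ on the highest weight vector — in particular pinning down the precise shifts and scalar factors coming from $R^\circ(-1)$, $R^\circ(-4u+1)^{t_-}$ and the doubling $u \mapsto 2u$ versus $u \mapsto 2u-1$, and checking consistency with the Sklyanin-determinant normalization ${\rm sdet}\,S^\circ(u)$ for $Y^-(2)$ (which one needs to know is non-trivial, i.e.\ how $\ga_2(u)$ enters). One must make sure the identification $V(\mu(u)) \cong V(\mu^\circ(u))$ as $Y^-(2)$-modules is literally correct and not off by a twist $\nu_g$; the conditions of Proposition \ref{HWT:refl.Prop.2} on $\mu(u)$ are exactly what guarantee that $\mu^\circ(u)$ is well-defined and unambiguous, so this should go through, but it requires care. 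Everything else is bookkeeping: the structure parallels the proof of Proposition \ref{LRR:DIII-D0.Class} closely, with $Y^-(2)$ (doubled) playing the role that $SY^\pm(2)\otimes Y^-(2)$ played there.
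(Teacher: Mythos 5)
Your proposal follows the paper's proof essentially step for step: use the isomorphism $\varphi_0$ to transport the problem to $Y^-(2)$, factor the weight via Lemma \ref{lem:mufact} to obtain $\mu^\circ(u)$, verify that the $Y^-(2)$-module $V(\mu^\circ(u))$ pulls back to the $X(\mfso_3,\mfso_3)^{tw}$-module $V(\mu(u))$ with no extra twist, invoke \eqref{LRR:AII-class}, and translate $Q(u)\rightsquigarrow P(u)=2^{-\deg Q}Q(2u-1)$. The ``fiddly part'' you flag — computing the action of $\varphi_0(s_{00}(u))$ and $\varphi_0(s_{11}(u))$ on $\xi$ from the fusion formula — is indeed exactly where the paper spends its effort (it uses the reflection and symmetry relations of $Y^-(2)$ to collapse the bilinear expression for $\varphi_0(s_{00}(u))$ into the form that yields $\mu_0(u)\xi$), so your plan is a correct match to the paper's argument.
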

\begin{proof}
Let us begin by listing the explicit images of the generating series $ s_{ij}(u)$ of $X(\mfso_3,\mfso_3)^{tw}$ under $\varphi_0$. Setting $\wt u=u-1/2$, we have
 \begin{equation}
 \begin{aligned}
  s_{-1,-1}(u) \mapsto& s_{-1,-1}^\circ(2\wt u)s_{-1,-1}^\circ(2u)-\tfrac{1}{4u-1}s_{-1,1}^\circ(2\wt u)s_{1,-1}^\circ(2u) \,,\\
  {s}_{-1,0}(u)  \mapsto& \tfrac{1}{\sqrt{2}}s_{-1,-1}^\circ(2\wt u)s_{-1,1}^\circ(2u)+\tfrac{1}{\sqrt{2}(4u-1)}\left(4u\,s_{-1,1}^\circ(2\wt u)s_{-1,-1}^\circ(2u)-s_{-1,1}^\circ(2\wt u)s_{11}^\circ(2u) \right), \\
  {s}_{-1,1}(u)  \mapsto& -\tfrac{4u}{4u-1}s_{-1,1}^\circ(2\wt u)s_{-1,1}^\circ(2u),\\
   {s}_{0,-1}(u)  \mapsto& \tfrac{1}{\sqrt{2}}s_{1,-1}^\circ(2\wt u)s_{-1,-1}^\circ(2u)+\tfrac{1}{\sqrt{2}(4u-1)}\left(4u\,s_{-1,-1}^\circ(2\wt u)s_{1,-1}^\circ(2u)-s_{11}^\circ(2\wt u)s_{1,-1}^\circ(2u) \right) , \\
  {s}_{00}(u)    \mapsto& \tfrac{1}{8u-2}\left((4u\,s_{-1,-1}^\circ(2\wt u)-s_{11}^\circ(2\wt u))s^\circ_{11}(2u)+(4u\,s^\circ_{11}(2\wt u)-s^\circ_{-1,-1}(2\wt u))s_{-1,-1}^\circ(2u) \right)\\
                        &+\tfrac{1}{2}\left(s_{1,-1}^\circ(2\wt u)s_{-1,1}^\circ(2u)+s_{-1,1}^\circ(2\wt u)s_{1,-1}^\circ(2u) \right) ,\\
  s_{01}(u) \mapsto& -\tfrac{1}{\sqrt{2}}s_{-1,1}^\circ(2\wt u)s_{11}^\circ(2u)-\tfrac{1}{\sqrt{2}(4u-1)}\left(4u\,s_{11}^\circ(2\wt u)s_{-1,1}^\circ(2u)-s_{-1,-1}^\circ(2\wt u)s_{-1,1}^\circ(2u) \right),\\      
  s_{1,-1}(u) \mapsto& -\tfrac{4u}{4u-1}s_{1,-1}^\circ(2\wt u)s_{1,-1}^\circ(2u) \,,\\
   s_{10}(u) \mapsto&-\tfrac{1}{\sqrt{2}}s_{11}^\circ(2\wt u)s_{1,-1}^\circ(2u)-\tfrac{1}{\sqrt{2}(4u-1)}\left(4u\,s_{1,-1}^\circ(2\wt u)s_{11}^\circ(2u)-s_{1,-1}^\circ(2\wt u)s_{-1,-1}^\circ(2u) \right),\\
   s_{11}(u) \mapsto& s_{11}^\circ(2\wt u)s_{11}^\circ(2u)-\tfrac{1}{4u-1}s_{1,-1}^\circ(2\wt u)s_{-1,1}^\circ(2u).                   
 \end{aligned}
 \label{LRR:BO.explicit}
\end{equation} 
A brief explanation for how to obtain these formulas was given in Subsection 4.4 of \cite{GRW}, see in particular the proof of Proposition 4.7 therein. For the sake of the reader, we recall this process and provide a detailed proof for a few of the above relations. 
 
 Recall the vector space $V\cong \C^3$ and its basis $\{v_{-1},v_0,v_1\}$ defined in the paragraph immediately preceding \eqref{LRR:B0->AII}. 
 The matrix $S(u)$ is an element of $\End\,V\otimes X(\mfso_3,\mfso_3)^{tw}[[u^{-1}]]$, while the image $\varphi_0(S(u))=\tfrac{1}{2}R^\circ(-1)S_1^\circ(2u-1)R^\circ(-4u+1)^{t_-}S_2^\circ(2u)$ belongs to $\End\,V\otimes Y^-(2)[[u^{-1}]]$. Therefore, to obtain the image of each generating series $s_{ij}(u)$ under $\varphi_0$ from the assignment given in (5.6), we expand
 $S(u)v_k$ and $\varphi_0(S(u))v_k$, for each $-1\leq k\leq 1$, as linear combinations of $v_{-1}, v_0$ and $v_1$ and then compare coefficients. 
 
 As an example, we consider the case where $k=1$. Since $S(u)v_1=\sum_{i=-1}^1v_i\otimes s_{i1}(u)$, this computation will allow us to compute the images of $s_{-1,1}(u),s_{01}(u)$ and $s_{11}(u)$. Since
 $v_1=-e_1\otimes e_1$, a straightforward computation shows that 
\begin{align}
\varphi_0  (S(u))v_1 = {} & -\tfrac{1}{2}R^\circ(-1)S_1^\circ(2u-1)R^\circ(-4u+1)^{t_-}S_2^\circ(2u)(e_1\otimes e_1)\nonumber\\
 = {} & \tfrac{1}{8u-2}\hspace{-.5em}\sum_{i,k=\pm 1}(e_k\otimes e_1+e_1\otimes e_k)\otimes s_{ki}^\circ(2u-1)s_{i1}^\circ(2u) \nonumber \\
 &  -\tfrac{2u}{4u-1}\hspace{-.5em}\sum_{i,k=\pm 1}(e_k\otimes e_i+e_i\otimes e_k)\otimes s_{k1}^\circ(2u-1)s_{i1}^\circ(2u)\label{so3-ex}. 
\end{align}
From this expression we can easily compute the $Y^-(2)$-coefficients of $v_{-1}=e_{-1}\otimes e_{-1}$ and $v_1=-e_{1}\otimes e_{1}$, which must coincide with the images 
of $s_{-1,1}(u)$ and $s_{11}(u)$, respectively. We obtain
\begin{equation*}
 \varphi_0(s_{-1,1}(u))=-\tfrac{4u}{4u-1}s_{-1,1}^\circ(2u-1)s_{-1,1}^\circ(2u), \quad \varphi_0(s_{11}(u))=s_{11}^\circ(2u-1)s_{11}^\circ(2u)-\tfrac{1}{4u-1}s_{1,-1}^\circ(2u-1)s_{-1,1}^\circ(2u).
\end{equation*}
Similarly, the coefficients of $e_{-1}\otimes e_1$ and $e_1\otimes e_{-1}$ in \eqref{so3-ex} are both equal to 
\begin{equation*}
 \tfrac{1}{8u-2}(s_{-1,-1}^\circ(2u-1)s_{-1,1}^\circ(2u)+s_{-1,1}^\circ(2u-1)s_{11}^\circ(2u))-\tfrac{4u}{8u-2}(s_{-1,1}^\circ(2u-1)s_{11}^\circ(2u)+s_{11}^\circ(2u-1)s_{-1,1}^\circ(2u)).
\end{equation*}
Since $v_0=\frac{1}{\sqrt{2}}(e_{-1}\otimes e_1+e_1\otimes e_{-1})$, the image of $s_{01}(u)$ must coincide with the above expression multiplied by $\sqrt{2}$. After rearranging, this gives
\begin{equation*}
 \varphi_0(s_{01}(u))=-\tfrac{1}{\sqrt{2}}s_{-1,1}^\circ(2u-1)s_{11}^\circ(2u)-\tfrac{1}{\sqrt{2}(4u-1)}(4us_{11}^\circ(2u-1)s_{-1,1}^\circ(2u)-s_{-1-1}^\circ(2u-1)s_{-1,1}^\circ(2u)).
\end{equation*}
The remaining images can all be computed by repeating this procedure with $k=0$ and $k=1$.

\medskip

Let us now return to the core of the proof of Proposition \ref{LRR:B0.Class}. Since the components of $\mu(u)$ satisfy the conditions of Lemma \ref{lem:mufact} (by Proposition \ref{HWT:refl.Prop.2}), we may choose $\mu^\circ(u)\in 1+u^{-1}\C[[u^{-1}]]$ such that
\begin{equation}
 \wt \mu_1(u)=2u\mu^\circ(2u)\mu^\circ(2u-1)\quad \text{ and }\quad \wt \mu_0(u)=2u\mu^\circ(2u)\mu^\circ(1-2u). \label{LRR:B0.4}
\end{equation}
Let $V(\mu^\circ(u))$ denote the irreducible highest weight $Y^-(2)$-module with the highest weight $\mu^\circ(u)$, and let $\xi$ denote its highest weight vector. $V(\mu^\circ(u))$ may
be viewed as a $X(\mfso_3,\mfso_3)^{tw}$-module via the isomorphism $\varphi_0$. 
It is immediate from the explicit formulas \eqref{LRR:BO.explicit} that we must have $s_{ij}(u)\xi=0$ for all $i<j$. Moreover, we have 
\begin{equation*}
 s_{11}(u)\xi=\varphi_0(s_{11}(u))\xi=\mu^\circ(2u)\mu^\circ(2u-1)\xi=\mu_1(u)\xi. 
\end{equation*}
Computing $s_{00}(u)\xi$ requires substantially more effort. First note that we have $s_{-1,1}^\circ(2u-1)s_{1,-1}^\circ(2u)\equiv [s_{-1,1}^\circ(2u-1),s_{1,-1}^\circ(2u)]$ on $\C\xi$, 
and by the explicit form of the defining reflection equation \eqref{[s,s]-Ols} we have 
\begin{align*}
[s_{-1,1}^\circ(2u-1),s_{1,-1}^\circ(2u)] & \equiv \tfrac{4u}{4u-1}\left(s^\circ_{11}(2u)s^\circ_{-1,-1}(2u-1)-s^\circ_{11}(2u-1)s^\circ_{-1,-1}(2u)\right)\\
                                                &+\tfrac{1}{4u-1}\left(s^\circ_{-1,-1}(2u-1)s^\circ_{-1,-1}(2u)-s^\circ_{11}(2u)s^\circ_{11}(2u-1)\right).
\end{align*}
Substituting this into the formula for $\varphi_0(s_{00}(u))$ (obtained from \eqref{LRR:BO.explicit}) and using that $[s^\circ_{ii}(u),s^\circ_{jj}(v)]\equiv 0$
for all $i,j\in \{\pm 1\}$, we obtain 
\begin{equation}
 \varphi_0(s_{00}(u))\equiv \tfrac{1}{4u-1}\left(4u\,s^\circ_{11}(2u)s^\circ_{-1,-1}(2u-1)-s^\circ_{11}(2u)s^\circ_{11}(2u-1)\right). \label{LRR:B0.3}
\end{equation}
By the symmetry relation \eqref{Sym-Ols}, $s^\circ_{-1,-1}(2u-1)=s^\circ_{11}(1-2u)-\tfrac{1}{2-4u}(s^\circ_{11}(1-2u)-s^\circ_{11}(2u-1))$. Substituting this into \eqref{LRR:B0.3}
and appealing to \eqref{LRR:B0.4} we obtain 
\begin{equation*}
 s_{00}(u)\xi=\tfrac{1}{2u-1}\mu^\circ(2u)\left(2u\,\mu^\circ(1-2u)-\mu^\circ(2u-1)\right)\xi=\mu_0(u)\xi.
\end{equation*}
In particular, this shows that, as an $X(\mfso_3,\mfso_3)^{tw}$-module, $V(\mu^\circ(u))$ is isomorphic to $V(\mu(u))$. 
\medskip 

We may now employ the classification results for finite-dimensional irreducible $Y^-(2)$-modules to determine precisely when  $V(\mu(u))$
is finite-dimensional.
As recalled in \eqref{LRR:AII-class}, the $Y^-(2)$-module $V(\mu^\circ(u))$ is finite-dimensional if and only if there exists a monic polynomial $Q(u)$ in $u$ with 
$Q(u)=Q(-u+1)$ and 
\begin{equation}
 \frac{\mu^\circ(-u)}{\mu^\circ(u)}=\frac{Q(u+1)}{Q(u)}. \label{LRR:B0.5}
\end{equation}
Moreover, if it exists (in other words, if $V(\mu^\circ(u))$ is indeed finite-dimensional), the monic polynomial $Q(u)$ is unique.

By \eqref{LRR:B0.4}, the condition \eqref{LRR:B0.5} is equivalent to 
\begin{equation*}
 \frac{\wt \mu_0(u)}{\wt \mu_1(u)}=\frac{\mu^\circ(1-2u)}{\mu^\circ(2u-1)}=\frac{Q(2u)}{Q(2u-1)}=\frac{P(u+1/2)}{P(u)},
\end{equation*}
where $P(u)=2^{-\deg Q(u)}Q(2u-1)$. With this definition of $P(u)$ we have $P(u)=P(-u+3/2)$, 
and the uniqueness of $P(u)$ is guaranteed by the uniqueness of $Q(u)$. Therefore, we have shown that $V(\mu(u))$ is finite-dimensional if and only if 
there exists a (uniquely determined) monic polynomial $P(u)$ such that $P(u)=P(-u+3/2)$ and \eqref{LRR:B0.2} holds. 
\end{proof}

Let $\Omega$ denote the Casimir element $\Omega=F_{11}^2-F_{11}+2F_{10}F_{01}$ of the Lie algebra $\mfso_3$, and set $\Omega(u)=\tfrac{4u+1}{4u}\cdot \Omega$. We have the following analogue of Propositions \ref{LRR:Prop.ev_CI-C0} and \ref{LRR:Prop.ev_DIII-D0}: 
\begin{prop}\label{LRR:Prop.ev_B0}
 Let $F^{\prime\rho}=\sum_{i,j=-1}^1 E_{ij}\otimes F^{\prime \rho}_{ij}\in \End\,\C^3\otimes \mathfrak{U}\mfso_3$. Then the assignment
 \begin{equation}
  S(u)\mapsto I+\frac{u}{u-3/4}\left(\frac{F^{\prime\rho}}{u-1/4}+\frac{(F^{\prime\rho})^2-2F^{\prime\rho}-2\Omega(u)\cdot I}{2(u-1/4)^2}\right) \label{LRR:ev.B0}
 \end{equation}
defines a surjective algebra homomorphism $\mathrm{ev}:X(\mfso_3,\mfso_3)^{tw}\onto \mathfrak{U}\mfso_3$. 
\end{prop}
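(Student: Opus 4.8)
The natural strategy is to obtain the evaluation morphism for $X(\mfso_3,\mfso_3)^{tw}$ by transporting the evaluation morphism $\mathrm{ev}^\circ_-\colon Y^-(2)\to\mathfrak{U}\mfsp_2$ of Proposition \ref{P:Y-Ols->U} through the isomorphism $\varphi_0\colon X(\mfso_3,\mfso_3)^{tw}\iso Y^-(2)$ of \eqref{LRR:B0->AII}, exactly as was done for the other small-rank cases in Propositions \ref{LRR:Prop.ev_CI-C0} and \ref{LRR:Prop.ev_DIII-D0}. Concretely, I would set $\mathrm{ev}=\mathrm{ev}^\circ_-\circ\varphi_0$ after identifying $\mathfrak{U}\mfsp_2\cong\mathfrak{U}\mfso_3$ (both are $\mathfrak{U}$ of a three–dimensional simple Lie algebra of rank one); under this identification the generators $F^\circ_{ij}$ of $\mfsp_2$ correspond, up to normalization, to $F^{\prime\rho}_{ij}$ of $\mfso_3$. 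Since $\varphi_0$ and $\mathrm{ev}^\circ_-$ are both algebra homomorphisms and $\mathrm{ev}^\circ_-$ is surjective (it even has a right inverse, $\mathrm{ev}^\circ_-\circ\iota=\mathrm{id}$), the composite is automatically a surjective algebra homomorphism onto $\mathfrak{U}\mfso_3$. Hence the only thing requiring actual work is to verify that this composite is given by the explicit formula \eqref{LRR:ev.B0}.

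The key computational step is therefore the following: take the explicit images \eqref{LRR:BO.explicit} of the generating series $s_{ij}(u)$ under $\varphi_0$, and apply $\mathrm{ev}^\circ_-$ to the right-hand sides, using Proposition \ref{P:Y-Ols->U}, which gives $s^\circ_{ij}(v)\mapsto\delta_{ij}+F_{ij}(v-\tfrac12)^{-1}$ for the $Y^-(2)$-generators. This produces, for each pair $(i,j)$ with $-1\le i,j\le1$, a rational function in $u$ valued in $\mathfrak{U}\mfsp_2$; one then rewrites these in terms of the Casimir $\Omega=F_{11}^2-F_{11}+2F_{10}F_{01}$ (using the $\mfso_3$-realization of $\mfsp_2$) and checks entry-by-entry that the resulting matrix equals the right-hand side of \eqref{LRR:ev.B0}. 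The appearance of the factor $\tfrac{u}{u-3/4}$ and of $\Omega(u)=\tfrac{4u+1}{4u}\Omega$ is forced by the shifts $2u$ and $2u-1$ occurring in \eqref{LRR:BO.explicit} together with the prefactor $\tfrac{1}{2}R^\circ(-1)$ projecting onto the three-dimensional subspace $V\cong\C^3$; these are exactly the sort of bookkeeping identities (partial-fraction decompositions in $u$, and quadratic identities in the $F^\bullet_{ij}$ coming from $\mathrm{sdet}\,S^\circ(u)$ being the relevant scalar) that appeared in the proof of Proposition \ref{LRR:Prop.ev_DIII-D0}.

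The main obstacle I expect is purely the algebra of this last verification: one must correctly track the normalization constants relating $F^\circ_{ij}$ to $F^{\prime\rho}_{ij}$ (the factors $g_{ii}+g_{jj}$ from Proposition \ref{P:F->TY}, which in type B0 are all equal to $2$, so $F^{\prime\rho}_{ij}=2F_{ij}$), and one must be careful that $V\cong\C^3$ carries the basis $\{v_{-1},v_0,v_1\}$ with $v_0$ normalized by $\tfrac{1}{\sqrt2}$, so that the $(0,0)$-entry picks up the extra $\tfrac12$ factors visible in \eqref{LRR:BO.explicit}. In practice I would verify the three diagonal entries and one off-diagonal entry in detail — say $s_{-1,-1}(u)$, $s_{00}(u)$, $s_{11}(u)$ and $s_{01}(u)$, which already involve all the structural features — using the reflection-equation relations \eqref{[s,s]-Ols} and the symmetry relation \eqref{Sym-Ols} to reduce products $s^\circ_{ij}(2u-1)s^\circ_{k\ell}(2u)$ to expressions in $\Omega$ and $F^\bullet_{ij}$, and then invoke that the remaining entries follow by the same procedure (and are also pinned down by the symmetry relation \eqref{TX-Sym}, which both sides of \eqref{LRR:ev.B0} satisfy). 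Once the formula is confirmed on these entries, surjectivity is immediate — $\mathrm{ev}$ hits $F^{\prime\rho}$, hence all of $\mathfrak{U}\mfso_3$ — completing the proof.
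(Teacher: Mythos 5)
Your proposal takes essentially the same route as the paper: both define $\mathrm{ev}$ as the composite $\wh\Phi\circ\mathrm{ev}^\circ_-\circ\varphi_0$, where $\wh\Phi$ is the identification $\mathfrak{U}\mfsp_2\iso\mathfrak{U}\mfso_3$ (given explicitly in the paper by $F^\circ_{11}\mapsto 2F_{11}$, $F^\circ_{\mp1,\pm1}\mapsto 2\sqrt{2}\,F_{\mp1,0}$ or $2\sqrt{2}\,F_{0,\mp1}$), observe surjectivity for free, and then reduce the proposition to a direct entry-by-entry check that the formula \eqref{LRR:ev.B0} agrees with the composite using the explicit images \eqref{LRR:BO.explicit}. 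The only small simplification you miss is that, after verifying $\mathrm{ev}(s_{11}(u))$, the paper shortcuts the remaining entries by invoking \eqref{HWT:EmbeddingBracket.2} and the fact that $X(\mfso_3,\mfso_3)^{tw}$ is generated by the coefficients of $s_{11}(u)$ together with the $F_{ij}\in\mfso_3$, rather than verifying three more entries and appealing to the symmetry relation.
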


\begin{proof}
 Let $\Phi:\mfsp_2\iso \mfso_3$ be the isomorphism of Lie algebras given by the assignment
 \begin{equation*}
  F^\circ_{11}\mapsto 2F_{11},\quad F^\circ_{-1,1}\mapsto 2\sqrt{2}F_{-1,0},\quad F^\circ_{1,-1}\mapsto 2\sqrt{2}F_{0,-1}.
 \end{equation*}
Let $\wh\Phi$ denote the corresponding isomorphism of enveloping algebras $\mathfrak{U}\mfsp_2\iso\mathfrak{U}\mfso_3$. Then the composition 
$\wh\Phi\circ\mathrm{ev}^\circ_-\circ \varphi_0$ is a surjective homomorphism $\mathrm{ev}:X(\mfso_3,\mfso_3)^{tw}\onto \mathfrak{U}\mfso_3$. To complete the proof 
of the first statement of the proposition, it remains only to see that 
this map agrees with that given in \eqref{LRR:ev.B0}. This can be checked directly using the explicit images \eqref{LRR:BO.explicit} and the formula $s^\circ_{ij}(u)\mapsto \delta_{ij}+F^\circ_{ij}\left(u- \frac{1}{2}\right)^{-1}$ for
the map $\mathrm{ev}^\circ_-$. For example, from these formulas we obtain 
\begin{align*}
 s_{11}(u)\mapsto &\left(1+\frac{F_{11}}{u-3/4}\right)\left(1+\frac{F_{11}}{u-1/4}\right)-\frac{1}{2u-1/2}\left(\frac{F_{0,-1}}{u-3/4}\right)\left(\frac{F_{-1,0}}{u-1/4}\right)\\
                  &=1+\frac{u}{u-3/4}\left(\frac{2F_{11}}{u-1/4}+\frac{F_{11}^2-F_{11}-\tfrac{1}{4u}(F_{11}^2-F_{11}+2F_{0,-1}F_{-1,0})}{(u-1/4)^2} \right).
\end{align*}
Conversely, since $F^\rho_{ij}=2F_{ij}$ for all $-1\leq i,j\leq 1$, the $(1,1)$-th entry of the matrix on the right hand side of \eqref{LRR:ev.B0} is 
\begin{align*}
 1+&\frac{u}{u-3/4}\left(\frac{2F_{11}}{u-1/4}+\frac{2F_{10}F_{01}+2F_{11}^2-2F_{11}-\tfrac{4u+1}{4u}(F_{11}^2-F_{11}+2F_{10}F_{01})}{(u-1/4)^2} \right)\\
   &=1+\frac{u}{u-3/4}\left(\frac{2F_{11}}{u-1/4}+\frac{F_{11}^2-F_{11}-\tfrac{1}{4u}(F_{11}^2-F_{11}+2F_{10}F_{01})}{(u-1/4)^2} \right).
 \end{align*}
As $F_{10}F_{01}=F_{0,-1}F_{-1,0}$, this shows that $\mathrm{ev}(s_{11}(u))$ is indeed given as claimed in \eqref{LRR:ev.B0}. The images of the other generators can 
be checked similarly, or one can use \eqref{HWT:EmbeddingBracket.2} and the fact that $X(\mfso_3,\mfso_3)^{tw}$ is generated by the coefficients of $s_{11}(u)$ and the elements $F_{ij} \in \mfso_3$.
\end{proof}

The morphism $\mathrm{ev}$ allows us to extend $\mfso_3$-modules to $X(\mfso_3,\mfso_3)^{tw}$-modules. As usual, modules obtained this way are called
evaluation modules. Let $V(\mu)$ denote the irreducible $\mfso_3$-module with the highest 
weight $\mu\in \C$. 

\begin{crl}\label{LRR:Cor.ev_B0}
 Let $\mu\in \C$. Then as an $X(\mfso_3,\mfso_3)$-module $V(\mu)$ is isomorphic to $V(\mu_0(u),\mu_1(u))$ where 
 \begin{equation}
 \mu_0(u)=1+\frac{\mu^2(-u-1/4)-\mu(u-1/4)}{(u-3/4)(u-1/4)^2},\quad \mu_1(u)=1+\frac{\mu^2+(2u-1)\mu}{(u-3/4)(u-1/4)}.\label{LRR:Cor.ev_B0.1}
 \end{equation}
\end{crl}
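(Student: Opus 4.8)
The plan is to deduce Corollary \ref{LRR:Cor.ev_B0} directly from the explicit evaluation morphism \eqref{LRR:ev.B0} of Proposition \ref{LRR:Prop.ev_B0}. By definition, the evaluation module $V(\mu)$ is the pull-back of the irreducible $\mfso_3$-module with highest weight $\mu$ along $\mathrm{ev}\colon X(\mfso_3,\mfso_3)^{tw}\onto \mathfrak{U}\mfso_3$. So all that is needed is to apply $\mathrm{ev}$ to the generators $s_{00}(u)$ and $s_{11}(u)$, read off the $(0,0)$ and $(1,1)$ diagonal entries of the right-hand side of \eqref{LRR:ev.B0}, and then evaluate the resulting operators on the highest weight vector $\xi$ of $V(\mu)$.

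First I would record that $F^{\prime\rho}_{ij} = 2F_{ij}$ for all $-1\le i,j\le 1$ in $\mfso_3$ (as noted just before Theorem \ref{HWT:Thm.HWT}, since $\mcG=I$ and $g_{ii}=1$), so that $F^{\prime\rho} = 2\sum_{i,j} E_{ij}\otimes F_{ij}$. Then the diagonal entries of the matrix $I + \tfrac{u}{u-3/4}\bigl(\tfrac{F^{\prime\rho}}{u-1/4}+\tfrac{(F^{\prime\rho})^2 - 2F^{\prime\rho}-2\Omega(u)\cdot I}{2(u-1/4)^2}\bigr)$ must be computed. For the $(1,1)$-entry, one uses $F_{11}\xi = \mu\xi$, $F_{01}\xi = 0$, and the fact that $\Omega = F_{11}^2 - F_{11} + 2F_{10}F_{01}$ acts on $V(\mu)$ by the scalar $\mu^2+\mu$ obtained by applying $\Omega$ to $\xi$ (note $F_{10}F_{01}\xi = [F_{10},F_{01}]\xi + F_{01}F_{10}\xi = \pm F_{11}\xi$ — the exact bracket constant should be checked against \eqref{[F,F]}, but it does not affect the final scalar since $\Omega$ is central and acts by its value on $\xi$, which is $\mu^2-\mu+0$ up to the chosen normalization). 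The $(1,1)$-entry of $(F^{\prime\rho})^2$ acting on $\xi$ is $\sum_{a} 4F_{1a}F_{a1}\xi = 4(F_{1,-1}F_{-1,1}+F_{10}F_{01}+F_{11}^2)\xi$; here $F_{-1,1}\xi = 0$ and $F_{01}\xi=0$, so this reduces to $4F_{11}^2\xi = 4\mu^2\xi$. Plugging in and simplifying should yield exactly $\mu_1(u) = 1 + \tfrac{\mu^2+(2u-1)\mu}{(u-3/4)(u-1/4)}$.

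For the $(0,0)$-entry, I would similarly compute: the $(0,0)$-entry of $F^{\prime\rho}$ is $2F_{00} = 0$ in $\mfso_3$ (since $F_{00} = E_{00} - E_{00} = 0$), the $(0,0)$-entry of $(F^{\prime\rho})^2$ acting on $\xi$ is $\sum_a 4F_{0a}F_{a0}\xi = 4(F_{0,-1}F_{-1,0} + F_{00}^2 + F_{01}F_{10})\xi$, and since $F_{-1,0}\xi=0$ this is $4F_{01}F_{10}\xi = 4[F_{01},F_{10}]\xi + 4F_{10}F_{01}\xi$; the commutator gives a multiple of $F_{11}\xi$ from \eqref{[F,F]}, hence $c\mu\xi$ for some explicit constant $c$, while $F_{01}\xi=0$ kills the second term. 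The $-2F^{\prime\rho}$ contributes nothing on the $(0,0)$-entry, and the $-2\Omega(u)\cdot I$ term contributes $-2\cdot\tfrac{4u+1}{4u}(\mu^2-\mu)$. Assembling these with the overall factor $\tfrac{u}{(u-3/4)(u-1/4)^2}\cdot\tfrac12$ and simplifying the rational function — this is where the arithmetic is slightly delicate, as the $\tfrac{4u+1}{4u}$ factor in $\Omega(u)$ must combine with the prefactor $\tfrac{u}{u-3/4}$ to cancel the $u$ in the denominator of $\Omega(u)$ — should produce $\mu_0(u) = 1 + \tfrac{\mu^2(-u-1/4) - \mu(u-1/4)}{(u-3/4)(u-1/4)^2}$. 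The main obstacle will be bookkeeping: correctly identifying which off-diagonal $F_{ia}$ annihilate $\xi$, getting the bracket constants in \eqref{[F,F]} right (particularly the $\theta$-terms, which for $\mfso_3$ are all $+1$), and carefully simplifying the rational expressions so that the stated common denominators appear. A useful sanity check is that the formulas in \eqref{LRR:Cor.ev_B0.1} must be consistent with Proposition \ref{HWT:Prop.mu_0(u)} (the relation \eqref{HWT:mu_0(u).0} on $\wt\mu_0(u)$), which I would verify at the end; alternatively, as the proof of Proposition \ref{LRR:Prop.ev_B0} itself remarks, one could instead compute $\mathrm{ev}(s_{11}(u))$ directly and then use \eqref{HWT:EmbeddingBracket.2} together with the symmetry relation \eqref{s=s} to pin down $\mu_0(u)$ from $\mu_1(u)$, avoiding the messier $(0,0)$-entry computation altogether.
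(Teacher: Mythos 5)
Your approach coincides with the paper's: determine the Casimir scalar, then apply the evaluation morphism \eqref{LRR:ev.B0} to the diagonal generators and act on the highest weight vector $\xi$. However, your determination of the Casimir scalar is internally inconsistent, and the first version you state is wrong. You write that $\Omega = F_{11}^2 - F_{11} + 2F_{10}F_{01}$ acts by $\mu^2 + \mu$, supported by the claim $F_{10}F_{01}\xi = [F_{10},F_{01}]\xi + F_{01}F_{10}\xi = \pm F_{11}\xi$. In fact $F_{10}F_{01}\xi = 0$: since $0<1$, $F_{01}$ is a raising generator (corresponding to a positive root) and annihilates the highest weight vector, so $F_{10}(F_{01}\xi) = 0$ with no commutator manipulation needed, and $\Omega\xi = (F_{11}^2 - F_{11})\xi = (\mu^2 - \mu)\xi$. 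Your commutator expansion is a valid identity, but you then silently discard $F_{01}F_{10}\xi$ as though $F_{10}$ annihilated $\xi$, which it does not for generic $\mu$; if you carry the identity through correctly you find $F_{01}F_{10}\xi = [F_{01},F_{10}]\xi = -[F_{10},F_{01}]\xi$, and the two terms cancel. The parenthetical afterwards lands on $\mu^2 - \mu + 0$, which is correct, but it contradicts the $\mu^2+\mu$ you use in the sentence, and the stray $+\mu$, if carried through, would corrupt both $\mu_0(u)$ and $\mu_1(u)$.

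With the Casimir corrected to $\mu^2-\mu$, the remainder of your outline is sound and matches the paper's intent. On $\xi$ the $(1,1)$-entry of $(F^{\prime\rho})^2$ gives $4\mu^2$ (the terms $F_{1,-1}F_{-1,1}$ and $F_{10}F_{01}$ drop out: the first because $F_{1,-1}=0$ in $\mfso_3$, the second because $F_{01}\xi=0$), and the $(0,0)$-entry gives $4F_{01}F_{10}\xi = 4[F_{01},F_{10}]\xi = -4\mu\xi$, using $[F_{01},F_{10}] = -F_{11}$ from \eqref{[F,F]}. The $\tfrac{4u+1}{4u}$ factor in $\Omega(u)$ does combine with the prefactor $\tfrac{u}{u-3/4}$ as you anticipate, and after clearing $4(u-1/4)$ from the numerator and denominator one obtains exactly the expressions \eqref{LRR:Cor.ev_B0.1}. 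Your proposed shortcut of computing only $\mu_1(u)$ and recovering $\mu_0(u)$ from the symmetry constraints would require the uniqueness argument of Lemma \ref{lem:mufact} and is not obviously shorter than the direct $(0,0)$ computation.
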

\begin{proof}
 Observe that the Casimir element $\Omega$ operates on $V(\mu)$ as scalar multiplication by $\mu^2-\mu$. One then obtains the formulas for $\mu_0(u)$ and 
 $\mu_1(u)$ given in \eqref{LRR:Cor.ev_B0.1} directly from the formula \eqref{LRR:ev.B0}, as in the proof of Corollary \ref{LRR:Cor.ev_DIIID0}. 
\end{proof}


\section{Classification of finite dimensional irreducible representations} \label{sec:CT}

With the classification results for low rank twisted Yangians obtained
in Section \ref{sec:SR} and the machinery of Section \ref{sec:HWT} at our disposal, we are now in a position to obtain the main results of this paper: classifications of finite-dimensional irreducible 
modules for twisted Yangians of types CI, DIII and BCD0. It will be convenient to adapt the convention used in Section \ref{sec:SR} and employ the notation 
$X(\mfg_N,\mfg_N^\rho)^{tw}$ and $Y(\mfg_N,\mfg_N^\rho)^{tw}$ for the twisted Yangians $X(\mfg_N,\mcG)^{tw}$ and $Y(\mfg_N,\mcG)^{tw}$, respectively. 

\subsection{Twisted Yangians for symmetric pairs of type CI and DIII}\label{Subsection:CT.CI-DIII}

We begin by focusing on the extended twisted Yangians of types CI and DIII. That is, we have $\mfg_N=\mfso_{2n}$ or $\mfg_N=\mfsp_{2n}$, and $\mfg_N^\rho=\mfgl_n$ in both cases, with
$\mcG=\sum_{i=1}^n(E_{ii}-E_{-i,-i})$. 
The following lemma produces a family of one-dimensional representations of $X(\mfg_N,\mfgl_n)^{tw}$: 
 \begin{lemma}\label{CT:Lemma.tracereps}
  Let $a\in \C$. Then the assignment 
  \begin{equation}
   S(u)\mapsto \mathcal{G}+au^{-1}I \label{CT:Lemma.tracereps.1}
  \end{equation}
  yields a one-dimensional representation $V(a)$ of $X(\mfg_N,\mfgl_n)^{tw}$ with the highest weight $(1+au^{-1},\ldots, 1+au^{-1})$.
 \end{lemma}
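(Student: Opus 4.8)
The plan is to obtain this as an immediate consequence of Lemma \ref{L:K-1dim}. By the isomorphism \eqref{iso:TX-BG}, the algebra $X(\mfg_N,\mfgl_n)^{tw}\cong\mcB(\mcG)$ is generated by the coefficients $s_{ij}^{(r)}$, $-n\le i,j\le n$, $r\ge0$, of its $S$-matrix $S(u)$, which has constant term $\mcG$, subject precisely to the reflection equation \eqref{TX-RE} and the symmetry relation \eqref{TX-Sym}. Hence, to construct a one-dimensional representation it suffices to exhibit a matrix $K(u)\in\End(\C^N)(u)$ with constant term $\mcG$ satisfying both \eqref{TX-RE} and \eqref{TX-Sym}; the assignment $s_{ij}(u)\mapsto(K(u))_{ij}$ will then extend uniquely to an algebra homomorphism $X(\mfg_N,\mfgl_n)^{tw}\to\C$.

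First I would take $K(u)=K(u;a)=\mcG+a\,u^{-1}I$, the matrix of \eqref{K-1dim:C1D3}. Its constant term is $\mcG$, as required. By Lemma \ref{L:K-1dim}, $K(u;a)$ solves the reflection equation \eqref{RE} --- which is \eqref{TX-RE} with $S(u)$ replaced by $K(u;a)$ --- and satisfies the symmetry relation \eqref{K-1dim:Sym}. It then remains only to observe that \eqref{K-1dim:Sym} is the specialization of \eqref{TX-Sym} to types CI and DIII: in these cases the lower sign in $(\pm)$ is in force, and $\mcG(u)=\mcG$ is a constant $K$-matrix of the first kind with $\Tr(\mcG)=\sum_{i=1}^n(1-1)=0$, so that the term $\Tr(\mcG(u))\,S(\ka-u)$ in \eqref{TX-Sym} drops out and the two relations coincide. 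This produces the one-dimensional module $V(a)$.

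Finally I would read off the highest weight from the diagonal form of $K(u;a)$: the operators $s_{ij}(u)$ act by $0$ on $V(a)$ for $i\neq j$, while $s_{ii}(u)$ acts as the scalar $g_{ii}+a\,u^{-1}=1+a\,u^{-1}$ for each $1\le i\le n$. Thus the conditions \eqref{HWT:highestweightrep} are met with any nonzero vector of $V(a)$ as highest weight vector, and $V(a)$ is the highest weight module with highest weight $(1+a\,u^{-1},\ldots,1+a\,u^{-1})$. I do not expect a genuine obstacle here; the only step requiring attention is the identification of \eqref{K-1dim:Sym} with the defining symmetry relation \eqref{TX-Sym} in types CI and DIII, which rests on $\Tr(\mcG)=0$ and on the sign conventions fixed before Lemma \ref{L:K-RE-Sym}.
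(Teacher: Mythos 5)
Your proof is correct and takes essentially the same approach as the paper: both reduce the claim to Lemma \ref{L:K-1dim} and the presentation of $X(\mfg_N,\mfgl_n)^{tw}$ by the reflection and symmetry relations. You are slightly more careful than the paper's one-line proof, since you make explicit the observation that \eqref{K-1dim:Sym} agrees with \eqref{TX-Sym} in types CI/DIII precisely because $\Tr(\mcG)=0$ and the lower sign is in force, a point the paper leaves implicit.
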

\begin{proof}
This follows from Lemma \ref{L:K-1dim} where it was shown that the matrix $\mathcal{G}+au^{-1}I$ satisfies the defining reflection equation \eqref{TX-RE} and symmetry relation
\eqref{TX-Sym} of the extended twisted Yangian $X(\mfg_N,\mfgl_n)^{tw}$. 
\end{proof}



\bigskip 

\noindent We are now prepared to prove our main results concerning the twisted Yangians of types CI and DIII. 

\begin{thrm}\label{CT:Thm.DIII-CI}
 Let $\mu(u)=(\mu_1(u),\ldots,\mu_n(u))$ satisfy \eqref{HWT:nontrivial} so that the irreducible $X(\mfg_{N},\mfgl_n)^{tw}$-module $V(\mu(u))$ exists. Then $V(\mu(u))$ is finite-dimensional if and only if there exists
a scalar $\gamma\in \C$ together with monic polynomials $P_1(u),\ldots,P_n(u)$ in $u$ with $P_i(u)=P_i(-u+n-i+2)$ for each $i>1$, $P_1(\gamma)\neq 0$, and
 \begin{flalign}
 &   &&\frac{\wt{\mu}_{i-1}(u)}{\wt{\mu}_i(u)}=\frac{P_i(u+1)}{P_i(u)}\quad \text{ for }\quad  i=2,\ldots,n, \label{CT:DIII-CI.Drinfeld.1} &&&\\
\text{while} & \hspace{20mm} && &&& \nn\\
 &   &&\frac{\wt{\mu}_1(\ka-u)}{\wt{\mu}_2(u)}=\frac{P_1(u+1)}{P_1(u)}\cdot \frac{\gamma-u}{\gamma+u-\ka} \; \text{ and } \; P_1(u)=P_1(-u+n) \; \text{ if }\; \mfg_{N}=\mfso_{2n} \label{CT:DIII-CI.Drinfeld.2}, &&&\\
 &   &&\frac{\wt{\mu}_1(\ka-u)}{\wt{\mu}_1(u)}=\frac{P_1(u+2)}{P_1(u)}\cdot \frac{\gamma-u}{\gamma+u-\ka} \; \text{ and } \; P_1(u)=P_1(-u+n+3) \; \text{ if }\; \mfg_{N}=\mfsp_{2n}. \label{CT:DIII-CI.Drinfeld.3} &&&
\end{flalign}
 Moreover, when $V(\mu(u))$ is finite-dimensional, the associated tuple $(P_1(u),\ldots,P_n(u),\gamma)$ is unique. 
\end{thrm}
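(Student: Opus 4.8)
The plan is to combine three ingredients: the reduction to Molev–Ragoucy reflection algebras from Proposition \ref{HWT:refl.Prop.3}, the rank-induction machinery from Lemma \ref{CT:Lemma.induction} and Proposition \ref{CT:Prop.induction}, and the small-rank classifications of Section \ref{sec:SR} (Proposition \ref{LRR:CI-C0.Class} and Proposition \ref{LRR:DIII-D0.Class}) as the base case. We argue separately for necessity and sufficiency of the conditions \eqref{CT:DIII-CI.Drinfeld.1}–\eqref{CT:DIII-CI.Drinfeld.3}, and the uniqueness statement is then read off at the end.

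For \emph{necessity}, suppose $V(\mu(u))$ is finite-dimensional. Since the pair is of type CI or DIII, Proposition \ref{HWT:refl.Prop.3}(1) (the case $\mbfk=n$, so $\boldsymbol\ell=0$, so no $\gamma$) directly yields the existence of monic $P_2(u),\dots,P_n(u)$ with $P_i(u)=P_i(-u+n-i+2)$ satisfying \eqref{CT:DIII-CI.Drinfeld.1}; this handles all the ``upper'' Drinfeld polynomials. To obtain $P_1(u)$ and $\gamma$, I would pass to the rank-one situation: the subspace $V_+$ of Lemma \ref{CT:Lemma.induction} carries a module structure over $\wt X(\mfg_{N-2},\mcG')^{tw}$, and since in types CI/DIII one has $\Tr(\mcG(u))=\Tr(\mcG'(u))=0$, by the remark following Proposition \ref{CT:Prop.induction} this action already factors through $X(\mfg_{N-2},\mcG')^{tw}$ with no twisting needed ($h(u)=1$). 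Iterating this down to rank one produces a finite-dimensional $X(\mfg_2,\mfgl_1)^{tw}$-module (in type CI, i.e.\ $\mfsp_2$) or $X(\mfg_4,\mfgl_2)^{tw}$-module (in type DIII, i.e.\ $\mfso_4$) with a controlled highest weight obtained by repeatedly applying $\mu_i\mapsto \mu_i(u+1/2)+\tfrac1{2u}\mu_n(u+1/2)$, which translates to the shift $\wt\mu_i(u)\mapsto\wt\mu_i(u+\tfrac12)$ on the tilded weights. Applying Proposition \ref{LRR:CI-C0.Class} (equation \eqref{LRR:CI.Class.eq}) in type CI, respectively Proposition \ref{LRR:DIII-D0.Class} (equations \eqref{LRR:DIII.2} and \eqref{LRR:DIII.3}) in type DIII, to this rank-one module yields the scalar $\gamma$ and the relation \eqref{CT:DIII-CI.Drinfeld.3}, resp.\ \eqref{CT:DIII-CI.Drinfeld.2}, after unwinding the parameter shifts $u\mapsto u-1/2$, $\ka\mapsto\ka-(n-1)$ etc. The symmetry constraints $P_1(u)=P_1(-u+n)$ (type D) and $P_1(u)=P_1(-u+n+3)$ (type C) come from the corresponding symmetry of the polynomial produced by the small-rank theorem together with the accumulated shifts.

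For \emph{sufficiency}, given the data $(P_1(u),\dots,P_n(u),\gamma)$ satisfying the stated conditions, I would build a finite-dimensional $X(\mfg_N)$-module $L(\lambda(u))$ whose restriction along $X(\mfg_N,\mcG)^{tw}\hookrightarrow X(\mfg_N)$ contains a highest weight vector of weight $\mu(u)$, then tensor with the one-dimensional module $V(a)$ of Lemma \ref{CT:Lemma.tracereps} if an extra free parameter (namely $\gamma$, which has no counterpart among the $X(\mfg_N)$-Drinfeld polynomials) needs to be absorbed. Concretely: use Corollary \ref{HWT:Cor.restrictions} to see which $X(\mfg_N)$-highest weights $\lambda(u)$ restrict to a given $\mu(u)$ — in the first-kind case this is the relation $\wt\mu_i(u)=[\mp]2u\,\lambda_i(u-\ka/2)\lambda_{-i}(-u+\ka/2)$ — and then invoke the classification \eqref{HWT:Ext.All.P_n}–\eqref{HWT:Ext.D.P_1} for $X(\mfg_N)$ to realize the needed $\lambda(u)$ by the polynomials $P_1,\dots,P_n$; Lemma \ref{HWT:Ext} guarantees a consistent extension to a full $N$-tuple with $M(\lambda(u))$ non-trivial. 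The factor $\tfrac{\gamma-u}{\gamma+u-\ka}$ appearing only in $P_1$'s relation is produced by tensoring with $V(\gamma\text{-related constant})$ from Lemma \ref{CT:Lemma.tracereps}, exactly as the extra parameter $\alpha$ in $Y^+(2)$ is handled in \cite{Mo5,Mo2}. The irreducible quotient of $X(\mfg_N,\mcG)^{tw}\cdot(\xi\otimes v_a)$ is then the desired finite-dimensional $V(\mu(u))$, with Proposition \ref{HWT:Prop.tensors} computing its highest weight.

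\textbf{Main obstacle.} The delicate point is the sufficiency direction, specifically the bookkeeping of spectral-parameter shifts and normalizing factors when passing between $X(\mfg_N,\mcG)^{tw}$, the extended Yangian $X(\mfg_N)$, and the one-dimensional twists $V(a)$: one must check that the candidate $\lambda(u)$ extracted from the $P_i$'s really lies in $1+u^{-1}\C[[u^{-1}]]$ (not merely a formal ratio), that $M(\lambda(u))$ is non-trivial (Lemma \ref{HWT:Ext}), and that after restriction and tensoring the highest weight is \emph{exactly} $\mu(u)$ and the resulting module is finite-dimensional — this last point needing the finite-dimensionality of $L(\lambda(u))$ over $X(\mfg_N)$ (Theorem 5.16 of \cite{AMR}) plus the fact that $V(a)$ is one-dimensional. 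The necessity direction is comparatively mechanical once one accepts that the rank induction of Proposition \ref{CT:Prop.induction} terminates cleanly in the trace-free CI/DIII setting; the only subtlety there is matching the small-rank parameter $\gamma$ (and its defining relations \eqref{LRR:CI.Class.eq}, \eqref{LRR:DIII.3}) with the rank-$n$ parameter after all the accumulated shifts $u\mapsto u-(n-1)/2$, $\ka\mapsto\ka-(n-1)$, which is where the precise forms $\tfrac{\gamma-u}{\gamma+u-\ka}$ and the symmetry points $n$, $n+3$ in \eqref{CT:DIII-CI.Drinfeld.2}–\eqref{CT:DIII-CI.Drinfeld.3} must be pinned down by a careful but routine computation.
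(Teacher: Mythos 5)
Your proposal follows the paper's argument in all essentials: Proposition \ref{HWT:refl.Prop.3} for $P_2,\dots,P_n$; the $V_+$ machinery of Lemma \ref{CT:Lemma.induction} and Proposition \ref{CT:Prop.induction} (with the observation that $h(u)=1$ in types CI/DIII) feeding the small-rank classifications of Propositions \ref{LRR:CI-C0.Class} and \ref{LRR:DIII-D0.Class} to extract $(P_1,\gamma)$; and, for sufficiency, building a finite-dimensional $L(\lambda(u))$ from a factorization of the $P_i$'s and tensoring with the one-dimensional module $V(\gamma-\ka)$ of Lemma \ref{CT:Lemma.tracereps}. The only cosmetic difference is that you iterate $V\mapsto V_+$ all the way down to the base rank in one pass rather than phrasing it as a one-step induction, and you leave implicit the final step where the paper reconciles $\mu(u)$ with the constructed $\mu^\sharp(u)$ by twisting with an automorphism $\nu_h$ for an even series $h$ — but you correctly flag this matching as the delicate point.
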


\begin{proof} $(\Longrightarrow)$
We begin by showing that if the $X(\mfg_N,\mfgl_{2n})^{tw}$-module $V=V(\mu_1(u),\ldots,\mu_n(u))$ is finite-dimensional, then there exists $\gamma\in \C$ 
and monic polynomials $P_1(u),\ldots,P_n(u)$ satisfying the conditions in the statement of the theorem. We obtain immediately from Proposition \ref{HWT:refl.Prop.3} that
 there exists monic polynomials $P_2(u),\ldots,P_n(u)$ in $u$ such that $P_i(u)=P_i(-u+n-i+2)$ and  \eqref{CT:DIII-CI.Drinfeld.1} holds for each $i\geq 2$. Therefore, to complete this direction of the proof it remains to see that there is also a scalar $\gamma\in \C$ and a monic polynomial $P_1(u)$ with $P_1(\gamma)\neq 0$, such that \eqref{CT:DIII-CI.Drinfeld.2} and \eqref{CT:DIII-CI.Drinfeld.3} hold. We will prove this by induction on the rank $n$, taking Propositions \ref{LRR:CI-C0.Class} and \ref{LRR:DIII-D0.Class} as the base for the  induction.
 Suppose that the statement holds whenever the rank $n$ of $\mfg_N$ satisfies $n<m$, where $m\in \N$ is some fixed
 integer with $m>2$ if $\mfg_{2m}=\mfso_{2m}$, and $m>1$ if $\mfg_{2m}=\mfsp_{2m}$.  Let $V=V(\mu_1(u),\ldots,\mu_m(u))$ be a nontrivial finite-dimensional irreducible $X(\mfg_{2m},\mfgl_{m})^{tw}$-module, and 
 denote its highest weight vector by $\xi$. Recall the subspace $V_+$ of $V$ containing $\xi$ defined by 
 \begin{equation*}
 V_+=\{w\in V:s_{kn}(u)w=0 \text{ for }k<n\}.
 \end{equation*}
By Proposition \ref{CT:Prop.induction}, $V_+$ inherits the structure of an $X(\mfg_{2m-2},\mathfrak{gl}_{m-1})^{tw}$
 module by letting $s^\prime_{ij}(u)$ operate as $s_{ij}^\circ(u)=s_{ij}(u+1/2)+\frac{\delta_{ij}}{2u}s_{mm}(u+1/2)$ for all $-m+1\leq i,j\leq m-1$. In particular, the highest weight module $V(\mu^\circ(u))$ (see Proposition \ref{CT:Prop.induction} for $\mu^{\circ}(u)$) must also be finite-dimensional, so by the induction hypothesis there is $\alpha\in \C$ and a monic polynomial $Q(u)$ in $u$ such that $Q(\alpha)\neq 0$,
 $Q(u)=Q(-u+m+1/2\mp 3/2)$, and
 \begin{equation}
 \frac{\wt{\mu}^\circ_1(\ka'-u)}{\wt{\mu}^\circ_{3/2\pm1/2}(u)}=\frac{Q(u+3/2\mp 1/2)}{Q(u)}\cdot \frac{\alpha-u}{\alpha+u-\ka'} \; \text{ where } \ka'=\ka-1.\label{CT:DIII.inductionstep}
 \end{equation}

Next observe that for any $1\leq i<m$ we have $\wt\mu_i^\circ(u)=\wt{\mu_i}(u+1/2)$. Using this, making the substitution $u\mapsto u-1/2$, setting $\gamma=\alpha+1/2$ and $P_1(u)=Q(u-1/2)$, we obtain:
\begin{equation}
 \frac{\wt{\mu}_1(\ka-u)}{\wt{\mu}_{3/2\pm 1/2}(u)}=\frac{Q(u+1\mp 1/2)}{Q(u-1/2)}\cdot \frac{\alpha+1/2-u}{\alpha+u-\ka+1/2}=\frac{P_1(u+3/2\mp 1/2)}{P_1(u)}\cdot \frac{\gamma-u}{\gamma+u-\ka}. \label{CT:DIII.unique}
\end{equation}
Moreover, the relation $Q(u)=Q(-u+m+1/2\mp 3/2)$ implies that $P_1(u)=P_1(-u+m)$ if $\mfg_{2m}=\mfso_{2m}$ and $P_1(u) = P_1(-u+m+3)$ if $\mfg_{2m}=\mfsp_{2m}$, and since $Q(\alpha)\neq 0$, $P_1(\gamma)=Q(\gamma-1/2)=Q(\alpha)\neq 0$. Thus, by induction we have established
that if $V=V(\mu(u))$ is a finite-dimensional irreducible $X(\mfg_{N},\mfgl_n)^{tw}$-module, then there exists a scalar $\gamma\in \C$ and monic polynomials $P_1(u),\ldots,P_n(u)$ satisfying the conditions 
in the statement of the theorem. 
 
 \medskip 
 
$(\Longleftarrow)$  Conversely, suppose $\mu(u)=(\mu_1(u),\ldots,\mu_n(u))$ is such that the irreducible $X(\mfg_{N},\mfgl_n)^{tw}$-module $V(\mu(u))$ exists, and in addition there exists $\gamma\in \C$ and monic polynomials 
$P_1(u),\ldots,P_n(u)$ satisfying all the conditions outlined in the statement of the theorem. We wish to show that $V(\mu(u))$ is finite-dimensional. Suppose first that $\mfg_N=\mfso_{2n}$.
Then, since for each $i\in \mathcal{I}_N$ we have $P_i(u)=P_i(-u+n-i+2-\delta_{i1})$, there exists monic polynomials $Q_1(u),\ldots, Q_n(u)$ such that 
\begin{equation*}
 P_i(u)=(-1)^{\deg{Q_i(u)}}Q_i(u)Q_i(-u+n-i+2-\delta_{i1}) 
\end{equation*}
for each $i\in \mathcal{I}_N$. Now, for each $i\in \mathcal{I}_N$, define $\wh{Q}_i(u)=Q_i(u+\ka/2)$ and $\lambda_i(u)\in 1+u^{-1}\C[[u^{-1}]]$ by 
\begin{equation*}
 \lambda_i(u)=u^{-a}\wh Q_2(u)\cdots \wh Q_i(u)\wh Q_{i+1}(u+1)\cdots \wh Q_n(u+1),
\end{equation*}
where $a=\sum_{i=2}^n\deg \wh Q_i$, and let $\lambda_{-1}(u)\in 1+u^{-1}\C[[u^{-1}]]$ be given by the formula
\begin{equation*}
 \lambda_{-1}(u)=\frac{1}{\wh {Q}_1(u)}u^{-a}\wh Q_1(u+1) \wh Q_2(u)\wh Q_{3}(u+1)\cdots \wh Q_n(u+1).
\end{equation*}
Then the $\lambda_i(u)$ satisfy the relations 
\begin{equation}
\frac{\lambda_{-1}(u)}{\lambda_2(u)}=\frac{\wh{Q}_1(u+1)}{\wh{Q}_1(u)} \quad \text{ and }\quad \frac{\lambda_{i-1}(u)}{\lambda_i(u)}=\frac{\wh{Q}_i(u+1)}{\wh{Q}_i(u)} \label{CT:DIII.Lambda}
\end{equation}
for all $i\geq 2$. Next, by Lemma \ref{HWT:Ext} there exists a unique $2n$-tuple $\lambda(u)$ extending $(\lambda_{-1}(u),\lambda_1(u),\ldots,\lambda_n(u))$ with the property that the irreducible $X(\mfso_{2n})$-module
$L(\lambda(u))$ exists. By \eqref{HWT:Ext.All.P_n} and \eqref{HWT:Ext.D.P_1} $L(\lambda(u))$ is also finite-dimensional. Now consider the $X(\mfso_{2n},\mfgl_n)^{tw}$-module $L(\lambda(u))\otimes V(\gamma-\ka)$. Let $\xi\in L(\lambda(u))$ be the highest weight vector, and
let $\eta$ be any nonzero vector in $V(\gamma-\ka)$. Then by Proposition \ref{HWT:Prop.tensors}, 
the module $X(\mfso_{2n},\mfgl_n)^{tw}\cdot(\xi\otimes \eta)$ is a finite-dimensional highest weight $X(\mfso_{2n},\mfgl_n)^{tw}$-module of weight $\mu^\sharp(u)=(\mu^\sharp_1(u),\ldots,\mu^\sharp_n(u))$ whose components are given by
\begin{equation}
\wt \mu^\sharp_i(u)=2u(1+(\gamma-\ka)u^{-1})\lambda_i(u-\ka/2)\lambda_{-i}(-u+\ka/2). \label{CT:DIII.tensorweight}
\end{equation}
By the relations \eqref{CT:DIII.Lambda}, \eqref{HWT:Ext.non-trivial} and \eqref{CT:DIII.tensorweight}, for all $i\geq 2$ we have:
\begin{align*}
 \frac{\wt \mu^\sharp_{i-1}(u)}{\wt \mu^\sharp_{i}(u)}&=\frac{\lambda_{i-1}(u-\ka/2)\lambda_{i}(-u-\ka/2+n-i+1)}{\lambda_i(u-\ka/2)\lambda_{i-1}(-u-\ka/2+n-i+1)}\\
                                                      &=\frac{\wh Q_i(u-\ka/2+1)}{\wh Q_i(u-\ka/2) }\frac{\wh Q_i(-u-\ka/2+n-i+1)}{\wh Q_i(-u-\ka/2+n-i+2)} =\frac{P_i(u+1)}{P_i(u)}.                                                     
\end{align*}
Similarly, relations \eqref{CT:DIII.Lambda}, \eqref{HWT:Ext.non-trivial} and \eqref{CT:DIII.tensorweight} imply that
\begin{align*}
 \frac{\wt \mu^\sharp_{1}(\ka-u)}{\wt \mu^\sharp_{2}(u)}&=\frac{\lambda_{-1}(u-\ka/2)\lambda_2(\ka/2-u)}{\lambda_2(u-\ka/2)\lambda_{-1}(-u+\ka/2)}\cdot \frac{(\ka-u)(1+(\gamma-\ka)(\ka-u)^{-1})}{u(1+(\gamma-\ka)(u)^{-1})}\\                                        
                                                        &=\frac{\wh Q_1(u-\ka/2+1)\wh Q_1(-u+\ka/2)}{\wh Q_1(u-\ka/2)\wh Q_1(-u+\ka/2+1)}\cdot \frac{\gamma-u}{\gamma+u-\ka} = \frac{P_1(u+1)}{P_1(u)}\cdot \frac{\gamma-u}{\gamma+u-\ka}.
\end{align*}
By assumption, the components of the $n$-tuple $\mu(u)$ also satisfy the relations \eqref{CT:DIII-CI.Drinfeld.1} and \eqref{CT:DIII-CI.Drinfeld.2} for the same scalar $\gamma$
and monic polynomials $P_1(u),\ldots,P_n(u)$. This yields that 
\begin{equation}
 \frac{\wt \mu_1(\ka-u)}{\wt \mu_2(u)}=\frac{\wt \mu^\sharp_1(\ka-u)}{\wt \mu^\sharp_2(u)} \quad \text{ and }\quad \frac{\wt \mu_{i-1}(u)}{\wt \mu_i(u)}=\frac{\wt \mu^\sharp_{i-1}(u)}{\wt \mu^\sharp_i(u)} \label{CT:DIII.mu<->sharp}
\end{equation}
for all $i\geq 2$. From the second set of equalities above, we deduce that, setting $g(u)=\mu_n(u)\mu^\sharp_n(u)^{-1}$, we have $\mu_i(u)=g(u)\mu^\sharp_i(u)$ for all $i\in \mathcal{I}_N$. From this and the first equality in \eqref{CT:DIII.mu<->sharp}, we deduce that $g(u) = g(\kappa-u)$. 

Set $h(u)=g(u+\ka/2)$.  Using $g(u)=g(\ka-u)$ we deduce that $h(-u)=g(\ka/2-u)=g(u+\ka/2)=h(u)$, and so $h(u)\in 1+u^{-2}\C[[u^{-2}]]$. Let $V(\mu^\sharp(u))^{\nu_h}$ denote the (irreducible) module obtained by twisting $V(\mu^\sharp(u))$
by the automorphism $\nu_h$ (see \eqref{nu_g}). Since $V(\mu(u))$ and $V(\mu^\sharp(u))^{\nu_h}$ are both irreducible and share the same highest weight, they are isomorphic. 
Moreover, since the module $V(\mu^\sharp(u))$ is an irreducible quotient of the finite-dimensional module $X(\mfso_{2n},\mfgl_n)^{tw}(\xi\otimes \eta)$,
it is itself finite-dimensional. Therefore the module $V(\mu^\sharp(u))^{\nu_h}$, and thus $V(\mu(u))$, is also finite-dimensional.

\medskip 

If instead $\mfg_N=\mfsp_{2n}$, then we need only make minor adjustments in the above proof to account for the subtle differences between \eqref{CT:DIII-CI.Drinfeld.2}
and \eqref{CT:DIII-CI.Drinfeld.3} and the symmetry $P_1(u)=P_1(-u+n+3)$ of $P_1(u)$. Since for each $i\in \mathcal{I}_N$ we have $P_i(u)=P_i(-u+n-i+2+2\delta_{i1})$, there exists monic polynomials $Q_1(u),\ldots, Q_n(u)$ with 
 \begin{equation*} 
  P_i(u)=(-1)^{\deg Q_i}Q_i(u)Q_i(-u+n-i+2+2\delta_{i1})
 \end{equation*}
for all $i\in \mathcal{I}_N$. As before, we define the shifted polynomials $\wh{Q}_1(u),\ldots,\wh{Q}_n(u)$ by the formula $\wh{Q}_i(u)=Q_i(u+\ka/2)$. We then define $\lambda_{-1}(u),\lambda_1(u),\ldots,\lambda_n(u)\in 1+u^{-1}\C[[u^{-1}]]$
by the formulas
\begin{equation*}
  \lambda_{-1}(u)=\frac{1}{\wh {Q}_1(u)}u^{-a}\wh Q_1(u+2)\wh Q_2(u+1)\wh Q_{3}(u+1)\cdots \wh Q_n(u+1) ,
\end{equation*}
and 
\begin{equation*}
 \lambda_i(u)=u^{-a}\wh Q_2(u)\cdots \wh Q_i(u)\wh Q_{i+1}(u+1)\cdots \wh Q_n(u+1)
\end{equation*}
for all $i\in \mathcal{I}_N$, where $a=\sum_{i=2}^n\deg \wh Q_i$.
As in the proof for $\mfg_N=\mfso_{2n}$, Lemma \ref{HWT:Ext} implies that there is a unique $2n$-tuple $\lambda(u)$ extending $(\lambda_{-1}(u),\ldots,\lambda_n(u))$ in such a way that the  irreducible $X(\mfsp_{2n})$-module $L(\lambda(u))$ exists. By
construction $L(\lambda(u))$ is also finite dimensional. Let $\xi\in L(\lambda(u))$ be the highest weight vector, and let $\eta$ be any nonzero vector in the one dimensional $X(\mfsp_{2n},\mfgl_n)^{tw}$-module $V(\gamma-\ka)$. 
The $X(\mfsp_{2n},\mfgl_n)^{tw}$-module $X(\mfsp_{2n},\mfgl_n)^{tw}(\xi\otimes \eta)\subset L(\lambda(u))\otimes V(\gamma-\ka)$ is then a highest weight module with the highest 
weight $\mu^\sharp(u)$ whose components are determined by \eqref{CT:DIII.tensorweight}. The irreducible module $V(\mu^\sharp(u))$ is then isomorphic to a
quotient of the cyclic span $X(\mfsp_{2n},\mfgl_n)^{tw}(\xi\otimes \eta)$, and so in particular is finite-dimensional. Similar computations to those carried
out for the case $\mfg_N=\mfso_{2n}$ then show that the components of $\mu^\sharp(u)$ satisfy the relations \eqref{CT:DIII-CI.Drinfeld.1} and \eqref{CT:DIII-CI.Drinfeld.3}. 
It follows that there exists an even series $h(u)\in 1+u^{-2}\C[[u^{-2}]]$ such that $V(\mu(u))$ is isomorphic to the module $V(\mu^\sharp(u))^{\nu_h}$ obtained
by twisting the module $V(\mu^\sharp(u))$ by the automorphism $\nu_h$. Therefore, since $V(\mu^\sharp(u))$ is finite-dimensional, so is $V(\mu^\sharp(u))^{\nu_h}$, and thus $V(\mu(u))$ is finite-dimensional.

\medskip 

 It remains to show the uniqueness of the tuple $(P_1(u),\ldots,P_n(u),\gamma)$. Suppose that $(Q_1(u),\ldots,Q_n(u),\alpha)$ and $(P_1(u),\ldots,P_n(u),\gamma)$ are two tuples as in the statement of the theorem, both associated to the same
finite-dimensional module $V(\mu(u))$. Define rational functions $f_2(u),\ldots, f_n(u)$ in $u$ by $f_i(u)=\frac{Q_i(u)}{P_i(u)}$ for each $i\geq 2$.  Then, by \eqref{CT:DIII-CI.Drinfeld.1}, each $f_i(u)$ satisfies $f_i(u)=f_i(u+1)$ and
so is periodic with period $1$, which is impossible unless $f_i(u)\in \C$. Since $P_i(u)$ and $Q_i(u)$ are assumed to be monic, this forces $f_i(u)=1$ giving $(Q_2(u),\ldots,Q_n(u))=(P_2(u),\ldots,P_n(u))$. The equality $(Q_1(u),\alpha)=(P_1(u),\gamma)$ can now be proven by induction on the rank $n$ of $\mfg_N$ taking the uniqueness statements of Propositions \ref{LRR:CI-C0.Class} and \ref{LRR:DIII-D0.Class} for $X(\mfsp_2,\mfgl_1)^{tw}$ and $X(\mfso_4,\mfgl_2)^{tw}$, respectively, as the base for induction. Indeed, by the arguments given at the beginning of the proof (see \eqref{CT:DIII.inductionstep} and \eqref{CT:DIII.unique}), if $(P(u),\gamma)$ and $(Q(u),\alpha)$ are both associated to the $X(\mfg_{2n},\mfgl_n)^{tw}$ module 
$V(\mu(u))$, then $(P_1(u+1/2),\gamma-1/2)$ and $(Q_1(u+1/2),\alpha-1/2)$ are both associated to the $X(\mfg_{2n-2},\mfgl_{n-1})^{tw}$ module $V(\mu^\circ(u))$. Therefore,
by induction $(P_1(u),\gamma)=(Q_1(u),\alpha)$. 
\end{proof}


The decomposition $X(\mfg_N,\mfgl_n)^{tw}\cong ZX(\mfg_N,\mfgl_n)^{tw}\otimes Y(\mfg_N,\mfgl_n)^{tw}$ recalled in \eqref{TX=ZTX*TY} allows us to deduce the following
corollary of Theorem \ref{CT:Thm.DIII-CI}:

\begin{crl}\label{CT:Cor.DIII-CI.Yangians}
 The isomorphism classes of finite-dimensional irreducible representations of the twisted Yangians $Y(\mfg_N,\mfgl_n)^{tw}$ are parametrized by families $(P_1(u),\ldots,P_n(u),\gamma)$ where $\gamma\in \C$
 and the $P_i(u)$ are monic polynomials in $u$ such that $P_i(u)=P_i(-u+n-i+2)$ for all $i>1$, while
 \begin{equation}
 P_1(\gamma)\neq 0 \quad \text{ and }\quad P_1(u)=\begin{cases}
                                                   P_1(-u+n) &\quad \text{ if }\quad \mfg_N=\mfso_{2n},\\
                                                   P_1(-u+n+3) &\quad\text{ if }\quad \mfg_N=\mfsp_{2n}.
                                                  \end{cases}
 \end{equation}
\end{crl}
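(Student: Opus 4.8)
The plan is to deduce Corollary \ref{CT:Cor.DIII-CI.Yangians} from Theorem \ref{CT:Thm.DIII-CI} using the tensor decomposition $X(\mfg_N,\mfgl_n)^{tw}\cong ZX(\mfg_N,\mfgl_n)^{tw}\otimes Y(\mfg_N,\mfgl_n)^{tw}$ from \eqref{TX=ZTX*TY}, together with the fact that $ZX(\mfg_N,\mfgl_n)^{tw}$ is a polynomial algebra in the even coefficients $w_2,w_4,\ldots$ of the central series $w(u)$. First I would recall that, by this decomposition, every irreducible $Y(\mfg_N,\mfgl_n)^{tw}$-module $W$ extends to an irreducible $X(\mfg_N,\mfgl_n)^{tw}$-module by letting $ZX(\mfg_N,\mfgl_n)^{tw}$ act by a suitable one-dimensional character, and conversely every irreducible $X(\mfg_N,\mfgl_n)^{tw}$-module restricts to an irreducible $Y(\mfg_N,\mfgl_n)^{tw}$-module; moreover $W$ is finite-dimensional if and only if the corresponding $X$-module is. So the classification of finite-dimensional irreducible $Y$-modules is obtained from that of $X$-modules by quotienting out the ambiguity coming from the central characters.

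The second step is to make this ambiguity precise in terms of highest weights. Given a tuple $\mu(u)=(\mu_i(u))_{i\in\mcI_N}$ satisfying \eqref{HWT:nontrivial}, Proposition \ref{HWT:Prop.central_action} shows that $w(u)$ acts on $V(\mu(u))$ by $\mu_n(-u)\mu_n(u)$. Using the automorphism $\nu_g$ of \eqref{nu_g}, for any even series $h(u)\in 1+u^{-2}\C[[u^{-2}]]$ the twist $V(\mu(u))^{\nu_h}$ has highest weight $(h(u-\ka/2)\mu_i(u))_{i\in\mcI_N}$; such twists are exactly the operation that becomes trivial after restriction to $Y(\mfg_N,\mfgl_n)^{tw}$ (they differ by a central character). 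I would then observe that the quantities $\wt\mu_{i-1}(u)/\wt\mu_i(u)$ and $\wt\mu_1(\ka-u)/\wt\mu_2(u)$ (resp.\ $\wt\mu_1(\ka-u)/\wt\mu_1(u)$) appearing in \eqref{CT:DIII-CI.Drinfeld.1}--\eqref{CT:DIII-CI.Drinfeld.3} are all invariant under $\mu_i(u)\mapsto h(u-\ka/2)\mu_i(u)$, since $\wt\mu_i(u)$ scales by the same factor for every $i$ and $h$ is even. Hence the data $(P_1(u),\ldots,P_n(u),\gamma)$ attached to $V(\mu(u))$ by Theorem \ref{CT:Thm.DIII-CI} depends only on the restriction of $V(\mu(u))$ to $Y(\mfg_N,\mfgl_n)^{tw}$. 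This gives a well-defined injective map from isomorphism classes of finite-dimensional irreducible $Y$-modules to tuples satisfying the stated constraints.

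For surjectivity, given a tuple $(P_1(u),\ldots,P_n(u),\gamma)$ satisfying the conditions of the corollary, one uses Proposition \ref{HWT:refl.Prop.2} (more precisely, the construction in the proof of the $(\Longleftarrow)$ direction of Theorem \ref{CT:Thm.DIII-CI}) to produce a tuple $\mu(u)$ satisfying \eqref{HWT:nontrivial} whose associated Drinfeld data is exactly $(P_1(u),\ldots,P_n(u),\gamma)$; the module $V(\mu(u))$ is then finite-dimensional by Theorem \ref{CT:Thm.DIII-CI}, and its restriction to $Y(\mfg_N,\mfgl_n)^{tw}$ is the desired module. I would finish by checking that two such $X$-modules $V(\mu(u))$ and $V(\mu'(u))$ restrict to isomorphic $Y$-modules precisely when $\mu'(u)$ is obtained from $\mu(u)$ by a twist $\nu_h$ with $h$ even, which by the invariance noted above happens exactly when they carry the same Drinfeld data; this closes the argument that the map is a bijection.

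The main obstacle is the bookkeeping around the central twist: one must verify carefully that the only freedom in passing from $X(\mfg_N,\mfgl_n)^{tw}$-modules to $Y(\mfg_N,\mfgl_n)^{tw}$-modules is multiplication of the highest weight by an \emph{even} series $h(u-\ka/2)$ (coming from $\nu_g$ with $g(u)=h(u)h(-u)$ as in \eqref{nu_g}), and that this is exactly the symmetry killed by the relations defining the Drinfeld data. Everything else is a formal consequence of \eqref{TX=ZTX*TY}, Proposition \ref{HWT:Prop.central_action}, and Theorem \ref{CT:Thm.DIII-CI}; no genuinely new computation is needed.
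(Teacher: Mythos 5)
Your argument is correct and follows essentially the same route as the paper: both use the tensor decomposition \eqref{TX=ZTX*TY} to reduce the classification of finite-dimensional irreducible $Y(\mfg_N,\mfgl_n)^{tw}$-modules to that of $X(\mfg_N,\mfgl_n)^{tw}$-modules via Theorem \ref{CT:Thm.DIII-CI}, observing that the residual freedom is a twist by $\nu_h$ which leaves the Drinfeld data unchanged. You make the invariance of the ratios $\wt\mu_{i-1}(u)/\wt\mu_i(u)$ and $\wt\mu_1(\ka-u)/\wt\mu_{3/2\pm1/2}(u)$ under even central twists explicit, where the paper handles this more tersely by normalizing the extension so that $w(u)\mapsto 1$; this is a clarification, not a different argument.
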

\begin{proof} The proof is similar to that of Corollary 5.19 in \cite{AMR}.  Let $\mathcal{P}(\mfg_N,\mfgl_n)$ be the subset of $\C[u]^{n}\times \C$ consisting of all tuples $(P_1(u),\ldots,P_n(u),\gamma)$ satisfying
 the conditions of the corollary. Suppose first that $V$ is a finite-dimensional irreducible $Y(\mfg_N,\mfgl_n)^{tw}$-module.
 Recall that the center $ZX(\mfg_N,\mfgl_n)^{tw}$ is generated by the coefficients of the even series $w(u)$ (see \eqref{w(u)}). By \eqref{TX=ZTX*TY} we have the decomposition
 $ X(\mfg_N,\mfgl_n)^{tw}\cong ZX(\mfg_N,\mfgl_n)^{tw}\otimes Y(\mfg_N,\mfgl_n)^{tw}$
 and therefore $V$ can be extended to an irreducible representation of $X(\mfg_N,\mfgl_n)^{tw}$ where the central elements act as scalars. In particular, we may let $w(u)$ operate as $1$. By Theorem \ref{HWT:Thm.HWT} and Proposition
 \ref{HWT:Prop.VM}, $V\cong V(\mu(u))$ for some highest 
 weight $\mu(u)$. Since $V(\mu(u))$ is finite-dimensional, Theorem \ref{CT:Thm.DIII-CI} allows us to associate an element of  $\mathcal{P}(\mfg_N,\mfgl_n)$ to 
 $V(\mu(u))$, and thus to $V$. The uniqueness statement of Theorem \ref{CT:Thm.DIII-CI} ensures that this gives a well-defined function $F^\circ$ from the space of isomorphism classes of finite-dimensional irreducible 
 $Y(\mfg_N,\mfgl_n)^{tw}$-modules to $\mathcal{P}(\mfg_N,\mfgl_n)$.

 Conversely, if  $(P_1(u),\ldots,P_n(u),\gamma)\in \mathcal{P}(\mfg_N,\mfgl_n)$, then we can find $\mu(u)=(\mu_1(u),\ldots,\mu_n(u))$
 such that the conditions of Theorem \ref{CT:Thm.DIII-CI} are satisfied. The proof
 of the theorem shows that the corresponding module $V(\mu(u))$ is determined uniquely up to twisting by an automorphism $\nu_h$: see \eqref{CT:DIII.mu<->sharp} and the lines below it. However, the elements of the subalgebra
 $Y(\mfg_N,\mfgl_n)^{tw}$ are all stable under automorphisms of the form $\nu_h$, so the $Y(\mfg_N,\mfgl_n)^{tw}$-module $V$ obtained by restriction is uniquely (up to isomorphism) determined by the $n+1$
 tuple $(P_1(u),\ldots,P_n(u),\gamma)$. Since the elements of the center $ZX(\mfgl_N,\mfgl_n)^{tw}$ must operate by scalar multiplication, the decomposition \eqref{TX=ZTX*TY} implies 
that $V$ is irreducible.  
 Hence, we obtain a well-defined function $F^\bullet$ from $\mathcal{P}(\mfg_N,\mfgl_n)$ to the space of isomorphism classes of finite-dimensional irreducible $Y(\mfg_N,\mfgl_n)^{tw}$-modules, 
 and moreover $F^\bullet$ and $F^\circ$ are mutual inverses.
\end{proof}

Let $\alpha\in \C$, and let $L(i:\alpha)$ denote the irreducible highest weight representation of $Y(\mfg_N)$ with Drinfeld polynomials $(P_1(u),\ldots,P_n(u))$ determined by $P_j(u)=1$ if $j\neq i$, and $P_i(u)=u-\alpha$. These are the so-called \textit{fundamental representations} of $Y(\mfg_N)$. The module $L(i:\alpha)$ can be obtained 
from any $X(\mfg_N)$ module $L(\lambda(u))$ with the Drinfeld polynomials $P_1(u),\ldots,P_n(u)$ by restricting to the subalgebra $Y(\mfg_N)$ (see Definition \ref{def:YgN}). For an explanation
of why this procedure is well-defined, see the proof of Corollary 5.19 in \cite{AMR}. The problem of explicitly constructing fundamental representations for $Y(\mfg_N)$ in the RTT-presentation was treated in Subsection 5.4 of \cite{AMR} (see also Subsection 12.1.D in \cite{CP}). The significance of the fundamental representations can be summarized by the following fact 
which is a restatement of Lemma 5.17 in \cite{AMR}: If $L$ and $L^\circ$ are two finite-dimensional highest weight $X(\mfg_N)$-modules with the highest weight vectors $\xi$ and $\xi^\circ$, respectively, then the cyclic span of $\xi\otimes \xi^\circ$ in $L\otimes L^\circ$ is a highest weight module with the Drinfeld polynomials $(Q_1(u)Q_1^\circ(u),\ldots, Q_n(u)Q_n^\circ(u))$, where $(Q_1(u),\ldots,Q_n(u))$ is the Drinfeld tuple associated to $L$ and $(Q_1^\circ(u),\ldots,Q_n^\circ(u))$ is the Drinfeld tuple associated to $L^\circ$. In particular, this implies that any finite-dimensional irreducible representation of $Y(\mfg_N)$ is isomorphic to a subquotient of a tensor product of fundamental representations (this is formulated precisely in Corollary 12.1.13 of \cite{CP}).

\begin{crl}\label{CT:Cor.DIII-CI.FundamentalReps}
Let $V$ be a finite-dimensional irreducible representation of $Y(\mfg_N,\mfgl_n)^{tw}$. Then there exists $m\in \mathbb{N}$, $i_1,\ldots,i_m\in \{1,\ldots,n\}$ and $a,\alpha_{i_1},\ldots,\alpha_{i_m}\in \C$, 
such that $V$ is isomorphic to a subquotient of the $Y(\mfg_N,\mfgl_n)^{tw}$-module 
\begin{equation*}
 L(i_1:\alpha_{i_1})\otimes \cdots \otimes L(i_m:\alpha_{i_m})\otimes V(a).
\end{equation*}
\end{crl}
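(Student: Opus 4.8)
The plan is to combine Corollary \ref{CT:Cor.DIII-CI.Yangians}, which parametrizes the finite-dimensional irreducible $Y(\mfg_N,\mfgl_n)^{tw}$-modules by tuples $(P_1(u),\ldots,P_n(u),\gamma)$, with the decomposition of arbitrary polynomials into linear factors and the known factorization property of Drinfeld polynomials under tensor products for $Y(\mfg_N)$. Given a finite-dimensional irreducible $V$, by Corollary \ref{CT:Cor.DIII-CI.Yangians} it corresponds to a unique such tuple. By Theorem \ref{HWT:Thm.HWT} and Proposition \ref{HWT:Prop.VM}, $V$ extends to an irreducible $X(\mfg_N,\mfgl_n)^{tw}$-module of the form $V(\mu(u))$ on which $w(u)$ acts as $1$, and the proof of Theorem \ref{CT:Thm.DIII-CI} (the sufficiency direction) exhibits $V(\mu(u))$ — up to a twist $\nu_h$ which is trivial on $Y(\mfg_N,\mfgl_n)^{tw}$ — as a quotient of $X(\mfg_N,\mfgl_n)^{tw}(\xi\otimes\eta)\subset L(\lambda(u))\otimes V(a)$ for a suitable finite-dimensional $X(\mfg_N)$-module $L(\lambda(u))$ and $a=\gamma-\ka$. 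Hence as a $Y(\mfg_N,\mfgl_n)^{tw}$-module, $V$ is a subquotient of $L\otimes V(a)$, where $L$ is $L(\lambda(u))$ restricted to $Y(\mfg_N)$.

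Next I would reduce $L$ to a tensor product of fundamental representations. The module $L(\lambda(u))$ constructed in the proof of Theorem \ref{CT:Thm.DIII-CI} has Drinfeld polynomials which are (up to the shifts $\wh Q_i(u)=Q_i(u+\ka/2)$ and the spectral-parameter translations by $0$ or $1$) products of the linear factors of the $Q_i(u)$'s; in any case, as a $Y(\mfg_N)$-module it is some finite-dimensional irreducible module, hence by the restatement of Lemma 5.17 of \cite{AMR} recalled just above, its Drinfeld tuple factors as a product over $j$ of Drinfeld tuples each of the form $(1,\ldots,1,u-\alpha,1,\ldots,1)$, i.e.\ the Drinfeld tuples of fundamental representations $L(i_j:\alpha_{i_j})$. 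Applying the same Lemma 5.17 of \cite{AMR} (i.e.\ the multiplicativity of Drinfeld polynomials under taking the cyclic span of $\xi_1\otimes\cdots\otimes\xi_m$ inside a tensor product) repeatedly, the cyclic span of the product of highest weight vectors inside $L(i_1:\alpha_{i_1})\otimes\cdots\otimes L(i_m:\alpha_{i_m})$ is a highest weight $X(\mfg_N)$-module with Drinfeld tuple equal to that of $L(\lambda(u))$; since a finite-dimensional irreducible $X(\mfg_N)$-module is determined by its Drinfeld tuple up to a twist by $\mu_f$ (Corollary 5.19 of \cite{AMR}), and such twists restrict trivially to $Y(\mfg_N)$, the restriction of $L(\lambda(u))$ to $Y(\mfg_N)$ is a subquotient of $L(i_1:\alpha_{i_1})\otimes\cdots\otimes L(i_m:\alpha_{i_m})$ as a $Y(\mfg_N)$-module.

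Finally I would assemble the pieces. Tensoring the inclusion of $Y(\mfg_N)$-modules with the one-dimensional $V(a)$ and using that $X(\mfg_N,\mfgl_n)^{tw}$ (hence $Y(\mfg_N,\mfgl_n)^{tw}$) is a left coideal subalgebra of $X(\mfg_N)$ — so that a $Y(\mfg_N,\mfgl_n)^{tw}$-subquotient of $L\otimes V(a)$ is again a $Y(\mfg_N,\mfgl_n)^{tw}$-module, and subquotients of subquotients are subquotients — we conclude that $V$ is a subquotient of the $Y(\mfg_N,\mfgl_n)^{tw}$-module $L(i_1:\alpha_{i_1})\otimes\cdots\otimes L(i_m:\alpha_{i_m})\otimes V(a)$, which is the claim. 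One subtlety to be careful about: the $Y(\mfg_N)$-module structure on $L(i_1:\alpha_{i_1})\otimes\cdots\otimes L(i_m:\alpha_{i_m})$ used in the tensor-product argument must be the restriction of the $X(\mfg_N)$-module structure, so that the coideal property of $X(\mfg_N,\mfgl_n)^{tw}\subset X(\mfg_N)$ can be invoked when tensoring with $V(a)$; this is automatic from how fundamental representations are defined here (restriction of $X(\mfg_N)$-modules), so no real difficulty arises.

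The main obstacle — really the only nontrivial point — is the bookkeeping in matching the Drinfeld polynomials of the $X(\mfg_N)$-module $L(\lambda(u))$ produced in the proof of Theorem \ref{CT:Thm.DIII-CI} with those of a product of fundamental representations, and making sure the twisting automorphisms ($\mu_f$ on the $X(\mfg_N)$-side and $\nu_h$ on the twisted-Yangian side) are all trivial upon restriction to $Y(\mfg_N)$ and $Y(\mfg_N,\mfgl_n)^{tw}$ respectively; once these identifications are pinned down, the argument is a formal chain of "subquotient of a subquotient is a subquotient" together with multiplicativity of Drinfeld data under tensor products, exactly as in the proof of Corollary 5.19 of \cite{AMR}.
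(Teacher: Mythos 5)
Your proposal is correct and uses the same ingredients as the paper: Corollary \ref{CT:Cor.DIII-CI.Yangians}, the sufficiency construction in the proof of Theorem \ref{CT:Thm.DIII-CI}, the one-dimensional module $V(a)$ with $a=\gamma-\ka$, the multiplicativity of Drinfeld polynomials under tensor products (Lemma 5.17 of \cite{AMR}), and the fact that the twists $\mu_f$ and $\nu_h$ die on the special subalgebras $Y(\mfg_N)$ and $Y(\mfg_N,\mfgl_n)^{tw}$. The one organizational difference is that you route the argument through the $X(\mfg_N)$-module $L(\lambda(u))$ built in Theorem \ref{CT:Thm.DIII-CI} — first showing $V$ is a subquotient of $L(\lambda(u))\otimes V(a)$, then separately replacing $L(\lambda(u))$ by a tensor product of fundamental representations — whereas the paper passes directly from the tuple $(P_1,\ldots,P_n,\gamma)$ to a tuple of Drinfeld polynomials $(Q_1,\ldots,Q_n)$, picks fundamental representations accordingly, and then shows in one stroke that the cyclic span $Y(\mfg_N,\mfgl_n)^{tw}(\xi\otimes\eta)\subset L(i_1:\alpha_{i_1})\otimes\cdots\otimes L(i_m:\alpha_{i_m})\otimes V(a)$ is a highest weight module with the required tuple. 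Your two-step reduction is valid, but it forces the extra verification that the subquotient realization of $L(\lambda(u))$ inside the tensor product of fundamental representations is witnessed by $X(\mfg_N)$-submodules (so that the coideal property applies after tensoring with $V(a)$) and that the residual $\mu_f$-twist on the left factor induces only a $\nu_g$-twist on the twisted side, which is trivial on $Y(\mfg_N,\mfgl_n)^{tw}$ — a point you flag correctly. The paper's direct computation of the highest weight of the cyclic span sidesteps this composition-of-subquotients and twist-compatibility bookkeeping.
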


\begin{proof}
By Corollary \ref{CT:Cor.DIII-CI.Yangians} we may associate a  tuple $(P_1(u),\ldots,P_n(u),\gamma)$ to $V$ satisfying certain conditions. These conditions imply that there exists 
monic polynomials $Q_1(u),\ldots,Q_n(u)$ satisfying the relations 
\begin{equation*}
P_i(u)=(-1)^{\deg Q_i(u)}Q_i(u-\ka/2)Q_i(-u+n-i+2+\delta_{i1}(\tfrac{1\mp 3}{2})-\ka/2)  \; \text{ for all }\;1\leq i\leq n.
\end{equation*}
Let $L(Q(u))$ denote the finite-dimensional irreducible $Y(\mfg_N)$ module associated to the Drinfeld polynomials $Q_1(u),\ldots,Q_n(u)$. By the facts just recalled before the statement of the Corollary, there exists $m\in \mathbb{N}$, $i_1,\ldots,i_m\in \{1,\ldots,n\}$ and $\alpha_{i_1},\ldots,\alpha_{i_m}\in \C$, 
such that $L(Q(u))$ is isomorphic to a subquotient of the $Y(\mfg_N)$-module $L=L(i_1:\alpha_{i_1})\otimes \cdots \otimes L(i_m:\alpha_{i_m})$.
For each $1\leq j\leq m$ let $\xi_j$ denote the highest weight vector of $L(i_j,\alpha_{i_j})$ and set $\xi=\xi_1\otimes \ldots\otimes\xi_m$. Then the cyclic span of 
$\xi$ in $L$ is a highest weight $Y(\mfg_N)$-module with Drinfeld Polynomials $Q_1(u),\ldots,Q_n(u)$ and its unique irreducible quotient is isomorphic to $L(Q(u))$. Set $a=\gamma-\ka$ and let $\eta\in V(a)$ be any nonzero vector. We may view $V(a)$ as a $Y(\mfg_N,\mfgl_n)^{tw}$-module by restriction. Then, by the same argument as used in the proof of Theorem \ref{CT:Thm.DIII-CI}, we see that the $Y(\mfg_N,\mfgl_n)^{tw}$-module $Y(\mfg_N,\mfgl_n)^{tw}(\xi\otimes \eta)\subset L\otimes V(a)$ is a finite-dimensional highest weight module associated to the tuple $(P_1(u),\ldots,P_n(u),\alpha)$. Since $V$ is the unique (up to isomorphism) finite-dimensional irreducible $Y(\mfg_N,\mfgl_n)^{tw}$-module corresponding to this tuple, it is isomorphic to the irreducible quotient of $Y(\mfg_N,\mfgl_n)^{tw}(\xi\otimes \eta)$. Hence,  we have shown that $V$ is isomorphic to a subquotient of the $Y(\mfg_N,\mfgl_n)^{tw}$-module 
\begin{equation*}
L(i_1:\alpha_{i_1})\otimes \cdots \otimes L(i_m:\alpha_{i_m})\otimes V(a). \qedhere
\end{equation*}
\end{proof}


\subsection{Twisted Yangians for symmetric pairs of type BCD0}\label{Subsection:CT.C0-D0}

Now let $(\mfg_N,\mfg_N^\rho)$ be a symmetric pair of type B0, C0, or D0. 

\medskip 

Let $V$ be a $X(\mfg_N,\mfg_N)^{tw}$-module, and recall the subspace $V_+=\{w\in V:s_{kn}(u)w=0 \text{ for }k<n\}$.
By Proposition \ref{CT:Prop.induction}, if $h(u)\in 1+u^{-1}\C[[u^{-1}]]$ satisfies the relation $h(u)h(\ka'-u)^{-1}=p(u+1/2)^{-1}p'(u)$, then the assignment $s^\prime_{ij}(u)\mapsto h(u)s_{ij}^\circ(u)$, where 
$s_{ij}^\circ(u)=s_{ij}(u+1/2)+\frac{\delta_{ij}}{2u}s_{nn}(u+1/2),$ defines a representation of $X(\mfg_{N-2},\mfg_{N-2})^{tw}$ in the space $V_+$. Let us begin by finding an explicit series $h(u)$ satisfying \eqref{h-p}. Setting $\ka'=\ka-1$, since $N=2\ka\pm 2$, we have 
\begin{equation*}
 p(u)=1\mp\frac{1}{2u-\ka}+\frac{N}{2u-2\ka} = \frac{u(2u-\ka\pm 1)}{(\ka-2u)(\ka-u)} \quad \text{ and }\quad p'(u)=1\mp\frac{1}{2u-\ka'}+\frac{N-2}{2u-2\ka'} = \frac{u(2u-\ka'\pm 1)}{(\ka'-2u)(\ka'-u)}.
\end{equation*}
Thus, 
\begin{equation}
 p(u+1/2)^{-1}p'(u)=\frac{2\ka'-2u+1}{2\ka'-2u}\cdot \frac{2u}{2u+1}. \label{h-p.1}
\end{equation}
Consider the series $h(u)$ defined by  
\begin{equation}
 h(u)=\frac{2u-2\ka'-1}{2u-2\ka'}. \label{CT:C0-D0.h}
\end{equation}
We may view $h(u)$ as an element of $\C[[u^{-1}]]$. Then $h(u)$ has constant term $1$ and, as a consequence of relation \eqref{h-p.1}, $h(u)h(\ka'-u)^{-1}=p(u+1/2)^{-1}p'(u)$.
Therefore the assignment $s^\prime_{ij}(u)\mapsto h(u)s_{ij}^\circ(u)$ defines a representation of $X(\mfg_{N-2},\mfg_{N-2})^{tw}$ in the space $V_+$. 

We are now prepared to prove classification theorems for finite-dimensional irreducible modules of extended twisted Yangians of the type $X(\mfg_N,\mfg_N)^{tw}$. Suppose first that $N=2n$.

\begin{thrm}\label{CT:Thm.C0-D0}
 Let $\mu(u)=(\mu_1(u),\ldots,\mu_n(u))$ satisfy \eqref{HWT:nontrivial} so that the irreducible $X(\mfg_{N},\mfg_{N})^{tw}$-module $V(\mu(u))$ exists. Then $V(\mu(u))$ is finite-dimensional if and only if there exist
monic polynomials $P_i(u)$ in $u$ for $1\le i \le n$ with $P_i(u)=P_i(-u+n-i+2)$ for each $i>1$, and
 \begin{flalign}
 &   &&\frac{\wt{\mu}_{i-1}(u)}{\wt{\mu}_i(u)}=\frac{P_i(u+1)}{P_i(u)}\quad \text{ for }\quad  i=2,\ldots,n, \label{CT:C0-D0.Drinfeld.1} &&&\\
\text{while} & \hspace{20mm} && &&& \nn\\
 &   &&\frac{\wt{\mu}_1(\ka-u)}{\wt{\mu}_2(u)}=\frac{\ka-u}{u}\cdot \frac{P_1(u+1)}{P_1(u)} \; \text{ and } \; P_1(u)=P_1(-u+n) \; \text{ if }\; \mfg_{N}=\mfso_{2n} \label{CT:C0-D0.Drinfeld.2}, &&&\\
 &   &&\frac{\wt{\mu}_1(\ka-u)}{\wt{\mu}_1(u)}=\frac{\ka-u}{u}\cdot \frac{P_1(u+2)}{P_1(u)} \; \text{ and } \; P_1(u)=P_1(-u+n+3) \; \text{ if }\; \mfg_{N}=\mfsp_{2n}. \label{CT:C0-D0.Drinfeld.3} &&&
\end{flalign}
Moreover, when $V(\mu(u))$ is finite-dimensional, the associated tuple $(P_1(u),\ldots,P_n(u))$ is unique. 
\end{thrm}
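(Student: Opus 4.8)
The proof of Theorem \ref{CT:Thm.C0-D0} will follow the same architecture as the proof of Theorem \ref{CT:Thm.DIII-CI}, with the crucial simplification that there is no scalar $\gamma$ to track: the algebras $X(\mfg_N,\mfg_N)^{tw}$ correspond, under the reduction of Proposition \ref{HWT:refl.Prop.1} and Remark \ref{HWT:Rem.reflection}, to the Molev--Ragoucy reflection algebras $\mcB(n,0)$ (i.e.\ $\boldsymbol\ell = n$, $\mbfk = 0$), which is the A0 case, so part (1) of Proposition \ref{HWT:refl.Prop.3} applies and yields monic polynomials $P_2(u),\dots,P_n(u)$ with $P_i(u)=P_i(-u+n-i+2)$ and \eqref{CT:C0-D0.Drinfeld.1} immediately in the ($\Rightarrow$) direction. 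The work is then entirely concentrated on producing $P_1(u)$ and establishing the extra symmetry relations \eqref{CT:C0-D0.Drinfeld.2}, \eqref{CT:C0-D0.Drinfeld.3}, which is handled by induction on $n$.

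\textbf{Necessity ($\Rightarrow$).} Assuming $V(\mu(u))$ is finite-dimensional, I would argue by induction on the rank $n$, with base cases $\mfso_3$, $\mfsp_2$ (for $n=1$, types B0 and C0) supplied by Propositions \ref{LRR:B0.Class} and \ref{LRR:CI-C0.Class}, and $\mfso_4$ (for $n=2$, type D0) supplied by Proposition \ref{LRR:DIII-D0.Class}. For the inductive step, pass to the subspace $V_+$; by Proposition \ref{CT:Prop.induction} together with the explicit series $h(u) = (2u-2\ka'-1)/(2u-2\ka')$ from \eqref{CT:C0-D0.h}, $V_+$ carries a representation of $X(\mfg_{N-2},\mfg_{N-2})^{tw}$, and the cyclic span of the highest weight vector $\xi$ is a finite-dimensional highest weight module with highest weight $h(u)\mu^\circ(u)$ where $\mu_i^\circ(u) = \mu_i(u+1/2) + \tfrac{1}{2u}\mu_n(u+1/2)$. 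Apply the induction hypothesis to obtain a monic polynomial $Q(u)$ with the appropriate symmetry and relation; then translate $u \mapsto u-1/2$ and set $P_1(u) = Q(u-1/2)$, using $\wt\mu_i^\circ(u) = \wt\mu_i(u+1/2)$ for $i<n$ (which follows from the definition \eqref{HWT:tilde_mu_i(u)} of $\wt\mu$ and the definition of $s_{ij}^\circ(u)$). The factor $\tfrac{\ka-u}{u}$ appearing in \eqref{CT:C0-D0.Drinfeld.2}--\eqref{CT:C0-D0.Drinfeld.3} should come out precisely from the $h(u)$-twist interacting with the shift; one checks the symmetry $Q(u) = Q(-u + (n-1))$ (resp.\ $Q(-u+(n-1)+3)$) transforms into $P_1(u) = P_1(-u+n)$ (resp.\ $P_1(-u+n+3)$).

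\textbf{Sufficiency ($\Leftarrow$) and uniqueness.} For sufficiency, given the data $(P_1(u),\dots,P_n(u))$, factor each $P_i$ as $P_i(u) = (-1)^{\deg Q_i}\,Q_i(u)\,Q_i(-u+n-i+2)$ for $i>1$ (and the analogous shifted factorization for $P_1$ with its symmetry), form the shifted polynomials $\wh Q_i(u) = Q_i(u+\ka/2)$, and build a tuple $\lambda(u)$ of $X(\mfg_N)$ highest weights via products of the $\wh Q_i$ exactly as in the proof of Theorem \ref{CT:Thm.DIII-CI}; Lemma \ref{HWT:Ext} extends this to a full $2n$-tuple whose Verma module is non-trivial, and \eqref{HWT:Ext.All.P_n}, \eqref{HWT:Ext.C.P_1}, \eqref{HWT:Ext.D.P_1} guarantee $L(\lambda(u))$ is finite-dimensional. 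Now restrict $L(\lambda(u))$ to $X(\mfg_N,\mfg_N)^{tw}$: since $\mcG = I$ here, $L(\lambda(u))$ already has an $X(\mfg_N,\mfg_N)^{tw}$-module structure (no auxiliary one-dimensional module $V(a)$ is needed, in contrast to the CI/DIII case — this is where the absence of $\gamma$ manifests), and $X(\mfg_N,\mfg_N)^{tw}\xi$ is a highest weight module whose weight $\mu^\sharp(u)$ is computed from Corollary \ref{HWT:Cor.restrictions} with $\mbfk = n$, giving $\wt\mu_i^\sharp(u) = 2u\,\lambda_i(u-\ka/2)\lambda_{-i}(-u+\ka/2)$. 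A direct computation using the recursions on the $\lambda_i$ and \eqref{HWT:Ext.non-trivial} shows $\mu^\sharp(u)$ satisfies \eqref{CT:C0-D0.Drinfeld.1}--\eqref{CT:C0-D0.Drinfeld.3}; comparing with the hypothesis on $\mu(u)$ yields $\mu_i(u) = g(u)\mu_i^\sharp(u)$ for a series $g$ with $g(u) = g(\ka-u)$, hence $h(u) := g(u+\ka/2)$ is even and $V(\mu(u)) \cong V(\mu^\sharp(u))^{\nu_h}$, which is finite-dimensional since $V(\mu^\sharp(u))$ is a quotient of the finite-dimensional $X(\mfg_N,\mfg_N)^{tw}\xi$. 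For uniqueness, if $(P_1,\dots,P_n)$ and $(Q_1,\dots,Q_n)$ are both associated to $V(\mu(u))$, then \eqref{CT:C0-D0.Drinfeld.1} forces $P_i/Q_i$ to be periodic of period $1$, hence constant, hence $1$ by monicity, for $i \ge 2$; and $P_1 = Q_1$ follows by the same rank-induction as in Theorem \ref{CT:Thm.DIII-CI}, the base cases being the uniqueness assertions of Propositions \ref{LRR:B0.Class}, \ref{LRR:CI-C0.Class}, \ref{LRR:DIII-D0.Class}.

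\textbf{Main obstacle.} The principal technical point I expect to be delicate is verifying that the $\tfrac{\ka-u}{u}$ factor in \eqref{CT:C0-D0.Drinfeld.2}--\eqref{CT:C0-D0.Drinfeld.3} emerges correctly through the interplay of the $h(u)$-twist in Proposition \ref{CT:Prop.induction}, the $u\mapsto u\pm 1/2$ shifts in the rank reduction, and the asymmetry between the $i=1$ components of $\wt\mu$ in types B0/C0/D0 (note \eqref{CT:C0-D0.Drinfeld.2} relates $\wt\mu_1(\ka-u)$ to $\wt\mu_2(u)$ whereas \eqref{CT:C0-D0.Drinfeld.3} relates $\wt\mu_1(\ka-u)$ to $\wt\mu_1(u)$); tracking these shifts and ensuring the polynomial symmetries $P_1(u) = P_1(-u+n)$ versus $P_1(u)=P_1(-u+n+3)$ come out with the right constants requires careful bookkeeping, but is routine in the same way as the corresponding step in the proof of Theorem \ref{CT:Thm.DIII-CI}.
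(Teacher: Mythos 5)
Your proposal follows essentially the same architecture as the paper's own proof: necessity via Proposition \ref{HWT:refl.Prop.3}(1) for $P_2,\dots,P_n$, then a rank-reduction induction through $V_+$ and Proposition \ref{CT:Prop.induction} with the explicit series $h(u)$ from \eqref{CT:C0-D0.h} to produce $P_1$; sufficiency by factoring the $P_i$, building a finite-dimensional $X(\mfg_N)$-module $L(\lambda(u))$ via Lemma \ref{HWT:Ext}, restricting to $X(\mfg_N,\mfg_N)^{tw}$ (correctly noting that no auxiliary one-dimensional module is needed since $\mcG=I$), and comparing with $\mu(u)$ via a $\nu_h$-twist; uniqueness by periodicity plus rank induction. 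One small imprecision: you list $\mfso_3$ (type B0) among the base cases, but Theorem \ref{CT:Thm.C0-D0} concerns only $N=2n$ (types C0 and D0, with bases $\mfsp_2$ and $\mfso_4$ from Propositions \ref{LRR:CI-C0.Class} and \ref{LRR:DIII-D0.Class}); the $\mfso_3$ base belongs to the separate Theorem \ref{CT:Thm.B0}. This does not affect the substance of your argument.
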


\begin{proof} $(\Longrightarrow)$  Suppose first that the $X(\mfg_N,\mfg_N)^{tw}$-module $V=V(\mu_1(u),\ldots,\mu_n(u))$ is finite-dimensional. By Proposition \ref{HWT:refl.Prop.3}
 there exists monic polynomials $P_2(u),\ldots,P_n(u)$ in $u$ such that $P_i(u)=P_i(-u+n-i+2)$ and  \eqref{CT:C0-D0.Drinfeld.1} holds. Thus it suffices to prove the statement that there there exists a monic polynomial $P_1(u)$ in $u$ such that \eqref{CT:C0-D0.Drinfeld.2} holds if $\mfg_N=\mfso_N$ and \eqref{CT:C0-D0.Drinfeld.3} holds if $\mfg_N=\mfsp_N$. We 
 do this by induction on $n$, taking Propositions \ref{LRR:CI-C0.Class} and \ref{LRR:DIII-D0.Class} as the induction base. Suppose inductively that the statement holds for $n<m$, where
 $m\in \N$ is fixed (with $m>2$ if $\mfg_{2m}=\mfso_{2m}$, $m>1$ if $\mfg_{2m}=\mfsp_{2m}$).  Let $(\mu_1(u),\ldots,\mu_m(u))$ be such that the irreducible $X(\mfg_{2m},\mfg_{2m})^{tw}$-module $V=V(\mu_1(u),\ldots,\mu_m(u))$ exists and is finite-dimensional.  
 Denote its highest weight vector by $\xi$. Let $h(u)$ be the series defined in \eqref{CT:C0-D0.h}, corresponding to $N=2m$. By Proposition \ref{CT:Prop.induction}, the space $V_+$ is an $X(\mfg_{2m-2},\mfg_{2m-2})^{tw}$-module and the cyclic span $X(\mfg_{2m-2},\mfg_{2m-2})^{tw}\xi$  has the structure of a highest weight module with the highest
 weight $h(u)\mu^\circ(u)=(h(u)\mu_1^\circ(u),\ldots,h(u)\mu_{m-1}^\circ(u))$. Moreover, $V_+$ is finite-dimensional and also non-trivial as $\xi \in V_+$. It follows that the irreducible $X(\mfg_{2m-2},\mfg_{2m-2})^{tw}$-module $V(h(u)\mu^\circ(u))$ exists and is finite-dimensional. Thus, by induction there is a monic
 polynomial $Q(u)$ with
 \begin{equation*}
 \frac{h(\ka'-u)\wt{\mu}^\circ_1(\ka'-u)}{h(u)\wt{\mu}^\circ_{3/2\pm1/2}(u)}=\frac{\ka'-u}{u}\cdot \frac{Q(u+3/2\mp 1/2)}{Q(u)} \; \text{ and } \; Q(u) = Q(-u+m + 1/2 \mp 3/2).
 \end{equation*}
By definition of the series $h(u)$ (see \eqref{CT:C0-D0.h}), the first equality on the previous line is equivalent to: 
 \begin{equation*}
 \frac{\wt{\mu}^\circ_1(\ka'-u)}{\wt{\mu}^\circ_{3/2\pm1/2}(u)}=\frac{h(u)}{h(\ka'-u)}\cdot\frac{\ka'-u}{u}\cdot \frac{Q(u+3/2\mp 1/2)}{Q(u)}=\frac{\ka-u-1/2}{u+1/2}\cdot \frac{Q(u+3/2\mp 1/2)}{Q(u)}.
 \end{equation*}
 As in the proof of Theorem \ref{CT:Thm.DIII-CI}, we can observe that $\wt\mu_i^\circ(u)=\wt\mu_i(u+1/2)$ for any $i<m$. Therefore, substituting $u\mapsto u-1/2$ and setting $P_1(u)=Q(u-1/2)$ we obtain 
 \begin{equation*}
 \frac{\wt{\mu}_1(\ka-u)}{\wt{\mu}_{3/2\pm1/2}(u)}= \frac{\ka-u}{u}\cdot \frac{P_1(u+3/2\mp 1/2)}{P_1(u)}.
 \end{equation*}
Moreover, since $Q(u)=Q(-u+m + 1/2 \mp 3/2)$, we have $P_1(u)=P_1(-u+m + 3/2 \mp 3/2)$. Therefore by induction we have shown that there exists 
 $P_1(u)$ satisfying \eqref{CT:C0-D0.Drinfeld.2} in the orthogonal case and \eqref{CT:C0-D0.Drinfeld.3} in the symplectic case. 
 
 \medskip 
 
 $(\Longleftarrow)$ Now suppose that $(\mu_1(u),\ldots,\mu_n(u))$ satisfies \eqref{HWT:nontrivial} and that there exists $P_1(u),\ldots,P_n(u)$ as in the statement of the theorem. We wish to  show that $V=V(\mu_1(u),\ldots,\mu_n(u))$ is finite-dimensional. This portion of the proof will be proven analogously to the corresponding direction in the proof of Theorem
 \ref{CT:Thm.DIII-CI}. 
 
 Suppose first that $\mfg_N=\mfso_{2n}$. Since $P_i(u)=P_i(-u+n-i+2-\delta_{i1})$ for all 
$1\leq i\leq n$, we can find monic polynomials $Q_i(u)$ such that
\begin{equation*}
 P_i(u)=(-1)^{\deg{Q_i(u)}}Q_i(u)Q_i(-u+n-i+2-\delta_{i1}) 
\end{equation*}
for each $i$. Set $\wh{Q}_i(u)=Q_i(u+\ka/2)$ for all $i$, and choose $\lambda_{-1}(u),\lambda_1(u),\ldots,\lambda_n(u)\in 1+u^{-1}\C[[u^{-1}]]$ such that 
\begin{equation*}
\frac{\lambda_{i-1}(u)}{\lambda_i(u)}=\frac{\wh{Q}_i(u+1)}{\wh{Q}_i(u)} \; \text{ for all $i\geq 2$ and } \; \lambda_{-1}(u)=\frac{\wh{Q}_1(u+1)}{\wh{Q}_1(u)}\lambda_2(u). 
\end{equation*}
By Lemma \ref{HWT:Ext} there is a unique way of extending
$(\lambda_{-1}(u),\ldots,\lambda_n(u))$ to a $2n$-tuple $\lambda(u)$ such that the $X(\mfso_{2n})$-module $L(\lambda(u))$ exists. Moreover, by 
\eqref{HWT:Ext.All.P_n} and \eqref{HWT:Ext.D.P_1} $L(\lambda(u))$ must be finite-dimensional. Let $\xi\in L(\lambda(u))$ be the highest weight vector. By Corollary
\ref{HWT:Cor.restrictions} and \eqref{HWT:Ext.non-trivial}, $X(\mfso_{2n},\mfso_{2n})^{tw}\xi$ is a highest weight $X(\mfso_{2n},\mfso_{2n})^{tw}$-module with highest weight
$\mu^\sharp(u)=(\mu_1^\sharp(u),\ldots,\mu_n^\sharp(u))$ whose components satisfy 
\begin{equation*}
 \frac{\wt{\mu}_i^\sharp(u)}{\wt{\mu}_{i+1}^\sharp(u)}=\frac{P_{i+1}(u+1)}{P_{i+1}(u)} \; \text{ for } \; i=1,\ldots,n-1, \ \text{ while } \;  \frac{\wt{\mu}_1^\sharp(\ka-u)}{\wt{\mu}_{2}^\sharp(u)} = \frac{\ka-u}{u} \cdot \frac{P_1(u+1)}{P_1(u)}.
\end{equation*}
(See the proof of Theorem \ref{CT:Thm.DIII-CI} for more details.)
Since $\mu(u)$ and $\mu^\sharp(u)$ both satisfy conditions \eqref{CT:C0-D0.Drinfeld.1} and \eqref{CT:C0-D0.Drinfeld.2}, it follows that there exists an even series $h(u)\in1+u^{-2}\C[[u^{-2}]]$ with the property
that $V(\mu(u))$ is isomorphic to the module obtained by twisting $V(\mu^\sharp(u))$ by the automorphism $\nu_h$ (\textit{cf.}~proof of Theorem~\ref{CT:Thm.DIII-CI}). As  $V(\mu^\sharp(u))$ is isomorphic to the irreducible 
quotient of $X(\mfso_{2n},\mfso_{2n})\xi$, it is finite dimensional, and therefore so is $V(\mu(u))$. 

\medskip 

If instead $\mfg_N=\mfsp_{2n}$, we need only make some minor adjustments.  Since for each $1\leq i\leq n$ we have $P_i(u)=P_i(-u+n-i+2+2\delta_{i1})$, there exists monic polynomials $Q_1(u),\ldots, Q_n(u)$ with 
 \begin{equation*} 
  P_i(u)=(-1)^{\deg Q_i}Q_i(u)Q_i(-u+n-i+2+2\delta_{i1})
 \end{equation*}
for all $1\leq i\leq n$. As before we define the polynomials $\wh{Q}_1(u),\ldots,\wh{Q}_n(u)$ by the formulas $\wh{Q}_i(u)=Q_i(u+\ka/2)$. Choose $\lambda_{-1}(u),\lambda_1(u),\ldots,\lambda_n(u)\in 1+u^{-1}\C[[u^{-1}]]$
satisfying
\begin{equation*}
 \frac{\lambda_{i-1}(u)}{\lambda_i(u)}=\frac{\wh{Q}_i(u+1)}{\wh{Q}_i(u)} \;\text{ for all $2\leq i\leq n$ and} \; \lambda_{-1}(u)=\frac{\wh{Q}_1(u+2)}{\wh{Q}_1(u)}\lambda_1(u).
\end{equation*}
Extend $(\lambda_{-1}(u),\ldots,\lambda_n(u))$ to the unique $2n$-tuple $\lambda(u)$ so that the $X(\mfsp_{2n})$-module $L(\lambda(u))$ exists. The proof that $V(\mu(u))$ is finite dimensional in the $\mfsp_{2n}$-case can be completed as in the $\mfso_{2n}$-case above.

\medskip 

To complete the proof of the Theorem, note that the uniqueness of $(P_1(u),\ldots,P_n(u))$ can be proven the exact same way as the uniqueness of $(P_1(u),\ldots,P_n(u),\gamma)$
in Theorem \ref{CT:Thm.DIII-CI}.
\end{proof}

We now consider the case when $N=2n+1$, that is when $\mfg_N=\mfso_{2n+1}$.  

\begin{thrm}\label{CT:Thm.B0}
 Let $\mu(u)=(\mu_0(u),\ldots,\mu_n(u))$ satisfy \eqref{HWT:nontrivial} and $u\cdot\wt\mu_0(\ka-u)=(\ka-u)\cdot\wt\mu_0(u)$ so that the irreducible $X(\mfg_{N},\mfg_{N})^{tw}$ module $V(\mu(u))$ exists. Then $V(\mu(u))$ is finite-dimensional if and only if there exists
monic polynomials $P_1(u),\ldots,P_n(u)$ in $u$ with $P_i(u)=P_i(-u+n-i+2)$ for each $i>1$, $P_1(u)=P_1(-u+n+1/2)$ and
 \begin{equation}
\frac{\wt{\mu}_{i-1}(u)}{\wt{\mu}_i(u)}=\frac{P_i(u+1-\tfrac{\delta_{i1}}{2})}{P_i(u)}\quad \text{ for }\quad  i=1,\ldots,n. \label{CT:B0.Drinfeld.1}  
\end{equation}
 Moreover, when $V(\mu(u))$ is finite-dimensional, the associated tuple $(P_1(u),\ldots,P_n(u))$ is unique. 
\end{thrm}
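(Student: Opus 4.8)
The plan is to prove Theorem \ref{CT:Thm.B0} by following the same strategy employed in the proof of Theorems \ref{CT:Thm.DIII-CI} and \ref{CT:Thm.C0-D0}, namely an induction on the rank $n$ combined with the restriction and induction machinery of Section \ref{sec:HWT} and the low rank classification of Section \ref{sec:SR}. The base case $n=1$ is precisely Proposition \ref{LRR:B0.Class}, which classifies the finite-dimensional irreducibles of $X(\mfso_3,\mfso_3)^{tw}$; one checks that its condition $\tfrac{\wt\mu_0(u)}{\wt\mu_1(u)} = \tfrac{P(u+1/2)}{P(u)}$ with $P(u)=P(-u+3/2)$ matches \eqref{CT:B0.Drinfeld.1} for $n=1$ (with $\ka=1/2$), so there is really only one equation and it corresponds to $i=1$. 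For the inductive step, I would split into the two halves of the classification.

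For the necessity direction $(\Longrightarrow)$, assume $V(\mu(u))$ is finite-dimensional. Proposition \ref{HWT:refl.Prop.3}(1) (with $q=1$, since the symmetric pair is BCD0, viewed as a limiting case, and the relevant reflection algebra is $\wt\mcB(n,n)$ of type A0) immediately yields monic polynomials $P_2(u),\ldots,P_n(u)$ with $P_i(u)=P_i(-u+n-i+2)$ and \eqref{CT:B0.Drinfeld.1} for $i\ge 2$. So the only thing left is the $i=1$ relation and the symmetry $P_1(u)=P_1(-u+n+1/2)$. Here I would pass to the subspace $V_+$ and apply Proposition \ref{CT:Prop.induction}: choosing a suitable series $h(u)$ satisfying \eqref{h-p} (for $\mfg_N=\mfso_{2n+1}$ one computes $p(u+1/2)^{-1}p'(u)$ explicitly as in the proof of Theorem \ref{CT:Thm.C0-D0}, using $N=2n+1$, $\ka=n-1/2$, $\ka'=n-3/2$, and produces an explicit rational $h(u)$ with constant term $1$), we get that $X(\mfso_{2n-1},\mfso_{2n-1})^{tw}\xi$ is a finite-dimensional highest weight module, so the induction hypothesis applies and gives a monic polynomial $Q(u)$ with $Q(u)=Q(-u+(n-1)+1/2)$ and the $i=1$ relation for $\wt\mu^\circ$. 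Then, using $\wt\mu^\circ_i(u)=\wt\mu_i(u+1/2)$ for $i<n$ together with the explicit form of $h(u)$ and the extra relation $u\cdot\wt\mu_0(\ka-u)=(\ka-u)\cdot\wt\mu_0(u)$ (which is exactly the condition carried by Proposition \ref{HWT:Prop.mu_0(u)} and appears built into the hypotheses), I would substitute $u\mapsto u-1/2$ and set $P_1(u)=Q(u-1/2)$ (up to a shift, and after a careful bookkeeping of the half-integer offsets coming from the $\delta_{i1}$ term) to deduce \eqref{CT:B0.Drinfeld.1} for $i=1$ and the symmetry $P_1(u)=P_1(-u+n+1/2)$.

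For the sufficiency direction $(\Longleftarrow)$, given $P_1(u),\ldots,P_n(u)$ satisfying the stated conditions, I would factor each $P_i(u)$ as $(-1)^{\deg Q_i}Q_i(u)Q_i(-u+c_i)$ for appropriate $c_i$ (using the palindromic symmetry; for $i=1$ the constant is $n+1/2$, for $i\ge 2$ it is $n-i+2$), shift by $\ka/2$ to get $\wh Q_i(u)=Q_i(u+\ka/2)$, and build series $\lambda_i(u)\in 1+u^{-1}\C[[u^{-1}]]$ for $i\in\mcI_N$ as products of the $\wh Q_i$ with the appropriate unit shifts, exactly as in the orthogonal case of Theorem \ref{CT:Thm.C0-D0} but now also defining $\lambda_0(u)$ via the $i=1$ relation. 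Lemma \ref{HWT:Ext}(1) then extends $(\lambda_i(u))_{i\in\mcI_N}$ uniquely to a $(2n+1)$-tuple $\lambda(u)$ with $M(\lambda(u))$ nontrivial; conditions \eqref{HWT:Ext.All.P_n} and \eqref{HWT:Ext.B.P_1} force $L(\lambda(u))$ to be finite-dimensional. By Corollary \ref{HWT:Cor.restrictions} (using that $\mcG=I$ here is of the first kind, with $\mbfk=n$, $\boldsymbol\ell=0$), the cyclic span $X(\mfso_{2n+1},\mfso_{2n+1})^{tw}\xi\subset L(\lambda(u))$ is a finite-dimensional highest weight module with some highest weight $\mu^\sharp(u)$, and the relations \eqref{HWT:Ext.non-trivial} together with the construction of the $\lambda_i$ force $\mu^\sharp(u)$ to satisfy \eqref{CT:B0.Drinfeld.1} and the $\wt\mu_0$-symmetry. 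Then $\mu(u)$ and $\mu^\sharp(u)$ satisfy the same constraints, so (as in the earlier proofs) $g(u):=\mu_n(u)\mu_n^\sharp(u)^{-1}$ is even after the shift $h(u)=g(u+\ka/2)$, $V(\mu(u))\cong V(\mu^\sharp(u))^{\nu_h}$, and finite-dimensionality of $V(\mu^\sharp(u))$ gives that of $V(\mu(u))$. Uniqueness of the tuple follows as in Theorem \ref{CT:Thm.DIII-CI}: the ratios $Q_i/P_i$ would be periodic of period $1$, hence constant, hence $1$ by monicity.

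The main obstacle I anticipate is the careful bookkeeping of the half-integer shifts in the $i=1$ relation. In type B we have $\ka=n-1/2$ rather than an integer, and the $\delta_{i1}/2$ offset in \eqref{CT:B0.Drinfeld.1} interacts with the shift $u\mapsto u-1/2$ used in the inductive step and with the definition of $h(u)$ in Proposition \ref{CT:Prop.induction}; getting the symmetry statement $P_1(u)=P_1(-u+n+1/2)$ to come out precisely (rather than shifted) requires tracking these constants very carefully through both directions of the argument and through the application of Proposition \ref{HWT:Prop.mu_0(u)}. A secondary subtlety is verifying that Proposition \ref{HWT:refl.Prop.3} applies with the correct reflection algebra $\wt\mcB(n,n)$ (the $q=\boldsymbol\ell=n$, equivalently $p=q$ degenerate A-type case) so that only the single family of polynomials $P_2,\ldots,P_n$ with no extra parameter $\gamma$ appears — this is why the B0 case has no $\gamma$, unlike CI.
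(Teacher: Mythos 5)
Your proposal is correct and follows essentially the same approach as the paper's proof: induction on rank with Proposition \ref{LRR:B0.Class} as the base, Propositions \ref{HWT:refl.Prop.3} and \ref{CT:Prop.induction} for necessity, and Lemma \ref{HWT:Ext} plus Corollary \ref{HWT:Cor.restrictions} and a twist by $\nu_h$ for sufficiency. One simplification you can exploit to tame the bookkeeping you worry about at the end: since the $i=1$ case of \eqref{CT:B0.Drinfeld.1} is written as the ratio $\wt\mu_0(u)/\wt\mu_1(u)$ (rather than a $\kappa$-shifted form as in types C0/D0/CI/DIII), and since $\wt{(h\mu^\circ)}_i(u) = h(u)\wt\mu^\circ_i(u)$, the series $h(u)$ cancels identically in that ratio when you apply the induction hypothesis to $V_+$ — so neither the explicit form of $h(u)$ nor the $\wt\mu_0$-symmetry relation is actually needed in the necessity direction; the latter is only used (in the sufficiency direction) to rewrite the $i=1$ relation into the form delivered by Corollary \ref{HWT:Cor.restrictions}.
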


\begin{proof}
$(\Longrightarrow)$ Suppose first that the $X(\mfg_N,\mfg_N)^{tw}$-module $V=V(\mu_0(u),\ldots,\mu_n(u))$ is finite-dimensional. By Proposition \ref{HWT:refl.Prop.3}
 there exists monic polynomials $P_2(u),\ldots,P_n(u)$ in $u$ such that $P_i(u)=P_i(-u+n-i+2)$ and  \eqref{CT:B0.Drinfeld.1} holds for $2\leq i\leq n$. As in the proof of Theorem \ref{CT:Thm.C0-D0}, it suffices to prove that there exists  a monic polynomial $P_1(u)$ with $P_1(u)=P_1(-u+n+1/2)$ and \eqref{CT:B0.Drinfeld.1} holds for $i=1$. We 
 do this by induction on $n$, taking Proposition \ref{LRR:B0.Class} as the induction base. Suppose inductively that the statement holds for $n<m$ for some 
 fixed $m>2$.  Let $V=V(\mu_0(u),\ldots,\mu_m(u))$ be a nontrivial finite-dimensional $X(\mfg_{2m+1},\mfg_{2m+1})^{tw}$-module, and 
 denote its highest weight vector by $\xi$. The same argument as in the proof of Theorem \ref{CT:Thm.C0-D0} shows that the irreducible module $V(h(u)\mu^\circ(u))$ is finite-dimensional. Therefore, by induction there exists a monic
 polynomial $Q(u)$ such that $Q(u)=Q(-u+m-1/2)$ and 
 \begin{equation*}
 \frac{h(u)\wt{\mu}^\circ_0(u)}{h(u)\wt{\mu}^\circ_{1}(u)}=\frac{\wt{\mu}^\circ_0(u)}{\wt{\mu}^\circ_{1}(u)}=\frac{Q(u+1/2)}{Q(u)}.
 \end{equation*}
Since $\wt\mu_i^\circ(u)=\wt\mu_i(u+1/2)$, substituting $u\mapsto u-1/2$ and setting $P_1(u)=Q(u-1/2)$ we obtain 
 \begin{equation*}
 \frac{\wt{\mu}_0(u)}{\wt{\mu}_{1}(u)}= \frac{P_1(u+1/2)}{P_1(u)},
 \end{equation*}
 which is exactly \eqref{CT:B0.Drinfeld.1} with $i=1$. Moreover, since $Q(u)=Q(-u+m-1/2)$, 
 $P_1(u)=P_1(-u+m+1/2)$. Therefore by induction we have shown that there exists 
 $P_1(u)$ satisfying the conditions of the theorem. 
 
 \medskip 
 
$(\Longleftarrow)$ Now suppose that $(\mu_0(u),\ldots,\mu_n(u))$ satisfies $u\cdot \wt\mu_0(\ka-u)=(\ka-u)\cdot \wt\mu_0(u)$, condition \eqref{HWT:nontrivial}, and that there exists $P_1(u),\ldots,P_n(u)$ as in the statement of the theorem. We wish to
 show that $V=V(\mu_0(u),\ldots,\mu_n(u))$ is finite-dimensional. This portion of the proof will be proven analogously to the corresponding direction in the proofs of Theorems
 \ref{CT:Thm.DIII-CI} and \ref{CT:Thm.C0-D0}.  Since $P_i(u)=P_i(-u+n-i+2-\tfrac{\delta_{i1}}{2})$ for all 
$1\leq i\leq n$, we can find monic polynomials $Q_i(u)$ such that
\begin{equation*}
 P_i(u)=(-1)^{\deg{Q_i(u)}}Q_i(u)Q_i(-u+n-i+2-\tfrac{\delta_{i1}}{2}) 
\end{equation*}
for each $i$. Set $\wh{Q}_i(u)=Q_i(u+\ka/2)$ for all $i$, and choose $\lambda_0(u),\lambda_1(u),\ldots,\lambda_n(u)\in 1+u^{-1}\C[[u^{-1}]]$ such that 
\begin{equation*}
\frac{\lambda_{i-1}(u)}{\lambda_i(u)}=\frac{\wh{Q}_i(u+1)}{\wh{Q}_i(u)} \; \text{ for all } \; 2 \le i \le n \; \text{ and }  \lambda_{0}(u)=\frac{\wh{Q}_1(u+1/2)}{\wh{Q}_1(u)}\lambda_1(u). 
\end{equation*}
By Lemma \ref{HWT:Ext} there is a unique $N$-tuple $\lambda(u)$ extending
$(\lambda_{0}(u),\ldots,\lambda_n(u))$ with the property that the $X(\mfso_{2n+1})$-module $L(\lambda(u))$ exists. Moreover, by 
\eqref{HWT:Ext.All.P_n} and \eqref{HWT:Ext.B.P_1}, $L(\lambda(u))$ must be finite-dimensional. Let $\xi\in L(\lambda(u))$ be the highest weight vector. By Corollary
\ref{HWT:Cor.restrictions} and \eqref{HWT:Ext.non-trivial}, $X(\mfg_N,\mfg_N)^{tw}\xi$ is a highest weight $X(\mfg_N,\mfg_N)^{tw}$-module with highest weight
$\mu^\sharp(u)=(\mu_0^\sharp(u),\ldots,\mu_n^\sharp(u))$ whose components satisfy
\begin{equation*}
 \frac{\wt{\mu}_i^\sharp(u)}{\wt{\mu}_{i+1}^\sharp(u)}=\frac{P_{i+1}(u+1)}{P_{i+1}(u)} \; \text{ for } \; i=1,\ldots,n-1, \; \text{ while } \frac{\wt{\mu}_0^\sharp(u)}{\wt{\mu}_{1}^\sharp(u)} = \frac{P_1(u+1/2)}{P_1(u)}.
\end{equation*}
Since $\mu(u)$ and $\mu^\sharp(u)$ both satisfy conditions \eqref{CT:B0.Drinfeld.1}, it follows that there exists an even series $h(u)\in1+u^{-2}\C[[u^{-2}]]$ with the property
that $V(\mu(u))$ is isomorphic to the module obtained by twisting $V(\mu^\sharp(u))$ by the automorphism $\nu_h$. 
To see this, note first that since $u\cdot \wt \mu_0(\ka-u)=(\ka-u)\cdot \wt \mu_0(u)$, the $i=1$ statement of \eqref{CT:B0.Drinfeld.1} is equivalent to 
\begin{equation*}
 \frac{\wt{\mu}_0(\ka-u)}{\wt{\mu}_{1}(u)}=\frac{\ka-u}{u}\cdot \frac{P_1(u+1/2)}{P_1(u)}.
\end{equation*}
One then repeats the same argument as given in detail in Theorem \ref{CT:Thm.DIII-CI}. As  $V(\mu^\sharp(u))$ is isomorphic to the irreducible 
quotient of $X(\mfg_N,\mfg_N)^{tw}\xi$, it is finite dimensional, and therefore so is $V(\mu(u))$. 

For a proof of the uniqueness of the tuple $(P_1(u),\ldots,P_n(u))$, see the proof of Theorem \ref{CT:Thm.DIII-CI}. \end{proof}


The decomposition $X(\mfg_N,\mfg_N)^{tw}\cong ZX(\mfg_N,\mfg_N)^{tw}\otimes Y(\mfg_N,\mfg_N)^{tw}$ recalled in \eqref{TX=ZTX*TY} allows us to deduce the following
corollary of Theorems \ref{CT:Thm.C0-D0} and \ref{CT:Thm.B0}:

\begin{crl}\label{CT:Cor.C0-D0.Yangians}
 The isomorphism classes of finite-dimensional irreducible representations of the twisted Yangians $Y(\mfg_N,\mfg_N)^{tw}$ are parametrized by families $(P_1(u),\ldots,P_n(u))$ where 
 the $P_i(u)$ are monic polynomials in $u$ such that $P_i(u)=P_i(-u+n-i+2)$ for all $i>1$, while
 \begin{equation}
                                           P_1(u)=\begin{cases}
                                                   P_1(-u+n+1/2)&\quad \text{ if }\quad \mfg_N=\mfso_{2n+1},\\
                                                   P_1(-u+n) &\quad \text{ if }\quad \mfg_N=\mfso_{2n},\\
                                                   P_1(-u+n+3) &\quad\text{ if }\quad \mfg_N=\mfsp_{2n}.
                                                  \end{cases}
 \end{equation}
\end{crl}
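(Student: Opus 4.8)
The plan is to deduce Corollary \ref{CT:Cor.C0-D0.Yangians} from the Classification Theorems \ref{CT:Thm.C0-D0} and \ref{CT:Thm.B0} in exactly the same way that Corollary \ref{CT:Cor.DIII-CI.Yangians} was deduced from Theorem \ref{CT:Thm.DIII-CI}; indeed the argument is essentially identical except that there is no extra parameter $\gamma$ to keep track of. Let $\mathcal{P}(\mfg_N,\mfg_N)$ denote the subset of $\C[u]^n$ consisting of tuples $(P_1(u),\ldots,P_n(u))$ of monic polynomials satisfying the symmetry conditions in the statement. The goal is to produce mutually inverse bijections between $\mathcal{P}(\mfg_N,\mfg_N)$ and the set of isomorphism classes of finite-dimensional irreducible $Y(\mfg_N,\mfg_N)^{tw}$-modules.

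First I would go from $Y(\mfg_N,\mfg_N)^{tw}$-modules to tuples. Given a finite-dimensional irreducible $Y(\mfg_N,\mfg_N)^{tw}$-module $V$, the decomposition $X(\mfg_N,\mfg_N)^{tw}\cong ZX(\mfg_N,\mfg_N)^{tw}\otimes Y(\mfg_N,\mfg_N)^{tw}$ from \eqref{TX=ZTX*TY} lets us extend $V$ to an irreducible $X(\mfg_N,\mfg_N)^{tw}$-module on which the central series $w(u)$ acts by the scalar $1$ (the coefficients of $w(u)$ generate $ZX(\mfg_N,\mfg_N)^{tw}$ and must act by scalars, so this normalization is possible). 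By Theorem \ref{HWT:Thm.HWT} and Proposition \ref{HWT:Prop.VM} this extension is isomorphic to some $V(\mu(u))$, and since $V(\mu(u))$ is finite-dimensional, Theorem \ref{CT:Thm.C0-D0} (if $N=2n$) or Theorem \ref{CT:Thm.B0} (if $N=2n+1$) associates to it a unique tuple $(P_1(u),\ldots,P_n(u))\in\mathcal{P}(\mfg_N,\mfg_N)$. The uniqueness statements in those theorems guarantee that the resulting assignment $F^\circ$ is a well-defined function on isomorphism classes.

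Conversely, given $(P_1(u),\ldots,P_n(u))\in\mathcal{P}(\mfg_N,\mfg_N)$, the proofs of Theorems \ref{CT:Thm.C0-D0} and \ref{CT:Thm.B0} show that one can choose a highest weight $\mu(u)$ (satisfying \eqref{HWT:nontrivial}, and also $u\cdot\wt\mu_0(\ka-u)=(\ka-u)\cdot\wt\mu_0(u)$ when $N$ is odd) realizing these as the associated polynomials, so that $V(\mu(u))$ is finite-dimensional. The key point — which mirrors the remark near \eqref{CT:DIII.mu<->sharp} in the proof of Theorem \ref{CT:Thm.DIII-CI} — is that such a $\mu(u)$ is determined by the tuple only up to twisting by an automorphism $\nu_h$ with $h(u)\in1+u^{-2}\C[[u^{-2}]]$ (the freedom in $\mu_n(u)$, respectively $\mu_0(u)$), but all elements of the subalgebra $Y(\mfg_N,\mfg_N)^{tw}\subset X(\mfg_N,\mfg_N)^{tw}$ are $\nu_h$-stable by \eqref{nu_g}, so the restriction of $V(\mu(u))$ to $Y(\mfg_N,\mfg_N)^{tw}$ is independent of this choice. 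Moreover that restriction is irreducible: since the centre $ZX(\mfg_N,\mfg_N)^{tw}$ acts by scalars on $V(\mu(u))$, the decomposition \eqref{TX=ZTX*TY} forces irreducibility over the tensor factor $Y(\mfg_N,\mfg_N)^{tw}$. This gives a well-defined function $F^\bullet$ from $\mathcal{P}(\mfg_N,\mfg_N)$ to isomorphism classes of finite-dimensional irreducible $Y(\mfg_N,\mfg_N)^{tw}$-modules.

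Finally I would check that $F^\circ$ and $F^\bullet$ are mutually inverse: starting from a module $V$, extending, reading off $(P_i(u))$ via the classification theorem, then running $F^\bullet$ recovers $V$ by the uniqueness in Proposition \ref{HWT:Prop.VM} together with the $\nu_h$-invariance just noted; starting from a tuple, the construction produces a $\mu(u)$ with exactly those associated polynomials, and the uniqueness clause of Theorem \ref{CT:Thm.C0-D0}/\ref{CT:Thm.B0} ensures $F^\circ\circ F^\bullet=\mathrm{id}$. There is no genuine obstacle here — the content is entirely in the Classification Theorems and in the structural decomposition \eqref{TX=ZTX*TY} — so the only thing to be careful about is the bookkeeping of the $\nu_h$-twist and the normalization $w(u)\mapsto 1$, exactly as in the proof of Corollary \ref{CT:Cor.DIII-CI.Yangians} and of Corollary 5.19 in \cite{AMR}.
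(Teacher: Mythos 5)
Your proposal is correct and follows the same route as the paper, which simply states that the corollary is proved identically to Corollary \ref{CT:Cor.DIII-CI.Yangians}. You have unpacked that reference accurately: the decomposition \eqref{TX=ZTX*TY}, the normalization of $w(u)$, the application of Theorems \ref{CT:Thm.C0-D0}/\ref{CT:Thm.B0}, and the $\nu_h$-invariance argument are exactly the ingredients the paper intends.
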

\begin{proof}
This is proved identically to Corollary \ref{CT:Cor.DIII-CI.Yangians}. 
\end{proof}

Given $1\leq i\leq n$ and $\alpha\in \C$, let $L(i:\alpha)$ be the fundamental representation of the Yangian $Y(\mfg_N)$ as defined above Corollary \ref{CT:Cor.DIII-CI.FundamentalReps}. Our last corollary can be proved in the same way as Corollary \ref{CT:Cor.DIII-CI.FundamentalReps}.
\begin{crl}\label{CT:Cor.C0-D0.FundamentalReps}
Let $V$ be a finite-dimensional irreducible representation of $Y(\mfg_N,\mfg_N)^{tw}$. Then there exists $m\in \mathbb{N}$, $i_1,\ldots,i_m\in \{1,\ldots,n\}$ and $\alpha_{i_1},\ldots,\alpha_{i_m}\in \C$, 
such that $V$ is isomorphic to a subquotient of the $Y(\mfg_N,\mfg_N)^{tw}$-module 
\begin{equation*}
  L(i_1:\alpha_{i_1})\otimes \cdots \otimes L(i_m:\alpha_{i_m}).
\end{equation*}
\end{crl}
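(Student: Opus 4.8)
The plan is to mirror the proof of Corollary \ref{CT:Cor.DIII-CI.FundamentalReps} essentially verbatim, using Corollary \ref{CT:Cor.C0-D0.Yangians} in place of Corollary \ref{CT:Cor.DIII-CI.Yangians} and dropping the parameter $\gamma$ (equivalently, dropping the one-dimensional factor $V(a)$, since in type BCD0 the matrix $\mcG$ is already the identity and the only relevant one-dimensional module is trivial). So first I would invoke Corollary \ref{CT:Cor.C0-D0.Yangians} to attach to $V$ a tuple $(P_1(u),\ldots,P_n(u))$ of monic polynomials satisfying the stated symmetry constraints.

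Next I would translate the symmetry $P_i(u)=P_i(-u+n-i+2)$ (for $i>1$) and $P_1(u)=P_1(-u+n+1/2)$, $P_1(-u+n)$ or $P_1(-u+n+3)$ (for $\mfg_N=\mfso_{2n+1}$, $\mfso_{2n}$, $\mfsp_{2n}$ respectively) into a factorization $P_i(u)=(-1)^{\deg Q_i}Q_i(u-\ka/2)\,Q_i(-u+n-i+2+\delta_{i1}\cdot c-\ka/2)$ for suitable monic polynomials $Q_i(u)$, where $c$ is $-1/2$, $-1$ or $+2$ according to the Dynkin type — exactly as in the ($\Longleftarrow$) directions of Theorems \ref{CT:Thm.C0-D0} and \ref{CT:Thm.B0}. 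The polynomials $Q_1(u),\ldots,Q_n(u)$ then serve as the Drinfeld polynomials of a finite-dimensional irreducible $Y(\mfg_N)$-module $L(Q(u))$. Using the fact recalled just before the statement (the restatement of Lemma 5.17 of \cite{AMR}, together with Corollary 12.1.13 of \cite{CP}), $L(Q(u))$ is a subquotient of a tensor product $L=L(i_1:\alpha_{i_1})\otimes\cdots\otimes L(i_m:\alpha_{i_m})$ of fundamental $Y(\mfg_N)$-modules, with $m$, the $i_j$ and the $\alpha_{i_j}$ determined by the multiset of roots of $Q_1(u),\ldots,Q_n(u)$; moreover the cyclic span of $\xi=\xi_1\otimes\cdots\otimes\xi_m$ (the tensor of highest weight vectors) in $L$ is a highest weight $Y(\mfg_N)$-module with Drinfeld polynomials $Q_1(u),\ldots,Q_n(u)$, whose irreducible quotient is $L(Q(u))$.

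The remaining step is to pass from $Y(\mfg_N)$ to $Y(\mfg_N,\mfg_N)^{tw}$: view $L$ (or rather a lift to an $X(\mfg_N)$-module, which restricts compatibly) as an $X(\mfg_N,\mfg_N)^{tw}$-module by restriction, and then, by the same computation as in Theorem \ref{CT:Thm.C0-D0} / \ref{CT:Thm.B0} — i.e.\ an application of Corollary \ref{HWT:Cor.restrictions} with $\mcG=I$ — conclude that the cyclic span $X(\mfg_N,\mfg_N)^{tw}\xi\subset L$ is a highest weight module whose highest weight $\mu^\sharp(u)$ satisfies the relations \eqref{CT:C0-D0.Drinfeld.1}--\eqref{CT:C0-D0.Drinfeld.3} or \eqref{CT:B0.Drinfeld.1} for the tuple $(P_1(u),\ldots,P_n(u))$. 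Hence its irreducible quotient $V(\mu^\sharp(u))$ is, up to a twist by some $\nu_h$ with $h(u)\in 1+u^{-2}\C[[u^{-2}]]$, isomorphic to $V(\mu(u))\cong V$; but elements of $Y(\mfg_N,\mfg_N)^{tw}$ are $\nu_h$-stable, so as a $Y(\mfg_N,\mfg_N)^{tw}$-module $V$ is isomorphic to the irreducible quotient of $Y(\mfg_N,\mfg_N)^{tw}\xi$, hence to a subquotient of $L(i_1:\alpha_{i_1})\otimes\cdots\otimes L(i_m:\alpha_{i_m})$. I do not anticipate a genuine obstacle here — the only point requiring a little care is bookkeeping of the shift $\ka/2$ and the type-dependent constant $c$ in the factorization of the $P_i(u)$, and checking that the one-dimensional factor $V(a)$ present in Corollary \ref{CT:Cor.DIII-CI.FundamentalReps} is genuinely unnecessary in type BCD0 (it is, since $\gamma$ does not occur in Corollary \ref{CT:Cor.C0-D0.Yangians}); everything else is a direct transcription of the type CI/DIII argument.
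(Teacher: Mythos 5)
Your proposal is correct and follows the same route as the paper, which simply states that this corollary is proved in the same way as Corollary~\ref{CT:Cor.DIII-CI.FundamentalReps}; you have accurately transcribed that argument to the BCD0 setting, including the correct type-dependent shifts $-1/2$, $-1$, $+2$ in the factorization of $P_1(u)$ and the observation that the one-dimensional factor $V(a)$ is not needed because $\gamma$ does not appear in Corollary~\ref{CT:Cor.C0-D0.Yangians}.
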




\begin{thebibliography}{AACFR}

\bibitem[AACFR]{AACFR}
	D. Arnaudon, J. Avan, N. Cramp\'e, L. Frappat, E. Ragoucy, 
	\emph{$R$-matrix presentation for super-Yangians $Y(\mathrm{osp}(m|2n))$}, 
	J. Math. Phys. \textbf{44} (2003), no. 1, 302--308.
	{\tt arXiv:math/0111325}.

\bibitem[AMR]{AMR} 
	D. Arnaudon, A. Molev, E. Ragoucy, 
	\emph{On the $R$-matrix realization of Yangians and their representations},  
	Ann. Henri Poincar\'e \textbf{7} (2006), no. 7-8, 1269--1325.
	{\tt arXiv:math/0511481}.

\bibitem[Be1]{Be1}
	D. Bernard,
	{\it Hidden Yangians in 2 D massive current algebras},
	Comm. Math. Phys.  \textbf{137}  (1991),  no. 1, 191--208.
	
\bibitem[Be2]{Be2}
	\bysame,
	{\it An introduction to Yangian symmetries.	Yang-Baxter equations in Paris (1992)}, 
	Internat. J. Modern Phys. B  \textbf{7}  (1993),  no. 20-21, 3517--3530. {\tt arXiv:hep-th/9211133}.

\bibitem[Ch]{Ch}
	I. Cherednik,
	{\it Factorizing particles on a half line, and root systems}, 
	Teoret. Mat. Fiz. \textbf{61} (1984), no. 1, 35–44. 

\bibitem[ChMa]{ChMa}
	L. Chekhov, M. Mazzocco,
	\emph{Isomonodromic deformations and twisted Yangians arising in Teichm\"{u}ller theory},
	Adv. Math.  \textbf{226}  (2011),  no. 6, 4731--4775. {\tt arXiv:0909.5350}.

\bibitem[CP]{CP}
	V. Chari, A. Pressley,
	{\it A guide to quantum groups},
	Cambridge University Press, Cambridge, 1994. xvi+651 pp. 
	
\bibitem[DMS]{DMS}
	G. W. Delius, N. J. MacKay, N. J. Short,
	{\it Boundary remnant of Yangian symmetry and the structure of rational reflection matrices},
	Phys. Lett. B  \textbf{522}  (2001),  no. 3-4, 335--344.
	{\tt arXiv:hep-th/0109115}.

\bibitem[Dr1]{Dr1}
	V. G.~Drinfeld, 
	{\it Hopf algebras and the quantum Yang-Baxter equation}, 
	Soviet Math. Dokl. \textbf{32} (1985), 254--258.
		
\bibitem[Dr2]{Dr2}
	\bysame,
 	{\it Quantum groups}, 
 	Proceedings of the International Congress of Mathematicians, Berkeley, 1986, 
 	A.~M.~Gleason (ed), 798--820, Amer. Math. Soc., Providence, RI. 

\bibitem[Dr3]{Dr3}
	\bysame,
	\emph{A new realization of Yangians and of quantum affine algebras},
	(Russian)  Dokl. Akad. Nauk SSSR  \textbf{296}  (1987),  no. 1, 13--17;  
	translation in  Soviet Math. Dokl.  \textbf{36}  (1988),  no. 2, 212--216. 

\bibitem[EGH+]{EGH+}
	P.~Etingof, O.~Golberg, S.~Hensel, T.~Liu, A.~Schwendner, D.~Vaintrob, E.~Yudovina,
	{\it Introduction to Representation Theory} with historical interludes by Slava Gerovitch,
	Student Mathematical Library, \textbf{59} American Mathematical Society, Providence, RI, 2011. viii+228 pp.
	

\bibitem[GM]{GM}
	L.~Gow, A.~Molev,
	{\it Representations of twisted q-Yangians}, 
	Selecta Math. (N.S.) \textbf{16} (2010), no. 3, 439--499.
	
\bibitem[GR]{GR}
	N.~Guay, V.~Regelskis,
	{\it Twisted Yangians for symmetric pairs of types B, C, D},
	 Math. Z. \textbf{284} (2016), no. 1-2, 131--166. {\tt arXiv:1407.5247}.
	

\bibitem[GRW]{GRW} 
	N. Guay, V. Regelskis, C. Wendlandt,
	{\it Twisted Yangians of small rank}, J. Math. Phys. \textbf{57}, 041703 (2016).
    {\tt arXiv:1602.01418}.


\bibitem[KN1]{KN1}
	S. Khoroshkin, M. Nazarov,
	{\it Yangians and Mickelsson algebras. I},
	Transform. Groups  \textbf{11}  (2006),  no. 4, 625--658.
	{\tt arXiv:math/0606265}.
	 
\bibitem[KN2]{KN2}
	\bysame,
	{\it Yangians and Mickelsson algebras. II},
	 Mosc. Math. J. \textbf{6}  (2006),  no. 3, 477--504, 587.
	 {\tt arXiv:math/0606272}.
	 
\bibitem[KN3]{KN3}
	\bysame,
	{\it Twisted Yangians and Mickelsson algebras. I}, 
	Selecta Math. (N.S.)  \textbf{13}  (2007),  no. 1, 69--136.
	{\tt arXiv:math/0703651}.

\bibitem[KN4]{KN4}
	\bysame,
	{\it Twisted Yangians and Mickelsson algebras. II}
	Algebra i Analiz  \textbf{21}  (2009),  no. 1, 153--228;  
	translation in  St. Petersburg Math. J.  \textbf{21}  (2010),  no. 1, 111--161.
	{\tt arXiv:0801.0519}.

\bibitem[KN5]{KN5}
	\bysame,
	{\it Mickelsson algebras and representations of Yangians},
	Trans. Amer. Math. Soc.  \textbf{364}  (2012),  no. 3, 1293--1367. 
	{\tt arXiv:0912.1101}.

\bibitem[KNP]{KNP}
	S. Khoroshkin, M. Nazarov, P. Papi,
	{\it Irreducible representations of Yangians},
	J. Algebra  \textbf{346}  (2011), 189--226.
	{\tt arXiv:1105.5777}.


\bibitem[KWWY]{KWWY}
	J. Kamnitzer, B. Webster, A. Weekes, O. Yacobi,
	\emph{Yangians and quantizations of slices in the affine Grassmannian}, 
	Algebra Number Theory \textbf{8}  (2014),  no. 4, 857--893. 
	{\tt arXiv:1209.0349}.

\bibitem[Ma]{Ma}
	N.~J. MacKay
	{ \it Introduction to Yangian symmetry in integrable field theory},
	Internat. J. Modern Phys. A  \textbf{20}  (2005),  no. 30, 7189--7217.
	{\tt arXiv:hep-th/0409183}

\bibitem[MaRe1]{MaRe1}
	N.~J. MacKay, V. Regelskis,
	{\it Yangian symmetry of the Y=0 maximal giant graviton}
	J. High Energy Phys. 2010, no. 12, 076, 17 pp.
	{\tt arXiv:1010.3761}.
	
\bibitem[MaRe2]{MaRe2}
	\bysame,
	N.~J. MacKay, V. Regelskis,
	{\it Achiral boundaries and the twisted Yangian of the D5-brane}
	J. High Energy Phys. 2011, no. 8, 019, 22 pp.
	{\tt arXiv:1105.4128}.
	
\bibitem[MaSh]{MaSh}
	N.~J. MacKay, B. Short,
	{\it Boundary scattering in the principal chiral model},
	Workshop on Integrable Theories, Solitons and Duality,  6 pp., J. High Energy Phys. Conf. Proc., Proc. Sci. (SISSA), Trieste, 2002. 
	{\tt arXiv:hep-th/0107256}

\bibitem[Mo1]{Mo1}
	A. Molev,
	{\it Representations of twisted Yangians},
	Lett. Math. Phys. {\bf 26} (1992), 211--218.
	
\bibitem[Mo2]{Mo2}
	\bysame,
	{\it Finite-dimensional irreducible representations of twisted Yangians},
	J. Math. Phys. {\bf 39} (1998), no. 10, 5559--5600.
	{\tt arXiv:q-alg/9711022}.
	
\bibitem[Mo3]{Mo3}
	\bysame,
	{\it Irreducibility criterion for tensor products of Yangian evaluation modules},
	Duke Math. J.  \textbf{112}  (2002),  no. 2, 307--341.
	{\tt arXiv:math/0009183}.

\bibitem[Mo4]{Mo4}
	\bysame,
	{\it Skew representations of twisted Yangians},
	Selecta Math. (N.S.) {\bf 12} (2006), no. 1, 1--38.
	{\tt arXiv:math/0408303}.	

\bibitem[Mo5]{Mo5}
	\bysame,
	{\it Yangians and Classical Lie Algebras}, 
	Mathematical Surveys and Monographs {\bf 143}, American Mathematical Society, Providence, RI, 2007, xviii+400 pp.
	
	
\bibitem[MNO]{MNO}
	A. Molev, M. Nazarov, G. Olshanskii, 
	{\it Yangians and classical Lie algebras},
	Russ. Math. Surv. {\bf 51} (1996), no. 2, 205--282. 
	{\tt  arXiv:hep-th/9409025}.

\bibitem[MO]{MO}	
	A. Molev, G. Olshanskii,
	{\it Centralizer  construction for twisted Yangians},
	Selecta Math. (N.S.) {\bf 6} (2000), no. 3,  269--317.

\bibitem[MR]{MR} 
	A. Molev, E. Ragoucy, 
	\emph{Representations of reflection algebras}, 
	Rev. Math. Phys. \textbf{14} (2002), no. 3, 317--342.
	{\tt arXiv:math/0107213}.

\bibitem[MRS]{MRS} 
	A. Molev, E. Ragoucy, P. Sorba,
	\emph{Coideal subalgebras in quantum affine algebras}, 
	Rev. Math. Phys. \textbf{15} (2003), no. 8, 789-822.
	{\tt arXiv:math/0208140}.
	
\bibitem[Na]{Na}
	M.~Nazarov,
	{\it Representations of twisted Yangians associated with skew Young diagrams},
	Selecta Math. (N.S.) \textbf{10} (2004), no. 1, 71--129. 

\bibitem[NaTa1]{NaTa1}
	M. Nazarov, V. Tarasov,
	{\it On irreducibility of tensor products of Yangian modules},
	Internat. Math. Res. Notices  \textbf{1998},  no. 3, 125--150.
	{\tt arXiv:q-alg/9712004}.
	
\bibitem[NaTa2]{NaTa2}
	\bysame,
	{\it On irreducibility of tensor products of Yangian modules associated with skew Young diagrams},
	Duke Math. J.  \textbf{112}  (2002),  no. 2, 343--378. 
	{\tt arXiv:math/0012039}.

\bibitem[Ol]{Ol} 
	G. Olshanskii, 
	\emph{Twisted Yangians and infinite-dimensional classical Lie algebras}, \textit{Quantum groups (Leningrad, 1990)}, 104--119, Lecture Notes in Math. {\bf 1510}, Springer, Berlin, 1992.
	
\bibitem[RTV]{RTV}
	R. Rimanyi, V. Tarasov, A. Varchenko,
	\emph{Cohomology classes of conormal bundles of Schubert varieties and Yangian weight functions},
	Math. Z. \textbf{277}  (2014),  no. 3-4, 1085--1104.	
	{\tt arXiv:1204.4961}.


\bibitem[Sk]{Sk}
	E. Sklyanin,
	{\it Boundary conditions for integrable quantum systems}
	J. Phys. A \textbf{21} (1988), no. 10, 2375--2389.

\end{thebibliography}
\end{document}